\newtheorem{theorem}{Theorem}[section]
\newtheorem{lemma}[theorem]{Lemma}
\newtheorem{proposition}[theorem]{Proposition}
\newtheorem{corollary}[theorem]{Corollary}
\theoremstyle{definition}
\newtheorem{definition}[theorem]{Definition}
\newtheorem{example}[theorem]{Example}
\newtheorem{remark}[theorem]{Remark}
\newtheorem{notation}[theorem]{Notation}
\theoremstyle{question}
\numberwithin{equation}{section}
\newtheorem*{xclaim}{Claim}
\begin{document}

\title[Ideals in $C_B(X)$ arising from ideals in $X$]{Ideals in $C_B(X)$ arising from ideals in $X$}

\author{M. R. Koushesh}

\address{Department of Mathematical Sciences, Isfahan University of Technology, Isfahan 84156--83111, Iran.}

\address{School of Mathematics, Institute for Research in Fundamental Sciences (IPM), P.O. Box: 19395--5746, Tehran, Iran.}

\email{koushesh@cc.iut.ac.ir}

\subjclass[2010]{Primary 46J10; Secondary 54D35, 54D60, 54C35, 54D65, 46J25, 46E25, 46E15, 46H05, 16S60}

\keywords{Stone--\v{C}ech compactification, spectrum of a $C^*$-algebra, real Banach algebra, set theoretic ideal, Hewitt realcompactification, commutative Gelfand--Naimark theorem.}

\thanks{This research was in part supported by a grant from IPM}

\begin{abstract}
Let $X$ be a completely regular topological space. We assign to each (set theoretic) ideal of $X$ an (algebraic) ideal of $C_B(X)$, the normed algebra of continuous bounded complex valued mappings on $X$ equipped with the supremum norm. We then prove several representation theorems for the assigned ideals of $C_B(X)$. This is done by associating a certain subspace of the Stone--\v{C}ech compactification $\beta X$ of $X$ to each ideal of $X$. This subspace of $\beta X$ has a simple representation, and in the case when the assigned ideal of $C_B(X)$ is closed, coincides with its spectrum as a $C^*$-subalgebra of $C_B(X)$. This in particular provides information about the spectrum of those closed ideals of $C_B(X)$ which have such representations. This includes the non-vanishing closed ideals of $C_B(X)$ whose spectrums are studied in great detail. Our representation theorems help to understand the structure of certain ideals of $C_B(X)$. This has been illustrated by means of various examples. Our approach throughout will be quite topological and makes use of the theory of the Stone--\v{C}ech compactification.
\end{abstract}

\maketitle

\tableofcontents

\part{Introduction}\label{JHGG}

\bigskip

Throughout this article by a \textit{space} we mean a topological space. Completely regular spaces as well as compact spaces (and therefore locally compact spaces) are assumed to be Hausdorff. Also, the field of scalars is assumed to be $\mathbb{C}$. (Note, however, that our proofs are all valid in the real setting, though, some phrases are defined only in the complex setting.)

Let $X$ be a space. We denote by $C(X)$ the set of all continuous mappings $f:X\rightarrow\mathbb{C}$ and we denote by $C_B(X)$ the set of all bounded elements of $C(X)$. Let $f\in C(X)$. The \textit{zero-set} of $f$ is $f^{-1}(0)$ and is denoted by $\mathrm{z}(f)$, the \textit{cozero-set} of $f$ is $X\setminus f^{-1}(0)$ and is denoted by $\mathrm{coz}(f)$, and the \textit{support} of $f$ is $\mathrm{cl}_X\mathrm{coz}(f)$ and is denoted by $\mathrm{supp}(f)$. The set of all cozero-sets of $X$ is denoted by $\mathrm{Coz}(X)$. We denote by $C_{00}(X)$ the set of all $f\in C_B(X)$ with a compact support and we denote by $C_0(X)$ the set of all $f\in C_B(X)$ which vanish at infinity (that is, $|f|^{-1}([1/n,\infty))$ is compact for each positive integer $n$).

Let $X$ be a set. An \textit{ideal} ${\mathfrak I}$ in $X$ is a non-empty collection of subsets of $X$ such that
\begin{itemize}
  \item Any subset of an element of ${\mathfrak I}$ is in ${\mathfrak I}$.
  \item Any finite union of elements of ${\mathfrak I}$ is in ${\mathfrak I}$.
\end{itemize}
Let ${\mathfrak I}$ be an ideal in $X$. Then ${\mathfrak I}$ is a \textit{$\sigma$-ideal} in $X$ if it has any countable union of its elements. The elements of ${\mathfrak I}$ are called \textit{null} (or \textit{negligible}) sets. The ideal ${\mathfrak I}$ is \textit{proper} if $X$ is not null. Suppose that $X$ is a space. Then $X$ is \textit{locally null} (with respect to ${\mathfrak I}$) if each point of $X$ has a null neighborhood in $X$. One may view an ideal as a collection of sets whose elements are considered to be somehow ``small" or ``negligible"; every subset of an element of an ideal must also be in the ideal; this explains what is meant by ``smallness". An ideal ${\mathfrak I}$ in a set $X$ is called a \textit{bornology} if its elements cover the whole $X$, that is, $X=\bigcup{\mathfrak I}$. A bornology on a set is generally viewed as the ``minimum structure" required to address questions of boundedness.

In \cite{Ko6} (also \cite {Ko11} and \cite{Ko10}) for a locally separable metrizable space $X$ we have introduced and studied the closed ideal $C_S(X)$ of $C_B(X)$ consisting of all $f\in C_B(X)$ with a separable support. This has been done through the critical introduction of a certain subspace $\lambda X$ of the Stone--\v{C}ech compactification $\beta X$ of $X$ and by relating the algebraic structure of $C_S(X)$ to the topological properties of $\lambda X$, using techniques we had already developed in \cite{Ko3} (and \cite{Ko12}). This motivated our present study of (algebraic) ideals of $C_B(X)$ by means of (set theoretic) ideals of $X$, where $X$ is a space which is not required mostly to be beyond completely regular. We first show how ideals in $X$ may be assigned to ideals in $C_B(X)$. We then prove several representation theorems for the assigned ideals of $C_B(X)$. This is done by associating a certain subspace of $\beta X$ to each ideal of $X$. This subspace of $\beta X$ has a simple representation; indeed, in certain cases it coincides with familiar subspaces of $\beta X$, and in the case when the assigned ideal of $C_B(X)$ is closed, equals to its spectrum. This in particular provides information about the spectrum of those closed ideals of $C_B(X)$ which have such representations. The class of such ideals of $C_B(X)$ is reasonably large, and includes all non-vanishing closed ideals of $C_B(X)$, whose spectrums are studied in great detail. Our representation theorems help to understand the structure of certain ideals of $C_B(X)$. This has been illustrated by means of various examples whose study will comprise a large portion of this article. Our approach here will be quite topological and will make use of the theory of the Stone--\v{C}ech compactification.

This article is divided into two parts.

In the first part (Part \ref{HFPG}) for a space $X$ and an ideal $\mathfrak{I}$ of $X$ we consider the ideals $C^{\mathfrak I}_{00}(X)$ and $C^{\mathfrak I}_0(X)$ of $C_B(X)$ defined by
\[C^{\mathfrak I}_{00}(X)=\big\{f\in C_B(X):\mathrm{supp}(f)\mbox{ has a null neighborhood in }X\big\}\]
and
\[C^{\mathfrak I}_0(X)=\big\{f\in C_B(X):|f|^{-1}\big([1/n,\infty)\big)\mbox{ is null for each }n\big\}.\]
The above expressions of $C^{\mathfrak I}_{00}(X)$ and $C^{\mathfrak I}_0(X)$ are motivated by the corresponding definitions of $C_{00}(X)$ and $C_0(X)$, respectively. Indeed, $C^{\mathfrak I}_{00}(X)$ and $C^{\mathfrak I}_0(X)$ coincide with $C_{00}(X)$ and $C_0(X)$, respectively, if $X$ is a locally compact space and $\mathfrak{I}$ is the ideal of $X$ consisting of all subspaces with a compact closure in $X$. The main result of this part states that for a normal locally null space $X$ the ideals $C^{\mathfrak I}_{00}(X)$ and $C^{\mathfrak I}_0(X)$ of $C_B(X)$ are respectively isometrically isomorphic to $C_{00}(Y)$ and $C_0(Y)$ for a unique (up to homeomorphism) locally compact space $Y$, namely, for the subspace
\[Y=\bigcup\big\{\mathrm{int}_{\beta X}\mathrm{cl}_{\beta X}C:C\in\mathrm{Coz}(X)\mbox{ and }\mathrm{cl}_XC\mbox{ has a null neighborhood in $X$}\big\}\]
of $\beta X$. Furthermore, $C^{\mathfrak I}_0(X)$ is a closed ideal of $C_B(X)$ and $Y$ is the spectrum of $C^{\mathfrak I}_0(X)$. In addition
\begin{itemize}
  \item $X$ is dense in $Y$,
  \item $C^{\mathfrak I}_{00}(X)$ is dense in $C^{\mathfrak I}_0(X)$,
  \item $Y$ is compact if and only if $C^{\mathfrak I}_{00}(X)$ is unital if and only if $C^{\mathfrak I}_0(X)$ is unital if and only if ${\mathfrak I}$ is non-proper.
\end{itemize}

In the second part (Part \ref{KHGJ}) we consider specific examples. This specification, either of the space $X$ or the ideal ${\mathfrak I}$, enables us to study certain ideals of $C_B(X)$, and their spectrums, whenever relevant. This part is divided into three sections.

In the first section (Section \ref{HFLH}) we consider certain well known ideals of $\mathbb{N}$. These include the \textit{summable} ideal ${\mathfrak S}$ and the \textit{density} ideal ${\mathfrak D}$ of $\mathbb{N}$ defined by
\[{\mathfrak S}=\bigg\{A\subseteq\mathbb{N}:\sum_{n\in A}\frac{1}{n}\mbox{ converges}\bigg\}\]
and
\[{\mathfrak D}=\bigg\{A\subseteq\mathbb{N}:\limsup_{n\rightarrow\infty}\frac{|A\cap\{1,\ldots,n\}|}{n}=0\bigg\}.\]
For simplicity of the notation let us denote
\[\mathfrak{s}_{00}=C^{\mathfrak S}_{00}(\mathbb{N}),\quad\mathfrak{d}_{00}=C^{\mathfrak D}_{00}(\mathbb{N}),\quad\mathfrak{s}_0=C^{\mathfrak S}_0(\mathbb{N})\quad\mbox{and}\quad\mathfrak{d}_0=C^{\mathfrak D}_0(\mathbb{N}).\]
Then, it follows from our general results in Part \ref{HFPG} that
\[\mathfrak{s}_{00}=\bigg\{\mathbf{x}\in\ell_\infty:\sum_{\mathbf{x}(n)\neq0}\frac{1}{n}\mbox{ converges}\bigg\},\]
and
\[\mathfrak{d}_{00}=\bigg\{\mathbf{x}\in\ell_\infty:\limsup_{n\rightarrow\infty}\frac{|\{k\leq n:\mathbf{x}(k)\neq0\}|}{n}=0\bigg\},\]
are ideals of $\ell_\infty$ and
\[\mathfrak{s}_0=\bigg\{\mathbf{x}\in\ell_\infty:\sum_{|\mathbf{x}(n)|\geq\epsilon}\frac{1}{n}\mbox{ converges for each }\epsilon>0\bigg\},\]
and
\[\mathfrak{d}_0=\bigg\{\mathbf{x}\in\ell_\infty:\limsup_{n\rightarrow\infty}\frac{|\{k\leq n:|\mathbf{x}(k)|\geq\epsilon\}|}{n}=0\mbox{ for each }\epsilon>0\bigg\}\]
are closed ideals $\ell_\infty$. Also, $\mathfrak{s}_{00}$ and $\mathfrak{s}_0$ are respectively isometrically isomorphic to $C_{00}(S)$ and $C_0(S)$ with
\[S=\bigcup\bigg\{\mathrm{cl}_{\beta\mathbb{N}}A:A\subseteq\mathbb{N}\mbox{ and }\sum_{n\in A}\frac{1}{n}\mbox{ converges}\bigg\},\]
and $\mathfrak{d}_{00}$ and $\mathfrak{d}_0$ are respectively isometrically isomorphic to $C_{00}(D)$ and $C_0(D)$ with
\[D=\bigcup\bigg\{\mathrm{cl}_{\beta\mathbb{N}}A:A\subseteq\mathbb{N}\mbox{ and }\limsup_{n\rightarrow\infty}\frac{|A\cap\{1,\ldots,n\}|}{n}=0\bigg\}.\]
In particular, $\mathfrak{s}_{00}$ and $\mathfrak{d}_{00}$ are dense in $\mathfrak{s}_0$ and $\mathfrak{d}_0$, respectively. Furthermore, $S$ and $D$ are the spectrums of $\mathfrak{s}_0$ and $\mathfrak{d}_0$, respectively. As corollaries of the above representations it follows that the pair $\mathfrak{s}_{00}$ and $\mathfrak{d}_{00}$, the pair $\mathfrak{s}_{00}/c_{00}$ and $\mathfrak{d}_{00}/c_{00}$, and the pair $\mathfrak{s}_0/c_0$ and $\mathfrak{d}_0/c_0$ each contains an isometric copy of
\[\bigoplus_{n=1}^\infty\ell_\infty,\quad\bigoplus_{i<2^\omega}\frac{\ell_\infty}{c_{00}}\quad\mbox{and}\quad\bigoplus_{i<2^\omega}\frac{\ell_\infty}{c_0},\]
respectively.

In the second section (Section \ref{KHDS}) we consider non-vanishing closed ideals of $C_B(X)$ for a completely regular space $X$. (An ideal $H$ of $C_B(X)$ is called \textit{non-vanishing} if for each $x\in H$ there is some $h\in H$ such that $h(x)\neq 0$.) We first show how a non-vanishing closed ideal $H$ of $C_B(X)$ can be thought of as being $C^{\mathfrak I}_0(X)$ for an appropriate choice of an ideal ${\mathfrak I}$ of $X$. Our representation theorems will then enable us to determine the spectrum $\mathfrak{sp}(H)$ of $H$ as the subspace
\[\mathfrak{sp}(H)=\bigcup_{h\in H}\mathrm{cl}_{\beta X}|h|^{-1}\big((1,\infty)\big)\]
of $\beta X$. As corollaries we prove, among others, that the spectrum of a non-vanishing closed ideal $H$ of $C_B(X)$ is $\sigma$-compact if and only if $H$ is $\sigma$-generated and is connected if and only if $H$ is indecomposable (that is, $H$ is not the direct sum of any two non-zero ideals of $C_B(X)$). Also, components of the  spectrum of $H$ are all open in it (in particular, the spectrum of $H$ is locally connected) if and only if
\[H=\overline{\bigoplus_{i\in I}H_i},\]
where $H_i$ is an indecomposable closed ideal in $C_B(X)$ for any $i\in I$. (Here the bar denotes the closure in $C_B(X)$.) Further, for a collection $\{H_i:i\in I\}$ of non-vanishing ideals in $C_B(X)$ we study the relation between the spectrum of the ideals generated by $H_i$'s and the individual spectrums $\mathfrak{sp}(\overline{H_i})$'s. Specifically, we show that
\[\mathfrak{sp}\bigg(\overline{\bigvee_{i\in I}H_i}\bigg)=\bigcup_{i\in I}\mathfrak{sp}\big(\overline{H_i}\big)\]
provided that $H_i$ is non-vanishing for each $i\in I$,
\[\mathfrak{sp}\bigg(\overline{\bigoplus_{i\in I}H_i}\bigg)=\bigoplus_{i\in I}\mathfrak{sp}\big(\overline{H_i}\big)\]
provided that ${\bigoplus_{i\in I}H_i}$ is non-vanishing, and
\[\mathfrak{sp}\bigg(\overline{\bigcap_{i\in I}H_i}\bigg)=\mathrm{int}_{\,\mathfrak{sp}(C_B(X))}\bigg(\bigcap_{i\in I}\mathfrak{sp}\big(\overline{H_i}\big)\bigg)\]
provided that $\bigcap_{i\in I}H_i$ is non-vanishing. Here the bar denotes the closure in $C_B(X)$, $\bigvee_{i\in I}H_i$ denotes the ideal of $C_B(X)$ generated by $\bigcup_{i\in I}H_i$, the first $\bigoplus$ denotes the algebraic direct sum, and the second $\bigoplus$ denotes the topological direct sum. We also show how the collection of all non-vanishing closed ideals of $C_B(X)$ can be made into one-to-one correspondence with the collection of all closed open bornologies of $X$ (that is, bornologies whose each element is contained in a closed element as well as in an open element).

Finally, in the third section (Section \ref{KIJGD}) we consider ideals of $C_B(X)$ arising from ideals of $X$ related to its topology. More precisely, for a space $X$ and a topological property $\mathfrak{P}$ we consider the collection
\[{\mathfrak I}_\mathfrak{P}=\{A\subseteq X:\mathrm{cl}_XA\mbox{ has }\mathfrak{P}\},\]
which is an ideal in $X$ provided that $\mathfrak{P}$ is closed hereditary (that is, any closed subspace of a space with $\mathfrak{P}$ has $\mathfrak{P}$) and preserved under finite closed sums (that is, any space which is a finite union of its closed subspaces each having $\mathfrak{P}$ has $\mathfrak{P}$). We consider $C^{\mathfrak I}_{00}(X)$ and $C^{\mathfrak I}_0(X)$ where ${\mathfrak I}={\mathfrak I}_\mathfrak{P}$. For simplicity of the notation denote
\[C^\mathfrak{P}_{00}(X)=C^{{\mathfrak I}_\mathfrak{P}}_{00}(X)\quad\mbox{and}\quad C^\mathfrak{P}_0(X)=C^{{\mathfrak I}_\mathfrak{P}}_0(X)\]
and
\[\lambda_\mathfrak{P}X=\lambda_{{\mathfrak I}_\mathfrak{P}}X\]
whenever $X$ is completely regular. In this context we have
\[C^\mathfrak{P}_{00}(X)=\big\{f\in C_B(X):\mathrm{supp}(f)\mbox{ has a closed neighborhood in $X$ with }\mathfrak{P}\big\}\]
and
\[C^\mathfrak{P}_0(X)=\big\{f\in C_B(X):|f|^{-1}\big([1/n,\infty)\big)\mbox{ has }\mathfrak{P}\mbox{ for each }n\big\}.\]
Then $C^\mathfrak{P}_{00}(X)$ is an ideal of $C_B(X)$ and $C^\mathfrak{P}_0(X)$ is a closed ideal of $C_B(X)$ which contains $C^\mathfrak{P}_{00}(X)$.
The above representations of $C^\mathfrak{P}_{00}(X)$ and $C^\mathfrak{P}_0(X)$ further simplify under certain conditions on $X$ or $\mathfrak{P}$. In the case when $X$ is normal and locally-$\mathfrak{P}$ (that is, each point of $X$ has a neighborhood in $X$ with $\mathfrak{P}$) then $C^\mathfrak{P}_{00}(X)$ and $C^\mathfrak{P}_0(X)$ are respectively isometrically isomorphic to $C_{00}(Y)$ and $C_0(Y)$ for the (unique) locally compact space $Y=\lambda_\mathfrak{P}X$. Furthermore, $Y$ is the spectrum of $C^\mathfrak{P}_0(X)$. In addition
\begin{itemize}
  \item $X$ is dense in $Y$,
  \item $C^\mathfrak{P}_{00}(X)$ is dense in $C^\mathfrak{P}_0(X)$,
  \item $Y$ is compact if and only if $C^\mathfrak{P}_{00}(X)$ is unital if and only if $C^\mathfrak{P}_0(X)$ is unital if and only if $X$ has $\mathfrak{P}$.
\end{itemize}
Under certain conditions on $X$ and $\mathfrak{P}$, we further have
\begin{itemize}
  \item $C^\mathfrak{P}_{00}(X)=\{f\in C_B(X):\mathrm{supp}(f)\mbox{ has }\mathfrak{P}\}=C^\mathfrak{P}_0(X)$,
  \item $C_{00}(Y)=C_0(Y)$; in particular, $Y$ is countably compact.
\end{itemize}
As a particular example, for a locally separable metrizable space $X$ it follows that
\[C_S(X)=\big\{f\in C_B(X):\mathrm{supp}(f)\mbox{ is separable}\big\},\]
is a closed ideal of $C_B(X)$ which is isometrically isomorphic to $C_0(Y)$ for the locally compact space
\[Y=\bigcup\{\mathrm{cl}_{\beta X}S:S\subseteq X\mbox{ is separable}\}.\]
In particular, $Y$ is the spectrum of $C_S(X)$. Furthermore,
\begin{itemize}
\item $\dim C_S(X)=\mathrm{d}(X)^{\aleph_0}$ if $X$ is non-separable,
\item $C_0(Y)=C_{00}(Y)$; in particular, $Y$ is countably compact,
\item $Y$ is normal if and only if $Y$ is compact if and only if $C_S(X)$ is unital if and only if $X$ is separable.
\end{itemize}
Here $\dim$ denotes the vector space dimension and $\mathrm{d}(X)$ (called the \textit{density} of $X$) is defined by
\[\mathrm{d}(X)=\min\big\{|D|:D\mbox{ is dense in }X\big\}+\aleph_0.\]
Moreover, if $X$ is also non-separable, then there is a chain of length $\lambda$, where $\aleph_\lambda=\mathrm{d}(X)$, consisting of closed ideals $H_\mu$'s of $C_B(X)$ such that
\[C_0(X)\subsetneqq H_0\subsetneqq H_1\subsetneqq\cdots\subsetneqq H_\lambda=C_B(X),\]
with $H_\mu$, for each $\mu<\lambda$, being isometrically isomorphic to
\[C_0(Y_\mu)=C_{00}(Y_\mu),\]
where $Y_\mu$ is a locally compact countably compact space which contains $X$ densely.

The concluding results in this section deal with specific topological properties $\mathfrak{P}$, such as realcompactness and pseudocompactness. (Recall that a completely regular space $X$ is \textit{realcompact} if it is homeomorphic to a closed subspaces of some product $\mathbb{R}^\mathfrak{m}$ and is \textit{pseudocompact} if there is no continuous unbounded mapping $f:X\rightarrow\mathbb{R}$.) We show that if $\mathfrak{P}$ is realcompactness and $X$ is normal, then
\[\lambda_\mathfrak{P}X=\beta X\setminus\mathrm{cl}_{\beta X}(\upsilon X\setminus X)\]
where $\upsilon X$ is the Hewitt realcompactification of $X$. (Recall that the \textit{Hewitt realcompactification} of a completely regular space $X$ is a realcompact space $\upsilon X$ which contains $X$ as a dense subspace and is such that every continuous mapping $f:X\rightarrow\mathbb{R}$ is continuously extendible over $\upsilon X$; one may assume that $\upsilon X\subseteq\beta X$.) Also, if $X$ is completely regular, then
\[\lambda_{\mathfrak U}X=\mathrm{int}_{\beta X}\upsilon X\]
for the ideal
\[{\mathfrak U}=\langle U:\mbox{$U$ is an open subspace of $X$ with a pseudocompact closure}\rangle\]
of $X$.

Throughout this article the Stone--\v{C}ech compactification will be the main tool of study. We review certain properties of the Stone--\v{C}ech compactification in the following and refer the reader to the texts \cite{E}, \cite{GJ}, \cite{PW} and \cite{We} for further reading.

\subsection*{The Stone--\v{C}ech compactification} Let $X$ be a completely regular space. A \textit{compactification} $\gamma X$ of $X$ is a compact space $\gamma X$ which contains $X$ as a dense subspace. The \textit{Stone--\v{C}ech compactification} of $X$, denoted by $\beta X$, is the compactification of $X$ which is characterized by the property that every continuous mapping $f:X\rightarrow K$, where $K$ is a compact space, is continuously extendable over $\beta X$; this continuous extension of $f$ is necessarily unique and is denoted by $f_\beta:\beta X\rightarrow K$ (or in occasions by $f^\beta:\beta X\rightarrow K$ to simplify the notation). For a completely regular space the Stone--\v{C}ech compactification always exists. In what follows we will use the following properties of $\beta X$.
\begin{itemize}
  \item The space $X$ is locally compact if and only if $X$ is open in $\beta X$.
  \item Let $T$ be a closed and open subspace of $X$. Then $\mathrm{cl}_{\beta X}T$ is closed and open in $\beta X$.
  \item Let $X\subseteq T\subseteq\beta X$. Then $\beta T=\beta X$.
  \item Let $T$ be a $C^*$-embedded subspace of $X$, that is, $T$ is a subspace of $X$ such that every continuous bounded mapping $f:T\rightarrow\mathbb{R}$ can be extended continuously to a mapping $F:X\rightarrow\mathbb{R}$. (In particular, if $X$ is normal and $T$ is a closed subspace of $X$.) Then $\beta T=\mathrm{cl}_{\beta X}T$.
  \item Let $S$ and $T$ be zero-sets of $X$. (In particular, if $S$ and $T$ are disjoint closed and open subspaces of $X$.) Then
  \[\mathrm{cl}_{\beta X}(S\cap T)=\mathrm{cl}_{\beta X}S\cap\mathrm{cl}_{\beta X}T.\]
  In particular, disjoint zero-sets of $X$ have disjoint closures in $\beta X$.
\end{itemize}

\part{General theory}\label{HFPG}

\bigskip

In this part for a space $X$ we assign to each (set theoretic) ideal of $X$ certain (algebraic) ideals of $C_B(X)$. We then prove representation theorems for the assigned ideals of $C_B(X)$ in the case when $X$ is either completely regular or normal. This is done by means of associating a certain subspace of $\beta X$ to each ideal of $X$. This subspace of $\beta X$ plays a crucial role in our future study in Part \ref{KHGJ}.

This part is divided into two sections, in either of which for a space $X$ we introduce and study an ideal of $C_B(X)$ associated to each ideal of $X$. Results of this part are stated and proved in the most general context; Part \ref{KHGJ} will be devoted subsequently to the consideration of specific examples.

\section{The normed ideal $C^{\mathfrak I}_{00}(X)$ of $C_B(X)$}\label{GPG}

In this section for a space $X$ and an ideal $\mathfrak{I}$ of $X$ we introduce the ideal $C^{\mathfrak I}_{00}(X)$ of $C_B(X)$. The definition of  $C^{\mathfrak I}_{00}(X)$ is modeled on (and generalizes) the definition of the ideal $C_{00}(X)$ of $C_B(X)$ which consists of all elements in $C_B(X)$ whose support is compact.

Results of this section will generalize those we have already obtained in \cite{Ko6}, \cite{Ko10} and \cite{Ko11}.

Recall that for a space $X$ and a subset $A$ of $X$ a \textit{neighborhood} of $A$ in $X$ is a subset $U$ of $X$ such that $A\subseteq\mathrm{int}_XU$.

\begin{definition}\label{HG}
Let $X$ be a space and let $\mathfrak{I}$ be an ideal in $X$. Define
\[C^{\mathfrak I}_{00}(X)=\big\{f\in C_B(X):\mathrm{supp}(f)\mbox{ has a null neighborhood in }X\big\}.\]
\end{definition}

The following example justifies our use of the notation $C^{\mathfrak I}_{00}(X)$.

\begin{example}\label{HUJ}
Let $X$ be a locally compact space and let
\[\mathfrak{I}=\{A\subseteq X:\mathrm{cl}_X A \mbox{ is compact}\}.\]
Trivially, $\mathfrak{I}$ is an ideal in $X$. As we see now, in this case
\[C^{\mathfrak I}_{00}(X)=C_{00}(X).\]
Let $f\in C_B(X)$. It is obvious that if $\mathrm{supp}(f)$ has a null neighborhood $U$ in $X$, then $\mathrm{supp}(f)$ is compact, as it is closed in $\mathrm{cl}_X U$ and the latter is so. Now, suppose that $\mathrm{supp}(f)$ is compact. Since $X$ is locally compact, for each $x\in X$ there is an open neighborhood $V_x$ of $x$ in $X$ such that $\mathrm{cl}_X V_x$ is compact. The set $\{V_x:x\in X\}$ forms an open cover for $\mathrm{supp}(f)$. Therefore
\[\mathrm{supp}(f)\subseteq V_{x_1}\cup\cdots\cup V_{x_n}=V,\]
where $x_i\in X$ for each $i=1,\ldots,n$. Clearly, $V$ is a neighborhood of $\mathrm{supp}(f)$ in $X$, and it is null, as
\[\mathrm{cl}_XV=\mathrm{cl}_XV_{x_1}\cup\cdots\cup\mathrm{cl}_X V_{x_n},\]
(being the union of a finite number of compact subspaces) is compact.
\end{example}

In order to prove our representation theorem in this section we need to prove a number of lemmas. This we will do next.

\begin{lemma}\label{TTG}
Let $X$ be a space and let $\mathfrak{I}$ be an ideal in $X$. Then $C^{\mathfrak I}_{00}(X)$ is an ideal in $C_B(X)$.
\end{lemma}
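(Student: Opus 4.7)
The plan is to verify the three defining properties of an algebraic ideal in $C_B(X)$: non-emptiness, closure under addition, and absorption under multiplication by arbitrary elements of $C_B(X)$. Each is short and each corresponds in a transparent way to one of the defining features of a set theoretic ideal together with basic properties of supports and interiors. The main (minor) point of care is that ``neighborhood'' here means containment in the interior, not mere containment, so one must check that unions of neighborhoods are neighborhoods; this is immediate from $\mathrm{int}_X U\cup\mathrm{int}_X V\subseteq\mathrm{int}_X(U\cup V)$.

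Step~1 (non-emptiness). The zero function $0\in C_B(X)$ satisfies $\mathrm{supp}(0)=\emptyset$. Since $\mathfrak{I}$ is non-empty and closed under taking subsets, $\emptyset\in\mathfrak{I}$, and $\emptyset$ is trivially a neighborhood of itself. Hence $0\in C^{\mathfrak I}_{00}(X)$.

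Step~2 (closure under addition). Let $f,g\in C^{\mathfrak I}_{00}(X)$ and pick null neighborhoods $U$ of $\mathrm{supp}(f)$ and $V$ of $\mathrm{supp}(g)$ in $X$. From $\mathrm{coz}(f+g)\subseteq\mathrm{coz}(f)\cup\mathrm{coz}(g)$ and the fact that closure commutes with finite unions one obtains $\mathrm{supp}(f+g)\subseteq\mathrm{supp}(f)\cup\mathrm{supp}(g)\subseteq\mathrm{int}_X U\cup\mathrm{int}_X V\subseteq\mathrm{int}_X(U\cup V)$, so $U\cup V$ is a neighborhood of $\mathrm{supp}(f+g)$. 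Moreover $U\cup V\in\mathfrak{I}$ because $\mathfrak{I}$ is closed under finite unions. Hence $f+g\in C^{\mathfrak I}_{00}(X)$.

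Step~3 (absorption). Let $f\in C^{\mathfrak I}_{00}(X)$ with null neighborhood $U$ of $\mathrm{supp}(f)$ and let $g\in C_B(X)$. Since $\mathrm{coz}(fg)=\mathrm{coz}(f)\cap\mathrm{coz}(g)\subseteq\mathrm{coz}(f)$, closing up gives $\mathrm{supp}(fg)\subseteq\mathrm{supp}(f)\subseteq\mathrm{int}_X U$, so $U$ is also a neighborhood of $\mathrm{supp}(fg)$, and it remains null. Thus $fg\in C^{\mathfrak I}_{00}(X)$. Closure under scalar multiples is the special case $g\equiv c$ (or is absorbed into Step~2 together with the observation that $\mathrm{supp}(cf)\subseteq\mathrm{supp}(f)$). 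The proof should present exactly Steps~1--3 with a brief reminder of the support-union and support-product inclusions; no serious obstacle is expected.
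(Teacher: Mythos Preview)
Your proposal is correct and follows essentially the same approach as the paper's proof: both verify non-emptiness via the zero function, closure under addition via $\mathrm{supp}(f+g)\subseteq\mathrm{supp}(f)\cup\mathrm{supp}(g)\subseteq\mathrm{int}_X U\cup\mathrm{int}_X V\subseteq\mathrm{int}_X(U\cup V)$, and absorption via $\mathrm{supp}(fg)\subseteq\mathrm{supp}(f)$. The arguments match step for step, with only cosmetic differences in presentation.
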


\begin{proof}
Note that $C^{\mathfrak I}_{00}(X)$ is non-empty, as it contains $\mathbf{0}$. (Observe that there always exists a null subset of $X$; this constitutes a neighborhood for $\emptyset=\mathrm{supp}(\mathbf{0})$ in $X$.) To show that $C^{\mathfrak I}_{00}(X)$ is closed under addition, let $f,g\in C^{\mathfrak I}_{00}(X)$. Then, there exist null neighborhoods $U$ and $V$ of $\mathrm{supp}(f)$ and $\mathrm{supp}(g)$ in $X$, respectively. Note that $\mathrm{coz}(f+g)\subseteq\mathrm{coz}(f)\cup\mathrm{coz}(g)$. Thus
\[\mathrm{supp}(f+g)\subseteq\mathrm{supp}(f)\cup\mathrm{supp}(g)\subseteq\mathrm{int}_X U\cup\mathrm{int}_X V\subseteq\mathrm{int}_X (U\cup V).\]
Therefore $\mathrm{supp}(f+g)$ has a null neighborhood in $X$, namely $U\cup V$. Then $f+g\in C^{\mathfrak I}_{00}(X)$. Next, let $f\in C^{\mathfrak I}_{00}(X)$ and $g\in C_B(X)$. Note that $\mathrm{coz}(fg)\subseteq\mathrm{coz}(f)$. Thus $\mathrm{supp}(fg)\subseteq\mathrm{supp}(f)$. In particular, $\mathrm{supp}(fg)$ has a null neighborhoods in $X$, as $\mathrm{supp}(f)$ does. Therefore $fg\in C^{\mathfrak I}_{00}(X)$. That $C^{\mathfrak I}_{00}(X)$ is closed under scalar multiplication follows trivially.
\end{proof}

The subspace $\lambda_{\mathfrak I} X$ of $\beta X$ introduced below plays a crucial role in our study. The space $\lambda_{\mathfrak I} X$ has been first considered in \cite{Ko3} (and later in \cite{Ko4}, \cite{Ko7}, \cite{Ko13} and \cite{Ko8}) to study certain classes of topological extensions.

\begin{definition}\label{HWA}
Let $X$ be a completely regular space and let $\mathfrak{I}$ be an ideal in $X$. Define
\[\lambda_{\mathfrak I} X=\bigcup\big\{\mathrm{int}_{\beta X}\mathrm{cl}_{\beta X}C:C\in\mathrm{Coz}(X)\mbox{ and }\mathrm{cl}_XC\mbox{ has a null neighborhood in $X$}\big\}.\]
\end{definition}

The space $\lambda_{\mathfrak I} X$ just defined may have a better expression if one requires $X$ to be normal. This is the subject matter of the next proposition. (See also Lemma \ref{KPJG}.) We need the following simple lemma.

Observe that for a space $X$ and a dense subspace $D$ of $X$ we have
\[\mathrm{cl}_XU=\mathrm{cl}_X(U\cap D)\]
for every open subspace $U$ of $X$.

\begin{lemma}\label{LKG}
Let $X$ be a completely regular space. Let $f:X\rightarrow[0,1]$ be a continuous mapping and $0<r<1$. Then
\[f_\beta^{-1}\big([0,r)\big)\subseteq\mathrm{int}_{\beta X}\mathrm{cl}_{\beta X}f^{-1}\big([0,r)\big).\]
\end{lemma}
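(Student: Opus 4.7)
The plan is to apply the observation stated immediately before the lemma (namely $\mathrm{cl}_X U = \mathrm{cl}_X(U \cap D)$ for an open set $U$ in a space $X$ with dense subspace $D$) to the ambient space $\beta X$ with dense subspace $X$.

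First I would note that $f_\beta:\beta X\rightarrow[0,1]$ is continuous, and since $[0,r)$ is open in $[0,1]$, the preimage $U=f_\beta^{-1}([0,r))$ is open in $\beta X$. Consequently $U$ is contained in the interior of its own closure:
\[U\subseteq\mathrm{int}_{\beta X}\mathrm{cl}_{\beta X}U.\]

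Next I would rewrite $\mathrm{cl}_{\beta X}U$ using the density of $X$ in $\beta X$ together with the cited observation applied to the open set $U\subseteq\beta X$:
\[\mathrm{cl}_{\beta X}U=\mathrm{cl}_{\beta X}(U\cap X).\]
The point here is that $f_\beta$ restricts to $f$ on $X$, so $U\cap X=f_\beta^{-1}([0,r))\cap X=f^{-1}([0,r))$. Chaining these equalities with the inclusion above yields exactly the stated inclusion.

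There is essentially no obstacle: the lemma is a one-line application of the dense-subspace identity recalled just before its statement, combined with the basic fact that an open set sits inside the interior of its closure. The only conceptual content is recognizing that $U$ really is open in $\beta X$ (using continuity of $f_\beta$) and that taking its trace on $X$ recovers $f^{-1}([0,r))$.
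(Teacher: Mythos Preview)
Your proof is correct and follows essentially the same approach as the paper: first observe that the open set $f_\beta^{-1}([0,r))$ is contained in the interior of its closure in $\beta X$, then use the density identity $\mathrm{cl}_{\beta X}U=\mathrm{cl}_{\beta X}(U\cap X)$ together with $f_\beta|_X=f$ to rewrite that closure as $\mathrm{cl}_{\beta X}f^{-1}([0,r))$.
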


\begin{proof}
Note that
\[f_\beta^{-1}\big([0,r)\big)\subseteq\mathrm{int}_{\beta X}\mathrm{cl}_{\beta X}f_\beta^{-1}\big([0,r)\big)\]
and
\[\mathrm{cl}_{\beta X}f_\beta^{-1}\big([0,r)\big)=\mathrm{cl}_{\beta X}\big(X\cap f_\beta^{-1}\big([0,r)\big)\big)=\mathrm{cl}_{\beta X}f^{-1}\big([0,r)\big).\]
\end{proof}

\begin{proposition}\label{KJHF}
Let $X$ be a normal space and let $\mathfrak{I}$ be an ideal in $X$. Then
\[\lambda_{\mathfrak I} X=\bigcup\{\mathrm{int}_{\beta X}\mathrm{cl}_{\beta X}U:\mathrm{cl}_XU\mbox{ is null}\,\}.\]
\end{proposition}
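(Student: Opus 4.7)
The plan is to establish both inclusions separately. The inclusion $\lambda_{\mathfrak I} X \subseteq \bigcup\{\mathrm{int}_{\beta X}\mathrm{cl}_{\beta X}U : \mathrm{cl}_X U \text{ null}\}$ is essentially free: given a cozero-set $C\in\mathrm{Coz}(X)$ with a null neighborhood $N$ of $\mathrm{cl}_X C$ in $X$, the chain $\mathrm{cl}_X C\subseteq\mathrm{int}_X N\subseteq N\in\mathfrak{I}$ combined with the hereditary property of ideals forces $\mathrm{cl}_X C\in\mathfrak{I}$, so taking $U=C$ exhibits $\mathrm{int}_{\beta X}\mathrm{cl}_{\beta X} C$ on the right-hand side verbatim.

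For the reverse inclusion, fix $U\subseteq X$ with $\mathrm{cl}_X U$ null and $p\in\mathrm{int}_{\beta X}\mathrm{cl}_{\beta X} U$; the task is to produce a cozero-set $C$ in $X$ whose closure has a null neighborhood in $X$ and satisfies $p\in\mathrm{int}_{\beta X}\mathrm{cl}_{\beta X} C$. Using that $\beta X$ is compact Hausdorff, hence normal, I first select by regularity an open $W\subseteq\beta X$ with $p\in W\subseteq\mathrm{cl}_{\beta X} W\subseteq\mathrm{int}_{\beta X}\mathrm{cl}_{\beta X} U$, and then by Urysohn's lemma produce a continuous $f:\beta X\to[0,1]$ with $f(p)=1$ and $f\equiv 0$ off $W$. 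Writing $g=f|_X:X\to[0,1]$, so that $g_\beta=f$, I set
\[
C=\{x\in X:g(x)>1/2\}=\mathrm{coz}\bigl((g-1/2)_+\bigr)\in\mathrm{Coz}(X),
\]
where $h_+=\max(h,0)$.

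It then remains to check that $C$ witnesses the required membership. For $p\in\mathrm{int}_{\beta X}\mathrm{cl}_{\beta X} C$, Lemma \ref{LKG} applied to $1-g$ with $r=1/2$ yields
\[
p\in(1-g)_\beta^{-1}\bigl([0,1/2)\bigr)\subseteq\mathrm{int}_{\beta X}\mathrm{cl}_{\beta X}(1-g)^{-1}\bigl([0,1/2)\bigr)=\mathrm{int}_{\beta X}\mathrm{cl}_{\beta X} C,
\]
since $f(p)=1>1/2$. For the null neighborhood of $\mathrm{cl}_X C$, I observe that $\mathrm{coz}(g)\subseteq W\cap X\subseteq\mathrm{cl}_{\beta X} U\cap X=\mathrm{cl}_X U$, so $\mathrm{coz}(g)$ is a null open subset of $X$; moreover $\mathrm{cl}_X C\subseteq\{x\in X:g(x)\ge 1/2\}\subseteq\mathrm{coz}(g)$, so $\mathrm{coz}(g)$ itself is the desired null neighborhood of $\mathrm{cl}_X C$. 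The main obstacle is precisely this last point: $\mathrm{cl}_X C$ must sit inside a null \emph{neighborhood}, not merely be null itself; the trick is to define $C$ via a strict threshold $g>1/2$ but the putative neighborhood via the looser condition $g>0$, so that the closure $\mathrm{cl}_X C$ lies strictly inside the open null set $\mathrm{coz}(g)$.
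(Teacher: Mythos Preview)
Your proof is correct, and it differs from the paper's in both directions in ways worth noting. For $\lambda_{\mathfrak I}X\subseteq T$, the paper invokes normality of $X$ to insert an open set $U$ with $\mathrm{cl}_XC\subseteq U\subseteq\mathrm{cl}_XU\subseteq\mathrm{int}_XV$, whereas you simply take $U=C$ and use heredity of $\mathfrak I$ to see $\mathrm{cl}_XC$ is itself null; this is cleaner and bypasses normality entirely. For $T\subseteq\lambda_{\mathfrak I}X$, the paper separates a point $t$ from the whole complement $\beta X\setminus T$, then uses compactness of a level set of the separating function to extract a finite subcover by sets $\mathrm{int}_{\beta X}\mathrm{cl}_{\beta X}U_i$; you instead work locally, separating $p$ inside a single $\mathrm{int}_{\beta X}\mathrm{cl}_{\beta X}U$, which avoids the compactness step and keeps the null witness as the single set $\mathrm{coz}(g)\subseteq\mathrm{cl}_XU$. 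A pleasant by-product of your argument is that normality of $X$ is never used at all---only the normality of $\beta X$---so your proof actually establishes the identity for arbitrary completely regular $X$, slightly strengthening the stated proposition.
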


\begin{proof}
Denote
\[T=\bigcup\{\mathrm{int}_{\beta X}\mathrm{cl}_{\beta X}U:\mathrm{cl}_XU\mbox{ is null}\}.\]
To show that $\lambda_{\mathfrak I} X\subseteq T$, let $C\in\mathrm{Coz}(X)$ such that $\mathrm{cl}_XC$ has a null neighborhood $V$ in $X$. Since $X$ is normal, there is an open neighborhood $U$ of $\mathrm{cl}_XC$ whose closure $\mathrm{cl}_XU$ is contained in $V$. Thus $\mathrm{cl}_XU$ is null and therefore $\mathrm{int}_{\beta X}\mathrm{cl}_{\beta X}U\subseteq T$ by the definition of $T$. But then $\mathrm{int}_{\beta X}\mathrm{cl}_{\beta X}C\subseteq T$. This show that $\lambda_{\mathfrak I} X\subseteq T$. To show the reverse inclusion, let $t\in T$. Note that $T$ is an open subspace of $\beta X$. Let $f:\beta X\rightarrow[0,1]$ be a continuous mapping such that
\[f(t)=0\quad\mbox{and}\quad f|_{\beta X\setminus T}=\mathbf{1}.\]
Observe that
$f^{-1}([0,1/2])$ is contained in $T$, and it is compact, as it is closed in $\beta X$. Therefore
\begin{equation}\label{FGFF}
f^{-1}\big([0,1/2]\big)\subseteq\mathrm{int}_{\beta X}\mathrm{cl}_{\beta X}U_1\cup\cdots\cup\mathrm{int}_{\beta X}\mathrm{cl}_{\beta X}U_n,
\end{equation}
where $\mathrm{cl}_XU_i$ is null for each $i=1,\ldots,n$. Thus, if we intersect the two sides of (\ref{FGFF}) with $X$ we have
\[X\cap f^{-1}\big([0,1/2]\big)\subseteq\mathrm{cl}_XU_1\cup\cdots\cup\mathrm{cl}_XU_n=\mathrm{cl}_XU,\]
where $U=U_1\cup\cdots\cup U_n$. Let $C=X\cap f^{-1}([0,1/3))$. Then $C\in\mathrm{Coz}(X)$ as $f^{-1}([0,1/3))\in\mathrm{Coz}(\beta X)$. (To see the latter, let
\[g=\max\Big\{\mathbf{0},\mathbf{\frac{1}{3}}-f\Big\}\]
and observe that $\mathrm{coz}(g)=f^{-1}([0,1/3))$.) Now $\mathrm{cl}_XC$ has a null neighborhood in $X$, namely $\mathrm{cl}_XU$. Therefore $\mathrm{int}_{\beta X}\mathrm{cl}_{\beta X}C\subseteq\lambda_{\mathfrak I} X$. But then $t\in\lambda_{\mathfrak I} X$, as $t\in f^{-1}([0,1/3))$ and  $f^{-1}([0,1/3))\subseteq\mathrm{int}_{\beta X}\mathrm{cl}_{\beta X}C$ by Lemma \ref{LKG}. This shows that $T\subseteq\lambda_{\mathfrak I} X$, and concludes the proof.
\end{proof}

\begin{definition}
Let $X$ be a space and let $\mathfrak{I}$ be an ideal in $X$. Then $X$ is called \textit{locally null} (with respect to $\mathfrak{I}$) if every point of $X$ has a null neighborhood in $X$.
\end{definition}

Observe that for a completely regular space $X$ and a continuous mapping $f:\beta X\rightarrow[0,1]$ we have $(f|_X)_\beta=f$, as both mappings are continuous and coincide on the dense subspace $X$ of $\beta X$.

Let $X$ be a space. An ideal $H$ of $C_B(X)$ is said to be \textit{non-vanishing} (or \textit{free} or \textit{of empty hull}) if for every $x\in X$ there is some $h\in H$ such that $h(x)\neq 0$.

\begin{lemma}\label{BBV}
Let $X$ be a completely regular space and let $\mathfrak{I}$ be an ideal in $X$. The following are equivalent:
\begin{itemize}
\item[\rm(1)] $X\subseteq\lambda_{\mathfrak I} X$.
\item[\rm(2)] $X$ is locally null.
\item[\rm(3)] $C^{\mathfrak I}_{00}(X)$ is non-vanishing.
\end{itemize}
\end{lemma}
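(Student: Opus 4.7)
\medskip

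My plan is to prove the equivalence in the cycle $(3)\Rightarrow(2)\Rightarrow(1)\Rightarrow(2)\Rightarrow(3)$; in fact, I will separate it into the two bilateral arguments $(2)\Leftrightarrow(3)$ and $(1)\Leftrightarrow(2)$, which seem to me the conceptually cleanest groupings.

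For $(3)\Rightarrow(2)$ the argument is immediate: given $x\in X$, a witness $h\in C^{\mathfrak I}_{00}(X)$ with $h(x)\neq 0$ places $x$ in $\mathrm{coz}(h)\subseteq\mathrm{supp}(h)\subseteq\mathrm{int}_X U$ for some null $U\in{\mathfrak I}$, so $U$ is a null neighborhood of $x$. For $(2)\Rightarrow(3)$ I fix $x\in X$, choose a null neighborhood $U$ of $x$, and use complete regularity to produce a continuous $g:X\to[0,1]$ with $g(x)=1$ and $g|_{X\setminus\mathrm{int}_XU}=\mathbf{0}$. Then $h=\max\{\mathbf{0},2g-\mathbf{1}\}$ satisfies $h(x)=1$ and $\mathrm{supp}(h)\subseteq g^{-1}([1/2,1])\subseteq\mathrm{int}_XU$, so $U$ is a null neighborhood of $\mathrm{supp}(h)$, witnessing $h\in C^{\mathfrak I}_{00}(X)$.

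For $(1)\Rightarrow(2)$, suppose $x\in X\subseteq\lambda_{\mathfrak I}X$. By the definition of $\lambda_{\mathfrak I}X$, there is $C\in\mathrm{Coz}(X)$ with $\mathrm{cl}_XC$ carrying a null neighborhood $U$ in $X$ and $x\in\mathrm{int}_{\beta X}\mathrm{cl}_{\beta X}C$. Intersecting with $X$ yields
\[
x\in X\cap\mathrm{int}_{\beta X}\mathrm{cl}_{\beta X}C\subseteq X\cap\mathrm{cl}_{\beta X}C=\mathrm{cl}_XC\subseteq\mathrm{int}_XU,
\]
so $U$ is a null neighborhood of $x$. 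For the converse $(2)\Rightarrow(1)$, fix $x\in X$ and a null neighborhood $U$ of $x$. By complete regularity choose continuous $g:X\to[0,1]$ with $g(x)=0$ and $g|_{X\setminus\mathrm{int}_XU}=\mathbf{1}$, and set $C=g^{-1}([0,1/2))$, a cozero-set of $X$ (since $C=\mathrm{coz}\max\{\mathbf{0},\mathbf{1}/\mathbf{2}-g\}$). Then $\mathrm{cl}_XC\subseteq g^{-1}([0,1/2])\subseteq\mathrm{int}_XU$, so $U$ is a null neighborhood of $\mathrm{cl}_XC$. Now Lemma \ref{LKG} applied to $g$ with $r=1/2$ gives
\[
x\in g_\beta^{-1}\big([0,1/2)\big)\subseteq\mathrm{int}_{\beta X}\mathrm{cl}_{\beta X}C\subseteq\lambda_{\mathfrak I}X,
\]
as required.

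The main subtlety I expect is the step $(2)\Rightarrow(1)$: simply taking a cozero-set neighborhood of $x$ inside a null neighborhood $U$ only places $x$ inside $\mathrm{cl}_{\beta X}C$, not automatically inside its $\beta X$-interior, which is what the definition of $\lambda_{\mathfrak I}X$ requires. Lemma \ref{LKG} is exactly what bridges this gap, provided that $C$ is produced as the inverse image of an open interval under a continuous $[0,1]$-valued function; this shapes the construction of $g$ above. Everything else reduces to standard manipulations of supports and complete regularity, and no appeal to normality is needed, consistent with the hypothesis that $X$ is only completely regular.
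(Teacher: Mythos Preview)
Your proof is correct and follows essentially the same route as the paper's: the implications $(1)\Leftrightarrow(2)$ are argued identically (including the use of Lemma~\ref{LKG} for $(2)\Rightarrow(1)$), and $(3)\Rightarrow(2)$ is the same. The only cosmetic difference is in $(2)\Rightarrow(3)$: the paper first shrinks to an open $W$ with $\mathrm{cl}_XW\subseteq\mathrm{int}_XU$ and then separates $x$ from $X\setminus W$, whereas you separate $x$ from $X\setminus\mathrm{int}_XU$ directly and truncate via $h=\max\{\mathbf{0},2g-\mathbf{1}\}$ to force $\mathrm{supp}(h)\subseteq g^{-1}([1/2,1])\subseteq\mathrm{int}_XU$; both achieve the same end.
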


\begin{proof}
(1) \emph{implies} (2). Let $x\in X$. Then $x\in\lambda_{\mathfrak I} X$ and therefore $x\in\mathrm{int}_{\beta X}\mathrm{cl}_{\beta X}D$ for some $D\in\mathrm{Coz}(X)$ such that $\mathrm{cl}_XD$ has a null neighborhood $V$ in $X$. But $V$ is then a neighborhood of $x$ in $X$ as well, as $x\in\mathrm{cl}_{\beta X}D\cap X=\mathrm{cl}_XD$.

(2) \emph{implies} (1). Let $x\in X$ and let $U$ be a null neighborhood of $x$ in $X$. Let $f:X\rightarrow[0,1]$ be a continuous mapping with $f(x)=0$ and $f|_{X\setminus \mathrm{int}_XU}=\mathbf{1}$. Let $C=f^{-1}([0,1/2))$. Then $C\in\mathrm{Coz}(X)$. Now $\mathrm{cl}_XC\subseteq f^{-1}([0,1/2])$ and $f^{-1}([0,1/2])\subseteq\mathrm{int}_XU$. Thus $U$ is a null neighborhood of $\mathrm{cl}_XC$ in $X$. Therefore $\mathrm{int}_{\beta X}\mathrm{cl}_{\beta X}C\subseteq\lambda_{\mathfrak I} X$. But then $x\in\lambda_{\mathfrak I} X$, as $x\in f_\beta^{-1}([0,1/2))$ and  $f_\beta^{-1}([0,1/2))\subseteq\mathrm{int}_{\beta X}\mathrm{cl}_{\beta X}C$ by Lemma \ref{LKG}.

(2) \emph{implies} (3). Note that $C^{\mathfrak I}_{00}(X)$ is an ideal in $C_B(X)$ by Lemma \ref{TTG}. Let $x\in X$ and let $V$ be a null neighborhood of $x$ in $X$. Let $W$ be an open neighborhood of $x$ in $X$ with $\mathrm{cl}_XW\subseteq\mathrm{int}_XV$. Let $g:X\rightarrow[0,1]$ be a continuous mapping with $g(x)=1$ and $g|_{X\setminus W}=\mathbf{0}$. Then $\mathrm{supp}(g)\subseteq\mathrm{cl}_XW$, as $\mathrm{coz}(g)\subseteq W$. Thus $V$ is a null neighborhood of $\mathrm{supp}(g)$ in $X$. Therefore $g\in C^{\mathfrak I}_{00}(X)$.

(3) \emph{implies} (2). Let $x\in X$. Then $x\in\mathrm{coz}(h)$ for some $h\in C^{\mathfrak I}_{00}(X)$. Since $\mathrm{supp}(h)$ has a null neighborhood in $X$ and $x\in\mathrm{supp}(h)$, it then follows that $x$ has a null neighborhood in $X$.
\end{proof}

\begin{lemma}\label{HDHD}
Let $X$ be a completely regular space and let $\mathfrak{I}$ be an ideal in $X$. For any subspace $A$ of $X$, if $\mathrm{cl}_{\beta X}A\subseteq\lambda_{\mathfrak I} X$ then $\mathrm{cl}_XA$ has a null neighborhood in $X$.
\end{lemma}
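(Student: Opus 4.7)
The plan is to exploit compactness of $\mathrm{cl}_{\beta X}A$ together with the explicit form of $\lambda_{\mathfrak{I}}X$ given in Definition \ref{HWA}, and then pull everything back from $\beta X$ to $X$ by intersection.

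First, I would observe that $\mathrm{cl}_{\beta X}A$ is a closed subspace of the compact space $\beta X$, hence compact. Unfolding the definition of $\lambda_{\mathfrak{I}}X$, the hypothesis $\mathrm{cl}_{\beta X}A\subseteq\lambda_{\mathfrak{I}}X$ says that $\mathrm{cl}_{\beta X}A$ is covered by the open family
\[
\bigl\{\mathrm{int}_{\beta X}\mathrm{cl}_{\beta X}C:C\in\mathrm{Coz}(X)\text{ and }\mathrm{cl}_XC\text{ has a null neighborhood in }X\bigr\}.
\]
By compactness, I may extract a finite subcover: there exist $C_1,\ldots,C_n\in\mathrm{Coz}(X)$, each with $\mathrm{cl}_XC_i$ admitting a null neighborhood $V_i$ in $X$, such that
\[
\mathrm{cl}_{\beta X}A\subseteq\mathrm{int}_{\beta X}\mathrm{cl}_{\beta X}C_1\cup\cdots\cup\mathrm{int}_{\beta X}\mathrm{cl}_{\beta X}C_n.
\]

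Next, I would intersect this inclusion with $X$. Since $\mathrm{cl}_XA=X\cap\mathrm{cl}_{\beta X}A$ and $X\cap\mathrm{int}_{\beta X}\mathrm{cl}_{\beta X}C_i\subseteq X\cap\mathrm{cl}_{\beta X}C_i=\mathrm{cl}_XC_i$ for each $i$, this gives
\[
\mathrm{cl}_XA\subseteq\mathrm{cl}_XC_1\cup\cdots\cup\mathrm{cl}_XC_n.
\]
Now set $V=V_1\cup\cdots\cup V_n$. Since the collection of null sets forms an ideal in $X$ and is closed under finite unions, $V$ is null. Moreover, using $\mathrm{cl}_XC_i\subseteq\mathrm{int}_XV_i$ for each $i$,
\[
\mathrm{cl}_XA\subseteq\mathrm{int}_XV_1\cup\cdots\cup\mathrm{int}_XV_n\subseteq\mathrm{int}_X(V_1\cup\cdots\cup V_n)=\mathrm{int}_XV,
\]
so $V$ is a null neighborhood of $\mathrm{cl}_XA$ in $X$.

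No part of this is really an obstacle; the only thing to watch is that one uses the correct trace identity $X\cap\mathrm{cl}_{\beta X}B=\mathrm{cl}_XB$ for subsets $B$ of $X$, and the ideal-theoretic fact that finite unions of null sets are null. Complete regularity is used only implicitly, insofar as it is the standing assumption needed to form $\beta X$ and to make sense of $\lambda_{\mathfrak{I}}X$; note in particular that normality of $X$ is \emph{not} needed here, so we are free to work with the original (cozero-based) definition of $\lambda_{\mathfrak{I}}X$ rather than the simplified form obtained in Proposition \ref{KJHF}.
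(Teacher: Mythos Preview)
Your proof is correct and follows essentially the same approach as the paper's own proof: extract a finite subcover of the compact set $\mathrm{cl}_{\beta X}A$ by sets of the form $\mathrm{int}_{\beta X}\mathrm{cl}_{\beta X}C_i$, intersect with $X$, and take the finite union of the associated null neighborhoods. The only differences are notational (the paper writes $U_i$ and $W$ where you write $V_i$ and $V$), and your closing remarks about which hypotheses are actually used are accurate.
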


\begin{proof}
By compactness, we have
\begin{equation}\label{TDJB}
\mathrm{cl}_{\beta X}A\subseteq\mathrm{int}_{\beta X}\mathrm{cl}_{\beta X}C_1\cup\cdots\cup\mathrm{int}_{\beta X}\mathrm{cl}_{\beta X}C_n,
\end{equation}
where $C_i\in\mathrm{Coz}(X)$ for each $i=1,\ldots,n$, and $\mathrm{cl}_XC_i$ has a null neighborhood $U_i$ in $X$. Intersecting both sides of (\ref{TDJB}) with $X$ we have
\[\mathrm{cl}_XA\subseteq\mathrm{cl}_XC_1\cup\cdots\cup\mathrm{cl}_XC_n.\]
Now
\[\mathrm{cl}_XC_1\cup\cdots\cup\mathrm{cl}_XC_n\subseteq\mathrm{int}_X U_1\cup\cdots\cup\mathrm{int}_X U_n\subseteq\mathrm{int}_XW\]
where $W=U_1\cup\cdots\cup U_n$. Therefore $\mathrm{cl}_XA$ has a null neighborhood in $X$, namely $W$.
\end{proof}

\begin{lemma}\label{HGBV}
Let $X$ be a completely regular space locally null with respect to an ideal $\mathfrak{I}$ of $X$. The following are equivalent:
\begin{itemize}
\item[\rm(1)] $\lambda_{\mathfrak I} X$ is compact.
\item[\rm(2)] ${\mathfrak I}$ is non-proper.
\item[\rm(3)] $C^{\mathfrak I}_{00}(X)$ is unital.
\end{itemize}
\end{lemma}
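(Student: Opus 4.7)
The plan is to prove the implications in the cycle $(1) \Rightarrow (2) \Rightarrow (3) \Rightarrow (1)$, relying essentially on Lemma \ref{HDHD} and Lemma \ref{BBV} already at hand. The key observation I would exploit repeatedly is that, since $X$ is locally null, Lemma \ref{BBV} gives both $X \subseteq \lambda_{\mathfrak I} X$ and that $C^{\mathfrak I}_{00}(X)$ is non-vanishing.

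For $(1) \Rightarrow (2)$, I would argue that if $\lambda_{\mathfrak I} X$ is compact then it is closed in $\beta X$; since it contains $X$ and $X$ is dense in $\beta X$, we must have $\lambda_{\mathfrak I} X = \beta X$. In particular $\mathrm{cl}_{\beta X} X \subseteq \lambda_{\mathfrak I} X$, so Lemma \ref{HDHD} applied to $A = X$ yields a null neighborhood of $\mathrm{cl}_X X = X$ in $X$. The only such neighborhood is $X$ itself, hence $X \in \mathfrak{I}$, i.e., $\mathfrak{I}$ is non-proper. For $(2) \Rightarrow (3)$, if $X$ is null then the constant function $\mathbf{1}$ belongs to $C^{\mathfrak I}_{00}(X)$ because $\mathrm{supp}(\mathbf{1}) = X$ has $X$ itself as a null neighborhood, and $\mathbf{1}$ is manifestly a multiplicative identity.

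For $(3) \Rightarrow (1)$, let $e$ be a unit of $C^{\mathfrak I}_{00}(X)$. For any $x \in X$, Lemma \ref{BBV} furnishes $h \in C^{\mathfrak I}_{00}(X)$ with $h(x) \neq 0$; from $eh = h$ we deduce $e(x) = 1$, so $e = \mathbf{1}$ on $X$ and hence $\mathbf{1} \in C^{\mathfrak I}_{00}(X)$. This again forces $X$ to be null, so that $X \in \mathrm{Coz}(X)$ (as $X = \mathrm{coz}(\mathbf{1})$) has itself as a null neighborhood, giving $\mathrm{int}_{\beta X} \mathrm{cl}_{\beta X} X = \beta X \subseteq \lambda_{\mathfrak I} X$. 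Therefore $\lambda_{\mathfrak I} X = \beta X$ is compact.

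No step looks genuinely hard; the only subtle point is $(3) \Rightarrow (1)$, where one must be careful that a unit of an ideal need not be the constant function $\mathbf{1}$ in general, and the non-vanishing hypothesis supplied by Lemma \ref{BBV} (which in turn uses the local nullity assumption) is exactly what is needed to identify $e$ with $\mathbf{1}$ pointwise on $X$.
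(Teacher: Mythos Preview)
Your proof is correct and follows essentially the same cycle $(1)\Rightarrow(2)\Rightarrow(3)\Rightarrow(1)$ as the paper, using Lemmas \ref{BBV} and \ref{HDHD} at the same places. The only cosmetic difference is in $(3)\Rightarrow(1)$: the paper reconstructs by hand a function $f_x\in C^{\mathfrak I}_{00}(X)$ with $f_x(x)=1$ (repeating the argument from the proof of Lemma \ref{BBV}), whereas you invoke the non-vanishing conclusion of Lemma \ref{BBV} directly to obtain such an $h$---a slightly cleaner route that avoids the redundancy.
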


\begin{proof}
(1) \emph{implies} (2). Note that $X\subseteq\lambda_{\mathfrak I} X$ by Lemma \ref{BBV}, as $X$ is locally null. Since $\lambda_{\mathfrak I} X$ is compact we have $\mathrm{cl}_{\beta X}X\subseteq\lambda_{\mathfrak I} X$. Therefore $X$ is null by Lemma \ref{HDHD}.

(2) \emph{implies} (3). Suppose that $X$ is null. Then the mapping $\mathbf{1}$ is the unit element of $C^{\mathfrak I}_{00}(X)$.

(3) \emph{implies} (1). Suppose that $C^{\mathfrak I}_{00}(X)$ has a unit element $u$. Let $x\in X$. Since $X$ is locally null, there is a null neighborhood $U_x$ of $x$ in $X$. Let $V_x$ be an open neighborhoods of $x$ in $X$ such that $\mathrm{cl}_XV_x\subseteq\mathrm{int}_X U_x$. Let $f_x:X\rightarrow[0,1]$ be a continuous mapping such that $f_x(x)=1$ and $f_x|_{X\setminus V_x}=\mathbf{0}$. Then $U_x$ is a neighborhood of $\mathrm{supp}(f_x)$ in $X$, as $\mathrm{supp}(f_x)\subseteq\mathrm{cl}_XV_x$. Therefore $f_x\in C^{\mathfrak I}_{00}(X)$. We have
\[u(x)=u(x)f_x(x)=f_x(x)=1\]
Thus $u=\mathbf{1}$ and therefore $X=\mathrm{supp}(u)$ is null. Since $X\in\mathrm{Coz}(X)$ trivially, it follows that $\lambda_{\mathfrak I} X=\beta X$ is compact.
\end{proof}

\begin{lemma}\label{KPJG}
Let $X$ be a normal space and let $\mathfrak{I}$ be an ideal in $X$. Then
\[\lambda_{\mathfrak I} X=\bigcup\{\mathrm{cl}_{\beta X}A:\mathrm{cl}_XA\mbox{ has a null neighborhood in }X\}.\]
\end{lemma}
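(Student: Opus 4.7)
The plan is to prove the two inclusions separately, letting $T$ denote the right-hand side
\[T=\bigcup\{\mathrm{cl}_{\beta X}A:\mathrm{cl}_XA\mbox{ has a null neighborhood in }X\}.\]

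The inclusion $\lambda_{\mathfrak I}X\subseteq T$ is immediate from the definitions: any $C\in\mathrm{Coz}(X)$ appearing in Definition \ref{HWA} is itself a subspace $A$ of $X$ with $\mathrm{cl}_XA=\mathrm{cl}_XC$ having a null neighborhood in $X$, and $\mathrm{int}_{\beta X}\mathrm{cl}_{\beta X}C\subseteq\mathrm{cl}_{\beta X}C\subseteq T$.

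For the reverse inclusion $T\subseteq\lambda_{\mathfrak I}X$, fix $t\in\mathrm{cl}_{\beta X}A$ where $A\subseteq X$ and $\mathrm{cl}_XA\subseteq\mathrm{int}_XV$ for some null $V\in\mathfrak{I}$. Here is where normality enters: by Urysohn's lemma, pick a continuous mapping $f:X\rightarrow[0,1]$ with $f|_{\mathrm{cl}_XA}=\mathbf{0}$ and $f|_{X\setminus\mathrm{int}_XV}=\mathbf{1}$, and set $C=f^{-1}([0,1/2))$. Then $C\in\mathrm{Coz}(X)$ (by the same type of computation used in the proof of Proposition \ref{KJHF}, namely $C=\mathrm{coz}(\max\{\mathbf{0},\mathbf{1/2}-f\})$), and since $\mathrm{cl}_XC\subseteq f^{-1}([0,1/2])\subseteq\mathrm{int}_XV$, the set $V$ is also a null neighborhood of $\mathrm{cl}_XC$. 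Hence $\mathrm{int}_{\beta X}\mathrm{cl}_{\beta X}C\subseteq\lambda_{\mathfrak I}X$ by the definition.

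It remains to show that $t$ lies in $\mathrm{int}_{\beta X}\mathrm{cl}_{\beta X}C$. Because $A\subseteq f^{-1}(0)$, the extension $f_\beta:\beta X\rightarrow[0,1]$ satisfies $\mathrm{cl}_{\beta X}A\subseteq f_\beta^{-1}(0)$, so $f_\beta(t)=0$ and therefore $t\in f_\beta^{-1}([0,1/2))$. Applying Lemma \ref{LKG} with $r=1/2$ yields
\[t\in f_\beta^{-1}\big([0,1/2)\big)\subseteq\mathrm{int}_{\beta X}\mathrm{cl}_{\beta X}f^{-1}\big([0,1/2)\big)=\mathrm{int}_{\beta X}\mathrm{cl}_{\beta X}C\subseteq\lambda_{\mathfrak I}X,\]
which completes the argument.

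The main point of the proof, and the only nonroutine step, is the upgrade from membership in a closure $\mathrm{cl}_{\beta X}A$ to membership in an \emph{interior} of a closure $\mathrm{int}_{\beta X}\mathrm{cl}_{\beta X}C$; this is precisely what Urysohn-type separation (available thanks to normality) combined with Lemma \ref{LKG} delivers. Everything else is bookkeeping with the ideal axioms and with the fact that a subset of a null set is null.
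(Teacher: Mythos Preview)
Your proof is correct and follows essentially the same approach as the paper's: both use Urysohn's lemma to separate $\mathrm{cl}_XA$ from $X\setminus\mathrm{int}_XV$, take $C=f^{-1}([0,1/2))$, and invoke Lemma~\ref{LKG} to place $\mathrm{cl}_{\beta X}A$ inside $\mathrm{int}_{\beta X}\mathrm{cl}_{\beta X}C\subseteq\lambda_{\mathfrak I}X$. The only cosmetic difference is that the paper shows $\mathrm{cl}_{\beta X}A\subseteq\mathrm{z}(g_\beta)\subseteq g_\beta^{-1}([0,1/2))$ in one line rather than fixing a point $t$ first.
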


\begin{proof}
Let
\[T=\bigcup\{\mathrm{cl}_{\beta X}A:\mathrm{cl}_XA\mbox{ has a null neighborhood in }X\}.\]
By the definition of $\lambda_{\mathfrak I} X$ it is clear that $\lambda_{\mathfrak I} X\subseteq T$. We check that $T\subseteq\lambda_{\mathfrak I} X$.

Let $A$ be a subset of $X$ whose closure $\mathrm{cl}_XA$ has a null neighborhood in $X$, say $U$. Since $X$ is normal, by the Urysohn lemma, there exists a continuous mapping $g:X\rightarrow[0,1]$ such that
\[g|_{\mathrm{cl}_XA}=\mathbf{0}\quad\mbox{and}\quad g|_{X\setminus\mathrm{int}_X U}=\mathbf{1}.\]
Let
\[C=g^{-1}\big([0,1/2)\big)\in\mathrm{Coz}(X).\]
Then $\mathrm{cl}_XC$ has a null neighborhood in $X$, namely $U$, as
\[\mathrm{cl}_XC\subseteq g^{-1}\big([0,1/2]\big)\subseteq\mathrm{int}_X U.\]
Therefore $\mathrm{int}_{\beta X}\mathrm{cl}_{\beta X}C\subseteq\lambda_{\mathfrak I} X$. But $g_\beta^{-1}([0,1/2))\subseteq\mathrm{int}_{\beta X}\mathrm{cl}_{\beta X}C$ by Lemma \ref{LKG}, and thus
\[\mathrm{cl}_{\beta X}A\subseteq\mathrm{z}(g_\beta)\subseteq g_\beta^{-1}\big([0,1/2)\big)\subseteq\lambda_{\mathfrak I} X.\]
\end{proof}

\begin{definition}\label{WWA}
Let $X$ be a completely regular space locally null with respect to an ideal $\mathfrak{I}$ of $X$. For any $f\in C_B(X)$ denote
\[f_\lambda=f_\beta|_{\lambda_{\mathfrak I} X}.\]
\end{definition}

Observe that by Lemma \ref{BBV} the mapping $f_\lambda$ extends $f$.

\begin{lemma}\label{TES}
Let $X$ be a normal space locally null with respect to an ideal $\mathfrak{I}$ of $X$. For any $f\in C_B(X)$ the following are equivalent:
\begin{itemize}
\item[\rm(1)] $f\in C^{\mathfrak I}_{00}(X)$.
\item[\rm(2)] $f_\lambda\in C_{00}(\lambda_{\mathfrak I} X)$.
\end{itemize}
\end{lemma}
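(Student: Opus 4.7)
The plan is to reduce both directions to the identity
\[
\mathrm{supp}(f_\lambda)=\mathrm{cl}_{\beta X}\mathrm{coz}(f)\cap\lambda_{\mathfrak I} X,
\]
where the support on the left is taken in $\lambda_{\mathfrak I} X$. To establish this identity I would first note that, since $f_\lambda$ extends $f$ (Lemma \ref{BBV}, applicable as $X$ is locally null), one has $\mathrm{coz}(f_\lambda)\cap X=\mathrm{coz}(f)$, and more generally $\mathrm{coz}(f_\lambda)=\mathrm{coz}(f_\beta)\cap\lambda_{\mathfrak I} X$. Using density of $X$ in $\beta X$ (applied to the open set $\mathrm{coz}(f_\beta)$), it follows that $\mathrm{cl}_{\beta X}\mathrm{coz}(f)=\mathrm{cl}_{\beta X}\mathrm{coz}(f_\beta)=\mathrm{cl}_{\beta X}\mathrm{coz}(f_\lambda)$, and then the claimed identity is just the standard subspace formula $\mathrm{cl}_Y A=\mathrm{cl}_Z A\cap Y$ applied to $Y=\lambda_{\mathfrak I} X\subseteq Z=\beta X$.

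For (1) $\Rightarrow$ (2), assume $\mathrm{supp}(f)$ admits a null neighborhood in $X$. Take $A=\mathrm{coz}(f)$ in Lemma \ref{KPJG}, whose hypothesis $\mathrm{cl}_XA=\mathrm{supp}(f)$ has a null neighborhood holds by assumption. Then $\mathrm{cl}_{\beta X}\mathrm{coz}(f)\subseteq\lambda_{\mathfrak I} X$, and so the identity above gives $\mathrm{supp}(f_\lambda)=\mathrm{cl}_{\beta X}\mathrm{coz}(f)$, which is closed in the compact space $\beta X$ and hence compact. Thus $f_\lambda\in C_{00}(\lambda_{\mathfrak I} X)$.

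For (2) $\Rightarrow$ (1), assume $\mathrm{supp}(f_\lambda)$ is compact in $\lambda_{\mathfrak I} X$. By the very definition of $\lambda_{\mathfrak I} X$, this compact set is covered by sets of the form $\mathrm{int}_{\beta X}\mathrm{cl}_{\beta X}C$ with $C\in\mathrm{Coz}(X)$ and $\mathrm{cl}_XC$ admitting a null neighborhood in $X$. Extracting a finite subcover, we obtain $C_1,\dots,C_n\in\mathrm{Coz}(X)$ with corresponding null neighborhoods $U_1,\dots,U_n$ of $\mathrm{cl}_XC_i$ in $X$, and
\[
\mathrm{supp}(f_\lambda)\subseteq\mathrm{int}_{\beta X}\mathrm{cl}_{\beta X}C_1\cup\cdots\cup\mathrm{int}_{\beta X}\mathrm{cl}_{\beta X}C_n.
\]
Intersecting with $X$ (and using $X\subseteq\lambda_{\mathfrak I} X$ so that $X\cap\mathrm{supp}(f_\lambda)=\mathrm{supp}(f)$) yields $\mathrm{supp}(f)\subseteq\mathrm{cl}_XC_1\cup\cdots\cup\mathrm{cl}_XC_n\subseteq\mathrm{int}_X(U_1\cup\cdots\cup U_n)$. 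Since finite unions of null sets are null, $U_1\cup\cdots\cup U_n$ is a null neighborhood of $\mathrm{supp}(f)$, giving $f\in C^{\mathfrak I}_{00}(X)$.

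The only place where care is required is the support identity in the first paragraph; everything else is a routine compactness/covering argument following the template of Lemma \ref{HDHD}. Normality of $X$ enters (indirectly) through the application of Lemma \ref{KPJG} in the (1) $\Rightarrow$ (2) direction, which is what allows us to use an arbitrary set $A$ rather than just cozero sets when pulling the null-neighborhood condition into the closure in $\beta X$.
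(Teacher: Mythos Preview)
Your proof is correct and follows essentially the same route as the paper: both directions hinge on the identity $\mathrm{supp}(f_\lambda)=\mathrm{cl}_{\beta X}\mathrm{coz}(f)\cap\lambda_{\mathfrak I} X$, with (1)$\Rightarrow$(2) using Lemma~\ref{KPJG} to place $\mathrm{cl}_{\beta X}\mathrm{coz}(f)$ inside $\lambda_{\mathfrak I} X$, and (2)$\Rightarrow$(1) using compactness of $\mathrm{supp}(f_\lambda)$ together with the defining cover of $\lambda_{\mathfrak I} X$. The only cosmetic difference is that for (2)$\Rightarrow$(1) the paper first observes $\mathrm{cl}_{\beta X}\mathrm{coz}(f)\subseteq\mathrm{supp}(f_\lambda)\subseteq\lambda_{\mathfrak I} X$ and then cites Lemma~\ref{HDHD}, whereas you inline that lemma's finite-cover argument directly.
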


\begin{proof}
Note that $X\subseteq\lambda_{\mathfrak I} X$ by Lemma \ref{BBV}, as $X$ is locally null.

(1) \emph{implies} (2). Since $\mathrm{cl}_X\mathrm{coz}(f)=\mathrm{supp}(f)$ has a null neighborhood in $X$ we have $\mathrm{cl}_{\beta X}\mathrm{coz}(f)\subseteq\lambda_{\mathfrak I} X$ by Lemma \ref{KPJG}. Thus
\begin{eqnarray*}
\mathrm{supp}(f_\lambda)=\mathrm{cl}_{\lambda_{\mathfrak I} X}\mathrm{coz}(f_\lambda)&=&\mathrm{cl}_{\lambda_{\mathfrak I} X}\big(X\cap \mathrm{coz}(f_\lambda)\big)\\&=&\mathrm{cl}_{\lambda_{\mathfrak I} X}\mathrm{coz}(f)=\lambda_{\mathfrak I}X\cap\mathrm{cl}_{\beta X}\mathrm{coz}(f)=\mathrm{cl}_{\beta X}\mathrm{coz}(f)
\end{eqnarray*}
is closed in $\beta X$ and is therefore compact.

(2) \emph{implies} (1). Note that $\mathrm{cl}_{\beta X}\mathrm{coz}(f)\subseteq\mathrm{supp}(f_\lambda)$, as $\mathrm{coz}(f)\subseteq\mathrm{coz}(f_\lambda)\subseteq\mathrm{supp}(f_\lambda)$ and the latter is compact. In particular $\mathrm{cl}_{\beta X}\mathrm{coz}(f)\subseteq\lambda_{\mathfrak I}X$. By Lemma \ref{HDHD} it follows that $\mathrm{supp}(f)$ has a null neighborhood in $X$.
\end{proof}

A version of the Banach--Stone theorem (Theorem 7.1 of \cite{Be}) states that for any locally compact spaces $X$ and $Y$, the rings $C_{00}(X)$ and $C_{00}(Y)$ are isomorphic if and only if the spaces $X$ and $Y$ are homeomorphic. (See \cite{A}.) This will be used in the proof of the following main result of this section.

\begin{theorem}\label{UUS}
Let $X$ be a normal space locally null with respect to an ideal $\mathfrak{I}$ of $X$. Then $C^{\mathfrak I}_{00}(X)$ is an ideal of $C_B(X)$ isometrically isomorphic to $C_{00}(Y)$ for some unique (up to homeomorphism) locally compact space $Y$, namely, for $Y=\lambda_{\mathfrak I} X$. Furthermore,
\begin{itemize}
\item[\rm(1)] $X$ is dense in $Y$.
\item[\rm(2)] $C^{\mathfrak I}_{00}(X)$ is non-vanishing.
\item[\rm(3)] $C^{\mathfrak I}_{00}(X)$ is unital if and only if ${\mathfrak I}$ is non-proper if and only if $Y$ is compact.
\end{itemize}
\end{theorem}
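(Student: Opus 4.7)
The plan is to construct the isometric isomorphism $\Phi\colon C^{\mathfrak I}_{00}(X)\to C_{00}(Y)$, where $Y=\lambda_{\mathfrak I}X$, by the formula $\Phi(f)=f_\lambda$ (see Definition \ref{WWA}). First I would note that $Y$ is locally compact, since it is visibly open in $\beta X$ as a union of interiors, and consequently $Y$ is a locally compact Hausdorff space. Part (1) is then immediate: by Lemma \ref{BBV}, the local nullity hypothesis gives $X\subseteq Y\subseteq\beta X$, and since $X$ is dense in $\beta X$ it is dense in $Y$. Parts (2) and (3) are nothing more than restatements of Lemma \ref{BBV} and Lemma \ref{HGBV}, respectively, so no new argument is needed.

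Next I would verify the four algebraic properties of $\Phi$. Well-definedness, i.e.\ that $f_\lambda\in C_{00}(Y)$ whenever $f\in C^{\mathfrak I}_{00}(X)$, is exactly Lemma \ref{TES}. That $\Phi$ is a ring homomorphism is a consequence of the uniqueness of the Stone--\v{C}ech extension: $(f+g)_\beta=f_\beta+g_\beta$ and $(fg)_\beta=f_\beta g_\beta$ on $\beta X$, and restriction to $Y$ preserves these identities; the scalar case is identical. For the isometry property, note $\|f\|_\infty=\sup_X|f|=\sup_{\beta X}|f_\beta|$ by density of $X$ in $\beta X$, while $X\subseteq Y\subseteq\beta X$ forces $\sup_Y|f_\lambda|=\sup_{\beta X}|f_\beta|$. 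Injectivity then follows automatically from the isometric property.

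The main step — and the only place where a genuine construction is required — is surjectivity. Given $g\in C_{00}(Y)$, let $K=\mathrm{supp}_Y(g)$, which is compact in $Y$; since $Y$ is open in $\beta X$ and $K$ is compact, $K$ is compact (hence closed) in $\beta X$ as well. Define $\tilde{g}\colon\beta X\to\mathbb{C}$ by $\tilde{g}|_Y=g$ and $\tilde{g}|_{\beta X\setminus K}=\mathbf{0}$; this is unambiguous because $g$ vanishes on $Y\setminus K$. Continuity is verified point-by-point: each $p\in K$ has the open neighborhood $Y$ in $\beta X$ on which $\tilde{g}$ agrees with the continuous function $g$, and each $p\in\beta X\setminus K$ has the open neighborhood $\beta X\setminus K$ on which $\tilde{g}$ vanishes. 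Setting $f=\tilde{g}|_X\in C_B(X)$, we get $f_\beta=\tilde{g}$ by uniqueness of the extension (they agree on the dense subspace $X$), hence $\Phi(f)=f_\lambda=\tilde{g}|_Y=g$; that $f$ lies in $C^{\mathfrak I}_{00}(X)$ then follows from Lemma \ref{TES} applied to the conclusion $f_\lambda=g\in C_{00}(Y)$.

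Finally, uniqueness up to homeomorphism is delivered by the version of the Banach--Stone theorem cited immediately before the theorem: if $Z$ is another locally compact space with $C^{\mathfrak I}_{00}(X)\cong C_{00}(Z)$ as rings, composing with $\Phi^{-1}$ gives $C_{00}(Y)\cong C_{00}(Z)$, which forces $Y\cong Z$ as topological spaces. The only real obstacle in the whole argument is the surjectivity step, and even there the difficulty is purely bookkeeping: one must confirm that the ``extend by zero outside the compact support'' trick produces a globally continuous function on $\beta X$, which is the content of the two-case neighborhood argument above.
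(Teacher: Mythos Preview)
Your proposal is correct and follows essentially the same approach as the paper: define $\Phi(f)=f_\lambda$, invoke Lemma~\ref{TES} for well-definedness and membership in $C^{\mathfrak I}_{00}(X)$, use density for the homomorphism and isometry properties, and appeal to Lemmas~\ref{BBV} and~\ref{HGBV} for parts (1)--(3) together with Banach--Stone for uniqueness. The only difference worth noting is in the surjectivity step: the paper simply writes ``$(g|_X)_\lambda=g$'' and moves on---this is justified because $(g|_X)_\beta$ exists automatically (as $g|_X\in C_B(X)$) and both $(g|_X)_\beta|_Y$ and $g$ are continuous on $Y$ and agree on the dense subset $X$---whereas you explicitly build the global extension $\tilde g$ on $\beta X$ by extending by zero off the compact support; your construction is correct but more elaborate than necessary, since the Stone--\v{C}ech extension already does the work.
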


\begin{proof}
Observe that $C^{\mathfrak I}_{00}(X)$ is an ideal of $C_B(X)$ by Lemma \ref{TTG}. Define a mapping
\[\psi:C^{\mathfrak I}_{00}(X)\longrightarrow C_{00}(\lambda_{\mathfrak I} X)\]
by
\[\psi(f)=f_\lambda\]
for any $f\in C^{\mathfrak I}_{00}(X)$. By Lemma \ref{TES} the mapping $\psi$ is well defined. It is clear that $\psi$ is an algebra homomorphism and that it is injective. (Again, note that $X\subseteq\lambda_{\mathfrak I} X$, and use the fact that any two scalar-valued continuous mapping on $\lambda_{\mathfrak I} X$ coincide, provided that they agree on the dense subspace $X$ of $\lambda_{\mathfrak I} X$.) To show that $\psi$ is surjective, let $g\in C_{00}(\lambda_{\mathfrak I} X)$. Then $(g|_X)_\lambda=g$ and thus  $g|_X\in C^{\mathfrak I}_{00}(X)$ by Lemma \ref{TES}. Note that $\psi(g|_X)=g$. To show that $\psi$ is an isometry, let $h\in C^{\mathfrak I}_{00}(X)$. Then
\[|h_\lambda|(\lambda_{\mathfrak I} X)=|h_\lambda|(\mathrm{cl}_{\lambda_{\mathfrak I} X}X)\subseteq\overline{|h_\lambda|(X)}=\overline{|h|(X)}\subseteq\big[0,\|h\|\big]\]
where the bar denotes the closure in $\mathbb{R}$. This yields $\|h_\lambda\|\leq\|h\|$. That $\|h\|\leq\|h_\lambda\|$ is clear, as $h_\lambda$ extends $h$.

Note that $\lambda_{\mathfrak I} X$ is locally compact, as it is open in the compact space $\beta X$.

The uniqueness of $\lambda_{\mathfrak I} X$ follows from the fact that for any locally compact space $T$ the ring $C_{00}(T)$ determines the topology of $T$.

(1). By Lemma \ref{BBV} we have $X\subseteq\lambda_{\mathfrak I} X$. That $X$ is dense in $\lambda_{\mathfrak I} X$ is then obvious.

(2). This follows from Lemma \ref{BBV}.

(3). This follows from Lemma \ref{HGBV}.
\end{proof}

\section{The normed closed ideal $C^{\mathfrak I}_0(X)$ of $C_B(X)$}\label{JGGG}

In this section for a space $X$ and an ideal $\mathfrak{I}$ of $X$ we introduce the closed ideal $C^{\mathfrak I}_0(X)$ of $C_B(X)$. The definition of  $C^{\mathfrak I}_0(X)$ is modeled on (and generalizes) the definition of the closed ideal $C_0(X)$ of $C_B(X)$ which consists of all elements in $C_B(X)$ which vanishes at infinity.

\begin{definition}\label{HHLG}
Let $X$ be a space and let $\mathfrak{I}$ be an ideal in $X$. Define
\[C^{\mathfrak I}_0(X)=\big\{f\in C_B(X):|f|^{-1}\big([1/n,\infty)\big)\mbox{ is null for each }n\big\}.\]
\end{definition}

The following is to justify our use of the notation $C^{\mathfrak I}_0(X)$.

\begin{example}\label{GHUJ}
Let $X$ be a locally compact space. Consider the ideal
\[\mathfrak{I}=\{A\subseteq X:\mathrm{cl}_X A \mbox{ is compact}\}\]
of $X$. Then, an argument analogous to the one given in Example \ref{HUJ} shows that
\[C^{\mathfrak I}_0(X)=C_0(X).\]
\end{example}

Under certain conditions the representation of $C^{\mathfrak I}_0(X)$ given in Definition \ref{HHLG} simplifies. This is the context of the next result.

\begin{proposition}\label{JFJG}
Let $X$ be a space and let $\mathfrak{I}$ be a $\sigma$-ideal in $X$. Then
\[C^{\mathfrak I}_0(X)=\big\{f\in C_B(X):\mathrm{coz}(f)\mbox{ is null}\big\}.\]
\end{proposition}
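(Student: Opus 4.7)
The proof plan is straightforward: the identity reduces to a routine set-theoretic observation that uses the $\sigma$-ideal hypothesis in exactly one direction, and the defining closure-under-subsets property of an ideal in the other.

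For the inclusion $C^{\mathfrak I}_0(X)\subseteq\{f\in C_B(X):\mathrm{coz}(f)\text{ is null}\}$, I would start from the observation that
\[
\mathrm{coz}(f)=\bigcup_{n=1}^{\infty}|f|^{-1}\bigl([1/n,\infty)\bigr),
\]
since $f(x)\neq 0$ if and only if $|f(x)|\geq 1/n$ for some positive integer $n$. If $f\in C^{\mathfrak I}_0(X)$, each set $|f|^{-1}([1/n,\infty))$ is null by definition, so $\mathrm{coz}(f)$ is a countable union of null sets and hence null, invoking precisely the hypothesis that $\mathfrak{I}$ is a $\sigma$-ideal.

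For the reverse inclusion, I would use the inclusion
\[
|f|^{-1}\bigl([1/n,\infty)\bigr)\subseteq\mathrm{coz}(f)
\]
valid for every $n$, which holds because $|f(x)|\geq 1/n$ forces $f(x)\neq 0$. Assuming $\mathrm{coz}(f)$ is null, each $|f|^{-1}([1/n,\infty))$ is a subset of a null set, hence null by the first clause in the definition of an ideal. Therefore $f\in C^{\mathfrak I}_0(X)$.

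There is no real obstacle here; the only point worth flagging is to make explicit in the write-up exactly where the $\sigma$-ideal hypothesis is used (the countable union in the first inclusion) versus where only the plain ideal axioms suffice (the hereditary property in the second inclusion), so that the reader sees that the $\sigma$-assumption is genuinely needed for the forward containment but not for the reverse.
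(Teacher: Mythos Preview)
Your proof is correct and essentially identical to the paper's: both directions use exactly the same identity $\mathrm{coz}(f)=\bigcup_{n=1}^\infty |f|^{-1}([1/n,\infty))$, invoking the $\sigma$-ideal property for the forward inclusion and hereditarity for the reverse.
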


\begin{proof}
Let $f\in C^{\mathfrak I}_0(X)$. Let $n$ be a positive integer. Then $|f|^{-1}([1/n,\infty))$ is null and thus, since $\mathfrak{I}$ is a $\sigma$-ideal in $X$, so is the countable union
\[\mathrm{coz}(f)=\bigcup_{n=1}^\infty|f|^{-1}\big([1/n,\infty)\big).\]

The converse is trivial, as if $\mathrm{coz}(f)$ is null, where $f\in C_B(X)$, then so is its subset $|f|^{-1}([1/n,\infty))$ for each positive integer $n$.
\end{proof}

In order to prove our representation theorem in this section we will prove a number of lemmas first.

\begin{lemma}\label{DGH}
Let $X$ be a space and let $\mathfrak{I}$ be an ideal in $X$. Then
$C^{\mathfrak I}_0(X)$ is a closed ideal in $C_B(X)$. Furthermore,
\[C^{\mathfrak I}_{00}(X)\subseteq C^{\mathfrak I}_0(X).\]
\end{lemma}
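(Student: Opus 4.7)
The plan is to verify the three assertions separately, each reducing to a simple inclusion between sublevel sets of $|f|$ together with the fact that ${\mathfrak I}$ is closed under taking subsets and finite unions.

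First, to show $C^{\mathfrak I}_0(X)$ is an ideal of $C_B(X)$: I would handle addition via the triangle inequality $|f+g|\leq|f|+|g|$, which gives
\[|f+g|^{-1}\big([1/n,\infty)\big)\subseteq|f|^{-1}\big([1/(2n),\infty)\big)\cup|g|^{-1}\big([1/(2n),\infty)\big),\]
a union of two null sets, hence null. For absorption by $C_B(X)$, if $g\in C_B(X)$ is non-zero, the estimate $|fg|\leq\|g\|\cdot|f|$ yields
\[|fg|^{-1}\big([1/n,\infty)\big)\subseteq|f|^{-1}\big([1/(n\|g\|),\infty)\big),\]
which is a subset of $|f|^{-1}([1/m,\infty))$ for any integer $m\geq n\|g\|$, hence null. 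Scalar multiplication is the special case $g=\alpha\mathbf{1}$, and non-emptiness is immediate since $\mathbf{0}\in C^{\mathfrak I}_0(X)$.

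Second, for closedness: let $(f_k)$ be a sequence in $C^{\mathfrak I}_0(X)$ converging uniformly to some $f\in C_B(X)$. Given $n$, choose $k$ so large that $\|f-f_k\|<1/(2n)$. Then whenever $|f(x)|\geq 1/n$ we have $|f_k(x)|\geq|f(x)|-|f(x)-f_k(x)|\geq 1/(2n)$, so
\[|f|^{-1}\big([1/n,\infty)\big)\subseteq|f_k|^{-1}\big([1/(2n),\infty)\big),\]
which is null since $f_k\in C^{\mathfrak I}_0(X)$. Hence $f\in C^{\mathfrak I}_0(X)$.

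Third, for the inclusion $C^{\mathfrak I}_{00}(X)\subseteq C^{\mathfrak I}_0(X)$: if $f\in C^{\mathfrak I}_{00}(X)$, then $\mathrm{supp}(f)$ has a null neighborhood $U$ in $X$, and for each $n$
\[|f|^{-1}\big([1/n,\infty)\big)\subseteq\mathrm{coz}(f)\subseteq\mathrm{supp}(f)\subseteq\mathrm{int}_X U\subseteq U,\]
so $|f|^{-1}([1/n,\infty))$ is null as a subset of a null set. There is no real obstacle here; the whole lemma is a routine verification, and the only place one has to exercise a little care is in the closedness argument, where the factor of $1/2$ in the $\epsilon/2$-style estimate is essential to land inside a sublevel set whose nullity is guaranteed by the hypothesis on $f_k$.
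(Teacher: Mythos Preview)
Your proof is correct and follows essentially the same approach as the paper's: the same triangle-inequality splitting for addition, the same bound $|fg|\leq\|g\|\,|f|$ for absorption, the same $\epsilon/2$ estimate for closedness, and the same inclusion $|f|^{-1}([1/n,\infty))\subseteq\mathrm{supp}(f)\subseteq U$ for the containment. The only cosmetic differences are the order in which the three parts are treated and that the paper establishes non-emptiness via the inclusion $C^{\mathfrak I}_{00}(X)\subseteq C^{\mathfrak I}_0(X)$ rather than by directly observing $\mathbf{0}\in C^{\mathfrak I}_0(X)$.
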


\begin{proof}
First we verify the second statement. Observe that if $f\in C^{\mathfrak I}_{00}(X)$ then $|f|^{-1}([1/n,\infty))$ is null for each positive integer $n$, as it is contained in $\mathrm{supp}(f)$ and the latter is so. Thus $f\in C^{\mathfrak I}_0(X)$.

Next, we show that $C^{\mathfrak I}_0(X)$ is an ideal in the algebra $C_B(X)$. Note that $C^{\mathfrak I}_0(X)$ is non-empty, as it contains $C^{\mathfrak I}_{00}(X)$, and the latter is so (since it is an ideal in $C_B(X)$). To show that $C^{\mathfrak I}_0(X)$ is closed under addition, let $f,g\in C^{\mathfrak I}_0(X)$. Let $n$ be a positive integer. The two sets $|f|^{-1}([1/(2n),\infty))$ and $|g|^{-1}([1/(2n),\infty))$ are null. But
\[|f+g|^{-1}\big([1/n,\infty)\big)\subseteq|f|^{-1}\Big(\Big[\frac{1}{2n},\infty\Big)\Big)\cup|g|^{-1}\Big(\Big[\frac{1}{2n},\infty\Big)\Big).\]
Thus $|f+g|^{-1}([1/n,\infty))$ is null. Therefore $f+g\in C^{\mathfrak I}_0(X)$. Next, let $f\in C^{\mathfrak I}_0(X)$ and $g\in C_B(X)$. Let $m$ be a positive integer such that $|g(x)|\leq m$ for each $x\in X$. Then
\[|fg|^{-1}\big([1/n,\infty)\big)\subseteq|f|^{-1}\Big(\Big[\frac{1}{mn},\infty\Big)\Big),\]
and $|f|^{-1}([1/(mn),\infty))$ is null. Thus $|fg|^{-1}([1/n,\infty))$ is null. Therefore $fg\in C^{\mathfrak I}_0(X)$. That $C^{\mathfrak I}_0(X)$ is closed under scalar multiplication follows analogously.

Finally, we show that $C^{\mathfrak I}_0(X)$ is closed in $C_B(X)$. Let $f$ be in the closure in $C_B(X)$ of $C^{\mathfrak I}_0(X)$. Let $n$ be a positive integer. There exists some $g\in C^{\mathfrak I}_0(X)$ with $\|f-g\|<1/(2n)$. Let $t\in |f|^{-1}([1/n,\infty))$. Then
\[\frac{1}{n}\leq\big|f(t)\big|\leq\big|f(t)-g(t)\big|+\big|g(t)\big|\leq\|f-g\|+\big|g(t)\big|\leq\frac{1}{2n}+\big|g(t)\big|\]
and thus $|g(t)|\geq 1/(2n)$. That is $t\in |g|^{-1}([1/(2n),\infty))$. Therefore
\[|f|^{-1}\big([1/n,\infty)\big)\subseteq|g|^{-1}\Big(\Big[\frac{1}{2n},\infty\Big)\Big).\]
Since the latter is null, so is $|f|^{-1}([1/n,\infty))$. Thus $f\in C^{\mathfrak I}_0(X)$.
\end{proof}

\begin{lemma}\label{KGV}
Let $X$ be a completely regular space and let $\mathfrak{I}$ be an ideal in $X$. The following are equivalent:
\begin{itemize}
\item[\rm(1)] $X\subseteq\lambda_{\mathfrak I} X$.
\item[\rm(2)] $X$ is locally null.
\item[\rm(3)] $C^{\mathfrak I}_0(X)$ is non-vanishing.
\item[\rm(4)] $C^{\mathfrak I}_{00}(X)$ is non-vanishing.
\end{itemize}
\end{lemma}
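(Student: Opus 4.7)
The plan is to leverage Lemma \ref{BBV}, which already establishes the equivalence of (1), (2), and (4). So the only genuinely new content is to weave statement (3) into the chain. I would close the loop by showing (4) $\Rightarrow$ (3) $\Rightarrow$ (2); combined with Lemma \ref{BBV} this gives all four equivalences.

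The implication (4) $\Rightarrow$ (3) is essentially free: by Lemma \ref{DGH} we have the inclusion $C^{\mathfrak I}_{00}(X)\subseteq C^{\mathfrak I}_0(X)$, so any family in the smaller ideal that separates the points of $X$ from $0$ remains a separating family when viewed inside the larger ideal. Explicitly, given $x\in X$, the hypothesis (4) yields $h\in C^{\mathfrak I}_{00}(X)$ with $h(x)\neq 0$, and the same $h$ witnesses the non-vanishing property for $C^{\mathfrak I}_0(X)$.

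The implication (3) $\Rightarrow$ (2) is where the definition of $C^{\mathfrak I}_0(X)$ does its work. Given $x\in X$, pick $h\in C^{\mathfrak I}_0(X)$ with $h(x)\neq 0$ and choose a positive integer $n$ large enough that $|h(x)|>1/n$. By continuity of $|h|$, the set
\[
V=|h|^{-1}\bigl((1/n,\infty)\bigr)
\]
is open in $X$ and contains $x$. Moreover, $V\subseteq |h|^{-1}([1/n,\infty))$, and the latter set is null by the very definition of $C^{\mathfrak I}_0(X)$. Since $\mathfrak{I}$ is closed under passage to subsets, $V$ itself is null, so $V$ is a null neighborhood of $x$ in $X$. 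As $x$ was arbitrary, $X$ is locally null, giving (2).

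There is no substantial obstacle here; the argument is a short extension of Lemma \ref{BBV}. The only mildly delicate point is remembering that, although the definition of ``null neighborhood'' requires only that some larger set with $x$ in its interior be null, the ideal axioms let us promote the open sub-level set $V$ itself to a null set, which streamlines the verification and makes the implication (3) $\Rightarrow$ (2) essentially a one-line consequence of continuity plus the ideal axiom for subsets.
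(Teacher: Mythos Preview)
Your proof is correct and matches the paper's approach essentially line for line: the paper also cites Lemma \ref{BBV} for the equivalence of (1), (2), (4), uses the inclusion from Lemma \ref{DGH} for (4) $\Rightarrow$ (3), and for (3) $\Rightarrow$ (2) picks $f$ with $f(x)\neq 0$, chooses $n$ with $|f(x)|>1/n$, and observes that $|f|^{-1}([1/n,\infty))$ is a null neighborhood of $x$. The only cosmetic difference is that the paper uses the closed super-level set directly as the null neighborhood, whereas you pass to the open sub-level set inside it; both are fine.
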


\begin{proof}
The equivalence of (1), (2) and (4) follows from Lemma \ref{BBV}.

(4) \emph{implies} (3). Note that $C^{\mathfrak I}_0(X)$ contains $C^{\mathfrak I}_{00}(X)$ by Lemma \ref{DGH}. Thus $C^{\mathfrak I}_0(X)$ is non-vanishing if $C^{\mathfrak I}_{00}(X)$ is so.

(3) \emph{implies} (2). Let $x\in X$. Then $f(x)\neq0$ for some $f\in C^{\mathfrak I}_0(X)$. Let $n$ be a positive integer such that $|f(x)|>1/n$. Then $|f|^{-1}([1/n,\infty))$ is a null neighborhood of $x$ in $X$.
\end{proof}

\begin{lemma}\label{HFBY}
Let $X$ be a completely regular space locally null with respect to an ideal $\mathfrak{I}$ of $X$. The following are equivalent:
\begin{itemize}
\item[\rm(1)] $\lambda_{\mathfrak I} X$ is compact.
\item[\rm(2)] ${\mathfrak I}$ is non-proper.
\item[\rm(3)] $C^{\mathfrak I}_0(X)$ is unital.
\item[\rm(4)] $C^{\mathfrak I}_{00}(X)$ is unital.
\end{itemize}
\end{lemma}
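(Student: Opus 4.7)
The plan is to piggy-back on Lemma \ref{HGBV}, which already gives the equivalences (1) $\Leftrightarrow$ (2) $\Leftrightarrow$ (4). Thus the only work is to splice (3) into this chain, and the cleanest route is to establish the implications (2) $\Rightarrow$ (3) $\Rightarrow$ (2); together with Lemma \ref{HGBV} this closes the loop.

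For (2) $\Rightarrow$ (3), I would argue directly. If $\mathfrak{I}$ is non-proper then $X$ itself is null, so for every positive integer $n$ the set $|\mathbf{1}|^{-1}([1/n,\infty))=X$ is null. Hence $\mathbf{1}\in C^{\mathfrak I}_0(X)$, and it is visibly a unit for this ideal.

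For (3) $\Rightarrow$ (2), suppose that $u$ is a unit of $C^{\mathfrak I}_0(X)$. Fix $x\in X$. I would mimic the bump-function construction from the proof of Lemma \ref{HGBV}: using local nullness choose a null neighborhood $U_x$ of $x$ in $X$ and an open neighborhood $V_x$ of $x$ with $\mathrm{cl}_X V_x\subseteq\mathrm{int}_X U_x$, then pick a continuous $f_x:X\rightarrow[0,1]$ with $f_x(x)=1$ and $f_x|_{X\setminus V_x}=\mathbf{0}$, so that $\mathrm{supp}(f_x)\subseteq\mathrm{cl}_X V_x$ sits inside the null neighborhood $U_x$. Then $f_x\in C^{\mathfrak I}_{00}(X)\subseteq C^{\mathfrak I}_0(X)$ by Lemma \ref{DGH}, and the unit property yields
\[u(x)=u(x)f_x(x)=f_x(x)=1.\]
Since $x$ was arbitrary, $u=\mathbf{1}$ on $X$. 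But $u\in C^{\mathfrak I}_0(X)$, so $X=|u|^{-1}([1,\infty))$ must be null; that is, $\mathfrak{I}$ is non-proper.

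The argument is essentially routine given Lemma \ref{HGBV}; the only mildly subtle point is the step (3) $\Rightarrow$ (2), where one must use local nullness to produce enough elements of $C^{\mathfrak I}_0(X)$ to pin down the unit pointwise and conclude $u=\mathbf{1}$. Because $C^{\mathfrak I}_0(X)$ is \emph{a priori} larger than $C^{\mathfrak I}_{00}(X)$, it is not quite a tautology that its unit must be $\mathbf{1}$, but the bump functions from Lemma \ref{HGBV} still live in $C^{\mathfrak I}_0(X)$ and do the job.
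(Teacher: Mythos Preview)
Your proposal is correct and follows essentially the same route as the paper: invoke Lemma \ref{HGBV} for (1) $\Leftrightarrow$ (2) $\Leftrightarrow$ (4), prove (2) $\Rightarrow$ (3) by observing $\mathbf{1}\in C^{\mathfrak I}_0(X)$, and prove (3) $\Rightarrow$ (2) via the same bump-function argument, using Lemma \ref{DGH} to place $f_x$ in $C^{\mathfrak I}_0(X)$ and conclude $u=\mathbf{1}$, hence $X=|u|^{-1}([1,\infty))$ is null.
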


\begin{proof}
The equivalence of (1), (2) and (4) follows from Lemma \ref{HGBV}.

(2) \emph{implies} (3). Suppose that $X$ is null. Then the mapping $\mathbf{1}$ is the unit element of $C^{\mathfrak I}_0(X)$.

(3) \emph{implies} (2).  Suppose that $C^{\mathfrak I}_0(X)$ has a unit element $u$. Let $U_x$, $V_x$ and $f_x$ be as defined in the proof of Lemma \ref{HGBV}. Note that $f_x\in C^{\mathfrak I}_0(X)$, as $f_x\in C^{\mathfrak I}_{00}(X)$ and $C^{\mathfrak I}_{00}(X)\subseteq C^{\mathfrak I}_0(X)$ by Lemma \ref{DGH}. Arguing as in the proof of Lemma \ref{HGBV} we have $u=\mathbf{1}$. This implies that $X=u^{-1}([1,\infty))$ is null.
\end{proof}

Recall that in any space any two disjoint zero-sets are \textit{completely separated}, in the sense that, there exists a continuous $[0,1]$-valued mapping on the space which equals to $\mathbf{0}$ on one and $\mathbf{1}$ on the other. (To see this, let $S$ and $T$ be a pair of disjoint zero-sets in a space $X$. Let $S=\mathrm{z}(f)$ and $T=\mathrm{z}(g)$ for some continuous mappings $f,g:X\rightarrow[0,1]$. The mapping
\[h=\frac{f}{f+g}:X\longrightarrow[0,1]\]
is well defined and continuous with $h|_S=\mathbf{0}$ and $h|_T=\mathbf{1}$.)

\begin{lemma}\label{TTES}
Let $X$ be a completely regular space locally null with respect to an ideal $\mathfrak{I}$ of $X$. For any $f\in C_B(X)$ the following are equivalent:
\begin{itemize}
\item[\rm(1)] $f\in C^{\mathfrak I}_0(X)$.
\item[\rm(2)] $f_\lambda\in C_0(\lambda_{\mathfrak I} X)$.
\end{itemize}
\end{lemma}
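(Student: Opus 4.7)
The plan is to handle the two implications separately. The main bridges will be Lemma~\ref{LKG} (to transfer information from $X$ to $\beta X$) and Lemma~\ref{HDHD} (to transfer it back), together with the fact that $X\subseteq\lambda_{\mathfrak I}X$ by Lemma~\ref{BBV}, so that $f_\lambda$ genuinely extends $f$.

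I would dispatch (2)~$\Rightarrow$~(1) first, as it is the easier direction. For each positive integer $n$, set $A_n=|f|^{-1}([1/n,\infty))$. Then $A_n\subseteq|f_\lambda|^{-1}([1/n,\infty))$, which by assumption is compact in $\lambda_{\mathfrak I}X$ and hence closed in $\beta X$. This gives $\mathrm{cl}_{\beta X}A_n\subseteq\lambda_{\mathfrak I}X$, and Lemma~\ref{HDHD} then produces a null neighborhood of $\mathrm{cl}_XA_n$ in $X$. In particular $A_n\subseteq\mathrm{cl}_XA_n$ is null, and this being true for every $n$ means $f\in C^{\mathfrak I}_0(X)$.

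For (1)~$\Rightarrow$~(2), the plan is to prove that $|f_\beta|^{-1}([1/n,\infty))\subseteq\lambda_{\mathfrak I}X$ for every $n$; since this preimage is closed in $\beta X$ and hence compact, the containment automatically forces $|f_\lambda|^{-1}([1/n,\infty))$ to equal it and to be compact in $\lambda_{\mathfrak I}X$. To establish the containment I would apply Lemma~\ref{LKG} to the continuous map $g=1-\min\{n|f|,1\}:X\to[0,1]$ at $r=1/2$: a direct computation yields $g^{-1}([0,1/2))=|f|^{-1}((1/(2n),\infty))=:C$, which is a cozero set of $X$, and $g_\beta^{-1}([0,1/2))=|f_\beta|^{-1}((1/(2n),\infty))\supseteq|f_\beta|^{-1}([1/n,\infty))$. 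To see that $C$ actually contributes to the defining union of $\lambda_{\mathfrak I}X$, I would note that the assumption $f\in C^{\mathfrak I}_0(X)$ makes $|f|^{-1}([1/(4n),\infty))$ null, so the open set $|f|^{-1}((1/(4n),\infty))$ is a null neighborhood of $\mathrm{cl}_XC\subseteq|f|^{-1}([1/(2n),\infty))$. Thus $\mathrm{int}_{\beta X}\mathrm{cl}_{\beta X}C\subseteq\lambda_{\mathfrak I}X$ by the very definition of $\lambda_{\mathfrak I}X$, and chaining through Lemma~\ref{LKG} completes the inclusion.

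The main subtlety lies in this first direction: because $X$ is only assumed completely regular, the simpler closure-based description of $\lambda_{\mathfrak I}X$ available under normality (Lemma~\ref{KPJG}) cannot be used, and one must manufacture an explicit cozero witness at each threshold $1/n$. The trick that makes this cheap is to let $|f|$ itself generate the needed cozero sets at two nested levels ($1/(2n)$ for $C$ and $1/(4n)$ for its null neighborhood), which avoids any Urysohn-type separation argument beyond what is already encoded in $|f|$.
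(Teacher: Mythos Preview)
Your proof is correct and follows essentially the same architecture as the paper's: for (2)$\Rightarrow$(1) you both push $|f|^{-1}([1/n,\infty))$ into $\lambda_{\mathfrak I}X$ via compactness and invoke Lemma~\ref{HDHD}; for (1)$\Rightarrow$(2) you both exhibit, for each $n$, a cozero set $C$ at an intermediate threshold whose closure sits inside a null open set at a still lower threshold, then apply Lemma~\ref{LKG} to trap $|f_\beta|^{-1}([1/n,\infty))$ inside $\mathrm{int}_{\beta X}\mathrm{cl}_{\beta X}C\subseteq\lambda_{\mathfrak I}X$.

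The one difference is in how the cozero witness is manufactured. The paper separates the disjoint zero-sets $|f|^{-1}([1/k,\infty))$ and $|f|^{-1}([0,1/(k+1)])$ by an auxiliary function $g_k$ and takes $C_k=g_k^{-1}([0,1/2))$. You bypass this by letting $|f|$ itself do the work: $C=|f|^{-1}((1/(2n),\infty))$ is already cozero, and $|f|^{-1}((1/(4n),\infty))$ is already a null open neighborhood of $\mathrm{cl}_XC$. This is a mild but genuine simplification---it removes the appeal to complete separation of zero-sets and keeps the whole argument internal to the level sets of $|f|$. The paper's route is slightly more modular (the separating function is a reusable device), while yours is shorter and more self-contained.
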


\begin{proof}
Note that $X\subseteq\lambda_{\mathfrak I} X$ by Lemma \ref{BBV}, as $X$ is locally null.

(1) \emph{implies} (2). Let $k$ be a positive integer. Observe that
\[A=|f|^{-1}\big([1/k,\infty)\big)\quad\mbox{and}\quad B=|f|^{-1}\big(\big[0, 1/(k+1)\big]\big)\]
are zero-sets in $X$ and they are disjoint. Thus, they are completely separated in $X$. Let $g_k:X\rightarrow[0,1]$ be a continuous mapping such that $g_k|_A=\mathbf{0}$ and $g_k|_B=\mathbf{1}$. Let
\[C_k=g_k^{-1}\big([0,1/2)\big)\in\mathrm{Coz}(X).\]
Then $\mathrm{cl}_XC_k$ has a null neighborhood in $X$, as
\[\mathrm{cl}_XC_k\subseteq g_k^{-1}\big([0,1/2]\big)\subseteq|f|^{-1}\Big(\Big(\frac{1}{k+1},\infty\Big)\Big),\]
and the latter is null. Therefore $\mathrm{int}_{\beta X}\mathrm{cl}_{\beta X}C_k\subseteq\lambda_{\mathfrak I} X$. Arguing as in the proof of Lemma \ref{LKG} we have
\[|f_\beta|^{-1}\big((1/k,\infty)\big)\subseteq\mathrm{cl}_{\beta X}|f|^{-1}\big((1/k,\infty)\big).\]
Since
\[\mathrm{cl}_{\beta X}|f|^{-1}\big((1/k,\infty)\big)\subseteq\mathrm{cl}_{\beta X}\mathrm{z}(g_k)\subseteq\mathrm{z}(g_k^\beta)\subseteq(g_k^\beta)^{-1}\big([0,1/2)\big)\]
and
\[(g_k^\beta)^{-1}\big([0,1/2)\big)\subseteq\mathrm{int}_{\beta X}\mathrm{cl}_{\beta X}C_k\]
by Lemma \ref{LKG}, it follows that
\begin{equation}\label{DDSD}
|f_\beta|^{-1}\big((1/k,\infty)\big)\subseteq\lambda_{\mathfrak I} X.
\end{equation}
Now, let $n$ be a positive integer. Using (\ref{DDSD}), we have
\[|f_\beta|^{-1}\big([1/n,\infty)\big)\subseteq|f_\beta|^{-1}\Big(\Big(\frac{1}{n+1},\infty\Big)\Big)\subseteq\lambda_{\mathfrak I} X.\]
Therefore
\[|f_\lambda|^{-1}\big([1/n,\infty)\big)=\lambda_{\mathfrak I} X\cap|f_\beta|^{-1}\big([1/n,\infty)\big)=|f_\beta|^{-1}\big([1/n,\infty)\big)\]
is compact, as it is closed in $\beta X$.

(2) \emph{implies} (1). Let $n$ be a positive integer. Then since $|f_\lambda|^{-1}([1/n,\infty))$ contains $|f|^{-1}([1/n,\infty))$ and it is compact, we have
\[\mathrm{cl}_{\beta X}|f|^{-1}\big([1/n,\infty)\big)\subseteq|f_\lambda|^{-1}\big([1/n,\infty)\big)\subseteq\lambda_{\mathfrak I} X.\]
But then $|f|^{-1}([1/n,\infty))$ has a null neighborhood in $X$ by Lemma \ref{HDHD}, and is therefore itself null.
\end{proof}

There is a version of the Banach--Stone theorem which states that for any locally compact spaces $X$ and $Y$, the rings $C_0(X)$ and $C_0(Y)$ are isomorphic if and only if the spaces $X$ and $Y$ are homeomorphic. (See \cite{A}.) This will be used in the proof of the following main result of this section.

\begin{theorem}\label{UDR}
Let $X$ be a completely regular space locally null with respect to an ideal $\mathfrak{I}$ of $X$. Then $C^{\mathfrak I}_0(X)$ is a closed ideal of $C_B(X)$ isometrically isomorphic to $C_0(Y)$ for some locally compact space $Y$, namely, for $Y=\lambda_{\mathfrak I} X$. The space $Y$ is unique up to homeomorphism and coincides with the spectrum of $C^{\mathfrak I}_0(X)$. Furthermore,
\begin{itemize}
\item[\rm(1)] $X$ is dense in $Y$.
\item[\rm(2)] $C^{\mathfrak I}_0(X)$ is non-vanishing.
\item[\rm(3)] $C^{\mathfrak I}_{00}(X)$ is dense in $C^{\mathfrak I}_0(X)$, if $X$ is moreover normal.
\item[\rm(4)] $C^{\mathfrak I}_0(X)$ is unital if and only if ${\mathfrak I}$ is non-proper if and only if $Y$ is compact.
\end{itemize}
\end{theorem}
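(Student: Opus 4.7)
The plan is to mirror the proof of Theorem \ref{UUS}, with Lemma \ref{TTES} playing the role that Lemma \ref{TES} played there. I would define
\[\psi:C^{\mathfrak I}_0(X)\longrightarrow C_0(\lambda_{\mathfrak I} X),\qquad\psi(f)=f_\lambda,\]
which is well defined by Lemma \ref{TTES} and is obviously a $*$-algebra homomorphism. Injectivity follows from Lemma \ref{BBV}, since $X\subseteq\lambda_{\mathfrak I} X$ and $f_\lambda$ extends $f$, so $f_\lambda=0$ forces $f=0$. For surjectivity, given $g\in C_0(\lambda_{\mathfrak I} X)$, I set $f=g|_X\in C_B(X)$; then $f_\beta|_{\lambda_{\mathfrak I} X}$ and $g$ are continuous mappings on $\lambda_{\mathfrak I} X$ agreeing on the dense subspace $X$, hence coincide, and Lemma \ref{TTES} yields $f\in C^{\mathfrak I}_0(X)$ with $\psi(f)=g$. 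The isometry is the same two-sided estimate used in Theorem \ref{UUS}: $\|f\|\leq\|f_\lambda\|$ is immediate, and $f_\lambda(\lambda_{\mathfrak I} X)=f_\lambda(\mathrm{cl}_{\lambda_{\mathfrak I} X}X)\subseteq\overline{f(X)}$ delivers $\|f_\lambda\|\leq\|f\|$.

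The space $Y=\lambda_{\mathfrak I} X$ is locally compact because it is open in the compact space $\beta X$. Uniqueness of $Y$ up to homeomorphism is the Banach--Stone theorem for $C_0$ rings. That $Y$ coincides with the spectrum of $C^{\mathfrak I}_0(X)$ follows from the commutative Gelfand--Naimark theorem: the $*$-isomorphism $\psi$ identifies the spectrum of $C^{\mathfrak I}_0(X)$ with that of $C_0(\lambda_{\mathfrak I} X)$, which for a locally compact Hausdorff space is the space itself.

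Of the four enumerated items, (1) follows from $X\subseteq\lambda_{\mathfrak I} X$ (Lemma \ref{BBV}) together with the density of $X$ in $\beta X$; (2) is Lemma \ref{KGV}; (4) is Lemma \ref{HFBY}. The substantive step is (3). Here I would invoke Theorem \ref{UUS} directly: since $X$ is normal, the same $\psi$ restricts, via Lemma \ref{TES}, to an isometric isomorphism $C^{\mathfrak I}_{00}(X)\cong C_{00}(\lambda_{\mathfrak I} X)$. Because $C_{00}(Y)$ is dense in $C_0(Y)$ for every locally compact Hausdorff $Y$, density of $C^{\mathfrak I}_{00}(X)$ in $C^{\mathfrak I}_0(X)$ follows by transporting along the isometry $\psi$.

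The principal obstacle is (3): the normality hypothesis enters precisely so that the restriction of $\psi$ to $C^{\mathfrak I}_{00}(X)$ surjects onto $C_{00}(\lambda_{\mathfrak I} X)$ (cf.\ the use of Lemma \ref{KPJG} in the proof of Theorem \ref{UUS}). Without this surjectivity one would only have the inclusion $\psi(C^{\mathfrak I}_{00}(X))\subseteq C_{00}(\lambda_{\mathfrak I} X)$, which is not enough to transport the density of $C_{00}(Y)$ in $C_0(Y)$ back to $C^{\mathfrak I}_0(X)$.
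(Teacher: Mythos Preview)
Your proposal is correct and follows essentially the same route as the paper: define the extension map $f\mapsto f_\lambda$, use Lemma \ref{TTES} for well-definedness and argue exactly as in Theorem \ref{UUS} for the isometric isomorphism, then invoke Lemmas \ref{KGV} and \ref{HFBY} for the enumerated items and transport the density of $C_{00}$ in $C_0$ through the isomorphism (using Theorem \ref{UUS}/Lemma \ref{TES} under the normality hypothesis) for (3). Your closing remark about why normality is needed for the surjectivity of $\psi|_{C^{\mathfrak I}_{00}(X)}$ is accurate and matches the paper's reliance on the proof of Theorem \ref{UUS}.
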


\begin{proof}
Observe that $C^{\mathfrak I}_0(X)$ is a closed ideal of $C_B(X)$ by Lemma \ref{DGH}. Define a mapping
\[\phi:C^{\mathfrak I}_0(X)\longrightarrow C_0(\lambda_{\mathfrak I} X)\]
by
\[\phi(f)=f_\lambda\]
for any $f\in C^{\mathfrak I}_0(X)$. By Lemma \ref{TTES} the mapping $\phi$ is well defined, and arguing as in the proof of Theorem \ref{UUS}, it follows that $\phi$ is an isometric algebra isomorphism.

Note that $\lambda_{\mathfrak I} X$ is locally compact, as it is open in the compact space $\beta X$, and $\lambda_{\mathfrak I} X$ contains $X$ (as a dense subspace) by Lemma \ref{KGV}.

The uniqueness of $\lambda_{\mathfrak I} X$ follows from the fact that the topology of any locally compact space $T$ is determined by the algebraic structure of the ring $C_0(T)$. Note that by the commutative Gelfand--Naimark theorem the Banach algebra $C^{\mathfrak I}_0(X)$ is isometrically isomorphic to $C_0(Y')$ where $Y'$ is the spectrum of $C^{\mathfrak I}_0(X)$. Since $Y'$ is a locally compact space the uniqueness implies that $Y'=\lambda_{\mathfrak I} X$.

(1). By Lemma \ref{KGV} we have $X\subseteq\lambda_{\mathfrak I} X$. That $X$ is dense in $\lambda_{\mathfrak I} X$ is then obvious.

(2). This follows from Lemma \ref{KGV}.

(3). Let $\phi$ be as defined in the above and let $\psi=\phi|_{C^{\mathfrak I}_{00}(X)}$. Then
\[\psi:C^{\mathfrak I}_{00}(X)\longrightarrow C_{00}(\lambda_{\mathfrak I} X),\]
and $\psi$ is surjective by the proof of Theorem \ref{UUS}. The result now follows from the well known fact that $C_{00}(T)$ is dense in $C_0(T)$ for any locally compact space $T$.

(4). This follows from Lemma \ref{HFBY}.
\end{proof}

\begin{remark}\label{HFJD}
Assuming that $C^{\mathfrak I}_0(X)$ is a Banach algebra, it follows from the commutative Gelfand--Naimark theorem that $C^{\mathfrak I}_0(X)$ is isometrically isomorphic to $C_0(Y)$ for some locally compact space $Y$. Our approach here in Theorem \ref{UDR} (apart from its independence of the proof) has the advantage that it provides certain information about either the Banach algebra $C^{\mathfrak I}_0(X)$ or the space $Y$ that is not generally expected to be deducible from the standard Gelfand theory. This fact is particularly highlighted in the second Part \ref{KHGJ} where we consider specific examples of the space $X$ or the ideal ${\mathfrak I}$.
\end{remark}

For a space $X$ and an ideal $\mathfrak{I}$ in $X$ it might be of some interest to determine when $C^{\mathfrak I}_{00}(X)=C^{\mathfrak I}_0(X)$. This will be done in the next theorem.

Let $X$ be a locally compact space. It is known that $C_{00}(X)=C_0(X)$ if and only if every $\sigma$-compact subspace of $X$ is contained in a compact subspace of $X$  (see Problem 7G.2 of \cite{GJ}); in particular, $C_{00}(X)=C_0(X)$ implies that $X$ is countably compact. (Recall that a space $X$ is \textit{countably compact} if every countable open cover of $X$ has a finite subcover, equivalently, if every countable infinite subspace of $X$ has an accumulation point in $X$; see Theorem 3.10.3 of \cite{E}.) This will be used in the proof of the following.

\begin{theorem}\label{ADEFR}
Let $X$ be a normal space and let $\mathfrak{I}$ be an ideal in $X$ such that the closure in $X$ of each null subset of $X$ has a null neighborhood in $X$. The following are equivalent:
\begin{itemize}
\item[\rm(1)] $C^{\mathfrak I}_{00}(X)=C^{\mathfrak I}_0(X)$.
\item[\rm(2)] $\mathfrak{I}$ is a $\sigma$-ideal in $X$.
\end{itemize}
Also, if $X$ is moreover locally null, then $\mathrm{(1)}$ and $\mathrm{(2)}$ are equivalent to the following:
\begin{itemize}
\item[\rm(3)] Every $\sigma$-compact subspace of $\lambda_{\mathfrak I} X$ is contained in a compact subspace of $\lambda_{\mathfrak I} X$; in particular,  $\lambda_{\mathfrak I} X$ is countably compact.
\end{itemize}
\end{theorem}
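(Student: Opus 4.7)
The plan is to prove the equivalence of (1) and (2) first, and then, under the additional local nullity assumption, to reduce (1) $\Leftrightarrow$ (3) to a known fact about $C_{00}$ and $C_0$ on locally compact spaces via Theorems \ref{UUS} and \ref{UDR}. Since Lemma \ref{DGH} already supplies the inclusion $C^{\mathfrak I}_{00}(X)\subseteq C^{\mathfrak I}_0(X)$, statement (1) is really just the reverse inclusion.

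For (2) implies (1) I would apply Proposition \ref{JFJG}: when $\mathfrak{I}$ is a $\sigma$-ideal, any $f\in C^{\mathfrak I}_0(X)$ has null cozero-set, and the standing hypothesis on $\mathfrak{I}$ then provides a null neighborhood of $\mathrm{supp}(f)=\mathrm{cl}_X\mathrm{coz}(f)$, placing $f$ in $C^{\mathfrak I}_{00}(X)$.

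The substantive direction is (1) implies (2). Given null sets $A_1,A_2,\ldots$, set $B_n=\mathrm{cl}_XA_n$ and use the standing hypothesis to choose a null neighborhood $V_n$ of $B_n$ in $X$. Normality and Urysohn's lemma yield continuous $h_n:X\to[0,1]$ with $h_n|_{B_n}=\mathbf{1}$ and $h_n|_{X\setminus\mathrm{int}_XV_n}=\mathbf{0}$. Define
\[f=\sum_{n=1}^\infty 2^{-n}h_n,\]
which converges uniformly to an element of $C_B(X)$ with $0\le f\le 1$. For each positive integer $k$ choose $N$ with $2^{-N}<1/k$; the estimate $f(x)\ge 1/k$ then forces $h_n(x)>0$, hence $x\in V_n$, for some $n\le N$, so
\[|f|^{-1}\bigl([1/k,\infty)\bigr)\subseteq V_1\cup\cdots\cup V_N,\]
which is null as a finite union of null sets. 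Thus $f\in C^{\mathfrak I}_0(X)$. Applying hypothesis (1), $\mathrm{supp}(f)$ has a null neighborhood $W$ in $X$. Since $h_n|_{B_n}=\mathbf{1}$ gives $B_n\subseteq\mathrm{coz}(f)$ for every $n$, the chain $\bigcup_n A_n\subseteq\bigcup_n B_n\subseteq\mathrm{supp}(f)\subseteq W$ exhibits $\bigcup_n A_n$ as a subset of a null set, so it is null and $\mathfrak{I}$ is a $\sigma$-ideal.

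For the final equivalence, assume in addition that $X$ is locally null. Theorems \ref{UUS} and \ref{UDR} furnish isometric algebra isomorphisms $C^{\mathfrak I}_{00}(X)\cong C_{00}(\lambda_{\mathfrak I}X)$ and $C^{\mathfrak I}_0(X)\cong C_0(\lambda_{\mathfrak I}X)$ through the common restriction $f\mapsto f_\lambda$, so (1) is equivalent to the equality $C_{00}(\lambda_{\mathfrak I}X)=C_0(\lambda_{\mathfrak I}X)$. Since $\lambda_{\mathfrak I}X$ is locally compact, the result cited from Problem 7G.2 of \cite{GJ}, together with its consequence that such spaces are countably compact, identifies this with condition (3). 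The main obstacle I foresee is precisely the construction of $f$ in the (1) implies (2) step: the geometric decay of the weights $2^{-n}$ is what confines each sublevel set of $|f|$ to a finite union of the $V_n$, which is what allows us to certify $f\in C^{\mathfrak I}_0(X)$ without assuming in advance the $\sigma$-ideal property we are trying to derive.
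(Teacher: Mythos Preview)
Your proof is correct and follows essentially the same route as the paper: build $f=\sum 2^{-n}h_n$ from Urysohn functions separating $\mathrm{cl}_XA_n$ from the complement of a null neighborhood, verify $f\in C^{\mathfrak I}_0(X)$, apply (1), and read off that $\bigcup_n A_n$ sits inside a null set; the (2)$\Rightarrow$(1) direction and the reduction of (1)$\Leftrightarrow$(3) to $C_{00}(\lambda_{\mathfrak I}X)=C_0(\lambda_{\mathfrak I}X)$ via Theorems \ref{UUS} and \ref{UDR} match the paper exactly. The one small difference is in how you certify $f\in C^{\mathfrak I}_0(X)$: the paper first observes that each $h_n\in C^{\mathfrak I}_{00}(X)$ (invoking the standing hypothesis once more on the null set $\mathrm{coz}(h_n)\subseteq V_n$) and then uses closedness of $C^{\mathfrak I}_0(X)$ from Lemma \ref{DGH}, whereas you bypass both by the direct tail estimate $|f|^{-1}([1/k,\infty))\subseteq V_1\cup\cdots\cup V_N$---a slightly more self-contained verification, though not a materially different argument.
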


\begin{proof}
(1) \emph{implies} (2). Let $A=\bigcup_{n=1}^\infty A_n$ where $A_n$ is a null subset of $X$ for each positive integer $n$. Fix some positive integer $n$. By our assumption $\mathrm{cl}_XA_n$ has a null neighborhood $U_n$ in $X$. Since $X$ is normal, by the Urysohn lemma, there exists a continuous mapping $f:X\rightarrow[0,1]$ such that
\[f|_{\mathrm{cl}_XA_n}=\mathbf{1}\quad\mbox{and}\quad f|_{X\setminus\mathrm{int}_XU_n}=\mathbf{0}.\]
Observe that $\mathrm{coz}(f_n)$ is contained in $U_n$ and is therefore null, and thus, using our assumption, its closure $\mathrm{supp}(f_n)$ has a null neighborhood in $X$. Therefore $f_n\in C^{\mathfrak I}_{00}(X)$ and thus $f_n\in C^{\mathfrak I}_0(X)$, as $C^{\mathfrak I}_{00}(X)\subseteq C^{\mathfrak I}_0(X)$ by Lemma \ref{DGH}. Let
\[f=\sum_{n=1}^\infty\frac{f_n}{2^n}:X\longrightarrow[0,1].\]
Note that $f$ is well defined by the Weierstrass $M$-test. Also, $f\in C^{\mathfrak I}_0(X)$, as $f$ is the limit of a sequence in $C^{\mathfrak I}_0(X)$ and $C^{\mathfrak I}_0(X)$ is closed in $C_B(X)$ by Lemma \ref{DGH}. But then $f\in C^{\mathfrak I}_{00}(X)$ by (1). In particular, $\mathrm{supp}(f)$ and therefore its subset $\mathrm{coz}(f)=\bigcup_{n=1}^\infty\mathrm{coz}(f_n)$ is null. But $A\subseteq\mathrm{coz}(f)$, as $A_n\subseteq\mathrm{coz}(f_n)$ for each positive integer $n$. Therefore $A$ is null.

(2) \emph{implies} (1). Let $f\in C^{\mathfrak I}_0(X)$. Then $\mathrm{coz}(f)=\bigcup_{n=1}^\infty|f|^{-1}([1/n,\infty))$ is null, as it is a countable union of null subsets of $X$. Therefore, using our assumption, its closure $\mathrm{supp}(f)$ has a null neighborhood in $X$. Thus $f\in C^{\mathfrak I}_{00}(X)$. Therefore $C^{\mathfrak I}_0(X)\subseteq C^{\mathfrak I}_{00}(X)$. That $C^{\mathfrak I}_{00}(X)\subseteq C^{\mathfrak I}_0(X)$ follows from Lemma \ref{DGH}.

Now suppose that $X$ is moreover locally null. Let
\[\phi:C^{\mathfrak I}_0(X)\longrightarrow C_0(\lambda_{\mathfrak I} X)\]
be the isomorphism defined in the proof of Theorem \ref{UDR}. Then $C^{\mathfrak I}_{00}(X)$ and $C^{\mathfrak I}_0(X)$ are identical if and only if their images  $\phi(C^{\mathfrak I}_{00}(X))$ and $\phi(C^{\mathfrak I}_0(X))$ are so. But $\phi(C^{\mathfrak I}_{00}(X))=C_{00}(\lambda_{\mathfrak I} X)$ and $\phi(C^{\mathfrak I}_0(X))=C_0(\lambda_{\mathfrak I} X)$, as it is observed in the proof of Theorem \ref{UDR}. Therefore, (1) holds if and only if $C_{00}(\lambda_{\mathfrak I} X)=C_0(\lambda_{\mathfrak I} X)$, while the latter is equivalent to (3).
\end{proof}

The following theorem is a complement to the above theorem; for a normal space $X$ and an ideal $\mathfrak{I}$ in $X$ satisfying the required assumptions, the above theorem gives a necessary and sufficient condition such that every $\sigma$-compact subspace of $\lambda_{\mathfrak I} X$ is contained in a compact subspace while the following theorem provides a necessary and sufficient condition such that $\lambda_{\mathfrak I} X$ is $\sigma$-compact. The space $\lambda_{\mathfrak I} X$ is therefore compact if it satisfies the conditions in both theorems.

Recall that a subset $A$ of a partially ordered set $(P,\leq)$ is called \textit{cofinal} if for every $p\in P$ there is some $a\in A$ such that $p\leq a$.

\begin{theorem}\label{FGFF}
Let $X$ be a normal space and let $\mathfrak{I}$ be an ideal in $X$ such that the closure in $X$ of each null subset of $X$ has a null neighborhood in $X$. The following are equivalent:
\begin{itemize}
\item[\rm(1)] $\lambda_{\mathfrak I} X$ is $\sigma$-compact.
\item[\rm(2)] $\mathfrak{I}$ contains a countable cofinal subset consisting of open subspaces of $X$.
\end{itemize}
\end{theorem}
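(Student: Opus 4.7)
The plan is to prove the two implications by mediating between ideals of $X$ and compact subsets of $\lambda_{\mathfrak I} X$, using the expression
\[\lambda_{\mathfrak I} X=\bigcup\{\mathrm{int}_{\beta X}\mathrm{cl}_{\beta X}U:\mathrm{cl}_XU\mbox{ is null}\}\]
from Proposition \ref{KJHF} together with the reformulation of $\lambda_{\mathfrak I} X$ given in Lemma \ref{KPJG}. Observe first that under the standing hypothesis, every null subset of $X$ has a null closure: if $B$ is null then it has a null neighborhood $V$, so $\mathrm{cl}_XB\subseteq V$ is null. This consequence will be used repeatedly.

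For $(2)\Rightarrow(1)$, let $\{U_n:n\geq1\}$ be a countable cofinal family in $\mathfrak{I}$ consisting of open subspaces of $X$; I would replace $U_n$ by $U_1\cup\cdots\cup U_n$ to arrange $U_n\subseteq U_{n+1}$. Each $U_n$ is null, hence $\mathrm{cl}_XU_n$ is null and (by hypothesis) has a null neighborhood in $X$; Lemma \ref{KPJG} then gives $\mathrm{cl}_{\beta X}U_n\subseteq\lambda_{\mathfrak I}X$. Conversely, any $x\in\lambda_{\mathfrak I}X$ lies in $\mathrm{int}_{\beta X}\mathrm{cl}_{\beta X}C$ for some $C\in\mathrm{Coz}(X)$ with $\mathrm{cl}_XC$ having a null neighborhood; in particular $C\subseteq\mathrm{cl}_XC$ is null, so cofinality gives $C\subseteq U_n$ for some $n$, whence $x\in\mathrm{cl}_{\beta X}U_n$. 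Thus $\lambda_{\mathfrak I}X=\bigcup_n\mathrm{cl}_{\beta X}U_n$, a countable union of compact sets.

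For $(1)\Rightarrow(2)$, since $\lambda_{\mathfrak I}X$ is $\sigma$-compact it is Lindel\"of, so the open cover furnished by Proposition \ref{KJHF} admits a countable subcover $\{\mathrm{int}_{\beta X}\mathrm{cl}_{\beta X}U_n:n\geq1\}$ with each $\mathrm{cl}_XU_n$ null. The hypothesis then produces, for each $n$, an open null neighborhood $V_n$ of $\mathrm{cl}_XU_n$ in $X$ (namely the interior of a null neighborhood), and I set $W_n=V_1\cup\cdots\cup V_n$, which is open in $X$ and null (its closure is the finite union of the null closures $\mathrm{cl}_XV_i$). For cofinality of $\{W_n\}$, let $A\in\mathfrak{I}$; since $\mathrm{cl}_XA$ has a null neighborhood, Lemma \ref{KPJG} yields $\mathrm{cl}_{\beta X}A\subseteq\lambda_{\mathfrak I}X$, and compactness of $\mathrm{cl}_{\beta X}A$ extracts a finite subcover
\[\mathrm{cl}_{\beta X}A\subseteq\mathrm{cl}_{\beta X}U_{n_1}\cup\cdots\cup\mathrm{cl}_{\beta X}U_{n_k}.\]
Intersecting with the dense subspace $X$ gives $\mathrm{cl}_XA\subseteq\mathrm{cl}_XU_{n_1}\cup\cdots\cup\mathrm{cl}_XU_{n_k}\subseteq V_{n_1}\cup\cdots\cup V_{n_k}\subseteq W_N$ for $N=\max\{n_i\}$, hence $A\subseteq W_N$.

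The main obstacle is precisely the last step of $(1)\Rightarrow(2)$: a naive argument working directly with the Lindel\"of subcover $\{U_n\}$ yields only $\mathrm{cl}_XA\subseteq\mathrm{cl}_XU_{n_1}\cup\cdots\cup\mathrm{cl}_XU_{n_k}$, which is \emph{not} an inclusion of $A$ into any open member of the subfamily. The ``inflation'' step replacing each $U_n$ by a genuinely open null neighborhood $V_n$ of $\mathrm{cl}_XU_n$, allowed by the standing hypothesis, is what converts this into the desired inclusion $A\subseteq W_N$ and thereby gives true cofinality in $\mathfrak{I}$.
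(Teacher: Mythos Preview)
Your argument is correct. The $(2)\Rightarrow(1)$ direction matches the paper's proof essentially verbatim. For $(1)\Rightarrow(2)$ you take a slightly different tactical route: you use the Lindel\"of property of $\lambda_{\mathfrak I}X$ to extract a countable subcover from the open cover of Proposition~\ref{KJHF}, and then perform the ``inflation'' step in $X$ (replacing each $U_n$ by an open null $V_n\supseteq\mathrm{cl}_XU_n$) to ensure cofinality lands in genuinely open members. The paper instead works inside $\beta X$: from $\lambda_{\mathfrak I}X=\bigcup_n K_n$ it uses normality of $\beta X$ to choose open $U_n\subseteq\beta X$ with $K_n\subseteq U_n\subseteq\mathrm{cl}_{\beta X}U_n\subseteq\lambda_{\mathfrak I}X$ (arranged to be increasing), and sets $V_n=X\cap U_n$. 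These $V_n$ are automatically open in $X$ and null by Lemma~\ref{HDHD}, and cofinality is immediate since $\mathrm{cl}_{\beta X}A\subseteq U_k$ intersected with $X$ gives $A\subseteq V_k$ directly---no inflation needed. Your approach trades this use of normality of $\beta X$ for the Lindel\"of reduction plus the explicit inflation step; both are valid, and yours makes the role of the standing hypothesis (null closures have null neighborhoods) more visibly central, while the paper's is a touch more economical. One small remark: in your opening observation the pronoun ``it'' should refer to $\mathrm{cl}_XB$ rather than $B$, since that is what the hypothesis furnishes a null neighborhood for; the conclusion $\mathrm{cl}_XB\subseteq V$ then follows as you state.
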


\begin{proof}
(1) \emph{implies} (2). Let $\lambda_{\mathfrak I} X=\bigcup_{n=1}^\infty K_n$ where $K_n$ is compact for each positive integer $n$. Since $\beta X$ is normal (and $\lambda_{\mathfrak I} X$ is open in $\beta X$), for each positive integer $n$ there is an open subspaces $U_n$ of $\beta X$ such that $K_n\subseteq U_n\subseteq\mathrm{cl}_{\beta X}U_n\subseteq\lambda_{\mathfrak I} X$. We may assume our choices are so that $U_1\subseteq U_2\subseteq\cdots$. Let $V_n=X\cap U_n$ for each positive integer $n$. We check that the countable set ${\mathfrak V}=\{V_n:n=1,2,\ldots\}$ of open subspaces of $X$ is a cofinal subset of $\mathfrak{I}$. Note that $V_n$ is null for any positive integer $n$ by Lemma \ref{HDHD}, as $\mathrm{cl}_{\beta X}V_n\subseteq\lambda_{\mathfrak I}X$. Let $A$ be a null subset of $X$. By our assumption $\mathrm{cl}_XA$ has a null neighborhood in $X$. Therefore, since $X$ is normal, $\mathrm{cl}_{\beta X}A\subseteq\lambda_{\mathfrak I}X$ by Lemma \ref{KPJG}. Obverse that $\lambda_{\mathfrak I} X=\bigcup_{n=1}^\infty U_n$. Therefore, by compactness and since $U_n$'s are ascending we have $\mathrm{cl}_{\beta X}A\subseteq U_k$ for some positive integer $k$. Now, if we intersect the two sides of the latter with $X$ it gives $A\subseteq X\cap U_k=V_k$.

(2) \emph{implies} (1). Let ${\mathfrak V}=\{V_n:n=1,2,\ldots\}$ be a cofinal subset of $\mathfrak{I}$ consisting of open subspaces of $X$. Observe that $\mathrm{cl}_{\beta X}V_n\subseteq\lambda_{\mathfrak I} X$ for any positive integer $n$ by Lemma \ref{KPJG} (as $X$ is normal and $\mathrm{cl}_XV_n$ has a null neighborhood in $X$ by our assumption). That is
\[\bigcup_{n=1}^\infty\mathrm{cl}_{\beta X}V_n\subseteq\lambda_{\mathfrak I} X.\]
We show that the reverse inclusion also holds in the latter. But this follows from Lemma \ref{KPJG}, as (by cofinality of ${\mathfrak V}$) for every null subset $A$ of $X$ we have $\mathrm{cl}_{\beta X}A\subseteq\mathrm{cl}_{\beta X}V_n$ for some positive integer $n$.
\end{proof}

The following provides an example of a space $X$ and an ideal $\mathfrak{I}$ in $X$ which satisfy the assumptions of Theorems \ref{ADEFR} and \ref{FGFF}.

Recall that for a space $X$ and open covers $\mathscr{U}$ and $\mathscr{V}$ of $X$ it is said that $\mathscr{U}$ is a \textit{refinement} of $\mathscr{V}$ if each element of $\mathscr{U}$ is contained in an element of $\mathscr{V}$. An open cover $\mathscr{U}$ of a space $X$ is called \textit{locally finite} if each point of $X$ has an open neighborhood in $X$ intersecting only a finite number of the elements of $\mathscr{U}$. A regular space $X$ is called \textit{paracompact} if for every open cover $\mathscr{U}$ of $X$ there is an open cover of $X$ which refines $\mathscr{U}$. Paracompact spaces are generally considered as the simultaneous generalizations of compact Hausdorff spaces and metrizable spaces. Every metrizable space as well as every compact Hausdorff space is paracompact and every paracompact space is normal. (See Theorems 5.1.1, 5.1.3 and 5.1.5 of \cite{E}.) Every closed subspace of a paracompact is paracompact. (See Corollary 5.1.29 of \cite{E}.) Also, a paracompact space with a dense Lindel\"{o}f subspace is Lindel\"{o}f. (See Theorem 5.1.25 of \cite{E}.)

\begin{example}
Let $X$ be a paracompact space. Suppose that $X$ is locally Lindel\"{o}f, that is, every $x\in X$ has a Lindel\"{o}f neighborhood in $X$. Let
\[\mathfrak{L}=\{L\subseteq X:\mathrm{cl}_XL\mbox{ is Lindel\"{o}f}\}.\]

We check that $\mathfrak{L}$ is a $\sigma$-ideal in $X$. Clearly, if $A\subseteq L$ with $L\in\mathfrak{L}$, then $A\in\mathfrak{L}$, as $\mathrm{cl}_XL$ is Lindel\"{o}f and so is its closed subspace $\mathrm{cl}_XA$. Let $L_n\in\mathfrak{L}$ for each positive integer $n$. Let
\[K=\mathrm{cl}_X\Big(\bigcup_{n=1}^\infty L_n\Big).\]
Note that $K$ is closed in the paracompact space $X$ and is therefore itself paracompact. But then $K$ is Lindel\"{o}f, as it contains $\bigcup_{n=1}^\infty\mathrm{cl}_X L_n$ as a dense subspace and the latter is so (since it is a countable union of Lindel\"{o}f subspaces). That is $\bigcup_{n=1}^\infty L_n\in\mathfrak{L}$. This shows that $\mathfrak{L}$ is a $\sigma$-ideal in $X$.

Note that $X$ is locally null, as it is locally Lindel\"{o}f by our assumption. We now verify that the closure in $X$ of each null subset of $X$ has a null neighborhood in $X$. Let $L\in\mathfrak{L}$. For each $x\in X$ let $U_x$ be a Lindel\"{o}f neighborhood of $x$ in $X$. The collection $\{\mathrm{int}_XU_x:x\in X\}$ is an open cover for the Lindel\"{o}f space $\mathrm{cl}_XL$. Therefore
\[\mathrm{cl}_XL\subseteq\bigcup_{n=1}^\infty\mathrm{int}_XU_{x_n}\]
for some $x_1,x_2,\ldots\in X$. Since $X$ is normal, there is an open subspace $V$ of $X$ with $\mathrm{cl}_XL\subseteq V\subseteq\mathrm{cl}_XV\subseteq W$. Note that $\mathrm{cl}_XV$ is Lindel\"{o}f, as it is contained in $\bigcup_{n=1}^\infty U_{x_n}$ as a closed subspace and the letter is so (since it is a countable union of Lindel\"{o}f subspaces). Therefore $V$ is a null neighborhood of $\mathrm{cl}_XL$ in $X$.
\end{example}

\begin{remark}
In \cite{T}, for a completely regular space $X$ and a filter base ${\mathscr B}$ of open subspaces of $X$, the author defined $C_{\mathscr B}(X)$ to be the set of all $f\in C(X)$ whose support is contained in $X\setminus B$ for some $B\in{\mathscr B}$, and $C_{\infty{\mathscr B}}(X)$ to be the set of all $f\in C(X)$ such that $|f|^{-1}([1/n,\infty))$ is contained in $X\setminus B$ for some $B\in{\mathscr B}$ for each positive integer $n$. (See \cite{AN} for certain special cases.) Also, if ${\mathfrak I}$ is an ideal of closed subspaces of $X$, in \cite{AG}, the authors defined $C_{\mathfrak I}(X)$ to be the set of all $f\in C(X)$ whose support is contained in ${\mathfrak I}$, and $C_\infty^{\mathfrak I}(X)$ to be the set of all $f\in C(X)$ such that $|f|^{-1}([1/n,\infty))$ is contained in ${\mathfrak I}$ for each positive integer $n$. Despite certain similarities between our definitions and the definitions given in \cite{T} or \cite{AG}, the existing differences between definitions have left this work with little in common with either \cite{T} or \cite{AG}.
\end{remark}

\part{Examples}\label{KHGJ}

\bigskip

In this part we study specific examples of the ideals $C^{\mathfrak I}_{00}(X)$ and $C^{\mathfrak I}_0(X)$ of $C_B(X)$ for various choices of the space $X$ and the ideal ${\mathfrak I}$ of $X$. Furthermore, we show how certain known ideals of $C_B(X)$ may be thought of as being of the form $C^{\mathfrak I}_{00}(X)$ or $C^{\mathfrak I}_0(X)$ for an appropriate choice of the ideal ${\mathfrak I}$ of $X$. Our general representation theorems in Part \ref{HFPG} may then be applied to obtain information about the structure of such ideals of $C_B(X)$.

\section{Ideals in $\ell_\infty$ arising from ideals in $\mathbb{N}$}\label{HFLH}

In this section we consider certain ideals in $\mathbb{N}$ (when $\mathbb{N}$ is endowed with the discrete topology). This leads to the introduction of certain ideals of $\ell_\infty$.

By $\ell_\infty$, $c_0$ and $c_{00}$, respectively, we denote the set of all bounded sequences in $\mathbb{C}$, the set of all vanishing sequences in $\mathbb{C}$, and the set of all sequences in $\mathbb{C}$ with only finitely many non-zero terms. Note that $\ell_\infty=C_B(\mathbb{N})$, $c_0=C_0(\mathbb{N})$ and $c_{00}=C_{00}(\mathbb{N})$, if $\mathbb{N}$ is given the discrete topology.

Let
\[{\mathfrak S}=\bigg\{A\subseteq\mathbb{N}:\sum_{n\in A}\frac{1}{n}\mbox{ converges}\bigg\}.\]
Then ${\mathfrak S}$ is an ideal in $\mathbb{N}$, called the \textit{summable ideal} in $\mathbb{N}$. A subset of $\mathbb{N}$ is called \textit{small} if it is null (with respect to ${\mathfrak S}$).

Note that there exists a family $\{A_i:i<2^\omega\}$ consisting of infinite subsets of $\mathbb{N}$ such that the intersection $A_i\cap A_j$ is finite for any distinct $i,j<2^\omega$. To see this, arrange the rational numbers into a sequence $q_1,q_2,\ldots$ and for each $i\in\mathbb{R}$ define $A_i=\{n_1,n_2,\ldots\}$ where $q_{n_1},q_{n_2},\ldots$ is a subsequence of $q_1,q_2,\ldots$ which converges to $i$. This known fact will be used in the proof of the following.

Recall that for a collection $\{X_i:i\in I\}$ of algebras the direct sum $\bigoplus_{i\in I}X_i$ is the set of all sequences $\{x_i\}_{i\in I}$ where $x_i\in X_i$ for each $i\in I$ such that $x_i=0$ for all but a finite number of indices $i\in I$. The set $\bigoplus_{i\in I}X_i$ is an algebra with addition, multiplication and scalar multiplication defined component-wise. We denote the sequence $\{x_i\}_{i\in I}$ by the sum $\sum_{i\in I}x_i$. The direct sum $\bigoplus_{i\in I}X_i$ of a collection $\{X_i:i\in I\}$ of normed spaces is defined analogously and is a normed space with the norm given by
\[\bigg\|\sum_{i\in I}x_i\bigg\|=\sup\big\{\|x_i\|_{X_i}:i\in I\big\}\]
for any $\sum_{i\in I}x_i\in\bigoplus_{i\in I}X_i$.

The proof of the following makes use of certain standard properties of the Stone--\v{C}ech compactification as stated in the concluding part of Part  \ref{JHGG}.

\begin{theorem}\label{TDK}
Let
\[\mathfrak{s}_{00}=\bigg\{\mathbf{x}\in\ell_\infty:\sum_{\mathbf{x}(n)\neq0}\frac{1}{n}\mbox{ converges}\bigg\}.\]
Then
\begin{itemize}
\item[\rm(1)] $\mathfrak{s}_{00}$ is an ideal in $\ell_\infty$.
\item[\rm(2)] $\mathfrak{s}_{00}$ is non-unital.
\item[\rm(3)] $\mathfrak{s}_{00}$ is isometrically isomorphic to $C_{00}(Y)$ where
\[Y=\bigcup\bigg\{\mathrm{cl}_{\beta\mathbb{N}}A:A\subseteq\mathbb{N}\mbox{ and }\sum_{n\in A}\frac{1}{n}\mbox{ converges}\bigg\}.\]
In particular, $Y$ is locally compact non-compact and contains $\mathbb{N}$ densely.
\item[\rm(4)] $\mathfrak{s}_{00}$ contains an isometric copy of the normed algebra $\bigoplus_{n=1}^\infty\ell_\infty$.
\item[\rm(5)] $\mathfrak{s}_{00}/c_{00}$ contains an isomorphic copy of the algebra
\[\bigoplus_{i<2^\omega}\frac{\ell_\infty}{c_{00}}.\]
\end{itemize}
\end{theorem}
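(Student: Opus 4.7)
Items (1)--(3) will follow at once from the theory of Section \ref{GPG} applied with $X=\mathbb{N}$ (discrete) and $\mathfrak{I}=\mathfrak{S}$. Discrete $\mathbb{N}$ is normal, and each singleton $\{n\}$ is small since $1/n<\infty$, so $\mathbb{N}$ is locally null with respect to $\mathfrak{S}$. In the discrete topology every subset is open and equals its own closure, so for any $\mathbf{x}\in\ell_\infty=C_B(\mathbb{N})$ the set $\mathrm{supp}(\mathbf{x})=\{n:\mathbf{x}(n)\neq 0\}$ is a neighborhood of itself, and therefore has a null neighborhood in $\mathbb{N}$ iff it is itself null; this shows $\mathfrak{s}_{00}=C^{\mathfrak{S}}_{00}(\mathbb{N})$. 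Theorem \ref{UUS} then gives (1) and an isometric isomorphism $\mathfrak{s}_{00}\cong C_{00}(Y)$ with $Y=\lambda_{\mathfrak{S}}\mathbb{N}$; Lemma \ref{KPJG}, again using that every $A\subseteq\mathbb{N}$ is its own closure and neighborhood, collapses this to $Y=\bigcup\{\mathrm{cl}_{\beta\mathbb{N}}A:A\in\mathfrak{S}\}$. Local compactness of $Y$ is automatic since $Y$ is open in $\beta\mathbb{N}$, density of $\mathbb{N}$ is Theorem \ref{UUS}(1), and both (2) and the non-compactness of $Y$ fall out of Theorem \ref{UUS}(3) since $\mathfrak{S}$ is proper (the harmonic series diverges).

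For (4) I will exhibit a pairwise disjoint family $\{B_n\}_{n\ge 1}$ of infinite small subsets of $\mathbb{N}$ and assemble copies of $\ell_\infty$ supported on the $B_n$'s. A convenient explicit choice is $B_n=\{p_n^k:k\ge 1\}$ where $p_n$ is the $n$-th prime; pairwise disjointness is obvious, and smallness follows from $\sum_{k\ge 1}p_n^{-k}=1/(p_n-1)<\infty$. Fix a bijection $\phi_n:\mathbb{N}\to B_n$ and let $\psi_n:\ell_\infty\to\ell_\infty$ send $\mathbf{y}$ to the sequence equal to $\mathbf{y}\circ\phi_n^{-1}$ on $B_n$ and to $0$ off $B_n$. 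Define
\[
\Phi:\bigoplus_{n=1}^\infty\ell_\infty\longrightarrow\mathfrak{s}_{00},\qquad\Phi\Big(\sum_{n}\mathbf{x}_n\Big)=\sum_{n}\psi_n(\mathbf{x}_n),
\]
where the sum on the right is pointwise and has only finitely many nonzero summands. The support of $\Phi(\sum_n\mathbf{x}_n)$ sits in a finite union of small sets, hence is small, so the image does lie in $\mathfrak{s}_{00}$. Disjointness of the $B_n$'s makes $\Phi$ a ring homomorphism and gives $\|\Phi(\sum_n\mathbf{x}_n)\|=\sup_n\|\mathbf{x}_n\|$, the norm of the direct-sum element.

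For (5) I first harvest, inside a single small infinite set $B\subseteq\mathbb{N}$ (for instance $B=\{k^2:k\ge 1\}\in\mathfrak{S}$), an almost disjoint family $\{A_i:i<2^\omega\}$ of infinite subsets of $B$ by the construction recalled in the paragraph preceding the theorem; each $A_i$ is then automatically small. Fix bijections $\phi_i:\mathbb{N}\to A_i$, let $\psi_i$ be as before, and define
\[
\Psi:\bigoplus_{i<2^\omega}\frac{\ell_\infty}{c_{00}}\longrightarrow\frac{\mathfrak{s}_{00}}{c_{00}},\qquad\Psi\Big(\sum_{i\in F}[\mathbf{y}_i]\Big)=\Big[\sum_{i\in F}\psi_i(\mathbf{y}_i)\Big],
\]
for any finite $F\subseteq 2^\omega$. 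The map is well defined modulo $c_{00}$ because altering some $\mathbf{y}_i$ by an element of $c_{00}$ changes $\psi_i(\mathbf{y}_i)$ only on a finite subset of $A_i$. It is multiplicative because for distinct $i,j\in F$ the product $\psi_i(\mathbf{y}_i)\psi_j(\mathbf{y}'_j)$ is supported on $A_i\cap A_j$, a finite set, hence lies in $c_{00}$; only the ``diagonal'' terms $\psi_i(\mathbf{y}_i\mathbf{y}'_i)$ survive. For injectivity, suppose $\sum_{i\in F}\psi_i(\mathbf{y}_i)\in c_{00}$; fixing $i_0\in F$, the set $A_{i_0}\setminus\bigcup_{j\in F\setminus\{i_0\}}A_j$ is cofinite in $A_{i_0}$ by almost disjointness, and on it the sum reduces to $\psi_{i_0}(\mathbf{y}_{i_0})$, which must therefore be finitely supported, forcing $\mathbf{y}_{i_0}\in c_{00}$. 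The main obstacle is precisely this final bookkeeping in the quotient: one has to use the almost-disjointness of the $A_i$'s to decouple the components and conclude that finite-support behavior on the whole of $\mathbb{N}$ forces finite-support behavior of each individual coordinate.
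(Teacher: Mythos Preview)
Your argument is correct. Parts (1)--(3) match the paper's proof essentially verbatim: both identify $\mathfrak{s}_{00}=C^{\mathfrak S}_{00}(\mathbb{N})$, check that discrete $\mathbb{N}$ is normal and locally null, invoke Theorem~\ref{UUS}, and collapse the description of $\lambda_{\mathfrak S}\mathbb{N}$ using that every subset of $\mathbb{N}$ is clopen (you cite Lemma~\ref{KPJG}; the paper argues directly from the definition, but the content is the same).

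For (4) and (5) you take a slightly more elementary route than the paper. The paper passes through the isomorphism $\mathfrak{s}_{00}\cong C_{00}(Y)$ and realizes each copy of $\ell_\infty$ as $C(\mathrm{cl}_{\beta\mathbb{N}}A_n)=C(\beta A_n)\cong C(\beta\mathbb{N})=\ell_\infty$, then uses zero-set intersection identities in $\beta\mathbb{N}$ (e.g.\ $\mathrm{cl}_{\beta\mathbb{N}}A_k\cap\mathrm{cl}_{\beta\mathbb{N}}A_l=\mathrm{cl}_{\beta\mathbb{N}}(A_k\cap A_l)$) to handle the almost-disjoint bookkeeping in (5). You instead stay entirely inside $\ell_\infty$, transplanting sequences via explicit bijections $\phi_n:\mathbb{N}\to B_n$ and reading off disjointness and almost-disjointness at the level of supports in $\mathbb{N}$. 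Both arguments hinge on the same combinatorial facts (a partition of a small set into infinitely many infinite pieces for (4); an almost disjoint family of size $2^\omega$ inside a small set for (5)), and your injectivity argument via the cofinite set $A_{i_0}\setminus\bigcup_{j\neq i_0}A_j$ is exactly the combinatorial core of the paper's argument stripped of the $\beta\mathbb{N}$ language. Your version avoids the Stone--\v{C}ech machinery in (4)--(5) and is a bit cleaner for that; the paper's version has the virtue of making the template reusable for the $\mathfrak{s}_0/c_0$ case in Theorem~\ref{JHF}, where the closures in $\beta\mathbb{N}$ genuinely carry information.
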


\begin{proof}
Conditions (1)--(3) follow from Theorem \ref{UUS} and the following observation. Consider the ideal ${\mathfrak S}$ of $\mathbb{N}$. Note that if $\mathbf{x}\in\ell_\infty$ (since $\mathbb{N}$ is discrete) then
\[\mathrm{supp}(\mathbf{x})=\big\{n\in\mathbb{N}:\mathbf{x}(n)\neq0\big\},\]
and $\mathrm{supp}(\mathbf{x})$ is null if and only if it has a null neighborhood in $\mathbb{N}$. Thus $\mathfrak{s}_{00}=C^{\mathfrak S}_{00}(\mathbb{N})$. Note that $\mathbb{N}$ is locally null (indeed, $\{n\}$ is a null neighborhood of $n$ in $\mathbb{N}$ for each $n\in\mathbb{N}$) and that ${\mathfrak S}$ is non-proper (as $\sum 1/n$ diverges). Also, note that (since $\mathbb{N}$ is discrete) every subset $A$ of $\mathbb{N}$ is closed and open in $\mathbb{N}$ and therefore has an closed and open closure $\mathrm{cl}_{\beta\mathbb{N}}A$ in $\beta\mathbb{N}$. Therefore $Y=\lambda_{\mathfrak S}\mathbb{N}$ by Proposition \ref{HWA}.

(4). By (3), we may consider $C_{00}(Y)$ in place of $\mathfrak{s}_{00}$. Let $A$ be an infinite subset of $\mathbb{N}$ such that $\sum_{n\in A}1/n$ converges (which exists, for example, let $A=\{2^n:n\in \mathbb{N}\}$). Let $A_1, A_2,\dots$ be a partition of $A$ into pairwise disjoint infinite subsets. Let $n$ be a positive integer. We may assume that $C(\mathrm{cl}_{\beta\mathbb{N}}A_n)$ is a subalgebra of $C_{00}(Y)$. (Since $A_n$ is closed and open in $\mathbb{N}$ it has a closed and open closure $\mathrm{cl}_{\beta\mathbb{N}}A_n$ in $\beta\mathbb{N}$. Thus each element of $C(\mathrm{cl}_{\beta\mathbb{N}}A_n)$ may be continuously extended over $Y$ by defining it to be identically $0$ elsewhere.) Note that $\mathrm{cl}_{\beta\mathbb{N}}A_i$ and $\mathrm{cl}_{\beta\mathbb{N}}A_j$ are disjoint for any distinct positive integers $i$ and $j$, as $A_i$ and $A_j$ are disjoint closed and open subspaces (and thus zero-sets) of $\mathbb{N}$. Thus, the inclusion mapping
\[\iota:\bigoplus_{n=1}^\infty C(\mathrm{cl}_{\beta\mathbb{N}}A_n)\longrightarrow C_{00}(Y)\]
is an algebra isomorphism (onto its image) and it preserves norms. (For the latter, use the fact that $\mathrm{cl}_{\beta\mathbb{N}}A_i$'s are disjoint for distinct indices.) Note that $\mathrm{cl}_{\beta\mathbb{N}}A_n$ coincides with $\beta A_n$, as $A_n$ is closed in the normal space $\mathbb{N}$. Finally, observe that
\[C(\mathrm{cl}_{\beta\mathbb{N}}A_n)=C(\beta A_n)=C(\beta\mathbb{N})=C_B(\mathbb{N})=\ell_\infty.\]

(5). By (3), we may consider $C_{00}(Y)$ in place of $\mathfrak{s}_{00}$. Let $A$ be an infinite subset of $\mathbb{N}$ such that $\sum_{n\in A}1/n$ converges. Consider a family $\{A_i:i<2^\omega\}$ consisting of infinite subsets of $A$ such that $A_i\cap A_j$ is finite for any distinct $i,j<2^\omega$. Let
\[H=\big\{f\in C_{00}(Y):\mathrm{supp}(f)\subseteq A\big\}\]
and let
\[H_i=\big\{f\in C(\mathrm{cl}_{\beta\mathbb{N}}A_i):\mathrm{supp}(f)\subseteq A_i\big\}\]
for each $i<2^\omega$. As in (4), we may assume that $C(\mathrm{cl}_{\beta\mathbb{N}}A_i)$ is a subalgebra of $C_{00}(Y)$ for each $i<2^\omega$, and thus, we may assume that $H_i\subseteq H$. Note that if $f\in H$ then $\mathrm{supp}(f)$ is finite, as it is a compact subspace of $\mathbb{N}$.

Define a mapping
\[\Theta:\bigoplus_{i<2^\omega}\frac{C(\mathrm{cl}_{\beta\mathbb{N}}A_i)}{H_i}\longrightarrow\frac{C_{00}(Y)}{H}\]
by
\[\sum_{i<2^\omega}(f_i+H_i)\longmapsto\sum_{i<2^\omega}f_i+H\]
where $f_i\in C(\mathrm{cl}_{\beta\mathbb{N}}A_i)$ for each $i<2^\omega$. We show that $\Theta$ is an isometric isomorphism onto its image; since
\[\frac{C(\mathrm{cl}_{\beta\mathbb{N}}A_i)}{H_i}=\frac{\ell_\infty}{c_{00}}\]
for each $i<2^\omega$, this completes the proof.

First, note that $\Theta$ is well defined; to show this, let
\[\sum_{i<2^\omega}(f_i+H_i)=\sum_{i<2^\omega}(g_i+H_i)\]
where $f_i, g_i\in C(\mathrm{cl}_{\beta\mathbb{N}}A_i)$ for each $i<2^\omega$. For each $i<2^\omega$ then $f_i+H_i=g_i+H_i$, or equivalently $f_i-g_i\in H_i$; in particular $f_i-g_i\in H$. Thus
\[\sum_{i<2^\omega}(f_i-g_i)\in H\]
and therefore
\[\Theta\bigg(\sum_{i<2^\omega}(f_i+H_i)\bigg)=\sum_{i<2^\omega}f_i+H=\sum_{i<2^\omega}g_i+H=\Theta\bigg(\sum_{i<2^\omega}(g_i+H_i)\bigg).\]

Now, we show that $\Theta$ preserves product. Let $f_i,g_i\in C(\mathrm{cl}_{\beta\mathbb{N}}A_i)$ for each $i<2^\omega$. Then
\[\Theta\bigg(\sum_{i<2^\omega}(f_i+H_i)\cdot\sum_{i<2^\omega}(g_i+H_i)\bigg)=\Theta\bigg(\sum_{i<2^\omega}(f_ig_i+H_i)\bigg)
=\sum_{i<2^\omega}f_ig_i+H.\]
Note that if $k,l<2^\omega$ with $k\neq l$ then
\begin{eqnarray*}
\mathrm{supp}(f_kg_l)&\subseteq&\mathrm{supp}(f_k)\cap\mathrm{supp}(g_l)\\&\subseteq&\mathrm{cl}_{\beta\mathbb{N}}A_k\cap\mathrm{cl}_{\beta\mathbb{N}}A_l=
\mathrm{cl}_{\beta\mathbb{N}}(A_k\cap A_l)=A_k\cap A_l\subseteq A
\end{eqnarray*}
and thus $f_kg_l\in H$. We have
\begin{eqnarray*}
&&\Theta\bigg(\sum_{i<2^\omega}(f_i+H_i)\bigg)\cdot\Theta\bigg(\sum_{i<2^\omega}(g_i+H_i)\bigg)\\&=&\bigg(\sum_{i<2^\omega}f_i+H\bigg)\cdot\bigg(\sum_{i<2^\omega}g_i+H\bigg)
\\&=&\bigg(\sum_{i<2^\omega}f_i\sum_{i<2^\omega}g_i\bigg)+H\\&=&\bigg(\sum_{i<2^\omega}f_ig_i+\sum_{k\neq l}f_kg_l\bigg)+H\\&=&\sum_{i<2^\omega}f_ig_i+H.
\end{eqnarray*}
This together with the above relations proves that
\[\Theta\bigg(\sum_{i<2^\omega}(f_i+H_i)\cdot\sum_{i<2^\omega}(g_i+H_i)\bigg)=\Theta\bigg(\sum_{i<2^\omega}(f_i+H_i)\bigg)\cdot\Theta\bigg(\sum_{i<2^\omega}(g_i+H_i)\bigg).\]
That $\Theta$ preserves addition and scalar multiplication follows analogously.

Next, we show that $\Theta$ is injective. Let
\[\Theta\bigg(\sum_{i<2^\omega}(f_i+H_i)\bigg)=0\]
where $f_i\in C(\mathrm{cl}_{\beta\mathbb{N}}A_i)$ for each $i<2^\omega$. Then
\[\sum_{i<2^\omega}f_i+H=0,\]
or, equivalently
\[h=\sum_{i<2^\omega}f_i\in H.\]
Suppose that $f_{i_j}$, where $j=1,\ldots,n$, are the possibly non-zero terms. Fix some $k=1,\ldots,n$. Then
\[f_{i_k}=h-\sum_{1\leq j\neq k\leq n}f_{i_j}.\]
We have
\[\mathrm{coz}(f_{i_k})\subseteq\mathrm{coz}(h)\cup\bigcup_{1\leq j\neq k\leq n}\mathrm{coz}(f_{i_j})\subseteq\mathrm{supp}(h)\cup\bigcup_{1\leq j\neq k\leq n}\mathrm{cl}_{\beta\mathbb{N}}A_{i_j}.\]
Note that the latter set is compact, as $\mathrm{supp}(h)$ is finite, since $h\in H$. Therefore
\begin{equation}\label{PJUF}
\mathrm{supp}(f_{i_k})\subseteq\mathrm{supp}(h)\cup\bigcup_{1\leq j\neq k\leq n}\mathrm{cl}_{\beta\mathbb{N}}A_{i_j}.
\end{equation}
Intersecting both sides of (\ref{PJUF}) with $\mathrm{cl}_{\beta\mathbb{N}}A_{i_k}$ it yields
\begin{eqnarray*}
\mathrm{supp}(f_{i_k})&\subseteq&\mathrm{supp}(h)\cup\bigcup_{1\leq j\neq k\leq n}(\mathrm{cl}_{\beta\mathbb{N}}A_{i_j}\cap\mathrm{cl}_{\beta\mathbb{N}}A_{i_k})\\&=&\mathrm{supp}(h)\cup\bigcup_{1\leq j\neq k\leq n}\mathrm{cl}_{\beta\mathbb{N}}(A_{i_j}\cap A_{i_k})=\mathrm{supp}(h)\cup\bigcup_{1\leq j\neq k\leq n}(A_{i_j}\cap A_{i_k})
\end{eqnarray*}
with the latter being a subset of $\mathbb{N}$. Thus
\[\mathrm{supp}(f_{i_k})\subseteq\mathbb{N}\cap\mathrm{cl}_{\beta\mathbb{N}} A_{i_k}=A_{i_k}\]
and therefore $f_{i_k}\in H_{i_k}$. This implies that
\[\sum_{j=1}^n(f_{i_j}+H_{i_j})=0.\]
Thus $\Theta$ is injective.
\end{proof}

\begin{theorem}\label{JHF}
Let
\[\mathfrak{s}_0=\bigg\{\mathbf{x}\in\ell_\infty:\sum_{|\mathbf{x}(n)|\geq\epsilon}\frac{1}{n}\mbox{ converges for each }\epsilon>0\bigg\}.\]
Then
\begin{itemize}
\item[\rm(1)] $\mathfrak{s}_0$ is a closed ideal in $\ell_\infty$.
\item[\rm(2)] $\mathfrak{s}_0$ is non-unital.
\item[\rm(3)] $\mathfrak{s}_0$ contains $\mathfrak{s}_{00}$ densely.
\item[\rm(4)] $\mathfrak{s}_0$ is isometrically isomorphic to $C_0(Y)$ where
\[Y=\bigcup\bigg\{\mathrm{cl}_{\beta\mathbb{N}}A:A\subseteq\mathbb{N}\mbox{ and }\sum_{n\in A}\frac{1}{n}\mbox{ converges}\bigg\}.\]
In particular, $Y$ is the spectrum of $\mathfrak{s}_0$, is locally compact non-compact and contains $\mathbb{N}$ densely.
\item[\rm(5)] $\mathfrak{s}_0/c_0$ contains a copy of the normed algebra
\[\bigoplus_{i<2^\omega}\frac{\ell_\infty}{c_0}.\]
\end{itemize}
\end{theorem}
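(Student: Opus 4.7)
The plan is to reduce everything except (5) to Theorem \ref{UDR}, after observing that $\mathfrak{s}_0$ is literally $C^{\mathfrak S}_0(\mathbb{N})$ for $\mathfrak{S}$ the summable ideal of $\mathbb{N}$ (with the discrete topology). Indeed, for $\mathbf{x}\in\ell_\infty$ the set $|\mathbf{x}|^{-1}([1/n,\infty))$ is precisely $\{k:|\mathbf{x}(k)|\geq 1/n\}$, and requiring this set to be small for every $n$ is the same as requiring $\sum_{|\mathbf{x}(k)|\geq\epsilon}1/k$ to converge for every $\epsilon>0$. Since $\mathbb{N}$ is normal, is locally null with respect to $\mathfrak{S}$ (singletons being null neighborhoods), and $\mathfrak{S}$ is proper (because $\sum 1/n$ diverges), Theorem \ref{UDR} gives (1), (2), and (4) at once, together with density of $\mathfrak{s}_{00}=C^{\mathfrak S}_{00}(\mathbb{N})$ in $\mathfrak{s}_0$, which is (3). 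The identification $\lambda_{\mathfrak S}\mathbb{N}=Y$ has already been carried out inside the proof of Theorem \ref{TDK}, so nothing more is needed to conclude that $Y$ is the spectrum, that it is locally compact non-compact, and that it contains $\mathbb{N}$ densely.

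For (5) I would mirror the construction in the proof of Theorem \ref{TDK}(5), but replace the ideal $c_{00}$ by $c_0$. Fix $A\subseteq\mathbb{N}$ with $\sum_{n\in A}1/n<\infty$ and an almost disjoint family $\{A_i:i<2^\omega\}$ of infinite subsets of $A$. Each $A_i$ is clopen in $\mathbb{N}$ and null, so $\mathrm{cl}_{\beta\mathbb{N}}A_i$ is clopen in $\beta\mathbb{N}$, contained in $Y$, and equal to $\beta A_i$; extending by $0$ across the clopen complement embeds $C(\mathrm{cl}_{\beta\mathbb{N}}A_i)\cong\ell_\infty$ as a subalgebra of $C_0(Y)$. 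Through the isometric isomorphism $\phi:\mathfrak{s}_0\to C_0(Y)$ of (4), the subalgebra $c_0\subseteq\mathfrak{s}_0$ gets identified with
\[
H=\bigl\{g\in C_0(Y):g|_{Y\setminus\mathbb{N}}=\mathbf{0}\bigr\},
\]
because $\mathbf{x}_\beta$ vanishes on $\beta\mathbb{N}\setminus\mathbb{N}$ for $\mathbf{x}\in c_0$, while conversely any $g\in C_0(Y)$ vanishing off $\mathbb{N}$ must have finite support on the discrete open subspace $\mathbb{N}\subseteq Y$ on each level set $|g|^{-1}([1/n,\infty))$. Setting $H_i=\{f\in C(\mathrm{cl}_{\beta\mathbb{N}}A_i):f|_{A_i^*}=\mathbf{0}\}$, which gives $C(\mathrm{cl}_{\beta\mathbb{N}}A_i)/H_i\cong\ell_\infty/c_0$, I would define
\[
\Theta:\bigoplus_{i<2^\omega}\frac{C(\mathrm{cl}_{\beta\mathbb{N}}A_i)}{H_i}\longrightarrow\frac{C_0(Y)}{H},\qquad \sum_{i<2^\omega}(f_i+H_i)\longmapsto\sum_{i<2^\omega}f_i+H.
\]

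The main obstacle is the verification that $\Theta$ is a well-defined, multiplicative, injective \emph{isometry}. Well-definedness and additivity are routine; multiplicativity and injectivity require the same almost-disjointness argument as in Theorem \ref{TDK}(5), namely that for $k\neq l$, $\mathrm{supp}(f_kg_l)\subseteq\mathrm{cl}_{\beta\mathbb{N}}(A_k\cap A_l)=A_k\cap A_l$ is finite and thus sits in $H$, and that if $h=\sum_{j=1}^n f_{i_j}\in H$ then intersecting $\mathrm{supp}(f_{i_k})\subseteq\mathrm{supp}(h)\cup\bigcup_{j\neq k}\mathrm{cl}_{\beta\mathbb{N}}A_{i_j}$ with $\mathrm{cl}_{\beta\mathbb{N}}A_{i_k}$ forces $f_{i_k}$ to vanish on $A_{i_k}^*$, hence $f_{i_k}\in H_{i_k}$. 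Here one only uses that $\mathrm{supp}(h)$ meets $A_{i_k}^*$ in a set on which $h$ vanishes, which is true because $h\in H$ vanishes on all of $Y\setminus\mathbb{N}$; this replaces the compactness argument of Theorem \ref{TDK}(5) and is the principal new point. For the isometry, since $Y\setminus\mathbb{N}$ is closed in the locally compact $Y$ the restriction map $C_0(Y)/H\to C_0(Y\setminus\mathbb{N})$ is an isometric isomorphism, and the $\mathrm{cl}_{\beta\mathbb{N}}A_i\setminus A_i=A_i^*$ are pairwise disjoint, so
\[
\Bigl\|\sum_i f_i+H\Bigr\|=\sup_{y\in Y\setminus\mathbb{N}}\Bigl|\sum_i f_i(y)\Bigr|=\sup_i\sup_{A_i^*}|f_i|=\sup_i\|f_i+H_i\|,
\]
which matches the direct sum norm and closes the argument.
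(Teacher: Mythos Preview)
Your treatment of (1)--(4) is exactly the paper's: both identify $\mathfrak{s}_0=C^{\mathfrak S}_0(\mathbb{N})$ and invoke Theorem \ref{UDR} together with the computation $\lambda_{\mathfrak S}\mathbb{N}=Y$ already carried out in the proof of Theorem \ref{TDK}.

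For (5) your route is genuinely different, and in some respects cleaner, than the paper's. The paper does \emph{not} take $H$ to be the image of $c_0$; instead it sets
\[H=\bigl\{f\in C_0(Y):|f|^{-1}([\epsilon,\infty))\subseteq A\mbox{ for every }\epsilon>0\bigr\},\]
i.e.\ the functions with cozero-set inside the fixed small set $A$, and defines $H_i$ analogously. It then proves the isometry by two ad hoc claims which compute $\|f+H_i\|$ and $\|\sum_j f_{i_j}+H\|$ as $\|\cdot|_{Y\setminus\mathbb{N}}\|_\infty$ via explicit $\epsilon$-perturbations. Your choice $H=\phi(c_0)=\{g\in C_0(Y):g|_{Y\setminus\mathbb{N}}=\mathbf{0}\}$ is the one the statement actually calls for and lets you replace both claims by the single standard $C^*$-fact that for a closed subset $F$ of a locally compact space $Z$ the restriction map induces an isometric isomorphism $C_0(Z)/\{g:g|_F=0\}\cong C_0(F)$; applied to $F=Y\setminus\mathbb{N}$ and to $F=A_i^*$ this gives the quotient norms instantly, and pairwise disjointness of the $A_i^*$ does the rest. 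What the paper's longer argument buys is self-containment: it never appeals to the ideal theory of $C_0$-spaces.

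One remark on your injectivity paragraph: the support inclusion $\mathrm{supp}(f_{i_k})\subseteq\mathrm{supp}(h)\cup\bigcup_{j\neq k}\mathrm{cl}_{\beta\mathbb{N}}A_{i_j}$ by itself does not force $f_{i_k}$ to vanish on $A_{i_k}^*\cap\mathrm{supp}(h)$, since a function may vanish on part of its own support. The argument you actually want is the direct pointwise one you gesture at afterwards: for $y\in A_{i_k}^*$ one has $h(y)=0$ (as $y\in Y\setminus\mathbb{N}$) and $f_{i_j}(y)=0$ for $j\neq k$ (as $\mathrm{cl}_{\beta\mathbb{N}}A_{i_j}\cap\mathrm{cl}_{\beta\mathbb{N}}A_{i_k}\subseteq\mathbb{N}$), whence $f_{i_k}(y)=h(y)-\sum_{j\neq k}f_{i_j}(y)=0$. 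With that phrasing the proof is complete.
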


\begin{proof}
The proofs for (1)--(4) are analogous to the proofs for the corresponding parts in Theorem \ref{TDK}, making use of Lemma \ref{DGH}. Note that if $\mathbf{x}\in\ell_\infty$ and $\epsilon>0$ then $|\mathbf{x}|^{-1}([\epsilon,\infty))$ is null (with respect to ${\mathfrak S}$) if and only if
\[\sum_{|\mathbf{x}(n)|\geq\epsilon}\frac{1}{n}\]
converges. Thus, in particular $\mathfrak{s}_0=C^{\mathfrak S}_0(\mathbb{N})$.

(5). The proof of this part is analogous to the proof of the corresponding part in Theorem \ref{TDK}; we will highlight only the differences.

Let $A$ and $\{A_i:i<2^\omega\}$ be as chosen in the proof of Theorem \ref{TDK}. Let
\[H=\big\{f\in C_0(Y):|f|^{-1}\big([\epsilon,\infty)\big)\subseteq A\mbox{ for each }\epsilon>0\big\}\]
and let
\[H_i=\big\{f\in C(\mathrm{cl}_{\beta\mathbb{N}}A_i):|f|^{-1}\big([\epsilon,\infty)\big)\subseteq A_i\mbox{ for each }\epsilon>0\big\}\]
for each $i<2^\omega$. We consider $C(\mathrm{cl}_{\beta\mathbb{N}}A_i)$ as a normed subalgebra of $C_0(Y)$, and thus, we may assume that $H_i\subseteq H$ for each $i<2^\omega$. Note that if $i<2^\omega$ and $f\in H_i$ then $|f|^{-1}([\epsilon,\infty))$ is a finite subset of $A_i$, as it is compact (since it is  closed in $\mathrm{cl}_{\beta\mathbb{N}}A_i$). Define a mapping
\[\Theta:\bigoplus_{i<2^\omega}\frac{C(\mathrm{cl}_{\beta\mathbb{N}}A_i)}{H_i}\longrightarrow\frac{C_0(Y)}{H}\]
by
\[\sum_{i<2^\omega}(f_i+H_i)\mapsto\sum_{i<2^\omega}f_i+H\]
where $f_i\in C(\mathrm{cl}_{\beta\mathbb{N}}A_i)$ for each $i<2^\omega$. Arguing as in the proof of Theorem \ref{TDK} it follows that $\Theta$ is well defined. The proofs that $\Theta$ preserves product is analogous to the corresponding part in the proof of Theorem \ref{TDK}; simply observe that if $f_i,g_i\in C(\mathrm{cl}_{\beta\mathbb{N}}A_i)$ for each $i<2^\omega$ and $\epsilon>0$, then for any $k,l<2^\omega$ with $k\neq l$ we have
\begin{eqnarray*}
|f_kg_l|^{-1}\big([\epsilon,\infty)\big)&\subseteq&\mathrm{coz}(f_kg_l)\\&\subseteq&\mathrm{coz}(f_k)\cap\mathrm{coz}(g_l)
\\&\subseteq&\mathrm{cl}_{\beta\mathbb{N}}A_k\cap\mathrm{cl}_{\beta\mathbb{N}}A_l=\mathrm{cl}_{\beta\mathbb{N}}(A_k\cap A_l)=A_k\cap A_l\subseteq A
\end{eqnarray*}
and thus $f_kg_l\in H$. The proofs that $\Theta$ preserves addition and scalar multiplication are analogous.

Now, we show that $\Theta$ is injective. Let
\[\Theta\bigg(\sum_{i<2^\omega}(f_i+H_i)\bigg)=0\]
where $f_i\in C(\mathrm{cl}_{\beta\mathbb{N}}A_i)$ for each $i<2^\omega$. Suppose that $f_{i_j}$, where $j=1,\ldots,n$, are the possibly non-zero terms. Fix some $k=1,\ldots,n$. Then, as in the proof of Theorem \ref{TDK} we have
\[f_{i_k}=h-\sum_{1\leq j\neq k\leq n}f_{i_j}\]
for some $h\in H$. Let $\epsilon>0$. Then
\begin{eqnarray*}
|f_{i_k}|^{-1}\big([\epsilon,\infty)\big)&\subseteq&|h|^{-1}\big([\epsilon/n,\infty)\big)\cup\bigcup_{1\leq j\neq k\leq n}|f_{i_j}|^{-1}\big([\epsilon/n,\infty)\big)\\&\subseteq& A\cup\bigcup_{1\leq j\neq k\leq n}\mathrm{cl}_{\beta\mathbb{N}}A_{i_j}.
\end{eqnarray*}
Arguing as in the proof of Theorem \ref{TDK}, intersecting both sides of the above relation with $\mathrm{cl}_{\beta\mathbb{N}}A_{i_k}$ it yields
\begin{eqnarray*}
|f_{i_k}|^{-1}\big([\epsilon,\infty)\big)\subseteq A_{i_k}
\end{eqnarray*}
and therefore $f_{i_k}\in H_{i_k}$. This implies that
\[\sum_{j=1}^n(f_{i_j}+H_{i_j})=0.\]
Thus $\Theta$ is injective.

Next, we show that $\Theta$ is an isometry. First, we need to show the following.

\begin{xclaim}
Let $i<2^\omega$ and $f\in C(\mathrm{cl}_{\beta\mathbb{N}}A_i)$. Then
\[\|f+H_i\|=\big\|f|_{Y\setminus\mathbb{N}}\big\|_\infty.\]
\end{xclaim}

\noindent \emph{Proof of the claim.} Suppose to the contrary that
\begin{equation}\label{FSD}
\|f+H_i\|<\big\|f|_{Y\setminus\mathbb{N}}\big\|_\infty.
\end{equation}
Then
\[\alpha=\|f+h\|_\infty<\big\|f|_{Y\setminus\mathbb{N}}\big\|_\infty\]
for some $h\in H_i$. Let $\alpha<\gamma<\|f|_{Y\setminus\mathbb{N}}\|_\infty$. Then
\[B=|f|^{-1}\big([\gamma,\infty)\big)\subseteq|h|^{-1}\big([\gamma-\alpha,\infty)\big)=C;\]
as if $y\in Y$ such that $|f(y)|\geq\gamma$, then
\[\alpha\geq\big|f(y)+h(y)\big|\geq\big|f(y)\big|-\big|h(y)\big|\geq\gamma-\big|h(y)\big|\]
and thus $|h(y)|\geq\gamma-\alpha$. Note that $B$ is a finite subset of $\mathbb{N}$, as $C$ is so, since $h\in H_i$. We have
\[Y=\mathrm{cl}_Y \mathbb{N}=B\cup\mathrm{cl}_Y (\mathbb{N}\setminus B)\]
and thus
\begin{eqnarray*}
|f|(Y\setminus\mathbb{N})\subseteq|f|\big(\mathrm{cl}_Y (\mathbb{N}\setminus B)\big)\subseteq\overline{|f|(\mathbb{N}\setminus B)}&=&\overline{|f|\big(\mathbb{N}\cap|f|^{-1}\big([0,\gamma)\big)\big)}\\&\subseteq&\overline{|f|\big(|f|^{-1}\big([0,\gamma)\big)\big)}
\subseteq\overline{[0,\gamma)}=[0,\gamma],
\end{eqnarray*}
where the bar denotes the closure in $\mathbb{R}$. This implies that $\|f|_{Y\setminus\mathbb{N}}\|_\infty\leq\gamma$, which contradicts the choice of $\gamma$. Thus (\ref{FSD}) is false, that is
\begin{equation}\label{GD}
\|f+H_i\|\geq\big\|f|_{Y\setminus\mathbb{N}}\big\|_\infty.
\end{equation}

Now, we prove the reverse inequality in (\ref{GD}). Let $\delta=\|f|_{Y\setminus\mathbb{N}}\|_\infty$. We first show that
\[D_\epsilon=\mathbb{N}\cap|f|^{-1}\big((\delta+\epsilon,\infty)\big)\]
is finite for every $\epsilon>0$. Suppose the contrary, that is, suppose that $D_\epsilon$ is infinite for some $\epsilon>0$. Note that $\mathrm{cl}_{\beta\mathbb{N}}D_\epsilon\setminus\mathbb{N}$ is non-empty, as $\mathrm{cl}_{\beta\mathbb{N}}D_\epsilon\subseteq\mathbb{N}$ implies that $D_\epsilon=\mathrm{cl}_{\beta\mathbb{N}}D_\epsilon$ is compact and is then finite. Let $p\in\mathrm{cl}_{\beta\mathbb{N}}D_\epsilon\setminus\mathbb{N}$. Note that
\[\mathrm{cl}_{\beta\mathbb{N}}D_\epsilon=\mathrm{cl}_{\beta\mathbb{N}}|f|^{-1}\big((\delta+\epsilon,\infty)\big).\]
We have
\begin{eqnarray*}
|f|(p)\in|f|(\mathrm{cl}_{\beta\mathbb{N}}D_\epsilon)&=&|f|\big(\mathrm{cl}_{\beta\mathbb{N}}|f|^{-1}\big((\delta+\epsilon,\infty)\big)\big)
\\&\subseteq&\overline{|f|\big(|f|^{-1}\big((\delta+\epsilon,\infty)\big)\big)}\subseteq\overline{(\delta+\epsilon,\infty)}=[\delta+\epsilon,\infty),
\end{eqnarray*}
where the bar denotes the closure in $\mathbb{R}$. Therefore
\[\delta=\big\|f|_{Y\setminus\mathbb{N}}\big\|_\infty\geq\big|f(p)\big|\geq\delta+\epsilon,\]
which is not possible. This shows that $D_\epsilon$ is finite for every $\epsilon>0$. Now, let $\epsilon>0$. Define a mapping  $h:Y\rightarrow\mathbb{C}$ such that $h(x)=-f(x)$ if $x\in D_\epsilon$ and $h(x)=0$ otherwise. Note that $D_\epsilon$ is closed in $Y$, as it is finite, and $D_\epsilon$ is open in $Y$, as it is open in $\mathbb{N}$ and $\mathbb{N}$ is open in $\beta\mathbb{N}$ (and thus in $Y$), since $\mathbb{N}$ is locally compact. Therefore $h$ is continuous. Observe that
\[\mathrm{coz}(h)\subseteq D_\epsilon\subseteq\mathbb{N}\cap|f|^{-1}\big([\delta+\epsilon,\infty)\big)\subseteq\mathbb{N}\cap\mathrm{cl}_{\beta\mathbb{N}}A_i=A_i.\]
Thus $h\in H_i$. Note that $f+h=\mathbf{0}$ on $D_\epsilon$ and $f+h= f$ on $Y\setminus D_\epsilon$. Also,
\[\big\|f|_{\mathbb{N}\setminus D_\epsilon}\big\|_\infty\leq\delta+\epsilon\]
by the way we have defined $D_\epsilon$. Now
\[\|f+H_i\|\leq\|f+h\|_\infty=\big\|f|_{Y\setminus D_\epsilon}\big\|_\infty=\max\big\{\big\|f|_{\mathbb{N}\setminus D_\epsilon}\big\|_\infty, \big\|f|_{Y\setminus \mathbb{N}}\big\|_\infty\big\}\leq\delta+\epsilon.\]
Since $\epsilon>0$ is arbitrary, it follows that
\[\|f+H_i\|\leq\delta=\big\|f|_{Y\setminus\mathbb{N}}\big\|_\infty.\]
This together with (\ref{GD}) proves the claim.

\begin{xclaim}
Let $i_1,\ldots,i_n<2^\omega$ and $f_{i_j}\in C(\mathrm{cl}_{\beta\mathbb{N}}A_{i_j})$ for each $j=1,\ldots,n$. Then
\[\bigg\|\sum_{j=1}^nf_{i_j}+H\bigg\|=\bigg\|\bigg(\sum_{j=1}^nf_{i_j}\bigg)\bigg|_{Y\setminus\mathbb{N}}\bigg\|_\infty.\]
\end{xclaim}

\noindent \emph{Proof of the claim.}
This follows by an argument similar to the one we have given in the first claim; one simply needs to replace $f$ by $\sum_{j=1}^nf_{i_j}$, $H_i$ by $H$, and $A_i$ by $A$ throughout.

\medskip

Now, let $i_1,\ldots,i_n<2^\omega$ and $f_{i_j}\in C(\mathrm{cl}_{\beta\mathbb{N}}A_{i_j})$ for each $j=1,\ldots,n$. For simplicity of the notation let
\[f=\sum_{j=1}^nf_{i_j}.\]
Let $1\leq j\neq k\leq n$. Then $A_j\cap A_k$ is finite and thus
\[\mathrm{coz}(f_{i_k})\cap\mathrm{coz}(f_{i_l})\subseteq\mathrm{cl}_{\beta\mathbb{N}}A_{i_k}\cap\mathrm{cl}_{\beta\mathbb{N}}A_{i_l}=\mathrm{cl}_{\beta\mathbb{N}}(A_{i_k}\cap A_{i_l})=A_{i_k}\cap A_{i_l}\subseteq\mathbb{N}.\]
In particular,
\[\mathrm{coz}\big(f_{i_k}|_{Y\setminus\mathbb{N}}\big)\cap\mathrm{coz}\big(f_{i_l}|_{Y\setminus\mathbb{N}}\big)=\emptyset.\]
Now, by the second claim
\[\bigg\|\Theta\bigg(\sum_{j=1}^n(f_{i_j}+H_{i_j})\bigg)\bigg\|=\|f+H\|=\big\|f|_{Y\setminus\mathbb{N}}\big\|_\infty\]
and by the first claim
\begin{eqnarray*}
\big\|f|_{Y\setminus\mathbb{N}}\big\|_\infty&=&\max\big\{\big\|f|_{(\mathrm{cl}_{\beta\mathbb{N}}A_{i_j}\setminus\mathbb{N})}\big\|_\infty:j=1,\ldots,n\big\}
\\&=&\max\big\{\big\|f_{i_j}|_{Y\setminus\mathbb{N}}\big\|_\infty:j=1,\ldots,n\big\}
\\&=&\max\big\{\|f_{i_j}+H_{i_j}\|:j=1,\ldots,n\big\}=\bigg\|\sum_{j=1}^n(f_{i_j}+H_{i_j})\bigg\|.
\end{eqnarray*}
That is, $\Theta$ is an isometry.
\end{proof}

\begin{remark}
Any sequence $f:\mathbb{N}\rightarrow(0,\infty)$ such that $\sum_{n=1}^\infty f(n)$ diverges, determines an ideal
\[{\mathfrak S}_f=\bigg\{A\subseteq\mathbb{N}:\sum_{n\in A}f(n)\mbox{ converges}\bigg\}\]
in $\mathbb{N}$. This provides a more general setting to state and prove Theorems \ref{TDK} and \ref{JHF}.
\end{remark}

Let
\[{\mathfrak D}=\bigg\{A\subseteq\mathbb{N}:\limsup_{n\rightarrow\infty}\frac{|A\cap\{1,\ldots,n\}|}{n}=0\bigg\}.\]
Then ${\mathfrak D}$ also is an important ideal in $\mathbb{N}$, called the \textit{density ideal} in $\mathbb{N}$. In other words, ${\mathfrak D}$ consists of those subsets $D$ of $\mathbb{N}$ such that $D$ has an \textit{asymptotic density} zero. Note that every small set in $\mathbb{N}$ has an asymptotic density zero, that is ${\mathfrak S}\subseteq{\mathfrak D}$; the converse, however, does not hold in general. The set of all prime numbers has an asymptotic density zero but it is not small. (For more information on the subject, see \cite{Hr}.)

The following is dual to Theorems \ref{TDK} and \ref{JHF} and may be proved analogously, by replacing the ideal ${\mathfrak S}$ by the ideal ${\mathfrak D}$ throughout all proofs.

\begin{theorem}\label{DFDS}
Let
\[\mathfrak{d}_{00}=\bigg\{\mathbf{x}\in\ell_\infty:\limsup_{n\rightarrow\infty}\frac{|\{k\leq n:\mathbf{x}(k)\neq0\}|}{n}=0\bigg\}\]
and
\[\mathfrak{d}_0=\bigg\{\mathbf{x}\in\ell_\infty:\limsup_{n\rightarrow\infty}\frac{|\{k\leq n:|\mathbf{x}(k)|\geq\epsilon\}|}{n}=0\mbox{ for each }\epsilon>0\bigg\}.\]
Then
\begin{itemize}
\item[\rm(1)] $\mathfrak{d}_{00}$ is an ideal in $\ell_\infty$ and $\mathfrak{d}_0$ is a closed ideal in $\ell_\infty$.
\item[\rm(2)] $\mathfrak{d}_{00}$ is dense in $\mathfrak{d}_0$.
\item[\rm(3)] Neither $\mathfrak{d}_{00}$ nor $\mathfrak{d}_0$ is unital.
\item[\rm(4)] $\mathfrak{d}_{00}$ and $\mathfrak{d}_0$ are isometrically isomorphic to $C_{00}(Y)$ and $C_0(Y)$, respectively, where
\[Y=\bigcup\bigg\{\mathrm{cl}_{\beta\mathbb{N}}A:A\subseteq\mathbb{N}\mbox{ and }\limsup_{n\rightarrow\infty}\frac{|A\cap\{1,\ldots,n\}|}{n}=0\bigg\}.\]
In particular, $Y$ is the spectrum of $\mathfrak{d}_0$, is locally compact non-compact and contains $\mathbb{N}$ densely.
\item[\rm(5)] $\mathfrak{d}_{00}$ contains an isometric copy of the normed algebra $\bigoplus_{n=1}^\infty\ell_\infty$.
\item[\rm(6)] $\mathfrak{d}_{00}/c_{00}$ contains an isomorphic copy of the algebra
\[\bigoplus_{i<2^\omega}\frac{\ell_\infty}{c_{00}}.\]
\item[\rm(7)] $\mathfrak{d}_0/c_0$ contains an isometric copy of the normed algebra
\[\bigoplus_{i<2^\omega}\frac{\ell_\infty}{c_0}.\]
\end{itemize}
\end{theorem}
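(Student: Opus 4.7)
The plan is to verify that the density ideal $\mathfrak{D}$ satisfies the hypotheses of Theorems \ref{UUS} and \ref{UDR}, to identify $\mathfrak{d}_{00}=C^{\mathfrak{D}}_{00}(\mathbb{N})$ and $\mathfrak{d}_0=C^{\mathfrak{D}}_0(\mathbb{N})$, and then to transcribe the arguments of Theorems \ref{TDK} and \ref{JHF} with $\mathfrak{S}$ replaced throughout by $\mathfrak{D}$. First I would check that $\mathfrak{D}$ is a proper ideal in $\mathbb{N}$ (closure under subsets and finite unions follows from $\limsup$ being subadditive for non-negative sequences, and $\mathbb{N}$ has asymptotic density $1$) and that $\mathbb{N}$ is locally null with respect to $\mathfrak{D}$, since each singleton $\{n\}$ is a null neighborhood of itself. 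Because $\mathbb{N}$ is discrete, for every $\mathbf{x}\in\ell_\infty$ the set $\mathrm{supp}(\mathbf{x})=\{n:\mathbf{x}(n)\neq 0\}$ has a null neighborhood in $\mathbb{N}$ if and only if it is itself null, and $|\mathbf{x}|^{-1}([1/n,\infty))=\{k:|\mathbf{x}(k)|\geq 1/n\}$ is null if and only if it has density zero; hence $\mathfrak{d}_{00}=C^{\mathfrak{D}}_{00}(\mathbb{N})$ and $\mathfrak{d}_0=C^{\mathfrak{D}}_0(\mathbb{N})$.

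Next, every $A\subseteq\mathbb{N}$ is clopen, so $A\in\mathrm{Coz}(\mathbb{N})$, $\mathrm{cl}_\mathbb{N}A=A$, and $\mathrm{cl}_{\beta\mathbb{N}}A$ is clopen in $\beta\mathbb{N}$; in particular $\mathrm{int}_{\beta\mathbb{N}}\mathrm{cl}_{\beta\mathbb{N}}A=\mathrm{cl}_{\beta\mathbb{N}}A$. The condition ``$A$ has a null neighborhood in $\mathbb{N}$'' collapses to ``$A$ is null'', so by Definition \ref{HWA} one has
\[
\lambda_{\mathfrak{D}}\mathbb{N}=\bigcup\Bigl\{\mathrm{cl}_{\beta\mathbb{N}}A:A\subseteq\mathbb{N},\ \limsup_{n\to\infty}\tfrac{|A\cap\{1,\ldots,n\}|}{n}=0\Bigr\},
\]
which is exactly the space $Y$ in the statement. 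Parts (1)--(4) are then immediate from Theorems \ref{UUS} and \ref{UDR}: density of $\mathfrak{d}_{00}$ in $\mathfrak{d}_0$ is Theorem \ref{UDR}(3), while non-unitality of both ideals and non-compactness of $Y$ follow from Lemmas \ref{HGBV} and \ref{HFBY} applied to the proper ideal $\mathfrak{D}$.

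For parts (5)--(7) I would exploit the inclusion $\mathfrak{S}\subseteq\mathfrak{D}$ (every summable set has asymptotic density zero). Therefore the very objects used in the proofs of Theorems \ref{TDK}(4)--(5) and \ref{JHF}(5)---the set $A=\{2^n:n\in\mathbb{N}\}$, a partition $A=A_1\sqcup A_2\sqcup\cdots$ into infinite pieces, and an almost disjoint family $\{A_i:i<2^\omega\}$ of infinite subsets of $A$---all lie in $\mathfrak{D}$. The constructions of the inclusion $\iota:\bigoplus_{n=1}^\infty\ell_\infty\to C_{00}(Y)$ and of the maps
\[
\Theta:\bigoplus_{i<2^\omega}\frac{\ell_\infty}{c_{00}}\longrightarrow\frac{C_{00}(Y)}{H}\quad\text{and}\quad \Theta:\bigoplus_{i<2^\omega}\frac{\ell_\infty}{c_0}\longrightarrow\frac{C_0(Y)}{H},
\]
together with the verifications of well-definedness, preservation of the algebraic operations, injectivity, and (in the $C_0$ case) isometry via the two intermediate claims computing norms against $Y\setminus\mathbb{N}$, transfer verbatim, yielding (5), (6) and (7) respectively.

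The main obstacle is purely verificational: one must confirm that no step in the proofs of Theorems \ref{TDK} and \ref{JHF} uses a property of $\mathfrak{S}$ beyond the ideal axioms together with the facts that each $A_i$ and each pairwise intersection $A_k\cap A_l$ (which is in fact finite) lies in the ideal. Inspection shows this is the case---no use is made of $\mathfrak{S}$ being a $\sigma$-ideal, nor of its explicit summability description---so no new ideas are required beyond the identification $\mathfrak{d}_\bullet=C^{\mathfrak{D}}_\bullet(\mathbb{N})$ and the containment $\mathfrak{S}\subseteq\mathfrak{D}$.
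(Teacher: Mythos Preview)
Your proposal is correct and follows essentially the same approach as the paper, which simply states that the theorem ``may be proved analogously, by replacing the ideal $\mathfrak{S}$ by the ideal $\mathfrak{D}$ throughout all proofs.'' Your additional observation that $\mathfrak{S}\subseteq\mathfrak{D}$ lets the very same set $A=\{2^n:n\in\mathbb{N}\}$ and its derived families be reused verbatim is a convenient shortcut that the paper does not spell out, but it is entirely in the spirit of the intended argument.
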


\section{The structure of non-vanishing closed ideals in $C_B(X)$}\label{KHDS}

In this section, for a completely regular space $X$, we consider non-vanishing closed ideals of $C_B(X)$. (Recall that an ideal $H$ of $C_B(X)$ is called \textit{non-vanishing} if for each $x\in H$ there is some $h\in H$ such that $h(x)\neq 0$.) We first show how a non-vanishing closed ideal $H$ of $C_B(X)$ can be thought of as being $C^{\mathfrak I}_0(X)$ for an appropriate choice of an ideal ${\mathfrak I}$ of $X$. Our representation theorems in Part \ref{HFPG} will then enable us to determine the spectrum of $H$ (denoted by $\mathfrak{sp}(H)$) as a subspace of $\beta X$. We will see how the known construction of $\mathfrak{sp}(H)$ allows a better study of its properties.

We start with the following definition.

\begin{definition}\label{HJGHA}
Let $X$ be a completely regular space and let $H$ be an ideal in $C_B(X)$. Define
\[\lambda_H X=\bigcup_{h\in H}\mathrm{cl}_{\beta X}|h|^{-1}\big((1,\infty)\big),\]
considered as a subspace of $\beta X$.
\end{definition}

As we will see the space $\lambda_H X$ defined in the above definition is nothing but $\lambda_{\mathfrak H} X$ for an appropriate choice of the ideal ${\mathfrak H}$ of $X$.

\begin{lemma}\label{JJHF}
Let $X$ be a completely regular space and let $f\in C_B(X)$. Let $f_1,f_2,\ldots$ be a sequence in $C_B(X)$ such that
\[|f|^{-1}\big([1/n,\infty)\big)\subseteq |f_n|^{-1}\big([1,\infty)\big)\]
for any positive integer $n$. Then, there is a sequence $g_1,g_2,\ldots$ in $C_B(X)$ with $g_nf_n\rightarrow f$.
\end{lemma}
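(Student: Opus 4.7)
My plan is to construct each $g_n$ explicitly using a single continuous cutoff on the complex plane, rather than building $g_n$ piecewise on $X$.

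Define the continuous bounded function $\phi:\mathbb{C}\rightarrow\mathbb{C}$ by
\[\phi(z)=\frac{\overline{z}}{\max\{1,|z|^2\}},\]
so that $\phi(z)=1/z$ whenever $|z|\geq 1$ and $\phi(z)=\overline{z}$ whenever $|z|\leq 1$ (the two formulas agree on $|z|=1$, so $\phi$ is continuous, and clearly $|\phi(z)|\leq 1$ for all $z$). The decisive algebraic feature is that $\phi(z)z=1$ for $|z|\geq 1$ and $\phi(z)z=|z|^2\in[0,1]$ for $|z|\leq 1$.

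Now for each positive integer $n$ set
\[g_n=f\cdot(\phi\circ f_n)\in C_B(X),\]
which is bounded by $\|f\|$ since $|\phi|\leq 1$. The estimate will then come from a pointwise case split on whether $|f_n(x)|\geq 1$ or $|f_n(x)|<1$. In the first case $\phi(f_n(x))f_n(x)=1$, so $(g_nf_n-f)(x)=0$. In the second case the hypothesis, taken contrapositively, forces $|f(x)|<1/n$, while $|\phi(f_n(x))f_n(x)-1|=1-|f_n(x)|^2\leq 1$; hence
\[\big|(g_nf_n-f)(x)\big|=\big|f(x)\big|\cdot\big|\phi(f_n(x))f_n(x)-1\big|<\frac{1}{n}.\]
Combining the two cases yields $\|g_nf_n-f\|\leq 1/n$, so $g_nf_n\rightarrow f$ in $C_B(X)$, as desired.

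There is no serious obstacle: the only nontrivial step is producing a single continuous bounded function on $\mathbb{C}$ that inverts $z$ on $|z|\geq 1$ and is well-behaved on $|z|\leq 1$, and the formula above does this in one line. Once $\phi$ is in hand, the pointwise estimate and the invocation of the containment $|f|^{-1}([1/n,\infty))\subseteq|f_n|^{-1}([1,\infty))$ in its contrapositive form $\{|f_n|<1\}\subseteq\{|f|<1/n\}$ finish the proof immediately.
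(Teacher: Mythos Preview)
Your proof is correct and is essentially the same as the paper's: the paper defines $u_n$ piecewise on $X$ by $u_n(x)=1/f_n(x)$ on $|f_n|^{-1}([1,\infty))$ and $u_n(x)=\overline{f_n(x)}$ on $|f_n|^{-1}([0,1])$, then sets $g_n=u_nf$, which is exactly your $g_n=f\cdot(\phi\circ f_n)$ since $\phi\circ f_n=u_n$. Your packaging via a single formula $\phi(z)=\overline{z}/\max\{1,|z|^2\}$ on $\mathbb{C}$ makes the continuity argument marginally cleaner than the paper's pasting-lemma verification, but the idea and the ensuing case split are identical.
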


\begin{proof}
Fix some positive integer $n$. We define a mapping $u_n:X\rightarrow\mathbb{C}$ by
\[u_n(x)=\left\{
             \begin{array}{ll}
               1/f_n(x), & \hbox{ if }x\in |f_n|^{-1}([1,\infty));  \\
               \overline{f_n(x)}, & \hbox{ if }x\in |f_n|^{-1}([0,1]).
             \end{array}
         \right.\]
The mapping $u_n$ is well defined, as $1/f_n(x)=\overline{f_n(x)}$ for any $x$ in the intersection
\[|f_n|^{-1}\big([1,\infty)\big)\cap |f_n|^{-1}\big([0,1]\big)=|f_n|^{-1}(1),\]
and $u_n$ is continuous, as it is continuous on each of the two closed subspaces $|f_n|^{-1}([1,\infty))$ and $|f_n|^{-1}([0,1])$ of $X$ whose union is the entire $X$. Note that $|u_n(x)|\leq 1$ for each $x\in X$. In particular, $u_n\in C_B(X)$. We now verify that
\begin{equation}\label{GFFF}
\big|u_nf_nf(x)-f(x)\big|<1/n
\end{equation}
for each $x\in X$. Let $x\in X$. We consider the following two cases:
\begin{description}
  \item[Case 1.] Suppose that $x\in|f_n|^{-1}([1,\infty))$. Then $u_n(x)f_n(x)=1$ by the definition of $u_n$. Therefore $u_nf_nf(x)-f(x)=0$, and thus (\ref{GFFF}) holds trivially in this case.

  \item[Case 2.] Suppose that $x\in|f_n|^{-1}([0,1))$. Then $f_n(x)=\overline{u_n(x)}$ by the definition of $u_n$. Therefore \[u_n(x)f_n(x)f(x)-f(x)=\big|u_n(x)\big|^2f(x)-f(x)=\big[\big|u_n(x)\big|^2-1\big]f(x).\]
      But $|u_n(x)|\leq 1$ and $|f(x)|<1/n$, as using our assumption
      \[|f_n|^{-1}\big([0,1)\big)\subseteq|f|^{-1}\big([0,1/n)\big).\]
      Therefore (\ref{GFFF}) holds in this case as well.
\end{description}
By (\ref{GFFF}) it follows that $\|u_nf_nf-f\|\leq 1/n$ for each positive integer $n$, and consequently $\|u_nf_nf-f\|\rightarrow 0$. For each positive integer $n$ let $g_n=u_nf$. Then $g_1,g_2,\ldots$ is a sequence in $C_B(X)$ such that $g_nf_n\rightarrow f$.
\end{proof}

\begin{lemma}\label{KLGS}
Let $X$ be a completely regular space and let $H$ be an ideal in $C_B(X)$. Then
\[\lambda_H X=\bigcup_{h\in H}\mathrm{int}_{\beta X}\mathrm{cl}_{\beta X}|h|^{-1}\big((1,\infty)\big).\]
\end{lemma}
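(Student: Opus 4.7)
The plan is to establish the two inclusions separately. One direction is immediate, since for each $h\in H$ we have $\mathrm{int}_{\beta X}\mathrm{cl}_{\beta X}|h|^{-1}((1,\infty))\subseteq\mathrm{cl}_{\beta X}|h|^{-1}((1,\infty))$; taking the union over $h\in H$ gives the containment of the right hand side in $\lambda_H X$.

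For the reverse inclusion, I would show that for every $h\in H$ there exists $f\in H$ with
\[\mathrm{cl}_{\beta X}|h|^{-1}\big((1,\infty)\big)\subseteq\mathrm{int}_{\beta X}\mathrm{cl}_{\beta X}|f|^{-1}\big((1,\infty)\big).\]
The natural choice is $f=2h$, which lies in $H$ because $H$ is an ideal of $C_B(X)$ (multiply by the constant function $2\cdot\mathbf{1}\in C_B(X)$). Since $|f|^{-1}((1,\infty))=|h|^{-1}((1/2,\infty))$, the goal reduces to
\[\mathrm{cl}_{\beta X}|h|^{-1}\big((1,\infty)\big)\subseteq\mathrm{int}_{\beta X}\mathrm{cl}_{\beta X}|h|^{-1}\big((1/2,\infty)\big).\]

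The key ingredients for this are, first, that $h\in C_B(X)$ extends continuously to $h_\beta\colon\beta X\to\mathbb{C}$ and consequently $|h|$ extends to $|h_\beta|\colon\beta X\to[0,\|h\|]$; second, that the closed set $|h_\beta|^{-1}([1,\infty))$ contains $|h|^{-1}((1,\infty))$, giving
\[\mathrm{cl}_{\beta X}|h|^{-1}\big((1,\infty)\big)\subseteq|h_\beta|^{-1}\big([1,\infty)\big)\subseteq|h_\beta|^{-1}\big((1/2,\infty)\big);\]
and third, an analogue of Lemma~\ref{LKG}, applied to the complementary open half-line $(1/2,\infty)$, which will give
\[|h_\beta|^{-1}\big((1/2,\infty)\big)\subseteq\mathrm{int}_{\beta X}\mathrm{cl}_{\beta X}|h|^{-1}\big((1/2,\infty)\big).\]
Chaining these three facts yields the desired inclusion and completes the proof.

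The main (minor) obstacle is the last step above. To justify it one uses the same trick as in Lemma~\ref{LKG}: the set $|h_\beta|^{-1}((1/2,\infty))$ is open in $\beta X$, hence is contained in the interior of its own closure; and by the observation (stated immediately before Lemma~\ref{LKG}) that $\mathrm{cl}_{\beta X}U=\mathrm{cl}_{\beta X}(U\cap X)$ for any open $U\subseteq\beta X$, that closure equals $\mathrm{cl}_{\beta X}|h|^{-1}((1/2,\infty))$. Note that the argument does not require $H$ to be closed or to contain any particular element beyond the given $h$; only the ideal property (closure under multiplication by scalars) is used, which is why the lemma holds at this level of generality.
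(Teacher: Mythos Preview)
Your proof is correct and follows essentially the same route as the paper's: the paper also reduces the nontrivial inclusion to showing $\mathrm{cl}_{\beta X}|h|^{-1}((1,\infty))\subseteq|h_\beta|^{-1}([1,\infty))\subseteq|h_\beta|^{-1}((\epsilon,\infty))\subseteq\mathrm{int}_{\beta X}\mathrm{cl}_{\beta X}|h/\epsilon|^{-1}((1,\infty))$ for some $0<\epsilon<1$, invoking a slight modification of Lemma~\ref{LKG} for the last step. Your choice $f=2h$ is exactly the case $\epsilon=1/2$, and your explicit justification of the Lemma~\ref{LKG} analogue (open set contained in the interior of its closure, combined with $\mathrm{cl}_{\beta X}U=\mathrm{cl}_{\beta X}(U\cap X)$ for open $U$) matches what the paper leaves implicit.
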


\begin{proof}
Clearly, it suffices to show that
\[\lambda_H X\subseteq\bigcup_{h\in H}\mathrm{int}_{\beta X}\mathrm{cl}_{\beta X}|h|^{-1}\big((1,\infty)\big).\]
Let $h\in H$. Then
\[\mathrm{cl}_{\beta X}|h|^{-1}\big((1,\infty)\big)\subseteq|h_\beta|^{-1}\big([1,\infty)\big)\subseteq|h_\beta|^{-1}\big((\epsilon,\infty)\big)\]
where $0<\epsilon<1$. But
\[|h_\beta|^{-1}\big((\epsilon,\infty)\big)\subseteq\mathrm{int}_{\beta X}\mathrm{cl}_{\beta X}|h|^{-1}\big((\epsilon,\infty)\big)=\mathrm{int}_{\beta X}\mathrm{cl}_{\beta X}|h/\epsilon|^{-1}\big((1,\infty)\big)\]
by (a slight modification of) Lemma \ref{LKG}.
\end{proof}

\begin{lemma}\label{GFS}
Let $X$ be a completely regular space and let $H$ be an ideal in $C_B(X)$. Let $K$ be a compact subspace of $\lambda_H X$. Then
\[K\subseteq\mathrm{int}_{\beta X}\mathrm{cl}_{\beta X}h^{-1}\big((1,\infty)\big)\]
for some non-negative $h\in H$.
\end{lemma}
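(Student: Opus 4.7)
The plan is to combine the open-set description of $\lambda_H X$ from Lemma \ref{KLGS} with the compactness of $K$, then amalgamate the finitely many mappings that appear into a single non-negative element of $H$ by taking a sum of squared moduli.

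First, I would invoke Lemma \ref{KLGS} to write
\[\lambda_H X=\bigcup_{h\in H}\mathrm{int}_{\beta X}\mathrm{cl}_{\beta X}|h|^{-1}\big((1,\infty)\big),\]
which realizes $\lambda_H X$ as a union of open subsets of $\beta X$. Since $K$ is a compact subspace of $\lambda_H X$, there exist $h_1,\ldots,h_n\in H$ with
\[K\subseteq\bigcup_{i=1}^n\mathrm{int}_{\beta X}\mathrm{cl}_{\beta X}|h_i|^{-1}\big((1,\infty)\big).\]

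Next, I would define $h=\sum_{i=1}^n|h_i|^2=\sum_{i=1}^n h_i\overline{h_i}$. Since each $\overline{h_i}$ lies in $C_B(X)$ and $H$ is an ideal in $C_B(X)$, each product $h_i\overline{h_i}$ lies in $H$; thus $h\in H$, and clearly $h$ is non-negative. The key pointwise observation is that if $|h_i(x)|>1$ for some $i$, then
\[h(x)\geq\big|h_i(x)\big|^2>1,\]
so $|h_i|^{-1}((1,\infty))\subseteq h^{-1}((1,\infty))$ for each $i=1,\ldots,n$. Taking closures and then interiors in $\beta X$ (both operations being monotone with respect to inclusion) yields
\[\mathrm{int}_{\beta X}\mathrm{cl}_{\beta X}|h_i|^{-1}\big((1,\infty)\big)\subseteq\mathrm{int}_{\beta X}\mathrm{cl}_{\beta X}h^{-1}\big((1,\infty)\big)\]
for every $i$. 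Combining with the finite cover from the first step gives
\[K\subseteq\mathrm{int}_{\beta X}\mathrm{cl}_{\beta X}h^{-1}\big((1,\infty)\big),\]
as required.

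There is no real obstacle here; the only point that needs to be observed is that $|h_i|^2\in H$ even though $H$ is only assumed to be a (complex) algebraic ideal: this is immediate from $|h_i|^2=h_i\cdot\overline{h_i}$ together with $\overline{h_i}\in C_B(X)$. The rest is a standard compactness argument feeding into the representation of $\lambda_H X$ provided by Lemma \ref{KLGS}.
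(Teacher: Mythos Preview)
Your proof is correct and follows essentially the same approach as the paper's own proof: use Lemma \ref{KLGS} and compactness to extract a finite subcover, set $h=\sum_{i=1}^n h_i\overline{h_i}\in H$, and then use the pointwise inclusion $|h_i|^{-1}((1,\infty))\subseteq h^{-1}((1,\infty))$ together with monotonicity of closure and interior.
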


\begin{proof}
Using Lemma \ref{KLGS} and compactness of $K$ we have
\begin{equation}\label{JJHJS}
K\subseteq\bigcup_{i=1}^j\mathrm{int}_{\beta X}\mathrm{cl}_{\beta X}|h_i|^{-1}\big((1,\infty)\big)
\end{equation}
where $h_i\in H$ for each $i=1,\ldots,j$. Let
\[h=\sum_{i=1}^j|h_i|^2.\]
Then $h=\sum_{i=1}^j h_i\overline{h_i}\in H$, as $H$ is an ideal in $C_B(X)$. We have
\[\bigcup_{i=1}^j|h_i|^{-1}\big((1,\infty)\big)\subseteq h^{-1}\big((1,\infty)\big).\]
In particular,
\[\bigcup_{i=1}^j\mathrm{int}_{\beta X}\mathrm{cl}_{\beta X}|h_i|^{-1}\big((1,\infty)\big)\subseteq\mathrm{int}_{\beta X}\mathrm{cl}_{\beta X}\Big[\bigcup_{i=1}^j|h_i|^{-1}\big((1,\infty)\big)\Big]\subseteq\mathrm{int}_{\beta X}\mathrm{cl}_{\beta X}h^{-1}\big((1,\infty)\big).\]
This together with (\ref{JJHJS}) implies that
\[K\subseteq\mathrm{int}_{\beta X}\mathrm{cl}_{\beta X}h^{-1}\big((1,\infty)\big).\]
\end{proof}

The next theorem determines the spectrum of a non-vanishing closed ideal of $C_B(X)$ as a subspace of $\beta X$.

\begin{theorem}\label{TRES}
Let $X$ be a completely regular space and let $H$ be a non-vanishing closed ideal in $C_B(X)$. Then $H$ is isometrically isomorphic to $C_0(Y)$ for some unique locally compact space $Y$, namely, for
\[Y=\bigcup_{h\in H}\mathrm{cl}_{\beta X}|h|^{-1}\big((1,\infty)\big).\]
In particular, $Y$ is the spectrum of $H$. Furthermore, $Y$ contains $X$ as a dense subspace.
\end{theorem}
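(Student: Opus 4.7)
The strategy is to realize the given non-vanishing closed ideal $H$ as $C^{\mathfrak I}_0(X)$ for a suitably chosen ideal $\mathfrak{I}$ of $X$, and then invoke Theorem \ref{UDR}. The natural choice is
\[{\mathfrak I}=\big\{A\subseteq X: A\subseteq|h|^{-1}\big([\epsilon,\infty)\big)\mbox{ for some }h\in H\mbox{ and some }\epsilon>0\big\}.\]
That $\mathfrak{I}$ is closed under subsets is immediate; closure under finite unions follows from the fact that if $A_i\subseteq|h_i|^{-1}([\epsilon_i,\infty))$ for $i=1,2$, then $A_1\cup A_2\subseteq h^{-1}([\epsilon,\infty))$ where $h=|h_1|^2+|h_2|^2\in H$ and $\epsilon=\min\{\epsilon_1^2,\epsilon_2^2\}$. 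Also, $X$ is locally null with respect to ${\mathfrak I}$: given $x\in X$, since $H$ is non-vanishing there is $h\in H$ with $h(x)\neq 0$, and then $|h|^{-1}((\epsilon,\infty))$ is an open neighborhood of $x$ for any $0<\epsilon<|h(x)|$, while being contained in the null set $|h|^{-1}([\epsilon,\infty))$.

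The first main step is to verify the identification $H=C^{\mathfrak I}_0(X)$. The inclusion $H\subseteq C^{\mathfrak I}_0(X)$ is immediate from the definition of $\mathfrak{I}$: for $f\in H$ and a positive integer $n$, $|f|^{-1}([1/n,\infty))$ is itself null (witnessed by $f$ and $1/n$). For the reverse inclusion, this is where I would expect the main difficulty and where Lemma \ref{JJHF} plays the decisive role. Given $f\in C^{\mathfrak I}_0(X)$, for each positive integer $n$ the set $|f|^{-1}([1/n,\infty))$ is null, so it is contained in some $|h_n|^{-1}([\epsilon_n,\infty))=|h_n/\epsilon_n|^{-1}([1,\infty))$ with $h_n\in H$; setting $f_n=h_n/\epsilon_n\in H$, Lemma \ref{JJHF} furnishes a sequence $g_n\in C_B(X)$ with $g_nf_n\to f$. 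Since $H$ is an ideal of $C_B(X)$, each $g_nf_n$ lies in $H$, and since $H$ is closed in $C_B(X)$, we conclude $f\in H$.

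The second main step is to identify the subspace $\lambda_{\mathfrak I} X$ of $\beta X$, given in Definition \ref{HWA}, with the set $Y$ in the statement. For any $h\in H$, the set $C=|h|^{-1}((1,\infty))$ is a cozero-set of $X$, and $|h|^{-1}((1/2,\infty))$ is an open (hence its own interior) null neighborhood of $\mathrm{cl}_X C\subseteq|h|^{-1}([1,\infty))$, so $\mathrm{int}_{\beta X}\mathrm{cl}_{\beta X}C\subseteq\lambda_{\mathfrak I} X$, and consequently $Y\subseteq\lambda_{\mathfrak I} X$. Conversely, given a cozero-set $C$ of $X$ whose closure has a null neighborhood $U$ in $X$, by definition $U\subseteq|h|^{-1}([\epsilon,\infty))$ for some $h\in H$ and $\epsilon>0$; writing $k=(2/\epsilon)h\in H$, we have
\[\mathrm{cl}_{\beta X}C\subseteq\mathrm{cl}_{\beta X}U\subseteq\mathrm{cl}_{\beta X}|h|^{-1}\big((\epsilon/2,\infty)\big)=\mathrm{cl}_{\beta X}|k|^{-1}\big((1,\infty)\big)\subseteq Y,\]
so $\mathrm{int}_{\beta X}\mathrm{cl}_{\beta X}C\subseteq Y$. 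Hence $\lambda_{\mathfrak I} X=Y$.

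Assembling the pieces, Theorem \ref{UDR} applied to $X$ (completely regular and locally null with respect to $\mathfrak{I}$) gives that $H=C^{\mathfrak I}_0(X)$ is isometrically isomorphic to $C_0(Y)$, that $Y$ is locally compact and coincides with the spectrum of $H$, and that $X$ is dense in $Y$. The uniqueness of $Y$ up to homeomorphism is also supplied by Theorem \ref{UDR} (equivalently, by the commutative Gelfand--Naimark theorem, since the spectrum of a commutative $C^*$-algebra is determined up to homeomorphism by the algebra). The substantive obstacle in this whole argument is the proof that $C^{\mathfrak I}_0(X)\subseteq H$, which would fail without the approximation provided by Lemma \ref{JJHF}; the rest is bookkeeping to match the definitions of $\lambda_{\mathfrak I} X$ and $Y$.
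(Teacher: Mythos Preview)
Your proposal is correct and follows essentially the same route as the paper: define an ideal $\mathfrak{I}$ of $X$ from $H$, verify $H=C^{\mathfrak I}_0(X)$ (with the nontrivial inclusion supplied by Lemma \ref{JJHF}), identify $\lambda_{\mathfrak I}X$ with $Y$, and invoke Theorem \ref{UDR}. The paper's ideal $\mathfrak{H}=\langle |h|^{-1}((1,\infty)):h\in H\rangle$ coincides with your $\mathfrak{I}$ after the paper checks the description (\ref{HJGF}); you simply start from that description. One small point to tighten: from $\mathrm{int}_{\beta X}\mathrm{cl}_{\beta X}|h|^{-1}((1,\infty))\subseteq\lambda_{\mathfrak I}X$ you conclude ``consequently $Y\subseteq\lambda_{\mathfrak I}X$,'' but $Y$ is defined via closures, not interiors---this step needs the observation (Lemma \ref{KLGS} in the paper) that $\mathrm{cl}_{\beta X}|h|^{-1}((1,\infty))\subseteq\mathrm{int}_{\beta X}\mathrm{cl}_{\beta X}|2h|^{-1}((1,\infty))$, which follows from Lemma \ref{LKG}.
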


\begin{proof}
Let
\[\mathfrak{H}=\big\langle |h|^{-1}\big((1,\infty)\big):h\in H\big\rangle,\]
that is, the ideal generated by the sets $|h|^{-1}((1,\infty))$ for all $h\in H$. Observe that $X$ is locally null (with respect to $\mathfrak{H}$), as $H$ is non-vanishing. Then $C^\mathfrak{H}_0(X)$ is a closed ideal in $C_B(X)$ which is  isometrically isomorphic to $C_0(\lambda_\mathfrak{H}X)$ by Theorem \ref{UDR}. Furthermore, $\lambda_\mathfrak{H}X$ is a locally compact space which is unique up to homeomorphism, contains $X$ as a dense subspace and coincides with the spectrum of $C^\mathfrak{H}_0(X)$ (again by Theorem \ref{UDR}). Thus, to prove our theorem, it suffices to show that $H=C^\mathfrak{H}_0(X)$ and $Y=\lambda_\mathfrak{H}X$ (with $Y$ as given in the statement of the theorem).

First, we check that
\begin{equation}\label{HJGF}
\mathfrak{H}=\big\{A:A\subseteq |h|^{-1}\big((1,\infty)\big)\mbox{ for some } h\in H\big\}.
\end{equation}
It is clear that $|h|^{-1}((1,\infty))$, where $h\in H$, is null, and thus so is any of its subsets. Let $A\in\mathfrak{H}$. Then
\begin{equation}\label{LJD}
A\subseteq\bigcup_{i=1}^k|h_i|^{-1}\big((1,\infty)\big)
\end{equation}
where $h_i\in H$ for each $i=1,\ldots,k$. Let
\[h=\sum_{i=1}^k |h_i|^2.\]
Then $h=\sum_{i=1}^k h_i\overline{h_i}\in H$, as $H$ is an ideal in $C_B(X)$. Note that
\[|h_i|^{-1}\big((1,\infty)\big)\subseteq h^{-1}\big((1,\infty)\big)\]
for each $i=1,\ldots,k$. Therefore $A\subseteq h^{-1}((1,\infty))$ by (\ref{LJD}).

We check that $H=C^\mathfrak{H}_0(X)$. It is clear that $|h|^{-1}((1/n,\infty))=|nh|^{-1}((1,\infty))$ is null for each $h\in H$ and positive integer $n$. Therefore $H\subseteq C^\mathfrak{H}_0(X)$. To show the reverse inclusion, let $g\in C^\mathfrak{H}_0(X)$. Let $n$ be a positive integer. Then $|g|^{-1}([1/n,\infty))$ is null. Therefore
\[|g|^{-1}\big([1/n,\infty)\big)\subseteq |h_n|^{-1}\big([1,\infty)\big)\]
by (\ref{HJGF}) for some $h_n\in H$. Since this holds for every positive integer $n$, by Lemma \ref{JJHF}, there is a sequence $v_1,v_2,\ldots$ in $C_B(X)$ such that $h_nv_n\rightarrow g$. Observe that $h_1v_1,h_2v_2,\ldots$ is a sequence in $H$, as $H$ is an ideal in $C_B(X)$. This implies that $g\in H$, as $H$ is closed in $C_B(X)$. Thus $C^\mathfrak{H}_0(X)\subseteq H$ and therefore $H=C^\mathfrak{H}_0(X)$ by the above part.

Finally, we check that $Y=\lambda_\mathfrak{H}X$. Note that $\lambda_\mathfrak{H}X\subseteq Y$ by (\ref{HJGF}) and the definition of $\lambda_\mathfrak{H}X$. On the other hand, if $C=|h|^{-1}((1,\infty))$, where $h\in H$, then $C\in\mathrm{Coz}(X)$ (indeed, $C=\mathrm{coz}(g)$ where $g=\max\{|h|-\mathbf{1},\mathbf{0}\}$) and $\mathrm{cl}_XC$ has a null neighborhood in $X$, say $|2h|^{-1}((1,\infty))$. Therefore $\mathrm{int}_{\beta X}\mathrm{cl}_{\beta X}C\subseteq\lambda_\mathfrak{H}X$ (by the definition of $\lambda_\mathfrak{H}X$). Thus $Y\subseteq\lambda_\mathfrak{H}X$ by Lemma \ref{KLGS} (as $Y=\lambda_HX$).
\end{proof}

\begin{notation}\label{ADFJ}
Let $X$ be a completely regular space and let $H$ be a non-vanishing closed ideal in $C_B(X)$. We denote by $\mathfrak{sp}(H)$ the spectrum of $H$.
\end{notation}

Next, in a series of results, we use the construction of the spectrum given in Theorem \ref{TRES} to study its properties.

The following theorem introduces a necessary and sufficient condition for the spectrum of a non-vanishing closed ideal of $C_B(X)$ to be $\sigma$-compact. First, we prove the following lemma (which will be used sometimes without explicit reference to it).

\begin{lemma}\label{KJGFF}
Let $X$ be a completely regular space. Then, for any $f,g\in C_B(X)$ and $r,s\in\mathbb{C}$ we have
\begin{itemize}
\item[\rm(1)] $(rf+sg)_\beta=rf_\beta+sg_\beta$.
\item[\rm(2)] $(fg)_\beta=f_\beta g_\beta$.
\item[\rm(3)] $(\overline{f})_\beta=\overline{f_\beta}$.
\item[\rm(4)] $|f_\beta|=|f|_\beta$.
\item[\rm(5)] $\|f_\beta\|=\|f\|$.
\end{itemize}
\end{lemma}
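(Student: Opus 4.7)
The plan is to prove all five parts by appealing to the uniqueness of continuous extensions from a dense subspace to a Hausdorff space. Recall that $X$ is dense in $\beta X$ and $\beta X$ is Hausdorff, so any two continuous mappings on $\beta X$ which agree on $X$ must coincide throughout $\beta X$. I will use this repeatedly.

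For parts (1)--(4), I first note that the mappings involved are bounded and continuous on $X$ (hence in $C_B(X)$), so each side of each equality makes sense as the Stone--\v{C}ech extension of a bounded continuous function or as a composition of such extensions with continuous operations (addition, multiplication, conjugation, absolute value) on $\mathbb{C}$. Concretely, for (1), both $(rf+sg)_\beta$ and $rf_\beta + sg_\beta$ are continuous mappings $\beta X \to \mathbb{C}$, and on $X$ both restrict to $rf + sg$; by uniqueness they coincide on $\beta X$. The argument for (2) is identical, with $fg$ replacing $rf+sg$. For (3) and (4), observe that $z \mapsto \overline{z}$ and $z \mapsto |z|$ are continuous on $\mathbb{C}$, so $\overline{f_\beta}$ and $|f_\beta|$ are continuous on $\beta X$ and agree on $X$ with $\overline{f}$ and $|f|$, respectively; again uniqueness gives the equalities.

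For part (5), the inequality $\|f\| \le \|f_\beta\|$ is immediate, since $f_\beta$ extends $f$ and the supremum over $\beta X$ is at least the supremum over $X$. For the reverse inequality, I would argue using density and continuity of $|f_\beta|$ (the latter by part (4)). Since $X$ is dense in $\beta X$, we have $\beta X = \mathrm{cl}_{\beta X} X$, so
\[
|f_\beta|(\beta X) = |f_\beta|(\mathrm{cl}_{\beta X} X) \subseteq \overline{|f_\beta|(X)} = \overline{|f|(X)} \subseteq \bigl[0,\|f\|\bigr],
\]
where the bar denotes closure in $\mathbb{R}$ and the first inclusion uses continuity of $|f_\beta|$. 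Taking the supremum over $\beta X$ gives $\|f_\beta\| \le \|f\|$, and together with the reverse inequality this yields equality.

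There is no real obstacle here; the entire lemma is a straightforward consequence of density of $X$ in $\beta X$, Hausdorffness of $\beta X$ guaranteeing uniqueness of continuous extensions, and continuity of the algebraic and modulus operations on $\mathbb{C}$. The only minor point worth flagging is that, strictly speaking, $f_\beta$ takes values in $\mathbb{C}$ and we are implicitly using that the continuous extension produced by the universal property of $\beta X$ (for mappings into the compact space $\overline{f(X)} \subseteq \mathbb{C}$) agrees with what we would write down; but this is built into the definition of $f_\beta$ given earlier in the paper.
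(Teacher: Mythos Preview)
Your proposal is correct and follows essentially the same approach as the paper: parts (1)--(4) are handled by uniqueness of continuous extensions from the dense subspace $X$ to the Hausdorff space $\beta X$, and part (5) uses the identical chain of inclusions $|f_\beta|(\beta X)\subseteq\overline{|f|(X)}\subseteq[0,\|f\|]$ together with the trivial reverse inequality.
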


\begin{proof}
To show (1) observe that $(rf+sg)_\beta$ and $rf_\beta+sg_\beta$ are identical, as they are continuous mappings which both coincide with $rf+sg$ on the dense subspace $X$ of $\beta X$. That (2), (3) and (4) hold follows analogously.

To show (5), note that
\[|f_\beta|(\beta X)=|f_\beta|(\mathrm{cl}_{\beta X}X)\subseteq\overline{|f_\beta|(X)}=\overline{|f|(X)}\subseteq\big[0,\|f\|\big],\]
where the bar denotes the closure in $\mathbb{R}$. This yields $\|f_\beta\|\leq\|f\|$. That $\|f\|\leq\|f_\beta\|$ is clear, as $f_\beta$ is an extension of $f$.
\end{proof}

For a ring $R$ and $A\subseteq R$ we denote by $\langle A\rangle$ the ideal in $R$ generated by $A$.

\begin{theorem}\label{JUHGH}
Let $X$ be a completely regular space. Let $H$ be a non-vanishing closed ideal in $C_B(X)$. The following are equivalent:
\begin{itemize}
\item[\rm(1)] $\mathfrak{sp}(H)$ is $\sigma$-compact.
\item[\rm(2)] $H$ is $\sigma$-generated, that is,
\[H=\overline{\langle f_1,f_2,\ldots\rangle}\]
for some $f_1,f_2,\ldots$ in $C_B(X)$.
\end{itemize}
Here the bar denotes the closure in $C_B(X)$.
\end{theorem}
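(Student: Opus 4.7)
The strategy is to exploit the explicit formula $\mathfrak{sp}(H)=\bigcup_{h\in H}\mathrm{cl}_{\beta X}|h|^{-1}((1,\infty))$ supplied by Theorem \ref{TRES}, and to combine it with Lemmas \ref{JJHF} and \ref{GFS}.

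For the direction $(1)\Rightarrow(2)$: Since $\mathfrak{sp}(H)$ is open in the normal space $\beta X$ (so locally compact Hausdorff) and $\sigma$-compact, I first extract by a standard exhaustion argument a nested sequence of compact sets $K_1\subseteq\mathrm{int}_{\beta X}K_2\subseteq K_2\subseteq\cdots$ with $\bigcup_n K_n=\mathfrak{sp}(H)$. Lemma \ref{GFS} then supplies, for each $n$, a non-negative $h_n\in H$ with $K_n\subseteq\mathrm{int}_{\beta X}\mathrm{cl}_{\beta X}h_n^{-1}((1,\infty))$. I claim that $H=\overline{\langle h_1,h_2,\ldots\rangle}$. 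One inclusion is trivial. For the other, take $g\in H$: the proof of Lemma \ref{TTES} shows that $|g_\beta|^{-1}([1/n,\infty))$ is a compact subset of $\mathfrak{sp}(H)$, hence is contained in some $K_{m(n)}$ by the nesting, and restricting to $X$ yields $|g|^{-1}([1/n,\infty))\subseteq |h_{m(n)}|^{-1}([1,\infty))$. Lemma \ref{JJHF} then provides $v_n\in C_B(X)$ with $v_n h_{m(n)}\to g$, and each $v_n h_{m(n)}$ lies in $\langle h_1,h_2,\ldots\rangle$.

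For the direction $(2)\Rightarrow(1)$: I set
\[
S=\bigcup_{n,k\geq 1}\mathrm{cl}_{\beta X}|f_n|^{-1}\big((1/k,\infty)\big),
\]
which is a countable union of closed subsets of the compact space $\beta X$, hence $\sigma$-compact, and show $S=\mathfrak{sp}(H)$. The inclusion $S\subseteq\mathfrak{sp}(H)$ is immediate after rewriting $|f_n|^{-1}((1/k,\infty))=|kf_n|^{-1}((1,\infty))$ and noting $kf_n\in H$. For the reverse inclusion, given $h\in H$, approximate $h$ in norm within $1/2$ by $F=\sum_{i=1}^{N}g_if_i\in\langle f_1,f_2,\ldots\rangle$; then $|h|^{-1}((1,\infty))\subseteq|F|^{-1}((1/2,\infty))$, and since $|F|\leq M\sum_{i\leq N}|f_i|$ with $M=\max_i\|g_i\|$, this is in turn contained in $\bigcup_{i\leq N}|f_i|^{-1}((\delta,\infty))$ for $\delta=1/(2MN)$. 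Taking closures in $\beta X$ places $\mathrm{cl}_{\beta X}|h|^{-1}((1,\infty))$ inside $S$.

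The main obstacle is in the forward direction: one must convert the purely topological containment produced by Lemma \ref{GFS} (which lives in $\beta X$) into the pointwise inequality on $X$ demanded by Lemma \ref{JJHF}. This is handled by the chain
\[
\mathrm{int}_{\beta X}\mathrm{cl}_{\beta X}h_n^{-1}\big((1,\infty)\big)\subseteq \mathrm{cl}_{\beta X}h_n^{-1}\big((1,\infty)\big)\subseteq (h_n)_\beta^{-1}\big([1,\infty)\big),
\]
valid because $(h_n)_\beta^{-1}([1,\infty))$ is closed in $\beta X$ and contains $h_n^{-1}((1,\infty))$, so that intersection with $X$ gives the desired estimate once $h_n$ has been chosen non-negative (as in Lemma \ref{GFS}).
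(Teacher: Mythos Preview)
Your proof is correct. The direction $(1)\Rightarrow(2)$ follows the same path as the paper's: extract a compact exhaustion of $\mathfrak{sp}(H)$, produce for each piece an element of $H$ via Lemma \ref{GFS}, and then recover an arbitrary $g\in H$ as a limit using Lemma \ref{JJHF}. Your version is organized a bit more cleanly (you invoke Lemma \ref{GFS} directly and use a standard nested exhaustion so that the interiors $\mathrm{int}_{\beta X}K_n$ form an open cover, which is exactly what makes ``compact $\subseteq$ some $K_{m}$'' work), but the underlying idea is the same.

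The direction $(2)\Rightarrow(1)$, however, is genuinely different and substantially simpler than the paper's argument. The paper works pointwise in $\beta X$: given $t\in\mathfrak{sp}(H)$ it picks $h\in H$ with $|h_\beta(t)|\geq 1$, approximates $h$ by an element of $\langle f_1,f_2,\ldots\rangle$, then passes through $|h_k|^2$, introduces complex phases $e^{\mathbf{i}\zeta_j}$ to turn $|f_j^\beta(t)|$ into $e^{\mathbf{i}\zeta_j}f_j^\beta(t)$, and finally approximates the $\zeta_j$ by rationals to land in a fixed countable family $\Omega$. Your argument bypasses all of this by working on $X$ rather than at a point of $\beta X$: from $\|h-F\|<1/2$ with $F=\sum_{i\leq N}g_if_i$ you get the set inclusion $|h|^{-1}((1,\infty))\subseteq\bigcup_{i\leq N}|f_i|^{-1}((\delta,\infty))$ directly via the triangle inequality, and then a single closure in $\beta X$ (of a \emph{finite} union) finishes. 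This avoids the phase trick entirely and shows that the countable family $\{\,\mathrm{cl}_{\beta X}|f_n|^{-1}((1/k,\infty)):n,k\geq 1\,\}$ already suffices, with no need for the auxiliary set $\Omega$.
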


\begin{proof}
(1) \emph{implies} (2). Suppose that the spectrum of $H$, which by Theorem \ref{TRES} equals to $\lambda_H X$, is $\sigma$-compact. Let
\begin{equation}\label{OJHD}
\lambda_H X=\bigcup_{n=1}^\infty K_n
\end{equation}
where $K_1,K_2,\ldots$ are compact. For each positive integer $n$, using the representation of $\lambda_H X$ given in Lemma \ref{KLGS} and compactness of $K_n$, we have
\begin{equation}\label{KIHS}
K_n\subseteq\bigcup_{i=1}^{k_n}\mathrm{int}_{\beta X}\mathrm{cl}_{\beta X}|h^n_i|^{-1}\big((1,\infty)\big)
\end{equation}
where $h^n_i\in H$ for each $i=1,\ldots,k_n$. Let
\[g_n=\sum_{i=1}^{k_n}|h^n_i|^2.\]
Then $g_n=\sum_{i=1}^{k_n}h^n_i\overline{h^n_i}\in H$, as $H$ is an ideal in $C_B(X)$. We may assume the choices are such that
\[\big\{h^n_1,\ldots,h^n_{k_n}\big\}\subseteq\big\{h^{n+1}_1,\ldots,h^{n+1}_{k_{n+1}}\big\}.\]
Thus
\begin{equation}\label{JHGD}
g_n\leq g_{n+1}.
\end{equation}
Arguing as in the proof of Lemma \ref{GFS} we have
\begin{equation}\label{KOJUS}
K_n\subseteq\mathrm{int}_{\beta X}\mathrm{cl}_{\beta X}g_n^{-1}\big((1,\infty)\big).
\end{equation}
On the other hand
\[\mathrm{int}_{\beta X}\mathrm{cl}_{\beta X}g_n^{-1}\big((1,\infty)\big)\subseteq\lambda_H X\]
by the definition of $\lambda_H X$. This, together with (\ref{OJHD}) and (\ref{KOJUS}), implies that
\begin{equation}\label{JGS}
\lambda_H X=\bigcup_{n=1}^\infty\mathrm{int}_{\beta X}\mathrm{cl}_{\beta X}g_n^{-1}\big((1,\infty)\big).
\end{equation}

Next, we show that
\begin{equation}\label{PIHD}
H=\overline{\langle g_1,g_2,\ldots\rangle};
\end{equation}
this will conclude the proof. Let $h\in H$. Note that by Lemma \ref{LKG} we have
\[|h^\beta|^{-1}\big((1/2,\infty)\big)\subseteq\mathrm{int}_{\beta X}\mathrm{cl}_{\beta X}|h|^{-1}\big((1/2,\infty)\big),\]
where the latter set is contained in $\lambda_H X$ by the definition of $\lambda_H X$. In particular,
\[|h^\beta|^{-1}\big([1,\infty)\big)\subseteq\lambda_H X.\]
By compactness of $|h^\beta|^{-1}([1,\infty))$ and the representation given in (\ref{JGS}) we have
\[|h^\beta|^{-1}\big([1,\infty)\big)\subseteq\bigcup_{i=1}^{l_n}\mathrm{int}_{\beta X}\mathrm{cl}_{\beta X}g_i^{-1}\big((1,\infty)\big),\]
for some positive integer $l_n$. Thus, intersecting the two sides of the above relation with $X$, it gives
\begin{equation}\label{KJH}
|h|^{-1}\big([1,\infty)\big)\subseteq\bigcup_{i=1}^{l_n}g_i^{-1}\big([1,\infty)\big)
\end{equation}
But, by (\ref{JHGD}) we have
\[g_i^{-1}\big([1,\infty)\big)\subseteq g_{l_n}^{-1}\big([1,\infty)\big).\]
for each $i=1,\ldots,l_n$. Therefore, (\ref{KJH}) yields
\[|h|^{-1}\big([1,\infty)\big)\subseteq g_{l_n}^{-1}\big([1,\infty)\big).\]
By Lemma \ref{JJHF} there is a sequence $f_1,f_2,\ldots$ in $C_B(X)$ such that $g_{l_n}f_n\rightarrow h$. That is $h\in\overline{\langle g_1,g_2,\ldots\rangle}$. This shows that $H\subseteq\overline{\langle g_1,g_2,\ldots\rangle}$. The reverse inclusion follows trivially, as $H$ is a closed ideal in $C_B(X)$ which contains the generators $g_n$ for all positive integer $n$. This proves (\ref{PIHD}).

(2) \emph{implies} (1). Let
\[H=\overline{\langle f_1,f_2,\ldots\rangle}\]
where $f_1,f_2,\ldots$ are in $C_B(X)$. We prove that $\lambda_H X=Y$, where
\[Y=\bigcup\big\{\mathrm{cl}_{\beta X}|g|^{-1}\big((1/n,\infty)\big):g\in\Omega\mbox{ and }n=1,2,\ldots\big\},\]
in which
\[\Omega=\{e^{\mathbf{i}\theta_1}f_1+\cdots+e^{\mathbf{i}\theta_k}f_k:\theta_1,\ldots,\theta_k\in\mathbb{Q}\mbox{ and }k=1,2,\ldots\}.\]
Note that $\Omega$ is a countable subset of $H$. This will imply that the spectrum of $H$ (which equals to $\lambda_H X$ by Theorem \ref{TRES}) is $\sigma$-compact.

Let $t\in\lambda_H X$. Then, by the definition, $t\in\mathrm{cl}_{\beta X}|h|^{-1}((1,\infty))$ for some $h\in H$. Note that
\[\mathrm{cl}_{\beta X}|h|^{-1}\big((1,\infty)\big)\subseteq|h^\beta|^{-1}\big([1,\infty)\big).\]
Thus $t\in|h^\beta|^{-1}([1,\infty))$ and therefore
\begin{equation}\label{OJAS}
\big|h^\beta(t)\big|>1/2.
\end{equation}
Let $h_1,h_2,\ldots$ be a sequence in $\langle f_1,f_2,\ldots\rangle$ such that $h_n\rightarrow h$. Then $h^\beta_n\rightarrow h^\beta$, as \[\|h^\beta_n-h^\beta\|=\big\|(h_n-h)^\beta\big\|=\|h_n-h\|\]
by Lemma \ref{KJGFF}. Thus $h_n^\beta(t)\rightarrow h^\beta(t)$ and therefore $|h_n^\beta(t)|\rightarrow|h^\beta(t)|$. Because of (\ref{OJAS}) there is some positive integer $k$ such that $|h_k^\beta(t)|>1/2$. We may write
\[h_k=\sum_{i=1}^lu_if_i,\]
where $u_1,\ldots,u_l$ are in $C_B(X)$. Then
\[|h_k|^2=h_k\overline{h_k}=\sum_{i=1}^l\overline{h_k}u_if_i=\sum_{i=1}^lv_if_i\]
where $v_i=\overline{h_k}u_i$ for each $i=1,\ldots,l$. Note that $v_i\in C_B(X)$ for each $i=1,\ldots,l$. Choose some integer $M$ such that $M\geq\|v_i\|$ ($=\|v^\beta_i\|$ by Lemma \ref{KJGFF}) for each $i=1,\ldots,l$. Using Lemma \ref{KJGFF}
\[\big(|h_k|^2\big)^\beta=(h_k\overline{h_k})^\beta=h_k^\beta\overline{h_k}^\beta=h_k^\beta\overline{h_k^\beta}=|h^\beta_k|^2.\]
We have
\begin{eqnarray*}
\frac{1}{4}<|h_k^\beta|^2(t)=\big(|h_k|^2\big)^\beta(t)&=&
\Big(\sum_{i=1}^lv_if_i\Big)^\beta(t)\\&=&\sum_{i=1}^l(v_i f_i)^\beta(t)\\&=&\sum_{i=1}^lv_i^\beta f^\beta_i(t)\\&\leq&\sum_{i=1}^l\big|v_i^\beta f^\beta_i(t)\big|\leq\sum_{i=1}^l\|v_i^\beta\|\big|f^\beta_i(t)\big|\leq M\sum_{i=1}^l\big|f^\beta_i(t)\big|.
\end{eqnarray*}
For each $i=1,\ldots,l$ let $\zeta_i$ be a real number such that
\[\big|f^\beta_i(t)\big|=e^{\mathbf{i}\zeta_i}f^\beta_i(t).\]
Then, by the above relation
\begin{equation}\label{PKHS}
\frac{1}{4}<M\sum_{i=1}^le^{\mathbf{i}\zeta_i}f^\beta_i(t).
\end{equation}
For each $i=1,\ldots,l$ let $\zeta^i_1,\zeta^i_2,\ldots$ be a sequence of rational numbers such that $\zeta^i_n\rightarrow\zeta_i$. Then
\[\sum_{i=1}^le^{\mathbf{i}\zeta^i_n}f^\beta_i(t)\longrightarrow\sum_{i=1}^le^{\mathbf{i}\zeta_i}f^\beta_i(t).\]
By (\ref{PKHS}) there is some positive integer $n$ such that
\begin{equation}\label{OKH}
\frac{1}{4M}<\sum_{i=1}^le^{\mathbf{i}\zeta^i_n}f^\beta_i(t)=g^\beta(t)
\end{equation}
where
\[g=\sum_{i=1}^le^{\mathbf{i}\zeta^i_n}f_i.\]
It is clear that $g\in\Omega$. Observe that
\[g_\beta^{-1}\Big(\Big(\frac{1}{4M},\infty\Big)\Big)\subseteq\mathrm{cl}_{\beta X}g^{-1}\Big(\Big(\frac{1}{4M},\infty\Big)\Big)\]
by Lemma \ref{LKG}. But $t\in g_\beta^{-1}((1/(4M),\infty))$ by (\ref{OKH}). Thus $t\in Y$ by the definition of $Y$. This shows that $\lambda_H X\subseteq Y$. That $Y\subseteq\lambda_H X$ is clear and follows from the definition of $\lambda_H X$ (and the fact that $\Omega\subseteq H$).
\end{proof}

\begin{corollary}\label{JKJH}
Let $X$ be a completely regular space. Let $H$ be a non-vanishing closed ideal in $C_B(X)$. Then, if $\mathfrak{sp}(H)$ is $\sigma$-compact then $H$ has an element vanishing nowhere on $X$.
\end{corollary}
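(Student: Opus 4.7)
The strategy is to convert $\sigma$-compactness of $\mathfrak{sp}(H)$ into $\sigma$-generation of $H$ via Theorem \ref{JUHGH}, and then construct an everywhere non-zero element of $H$ as a suitably normalized, non-negative infinite series built from the generators. A key observation is that replacing a generator $f$ by $f\overline{f}$ preserves membership in $H$ (which is an ideal) and yields a non-negative real-valued function; this will prevent the terms of our series from cancelling.

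By Theorem \ref{JUHGH}, $\sigma$-compactness of $\mathfrak{sp}(H)$ implies $H = \overline{\langle f_1, f_2, \ldots\rangle}$ for some sequence $f_1, f_2, \ldots$ in $C_B(X)$. Set $g_n = f_n \overline{f_n} \in H$, so $g_n \geq 0$, and define
\[f = \sum_{n=1}^\infty \frac{g_n}{2^n(1+\|g_n\|)}.\]
The Weierstrass $M$-test gives uniform convergence, so $f \in C_B(X)$; each partial sum lies in $H$, and $H$ is closed in $C_B(X)$, so $f \in H$. Moreover $f \geq 0$ pointwise.

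The main step, and where the non-vanishing hypothesis on $H$ is used, is to show $f(x) \neq 0$ for every $x \in X$. Given $x \in X$, since $H$ is non-vanishing there exists $h \in H$ with $h(x) \neq 0$. Approximating $h$ uniformly by a sequence $k_j \in \langle f_1, f_2, \ldots\rangle$ and using continuity of evaluation at $x$, we have $k_j(x) \to h(x) \neq 0$, so $k_{j_0}(x) \neq 0$ for some $j_0$. Since $k_{j_0}$ is a finite $C_B(X)$-linear combination of the $f_n$'s, at least one $f_{n_0}(x) \neq 0$, whence $g_{n_0}(x) = |f_{n_0}(x)|^2 > 0$. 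As every term in the series defining $f$ is non-negative, $f(x) \geq g_{n_0}(x)/(2^{n_0}(1+\|g_{n_0}\|)) > 0$, completing the argument.
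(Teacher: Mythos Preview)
Your proof is correct and follows essentially the same approach as the paper: invoke Theorem \ref{JUHGH} to obtain $\sigma$-generation, replace each generator $f_n$ by $g_n=f_n\overline{f_n}\in H$, and sum a normalized series to obtain a non-negative element of $H$ that is positive at each point of $X$ by the non-vanishing hypothesis. The only cosmetic difference is your normalization $2^n(1+\|g_n\|)$ in place of the paper's $2^n\|g_n\|$, which spares you the paper's extra step of discarding the zero generators.
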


\begin{proof}
Suppose that the spectrum of $H$ is $\sigma$-compact. By Theorem \ref{JUHGH} then $H$ is $\sigma$-generated by some $f_1,f_2,\ldots$ in $C_B(X)$, that is
\begin{equation}\label{OYKH}
H=\overline{\langle f_1,f_2,\ldots\rangle}.
\end{equation}
We may assume that $f_n\neq\mathbf{0}$ for each positive integer $n$. For each positive integer $n$ let $g_n=|f_n|^2$. Define
\[g=\sum_{n=1}^\infty\frac{g_n}{2^n\|g_n\|}.\]
Note that $g$ is well defined by the Weierstrass $M$-test. Also, $g_n=f_n\overline{f_n}\in H$ for each positive integer $n$, and thus $g\in H$, as $g$ is the limit of a sequence in $H$ and $H$ is closed in $C_B(X)$. We show that $g$ does not vanish anywhere on $X$. Let $x\in X$. Since $H$ is non-vanishing, there is some $h\in H$ with $h(x)\neq0$. By (\ref{OYKH}) there is a sequence $u_1,u_2,\ldots$ in $\langle f_1,f_2,\ldots\rangle$ such that $u_n\rightarrow h$. In particular, $u_n(x)\rightarrow h(x)$. Thus, $u_k(x)\neq0$ for some positive integer $k$. But, $u_k=v_1f_1+\cdots+v_mf_m$ for some $v_1,\ldots,v_m$ in $C_B(X)$. Therefore $f_i(x)\neq0$ for some $i=1,\ldots,m$ and thus $g_i(x)\neq0$. Therefore $g(x)\neq0$ by the definition of $g$.
\end{proof}

The following theorem (which is a further application of Theorem \ref{TRES}) may be considered as a complement to Theorem \ref{JUHGH}; while Theorem \ref{JUHGH} gives a necessary and sufficient condition such that the spectrum of a non-vanishing closed ideal of $C_B(X)$ is $\sigma$-compact, the following theorem provides a necessary and sufficient condition such that every $\sigma$-compact subspace of the spectrum is contained in a compact subspace. Clearly, the spectrum is compact if it satisfies the conditions in both theorems. (Compare Theorems \ref{JUHGH} and \ref{OYSD} to Theorems \ref{ADEFR} and \ref{FGFF}, respectively. Note that Theorems \ref{JUHGH} and \ref{OYSD} require the spaces under consideration to be only completely regular, while the spaces under consideration in Theorems \ref{ADEFR} and \ref{FGFF} are normal. In particular, Theorems \ref{JUHGH} and \ref{OYSD} are not deducible from Theorems \ref{ADEFR} and \ref{FGFF}.)

\begin{theorem}\label{OYSD}
Let $X$ be a completely regular space. Let $H$ be a non-vanishing closed ideal in $C_B(X)$. The following are equivalent:
\begin{itemize}
\item[\rm(1)] Every $\sigma$-compact subspace of $\mathfrak{sp}(H)$ is contained in a compact subspace; in particular, $\mathfrak{sp}(H)$ is countably compact.
\item[\rm(2)] $C_{00}(\mathfrak{sp}(H))=C_0(\mathfrak{sp}(H))$.
\item[\rm(3)] $H=\bigcup_{h\in H}\mathrm{Ann}(h-\mathbf{1})$.
\end{itemize}
Here $\mathrm{Ann}(h)$ denotes the annihilator of $h$.
\end{theorem}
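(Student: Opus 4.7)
My plan is to transport all three conditions through the isometric algebra isomorphism $\phi: H \to C_0(Y)$, $\phi(f) = f_\lambda$, supplied by Theorem \ref{TRES}, where $Y = \mathfrak{sp}(H) = \lambda_H X$ is locally compact. Under this identification, each of (1), (2), (3) becomes a statement about $C_0$ on a locally compact Hausdorff space.

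The equivalence (1) $\Leftrightarrow$ (2) is exactly the standard fact (Problem 7G.2 of \cite{GJ}) recalled just before Theorem \ref{ADEFR}: on a locally compact space $Y$ one has $C_{00}(Y) = C_0(Y)$ if and only if every $\sigma$-compact subspace of $Y$ is contained in a compact subspace of $Y$. Applying this with $Y = \mathfrak{sp}(H)$ gives the equivalence directly.

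For (2) $\Leftrightarrow$ (3) I would first unpack (3). The relation $g(h-\mathbf{1}) = 0$ in $C_B(X)$ is simply $gh = g$, and since $H$ is an ideal this forces $g = gh \in H$; hence the inclusion $\bigcup_{h \in H} \mathrm{Ann}(h-\mathbf{1}) \subseteq H$ is automatic, and (3) is equivalent to asserting that every $g \in H$ satisfies $gh = g$ for some $h \in H$. Because $\phi$ is an algebra isomorphism and continuous functions on $Y$ are determined by their values on the dense subspace $X$, this translates into the statement (3$'$): every $f \in C_0(Y)$ admits some $k \in C_0(Y)$ with $fk = f$. For (2) $\Rightarrow$ (3$'$), given $f \in C_0(Y) = C_{00}(Y)$, the set $K = \mathrm{supp}(f)$ is compact, and by the locally compact Hausdorff Urysohn lemma one obtains $k \in C_{00}(Y) \subseteq C_0(Y)$ with values in $[0,1]$ and $k|_K = \mathbf{1}$, so $fk = f$. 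Conversely, for (3$'$) $\Rightarrow$ (2), if $fk = f$ then $\mathrm{coz}(f) \subseteq k^{-1}(1) \subseteq |k|^{-1}([1,\infty))$, and the last set is compact since $k \in C_0(Y)$; as $k^{-1}(1)$ is closed, $\mathrm{supp}(f)$ is compact, i.e.\ $f \in C_{00}(Y)$.

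The only real obstacle is the bookkeeping for (3): one must notice that the annihilator is automatically contained in $H$ (thanks to $H$ being an ideal), and then faithfully push the product identity $gh = g$ through $\phi$. Everything else reduces to a single Urysohn-style construction together with the quoted lemma about $C_{00} = C_0$ on a locally compact space.
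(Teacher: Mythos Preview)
Your proof is correct and takes a genuinely different route from the paper's. The paper proves the cycle by establishing $(1)\Rightarrow(3)$ and $(3)\Rightarrow(1)$ directly, working inside $\beta X$: for $(1)\Rightarrow(3)$ it shows that $\mathrm{coz}(h_\beta)$ is a $\sigma$-compact subspace of $\lambda_H X$, invokes the hypothesis to trap it in a compact set, applies Lemma~\ref{GFS} to produce a non-negative $f\in H$ with $\mathrm{coz}(h)\subseteq f^{-1}([1,\infty))$, and then explicitly constructs $g=uf\in H$ with $g\equiv 1$ on $\mathrm{coz}(h)$; for $(3)\Rightarrow(1)$ it builds a series $f=\sum 2^{-n}\|f_n\|^{-1}f_n$ from elements of $H$ covering the $\sigma$-compact set, uses (3) to obtain $g\in H$ with $fg=f$, and reads off a compact container from $g_\beta$. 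Your approach instead transports condition (3) wholesale through the isomorphism $\phi:H\to C_0(Y)$ of Theorem~\ref{TRES}, reducing it to the intrinsic statement $(3')$ that every element of $C_0(Y)$ has a local unit in $C_0(Y)$; the equivalence $(2)\Leftrightarrow(3')$ then becomes a one-line Urysohn argument on the locally compact space $Y$. Your route is shorter and cleaner, bypassing Lemma~\ref{GFS} and all the explicit $\beta X$ manipulations; the paper's route, on the other hand, stays closer to the concrete description of $\lambda_H X$ and does not require checking that the multiplicative identity $gh=g$ survives passage through $\phi$ (which it does, by Lemma~\ref{KJGFF}, as you implicitly use).
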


\begin{proof}
Conditions (1) and (2) are known to be equivalent. (See the observation made prior to Theorem \ref{ADEFR}.)

(1) \emph{implies} (3). Let $h\in H$. For any $r>0$ we have
\[|h_\beta|^{-1}\big((r,\infty)\big)\subseteq\mathrm{int}_{\beta X}\mathrm{cl}_{\beta X}|h|^{-1}\big((r,\infty)\big)\]
by (a slight modification of) Lemma \ref{LKG} (and Lemma \ref{KJGFF}), thus $|h_\beta|^{-1}((r,\infty))\subseteq\lambda_H X$. In particular, the $\sigma$-compact subspace $\mathrm{coz}(h_\beta)=\bigcup_{n=1}^\infty|h_\beta|^{-1}([1/n,\infty))$ of $\beta X$ is contained in $\lambda_H X$. Note that the spectrum of $H$ equals to $\lambda_H X$ by Theorem \ref{TRES}. Thus, by our assumption, $\mathrm{coz}(h_\beta)$ is contained in a compact subspace $K$ of $\lambda_H X$. By Lemma \ref{GFS} we have $K\subseteq\mathrm{cl}_{\beta X}f^{-1}((1,\infty))$ for some non-negative $f\in H$. Therefore $\mathrm{coz}(h_\beta)\subseteq\mathrm{cl}_{\beta X}f^{-1}((1,\infty))$. Now, if we intersect the two sides of the latter with $X$ we obtain
\begin{equation}\label{LKOHS}
\mathrm{coz}(h)\subseteq f^{-1}\big([1,\infty)\big).
\end{equation}
Let $u:X\rightarrow\mathbb{R}$ be the mapping defined by
\[u(x)=\left\{
             \begin{array}{ll}
               1/f(x), & \hbox{ if }x\in f^{-1}([1,\infty));  \\
               f(x), & \hbox{ if }x\in f^{-1}([0,1]).
             \end{array}
         \right.\]
It is clear that $u$ is well defined, continuous, and bounded. That is $u\in C_B(X)$. Let $g=uf$. Note that $g\in H$ (as $f\in H$ and $H$ is an ideal in $C_B(X)$). It follows from the definitions that $f^{-1}([1,\infty))\subseteq g^{-1}(1)$. This combined with (\ref{LKOHS}) gives $\mathrm{coz}(h)\subseteq g^{-1}(1)$, which implies that $hg=h$. Therefore $h\in\mathrm{Ann}(g-\mathbf{1})$. This shows that $H\subseteq\bigcup_{h\in H}\mathrm{Ann}(h-\mathbf{1})$. The reverse inclusion is trivial, as $H$ is an ideal in $C_B(X)$.

(3) \emph{implies} (1). Let $K$ be a $\sigma$-compact subspace of the spectrum of $H$ (which equals to $\lambda_H X$ by Theorem \ref{TRES}). Let $K=\bigcup_{n=1}^\infty K_n$ where $K_n$ is compact for each positive integer $n$. By Lemma \ref{GFS} for each positive integer $n$ there is some non-negative $f_n\in H$ such that
\begin{equation}\label{ASDHS}
K_n\subseteq\mathrm{cl}_{\beta X}f_n^{-1}\big((1,\infty)\big).
\end{equation}
Let
\[f=\sum_{n=1}^\infty\frac{f_n}{2^n\|f_n\|}:X\longrightarrow\mathbb{R}.\]
Then $f$ is a well defined continuous bounded mapping. Also, $f\in H$, as $f$ is the limit a sequence in $H$ (and $H$ is closed in $C_B(X)$). By our assumption there is some $g\in H$ such that $f\in\mathrm{Ann}(g-\mathbf{1})$. That is $fg=f$, or equivalently, $\mathrm{coz}(f)\subseteq g^{-1}(1)$. In particular,
\[\mathrm{cl}_{\beta X}\mathrm{coz}(f)\subseteq\mathrm{cl}_{\beta X} g^{-1}(1)\subseteq g_\beta^{-1}(1).\]
Note that
\[g_\beta^{-1}\big((1/2,\infty)\big)\subseteq\mathrm{int}_{\beta X}\mathrm{cl}_{\beta X}g^{-1}\big((1/2,\infty)\big)\]
by (a slight modification of) Lemma \ref{LKG}, and
\[\mathrm{int}_{\beta X}\mathrm{cl}_{\beta X}g^{-1}\big((1/2,\infty)\big)\subseteq\lambda_H X.\]
Therefore $\mathrm{cl}_{\beta X}\mathrm{coz}(f)\subseteq\lambda_H X$. But $f_n^{-1}((1,\infty))\subseteq\mathrm{coz}(f)$ for each positive integer $n$ (by the definition of $f$) and thus $K_n\subseteq\mathrm{cl}_{\beta X}\mathrm{coz}(f)$ by (\ref{ASDHS}). That is the compact subspace $\mathrm{cl}_{\beta X}\mathrm{coz}(f)$ of $\lambda_H X$ contains $K$.
\end{proof}

In Theorems \ref{JUHGH} and \ref{OYSD} for a non-vanishing closed ideal of $C_B(X)$ we considered compactness like (such as $\sigma$-compactness and countable compactness) properties of its spectrum. In the next few results we consider connectedness like properties of the spectrum. This includes connectedness and local connectedness in particular.

For a collection $\{H_i:i\in I\}$ of ideals of a ring $R$ we denote the ideal $\langle\bigcup_{i\in I}H_i\rangle$ by
$\bigoplus_{i\in I}H_i$ provided that $H_i\cap\langle\bigcup_{i\neq j\in I}H_j\rangle=\mathbf{0}$ for each $i\in I$.

\begin{theorem}\label{HJGS}
Let $X$ be a completely regular space. Let $H$ be a non-vanishing closed ideal in $C_B(X)$. The following are equivalent:
\begin{itemize}
\item[\rm(1)] $\mathfrak{sp}(H)$ is connected.
\item[\rm(2)] $H$ is indecomposable, that is,
\[H\neq I\oplus J\]
for any non-zero ideals $I$ and $J$ of $C_B(X)$.
\end{itemize}
\end{theorem}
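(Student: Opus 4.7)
My plan is to use Theorem \ref{TRES} to identify $H$ isometrically with $C_0(Y)$ via the isomorphism $\phi : f \mapsto f_\lambda$, where $Y = \mathfrak{sp}(H) = \lambda_H X$, and then prove both implications by contraposition. Everything reduces to relating clopen splittings of $Y$ to ideal decompositions of $H$ in $C_B(X)$, and the bridge between the two is the multiplicativity of the $\beta X$-extension (Lemma \ref{KJGFF}) together with the description of $\lambda_H X$ in Lemma \ref{KLGS}.

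For $(1)\Rightarrow(2)$ (contrapositive), assume $H = I \oplus J$ with $I,J$ non-zero ideals of $C_B(X)$ and set $U = \lambda_I X$ and $V = \lambda_J X$. These are open subsets of $\beta X$ by Lemma \ref{KLGS}, they are non-empty (given a non-zero $g \in I$, a scalar multiple of $g$ still in $I$ exceeds $1$ in modulus at some point of $X$), and each is contained in $Y = \lambda_H X$. The separation step is that for $g \in I$ and $h \in J$ we have $gh \in I \cap J = \{\mathbf{0}\}$, so $g(x)h(x)=0$ for every $x \in X$; consequently $|g|^{-1}([1,\infty))$ and $|h|^{-1}([1,\infty))$ are disjoint zero-sets in $X$, and hence have disjoint closures in $\beta X$ by the zero-set property recalled in the introduction, giving $U \cap V = \emptyset$. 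To cover $Y$, take $y \in Y$ and $f \in H$ with $y \in \mathrm{cl}_{\beta X}|f|^{-1}((1,\infty))$; writing $f = g + h$ with $g \in I$, $h \in J$, the pointwise identity $gh = \mathbf{0}$ forces $|f(x)| = |g(x)| + |h(x)|$ for every $x$, so $|f|^{-1}((1,\infty)) = |g|^{-1}((1,\infty)) \cup |h|^{-1}((1,\infty))$, and passing to $\beta X$-closures gives $y \in U \cup V$. Hence $Y = U \sqcup V$ is disconnected.

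For $(2)\Rightarrow(1)$ (contrapositive), assume $Y = Y_1 \sqcup Y_2$ with $Y_1, Y_2$ non-empty and clopen in $Y$, and define
\[
I = \{f \in H : f_\beta|_{Y_2} = \mathbf{0}\}, \qquad J = \{f \in H : f_\beta|_{Y_1} = \mathbf{0}\}.
\]
The identity $(fg)_\beta = f_\beta g_\beta$ from Lemma \ref{KJGFF} shows that $I$ and $J$ are ideals of $C_B(X)$ sitting inside $H$; since $X \subseteq Y$, any element of $I \cap J$ vanishes on $X$ and is therefore zero. Given $f \in H$, I would decompose $f_\lambda \in C_0(Y)$ as $g_1 + g_2$, where $g_i$ equals $f_\lambda$ on $Y_i$ and $\mathbf{0}$ on $Y_{3-i}$; each $g_i$ is continuous and lies in $C_0(Y)$ because $Y_i$ is clopen in $Y$, so surjectivity of $\phi$ yields $h_i \in H$ with $\phi(h_i) = g_i$, and by construction $h_1 \in I$, $h_2 \in J$, and $h_1 + h_2 = f$. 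Non-triviality of $I$ and $J$ follows from a Urysohn argument on the locally compact Hausdorff space $Y$: each non-empty open $Y_i$ supports a non-zero element of $C_0(Y)$ vanishing on $Y_{3-i}$, which pulls back through $\phi$ to a non-zero element of the corresponding ideal.

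I expect the main obstacle to be the subtlety in the first direction that $U \cap V = \emptyset$ in $\beta X$ is not a formal consequence of the disjointness of the defining cozero-sets in $X$; one genuinely needs to pass to the associated zero-sets $|g|^{-1}([1,\infty))$ and invoke the zero-set separation property in $\beta X$, which is why the slight asymmetry between $(1,\infty)$ in the definition of $\lambda_H X$ and $[1,\infty)$ here matters. The other conceptual point worth watching is that in the second direction the ideals $I$ and $J$ must be ideals in the \emph{ambient} algebra $C_B(X)$, and not merely in $H$; this is precisely what Lemma \ref{KJGFF} guarantees, via the continuous extension of multiplication from $X$ to all of $\beta X$.
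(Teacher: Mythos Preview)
Your proposal is correct and follows essentially the same strategy as the paper: both directions are argued by contraposition, with $\lambda_I X$ and $\lambda_J X$ serving as the separation of $\mathfrak{sp}(H)$ in one direction, and ideals built from a clopen splitting of $Y$ (via the isomorphism $\phi$, equivalently via multiplication by $\chi_{X\cap Y_i}$) in the other. Your disjointness argument via disjoint zero-sets and your covering argument via the pointwise identity $|f|=|g|+|h|$ are slight tactical variants of the paper's direct use of $(fg)_\beta=f_\beta g_\beta$ and a triangle-inequality estimate, but the substance is identical.
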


\begin{proof}
(1) \emph{implies} (2). Suppose that $H$ is not indecomposable, that is, $H=I\oplus J$ for some non-zero ideals $I$ and $J$ of $C_B(X)$. We show that the pair
\[A=\bigcup_{h\in I}\mathrm{int}_{\beta X}\mathrm{cl}_{\beta X}|h|^{-1}\big((1,\infty)\big)\quad\mbox{and}\quad B=\bigcup_{h\in J}\mathrm{int}_{\beta X}\mathrm{cl}_{\beta X}|h|^{-1}\big((1,\infty)\big)\]
form a separation for $\lambda_HX$ ($=\mathfrak{sp}(H)$ by Theorem \ref{TRES}). Clearly $A$ and $B$ are open subspaces of $\lambda_HX$. Also, $A$ and $B$ are non-empty, as $I$ and $J$ are non-zero. (To see this, let $\mathbf{0}\neq f\in I$. Let $x\in X$ such that $f(x)\neq0$. Let $n$ be a positive integer with $1/n<|f(x)|$. Let $g=nf$. Then $x\in A$, as $x\in|g_\beta|^{-1}((1,\infty))$ and $|g_\beta|^{-1}((1,\infty))\subseteq\mathrm{int}_{\beta X}\mathrm{cl}_{\beta X}|g|^{-1}((1,\infty))$ by (a slight modification of) Lemma \ref{LKG}. This shows that $A\neq\emptyset$. Similarly $B\neq\emptyset$.) We now prove that $\lambda_HX=A\cup B$. Let $t\in\lambda_HX$. Then $t\in\mathrm{cl}_{\beta X}|h|^{-1}((1,\infty))$ for some $h\in H$. Then $t\in|h_\beta|^{-1}([1,\infty))$. Let $f\in I$ and $g\in J$ such that $h=f+g$. Note that $h_\beta=f_\beta+g_\beta$ by Lemma \ref{KJGFF}. Since $|h_\beta(t)|\geq1$, we have either $|f_\beta(t)|>1/3$ or $|g_\beta(t)|>1/3$. But then, as argued above, either $t\in\mathrm{int}_{\beta X}\mathrm{cl}_{\beta X}|3f|^{-1}((1,\infty))$ or $t\in\mathrm{int}_{\beta X}\mathrm{cl}_{\beta X}|3g|^{-1}((1,\infty))$. Consequently, either $t\in A$ or $t\in B$. That is $\lambda_HX\subseteq A\cup B$. The reverse inclusion holds trivially. Finally, we prove that $A\cap B=\emptyset$. Suppose to the contrary that there is some $s\in A\cap B$. Then $s\in\mathrm{int}_{\beta X}\mathrm{cl}_{\beta X}|f|^{-1}((1,\infty))$ and $s\in\mathrm{int}_{\beta X}\mathrm{cl}_{\beta X}|g|^{-1}((1,\infty))$ for some $f\in I$ and $g\in J$. Therefore $s\in|f_\beta|^{-1}([1,\infty))$ and $s\in|g_\beta|^{-1}([1,\infty))$. Thus $f_\beta g_\beta(s)\neq 0$. But (using Lemma \ref{KJGFF}) we have $f_\beta g_\beta=(fg)_\beta=\mathbf{0}$, as $fg\in I\cap J=\mathbf{0}$. This contraction proves that $A\cap B=\emptyset$. That is, the pair $A$ and $B$ form a separation for $\lambda_HX$ and $\lambda_HX$ is therefore disconnected.

(2) \emph{implies} (1). Suppose that $\lambda_HX$ is disconnected. Let $C$ and $D$ be a separation for $\lambda_HX$. Let
\[K=\{h\chi_{(X\cap C)}:h\in H\}\quad\mbox{and}\quad L=\{h\chi_{(X\cap D)}:h\in H\},\]
where $\chi$ is used to denote the characteristic function. It is easy to see that $K$ and $L$ are ideals of $C_B(X)$ which are contained in $H$. We check that $K$ and $L$ are non-zero. Note that $C$ is open in $\beta X$ (as $\lambda_HX$  is so by Lemma \ref{KLGS}, and $C$ is open in $\lambda_HX$) and is non-empty. Therefore $X\cap C\neq\emptyset$. Let $x\in X\cap C$. Let $h\in H$ such that $h(x)\neq0$. Then $h\chi_{(X\cap C)}\in K$ and $h\chi_{(X\cap C)}\neq\mathbf{0}$, as $h\chi_{(X\cap C)}(x)\neq0$. That is $K$ is non-zero. Similarly, $L$ is non-zero. It is clear that for every $h\in H$ we have $h=h\chi_{(X\cap C)}+h\chi_{(X\cap D)}$, as the sets $X\cap C$ and $X\cap D$ cover $X$ and are disjoint. Thus $H\subseteq K+L$ and therefore $H=K+L$. It is also clear that $K\cap L=\mathbf{0}$. Thus $H=K\oplus L$ and $H$ is therefore not indecomposable.
\end{proof}

Let $\{Y_i:i\in I\}$ be a collection of topological spaces. We may assume that the spaces $Y_i$'s are pairwise disjoint. The (\textit{topological}) \textit{direct sum} of $\{Y_i:i\in I\}$, denoted by $\bigoplus_{i\in I}Y_i$, is the set $Y=\bigcup_{i\in I}Y_i$ together with the family of open subsets $\mathscr{O}$ consisting of all $U\subseteq Y$ such that $U\cap Y_i$ is open in $Y_i$ for every $i\in I$. Clearly, a space $Y$ is the direct sum of a collection  $\{Y_i:i\in I\}$ of its subspaces if and only if $\{Y_i:i\in I\}$ form a collection of pairwise disjoint open subspaces of $Y$ whose union is the whole $Y$.

\begin{theorem}\label{JFDF}
Let $X$ be a completely regular space. Let $H$ be a non-vanishing closed ideal in $C_B(X)$. The following are equivalent:
\begin{itemize}
\item[\rm(1)] Components of $\mathfrak{sp}(H)$ are open in $\mathfrak{sp}(H)$; in particular, $\mathfrak{sp}(H)$ is locally connected.
\item[\rm(2)] There is a representation
\[H=\overline{\bigoplus_{i\in I}H_i},\]
where $H_i$ is an indecomposable closed ideal in $C_B(X)$ for any $i\in I$.
\end{itemize}
Here the bar denotes the closure in $C_B(X)$.
\end{theorem}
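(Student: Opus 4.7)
The plan is to pass via the isometric isomorphism $\phi:H\to C_0(Y)$ of Theorem \ref{TRES}, where $Y=\mathfrak{sp}(H)$, and exploit the standard bijection between closed ideals of the commutative $C^*$-algebra $C_0(Y)$ and open subsets of $Y$: every closed ideal has the form $J_U=\{f\in C_0(Y):f|_{Y\setminus U}=\mathbf{0}\}$ for a unique open $U\subseteq Y$. Since $X$ is dense in $Y$ and $\beta Y=\beta X$, each $g\in C_B(X)$ extends to $g_\beta|_Y\in C_B(Y)$, and each $J_U$ is automatically invariant under multiplication by $C_B(Y)$; consequently $\phi$ restricts to a bijection between closed ideals of $C_B(X)$ contained in $H$ and the ideals $J_U$ of $C_0(Y)$. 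The crucial dictionary entry is that for any closed ideal $K\subseteq H$ with $\phi(K)=J_U$, the subspace $\lambda_KX$ of Definition \ref{HJGHA} equals $U$: the inclusion $\lambda_KX\subseteq U$ follows because $k_\lambda$ vanishes on $Y\setminus U$ for each $k\in K$, while the reverse inclusion is obtained via compactly supported bump functions inside $U$ together with Lemma \ref{LKG}, as in the proof of Theorem \ref{TRES}.

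For the implication (2)$\Rightarrow$(1), write $\phi(H_i)=J_{U_i}$ for each $i\in I$. Indecomposability of $H_i$, combined with the contrapositive of the (1)$\Rightarrow$(2) direction of Theorem \ref{HJGS}---whose proof, inspected directly, only needs the two summands of a hypothetical decomposition to be non-zero, not that the ambient ideal be non-vanishing---shows $U_i=\lambda_{H_i}X$ must be connected. The direct sum hypothesis $H_i\cap\sum_{j\neq i}H_j=\mathbf{0}$ translates to $J_{U_i}\cap\sum_{j\neq i}J_{U_j}=\mathbf{0}$; if some $U_i\cap U_j$ with $j\neq i$ were non-empty, a Urysohn bump on the locally compact Hausdorff space $Y$ would yield a non-zero element of that intersection, a contradiction. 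Hence the $U_i$'s are pairwise disjoint, and the density $\overline{\bigoplus_iJ_{U_i}}=C_0(Y)$ then forces $\bigcup_iU_i=Y$. Each $U_i$ is therefore clopen (its complement being the union of the remaining open $U_j$'s) and connected, so it is a component of $Y$; in particular every component of $Y$ is open, giving (1).

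For the implication (1)$\Rightarrow$(2), let $\{Y_i:i\in I\}$ enumerate the components of $Y$. By (1) each $Y_i$ is open, and therefore also closed (its complement is a union of other components, each open). Define $H_i=\phi^{-1}(J_{Y_i})$; this is a closed ideal of $C_B(X)$ with $\lambda_{H_i}X=Y_i$, and connectedness of $Y_i$ yields indecomposability of $H_i$ via the same application of Theorem \ref{HJGS} noted above. Disjointness of the $Y_i$'s gives $H_i\cap\sum_{j\neq i}H_j=\mathbf{0}$, so $\bigoplus_{i\in I}H_i$ is a well-defined algebraic direct sum. Finally, $\bigoplus_iJ_{Y_i}$ is dense in $C_0(Y)$: each $f\in C_0(Y)$ is a uniform limit of compactly supported continuous functions on $Y$, and since $Y=\bigsqcup_iY_i$ is a topological direct sum of clopen components, every compact subset of $Y$ meets only finitely many $Y_i$'s, so the approximants already lie in $\bigoplus_iJ_{Y_i}$.

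The main obstacle is the careful setup in the first paragraph---matching ``closed ideal of $C_B(X)$ contained in $H$'' with ``closed ideal of $C_0(Y)$'' via $\phi$, and verifying that the spectrum construction of Definition \ref{HJGHA} applied to a sub-ideal recovers the corresponding open subset of $Y$. Once this dictionary is in place, the remaining arguments are routine topological translations combined with the indecomposability criterion of Theorem \ref{HJGS}.
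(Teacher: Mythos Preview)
Your approach is correct and takes a cleaner, more conceptual route than the paper. The paper works hands-on: in $(1)\Rightarrow(2)$ it sets $H_i=\{h\chi_{X\cap U_i}:h\in H\}$ for each component $U_i$ of $\mathfrak{sp}(H)$, verifies directly that these are closed ideals forming a direct sum with dense span, and proves indecomposability by identifying $H_i$ with a non-vanishing ideal of $C_B(X\cap U_i)$ and computing its spectrum as $\mathrm{cl}_{\lambda_HX}U_i$; in $(2)\Rightarrow(1)$ it first proves $X=\bigoplus_iX_i$ as a topological sum (with $X_i=\bigcup_{f\in H_i}\mathrm{coz}(f)$) and then that $\mathfrak{sp}(H)=\bigoplus_i\mathfrak{sp}(H_i)$ through fairly lengthy closure computations in $\beta X$. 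Your dictionary between closed subideals of $H$ and open subsets of $Y=\mathfrak{sp}(H)$ via $\phi$ lets you do everything inside $C_0(Y)$, which shortens both directions considerably and makes the role of Theorem~\ref{HJGS} transparent.

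One correction is needed in your $(2)\Rightarrow(1)$ paragraph. You invoke ``the contrapositive of the $(1)\Rightarrow(2)$ direction of Theorem~\ref{HJGS}'' to pass from indecomposability of $H_i$ to connectedness of $U_i$; but that contrapositive is ``decomposable $\Rightarrow$ disconnected'', from which indecomposability yields nothing. What you need is the $(2)\Rightarrow(1)$ direction (indecomposable $\Rightarrow$ connected), and the paper's proof of \emph{that} direction does use non-vanishing of the ambient ideal to show the constructed summands are non-zero. This is not fatal for you, because your dictionary supplies the implication directly: if $U_i$ splits as a disjoint union $V\sqcup W$ of non-empty open sets, then $J_{U_i}=J_V\oplus J_W$ non-trivially in $C_0(Y)$, and pulling back via $\phi$ gives $H_i=\phi^{-1}(J_V)\oplus\phi^{-1}(J_W)$ with both summands non-zero closed ideals of $C_B(X)$. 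State it this way rather than through the mis-cited direction of Theorem~\ref{HJGS}.
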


\begin{proof}
(1) \emph{implies} (2). Let $\mathscr{U}=\{U_i:i\in I\}$ be the collection of all components of $\lambda_HX$ ($=\mathfrak{sp}(H)$ by Theorem \ref{TRES}) which are faithfully indexed. Note that components of any space are always closed. Thus, in particular, $U_i$'s are both open and closed in $\lambda_HX$.

For each $i\in I$ let
\[H_i=\{h\chi_{X_i}:h\in H\},\]
where $X_i=X\cap U_i$ and $\chi$ is used to denote the characteristic function. We prove that $H_i$ is an indecomposable closed ideal of $C_B(X)$ for any $i\in I$, and
\[H=\overline{\bigoplus_{i\in I}H_i}.\]

It is easy to see that $H_i$ is an ideal in $C_B(X)$ for any $i\in I$, and $H_i$ is contained in $H$. Let $i_0\in I$. Let $f\in H_{i_0}\cap\langle\bigcup_{i_0\neq i\in I}H_i\rangle$. Then
\[f=h_0\chi_{X_{i_0}}=h_1\chi_{X_{i_1}}+\cdots+h_n\chi_{X_{i_n}},\]
where $h_0,\ldots,h_n\in H$, in which $i_0,\ldots,i_n\in I$ are distinct. This implies that $f=\mathbf{0}$, as $U_{i_0},\ldots,U_{i_n}$ (and therefore $X_{i_0},\ldots,X_{i_n}$) are pairwise disjoint. That is the ideal $\langle\bigcup_{i\in I}H_i\rangle$ generated by $H_i$'s (which is clearly contained in $H$) is the direct sum $\bigoplus_{i\in I}H_i$. Since $H$ is closed in $C_B(X)$, this in particular implies that
\begin{equation}\label{JJHG}
\overline{\bigoplus_{i\in I}H_i}\subseteq H.
\end{equation}

We now check that the reverse inclusion also holds in the above relation. Let $h\in H$. Let $\epsilon>0$. Observe that $|h_\beta|^{-1}((\epsilon,\infty))$ is contained in $\lambda_HX$, as it is contained in $\mathrm{cl}_{\beta X}|h/\epsilon|^{-1}((1,\infty))$ (by Lemma \ref{LKG}) and the latter is so. Let $n$ be a positive integer. Then $|h_\beta|^{-1}([1/n,\infty))$ is contained in $\lambda_HX$ and it is compact. Since  $\mathscr{U}$ is an open cover for $\lambda_HX$ it then follows that
\begin{equation}\label{KHDF}
|h_\beta|^{-1}\big([1/n,\infty)\big)\subseteq U_{j_1}\cup\cdots\cup U_{j_n}
\end{equation}
for some distinct $j_1,\ldots,j_n\in I$. Let
\[h_n=h\chi_{X_{j_1}}+\cdots+h\chi_{X_{j_n}}.\]
It is clear that $h_n\in\bigoplus_{i\in I}H_i$. We prove that $h_n\rightarrow h$. This will imply that $h\in\overline{\bigoplus_{i\in I}H_i}$. But this follows trivially, as by (\ref{KHDF}) we have
\[h_n(x)=h(x),\quad\mbox{if}\quad x\in X\cap(U_{j_1}\cup\cdots\cup U_{j_n})\]
and
\[|h_n(x)-h(x)|\leq 1/n,\quad\mbox{if}\quad x\in X\setminus(U_{j_1}\cup\cdots\cup U_{j_n})\]
for any positive integer $n$, and thus $\|h_n-h\|\leq 1/n$. This proves the reverse inclusion in (\ref{JJHG}).

To conclude the proof we need to show that $H_i$'are indecomposable. Let $i\in I$. Note that the elements of $H_i$ vanish outside of $X_i$. We can therefore identify each element $h\chi_{X_i}$ of $H_i$ (where $h\in H$) with its restriction on $X_i$. In particular, $H_i$ will be a non-vanishing closed ideal in $C_B(X_i)$, as one can easily check. Observe that $\beta X_i=\mathrm{cl}_{\beta X}X_i$, as $X_i$ is open and closed in $X$. Also, \[|h\chi_{X_i}|^{-1}\big([1,\infty)\big)=X_i\cap|h|^{-1}\big([1,\infty)\big).\]
Note that $X_i$ and $|h|^{-1}([1,\infty))$ are both zero-sets in $X$. (Indeed, $|h|^{-1}([1,\infty))=\mathrm{z}(g)$ where $g=\min\{|h|-\mathbf{1},\mathbf{0}\}$.) Therefore
\[\mathrm{cl}_{\beta X}\big[X_i\cap|h|^{-1}\big([1,\infty)\big)\big]=\mathrm{cl}_{\beta X}X_i\cap\mathrm{cl}_{\beta X}|h|^{-1}\big([1,\infty)\big).\]
We combine these and use Theorem \ref{TRES} to obtain
\begin{eqnarray*}
\mathfrak{sp}(H_i)&=&\bigcup_{h\in H}\mathrm{cl}_{\beta X_i}|h\chi_{X_i}|^{-1}\big([1,\infty)\big)\\&=&\bigcup_{h\in H}\big[\mathrm{cl}_{\beta X}X_i\cap\mathrm{cl}_{\beta X}|h|^{-1}\big([1,\infty)\big)\big]\\&=&\mathrm{cl}_{\beta X}X_i\cap\bigcup_{h\in H}\mathrm{cl}_{\beta X}|h|^{-1}\big([1,\infty)\big)=\mathrm{cl}_{\beta X}X_i\cap\lambda_HX=\mathrm{cl}_{\lambda_HX}X_i.
\end{eqnarray*}
But
\[\mathrm{cl}_{\lambda_HX}X_i=\mathrm{cl}_{\lambda_HX}(X\cap U_i)=\mathrm{cl}_{\lambda_HX}U_i,\]
as $U_i$ is open in $\lambda_HX$ (and $X$ is dense in $\lambda_HX$). Therefore $\mathfrak{sp}(H_i)=\mathrm{cl}_{\lambda_HX}U_i$. But $U_i$ is connected and thus so is its closure in $\lambda_HX$. That is $\mathfrak{sp}(H_i)$ is connected. It now follows from Theorem \ref{HJGS} that $H_i$ is indecomposable.

(2) \emph{implies} (1). For each $i\in I$ let
\[X_i=\bigcup_{f\in H_i}\mathrm{coz}(f).\]
We show that
\begin{equation}\label{OPHS}
X=\bigoplus_{i\in I}X_i,
\end{equation}
that is, $\{X_i:i\in I\}$ forms a collection of pairwise disjoint open subspaces of $X$ whose union is the whole $X$. It is clear that $X_i$ is open in $X$ for each $i\in I$. Since $H$ is non-vanishing, for each $x\in X$ there is some $f\in H$ such that $f(x)\neq0$. But $f$ is the limit of a sequence $f_1,f_2,\ldots$ in $\bigoplus_{i\in I}H_i$. Therefore $f_n(x)\neq0$ for some positive integer $n$. Since $f_n$ is a finite sum of elements from $\bigcup_{i\in I}H_i$ it follows that $h(x)\neq0$ for some $h\in H_i$ and $i\in I$. Thus $x\in X_i$. That is $X_i$'s cover $X$. To show that $X_i$'s are pairwise disjoint, suppose otherwise that $x\in X_i\cap X_j$ for some distinct $i,j\in I$. Then $x\in\mathrm{coz}(f_i)\cap\mathrm{coz}(f_j)$ where $f_i\in H_i$ and $f_j\in H_j$. But $f_if_j\in H_i\cap H_j=\mathbf{0}$, which is a contradiction. This proves (\ref{OPHS}).

Next, we shows that
\begin{equation}\label{RRC}
\mathfrak{sp}(H)=\bigoplus_{i\in I}\mathfrak{sp}(H_i),
\end{equation}
that is, $\{\mathfrak{sp}(H_i):i\in I\}$ is a collection of pairwise disjoint open subspaces of $\mathfrak{sp}(H)$ whose union is $\mathfrak{sp}(H)$.

First, we check that
\begin{equation}\label{JOKIBH}
\mathfrak{sp}(H_i)=\bigcup_{f\in H_i}\mathrm{cl}_{\beta X}|f|^{-1}\big((1,\infty)\big)
\end{equation}
for any $i\in I$. Let $i\in I$. Note that the elements of $H_i$ vanish outside of $X_i$. We can therefore identify the elements of $H_i$ with their restriction on $X_i$. In particular, $H_i$ will be a non-vanishing closed ideal in $C_B(X_i)$. By Theorem \ref{TRES} we therefore have
\begin{equation}\label{JUOKBH}
\mathfrak{sp}(H_i)=\bigcup_{f\in H_i}\mathrm{cl}_{\beta X_i}|f|^{-1}\big((1,\infty)\big).
\end{equation}
But $\beta X_i=\mathrm{cl}_{\beta X}X_i$, therefore, we have
\[\mathrm{cl}_{\beta X_i}|f|^{-1}\big((1,\infty)\big)=\mathrm{cl}_{\beta X}X_i\cap\mathrm{cl}_{\beta X}|f|^{-1}\big((1,\infty)\big)=\mathrm{cl}_{\beta X}|f|^{-1}\big((1,\infty)\big)\]
for any $f\in H_i$. This together with (\ref{JUOKBH}) proves (\ref{JOKIBH}).

We check that $\mathfrak{sp}(H_i)$'s are open in $\beta X$ (and thus in $\mathfrak{sp}(H)$). To see this, let $i\in I$. Let $f\in H_i$. Then
\[\mathrm{cl}_{\beta X}|f|^{-1}\big((1,\infty)\big)\subseteq|f_\beta|^{-1}\big([1,\infty)\big)\subseteq|2f_\beta|^{-1}\big((1,\infty)\big),\]
and
\[|f_\beta|^{-1}\big((1,\infty)\big)\subseteq\mathrm{cl}_{\beta X}|f|^{-1}\big((1,\infty)\big)\]
by Lemma \ref{LKG}. Thus, using (\ref{JOKIBH}), we have
\[\mathfrak{sp}(H_i)=\bigcup_{f\in H_i}|f_\beta|^{-1}\big((1,\infty)\big).\]

We now check that
\begin{equation}\label{KLDSD}
\mathfrak{sp}(H)=\bigcup_{i\in I}\mathfrak{sp}(H_i).
\end{equation}
Let $t\in\mathfrak{sp}(H)$. Then, by Theorem \ref{TRES} we have $t\in\mathrm{cl}_{\beta X}|f|^{-1}((1,\infty))$ for some $f\in H$. In particular $t\in|f^\beta|^{-1}([1,\infty))$. Let $f_1,f_2,\ldots$ be a sequence in $\bigoplus_{i\in I}H_i$ such that $f_n\rightarrow f$. Then $f^\beta_n\rightarrow f^\beta$, as
\[\|f^\beta_n-f^\beta\|=\big\|(f_n-f)^\beta\big\|=\|f_n-f\|\]
by Lemma \ref{KJGFF}. In particular $f_n^\beta(t)\rightarrow f^\beta(t)$, and therefore $|f_n^\beta(t)|\rightarrow|f^\beta(t)|\geq 1$. Let $0<\epsilon<1$. Let $m$ be a positive integer with $|f_m^\beta(t)|>\epsilon$. By the choice of the sequence $f_1,f_2,\ldots$ we have $f_m=\sum_{j=1}^kh_{i_j}$, where $h_{i_j}\in H_{i_j}$ for each $j=1,\ldots,k$. In particular $f^\beta_m=\sum_{j=1}^kh^\beta_{i_j}$ by Lemma \ref{KJGFF}. Observe that
\begin{eqnarray*}
|f^\beta_m|^{-1}\big((\epsilon,\infty)\big)&=&\bigg|\sum_{j=1}^kh^\beta_{i_j}\bigg|^{-1}\big((\epsilon,\infty)\big)\\&\subseteq&
\bigg(\sum_{j=1}^k|h^\beta_{i_j}|\bigg)^{-1}\big((\epsilon,\infty)\big)\subseteq \bigcup_{j=1}^k|h^\beta_{i_j}|^{-1}\big((\epsilon/k,\infty)\big).
\end{eqnarray*}
But, using (\ref{JOKIBH}), we have
\[|h^\beta_{i_j}|^{-1}\big((\epsilon/k,\infty)\big)\subseteq\mathrm{cl}_{\beta X}|h_{i_j}|^{-1}\big((\epsilon/k,\infty)\big)=\mathrm{cl}_{\beta X}\big|(k/\epsilon) h_{i_j}\big|^{-1}\big((1,\infty)\big)\subseteq\mathfrak{sp}(H_{i_j})\]
for each $j=1,\ldots,k$. Combining the above relations we therefore have
\[|f^\beta_m|^{-1}\big((\epsilon,\infty)\big)\subseteq\bigcup_{j=1}^k\mathfrak{sp}(H_{i_j})\subseteq\bigcup_{i\in I}\mathfrak{sp}(H_i).\]
Since $t\in|f^\beta_m|^{-1}((\epsilon,\infty))$ we have $t\in\bigcup_{i\in I}\mathfrak{sp}(H_i)$. This shows that $\mathfrak{sp}(H)\subseteq\bigcup_{i\in I}\mathfrak{sp}(H_i)$. The reverse inclusion is trivial and follows from (\ref{JOKIBH}) and Theorem \ref{TRES}.

Finally, we check that $\mathfrak{sp}(H_i)$'s are pairwise disjoint. Let $i,j\in I$ be distinct. Then $X_i$ and $X_j$ are disjoint closed and open subspaces of $X$ and therefore they have disjoint closures in $\beta X$. But $\mathfrak{sp}(H_i)$ and $\mathfrak{sp}(H_j)$ are then disjoint, as they are contained in $\mathrm{cl}_{\beta X}X_i$ ($=\beta X_i$) and $\mathrm{cl}_{\beta X}X_j$ ($=\beta X_j$), respectively. This proves (\ref{RRC}).

To conclude the proof, observe that $\mathfrak{sp}(H_i)$ is connected for each $i\in I$ by Theorem \ref{HJGS}, as $H_i$ is indecomposable. Therefore, the collection $\{\mathfrak{sp}(H_i):i\in I\}$ coincides with the collection of all components of $\mathfrak{sp}(H)$. In particular, components of $\mathfrak{sp}(H)$ are all open in $\mathfrak{sp}(H)$.
\end{proof}

In the next theorem, for a collection $\{H_i:i\in I\}$ of ideals in $C_B(X)$ we study the relation between the spectrum of certain closed ideals of $C_B(X)$ generated by $H_i$'s and the individual spectrums $\mathfrak{sp}(\overline{H_i})$'s.

Let $(P,\leq)$ be a partially ordered set. For a subset $A$ of $P$ we denote the least upper bound of $A$ by $\bigvee A$ (provided it exists).

Note that for a space $X$ the collection $\mathscr{H}(X)$ of all ideals of $C_B(X)$ (partially ordered with set theoretic inclusion $\subseteq$) is a partially ordered set in which for any subcollection $\{H_i:i\in I\}$ of $\mathscr{H}(X)$ we have
\[\bigvee_{i\in I}H_i=\bigg\langle\bigcup_{i\in I}H_i\bigg\rangle,\]
that is, the ideal in $C_B(X)$ generated by $\bigcup_{i\in I}H_i$.

\begin{theorem}\label{OJFS}
Let $X$ be a completely regular space. Let $\{H_i:i\in I\}$ be a non-empty collection of ideals in $C_B(X)$.
\begin{itemize}
\item[\rm(1)] Suppose that $H_i$ is non-vanishing for each $i\in I$. Then
  \[\mathfrak{sp}\bigg(\overline{\bigvee_{i\in I}H_i}\bigg)=\bigcup_{i\in I}\mathfrak{sp}\big(\overline{H_i}\big).\]
\item[\rm(2)] Suppose that ${\bigoplus_{i\in I}H_i}$ is non-vanishing. Then
  \[\mathfrak{sp}\bigg(\overline{\bigoplus_{i\in I}H_i}\bigg)=\bigoplus_{i\in I}\mathfrak{sp}\big(\overline{H_i}\big).\]
\item[\rm(3)] Suppose that $\bigcap_{i\in I}H_i$ is non-vanishing. Then
  \[\mathfrak{sp}\bigg(\overline{\bigcap_{i\in I}H_i}\bigg)=\mathrm{int}_{\,\mathfrak{sp}(C_B(X))}\bigg(\bigcap_{i\in I}\mathfrak{sp}\big(\overline{H_i}\big)\bigg).\]
\end{itemize}
Here the bar denotes the closure in $C_B(X)$.
\end{theorem}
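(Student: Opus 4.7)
The plan is to work throughout with the representation $\mathfrak{sp}(H)=\lambda_HX$ furnished by Theorem \ref{TRES}, comparing the resulting open subsets of $\beta X$. I would first record the preliminary observation that $\lambda_{\overline{H}}X=\lambda_HX$ for any ideal $H$ of $C_B(X)$: given $f\in\overline{H}$ and $0<\epsilon<1$, any $f_n\in H$ with $\|f-f_n\|<1-\epsilon$ satisfies $|f|^{-1}((1,\infty))\subseteq|f_n/\epsilon|^{-1}((1,\infty))$, so closures may be freely moved in and out of $\lambda$-expressions.

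For part (1), the inclusion $\supseteq$ is immediate from monotonicity of $\lambda$. For the reverse, I would take $t\in\mathfrak{sp}(\overline{\bigvee_iH_i})$, which by the $\lambda$-description and Lemma \ref{KJGFF} yields some $f\in\overline{\bigvee_iH_i}$ with $|f_\beta(t)|\geq 1$; approximating $f$ within $1/2$ by a finite sum $h_1+\cdots+h_n$ with $h_k\in H_{i_k}$ and applying Lemma \ref{KJGFF} forces $|h_k^\beta(t)|\geq 1/(2n)$ for some $k$, which by the obvious modification of Lemma \ref{LKG} puts $t$ inside $\mathfrak{sp}(\overline{H_{i_k}})$. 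For part (2), one has $\bigoplus_iH_i=\bigvee_iH_i$ as ideals, so part (1) delivers the underlying set. The summands $\mathfrak{sp}(\overline{H_i})$ are open in $\beta X$ by Lemma \ref{KLGS}; for pairwise disjointness, the direct-sum hypothesis forces $H_iH_j=\mathbf{0}$ for $i\neq j$, hence $\overline{H_i}\cdot\overline{H_j}=\mathbf{0}$ by continuity of multiplication, and an overlap $t\in\mathfrak{sp}(\overline{H_i})\cap\mathfrak{sp}(\overline{H_j})$ would yield suitable $f,g$ with $(fg)_\beta(t)=f_\beta(t)g_\beta(t)\neq 0$, a contradiction.

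The delicate part is (3). Setting $W:=\mathrm{int}_{\beta X}\bigcap_i\mathfrak{sp}(\overline{H_i})$, the inclusion $\subseteq$ again follows from monotonicity of $\lambda$ and openness of $\mathfrak{sp}(\overline{\bigcap_iH_i})$. For the reverse, my plan is to identify $\overline{\bigcap_iH_i}$ with $C^{\mathfrak{I}}_0(X)$ for the ideal $\mathfrak{I}:=\{A\subseteq X:\mathrm{cl}_{\beta X}A\subseteq W\}$ of $X$, and then apply Theorem \ref{UDR}. Non-vanishing of $\bigcap_iH_i$ forces $X\subseteq W$ (each $x$ lies in some $\mathrm{coz}(h_\beta)\subseteq W$), and combined with normality of $\beta X$ this makes $X$ locally null with respect to $\mathfrak{I}$; a Urysohn-type argument modeled on the proof of Proposition \ref{KJHF} then yields $\lambda_{\mathfrak{I}}X=W$. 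The containment $\overline{\bigcap_iH_i}\subseteq C^{\mathfrak{I}}_0(X)$ is easy, as for $f\in\bigcap_iH_i$ the open set $\mathrm{coz}(f_\beta)$ sits inside every $\mathfrak{sp}(\overline{H_i})$ and hence inside $W$, so each $|f|^{-1}([1/n,\infty))$ is $\mathfrak{I}$-null.

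The hard step is the reverse inclusion $C^{\mathfrak{I}}_0(X)\subseteq\overline{\bigcap_iH_i}$. My approach is: given $f\in C^{\mathfrak{I}}_0(X)$ and $\epsilon>0$, truncate by continuous functional calculus to $f_\epsilon$ whose $\beta$-extension has compact support $K\subseteq W$; prove the finite-index identity $\lambda_{\bigcap_{j\in F}H_j}X=\bigcap_{j\in F}\lambda_{H_j}X$ for each finite $F\subseteq I$ by applying Lemma \ref{GFS} inside each $\overline{H_j}$ and multiplying the resulting non-negative generators (whose product lies in $\bigcap_{j\in F}H_j$ by the ideal property); then combine this with non-vanishing of $\bigcap_iH_i$ to produce a single $h\in\bigcap_iH_i$ with $h_\beta$ bounded below on a neighborhood of $K$; and finally absorb $f_\epsilon$ into $\overline{\bigcap_iH_i}$ by multiplying by a polynomial in $h$ with zero constant term. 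The real obstacle is this covering step, namely upgrading the pointwise non-vanishing of $\bigcap_iH_i$ (which is automatic at points of $X$) to a finite cover of the compact set $K\subseteq W$, which may meet $\beta X\setminus X$, by cozero sets of elements of the full intersection. This is precisely where the hypothesis that $\bigcap_iH_i$ (rather than each $H_i$ individually) is non-vanishing must be exploited in full strength.
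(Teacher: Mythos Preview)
Your treatment of (1) and (2) is essentially the paper's (which simply refers back to the arguments for (\ref{KLDSD}) and (\ref{RRC}) in the proof of Theorem~\ref{JFDF}); your direct disjointness argument in (2) via $H_iH_j=\mathbf{0}$ is slightly cleaner than the paper's detour through the partition $X=\bigoplus_iX_i$, but the substance is identical.

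In (3) you have correctly isolated the obstacle, but the step ``combine with non-vanishing of $\bigcap_iH_i$ to produce $h\in\bigcap_iH_i$ with $h_\beta$ bounded below on a neighborhood of $K$'' is circular: asking for such an $h$ amounts to asking that $K\subseteq\lambda_{\bigcap_iH_i}X$, which is precisely the inclusion $W\subseteq\mathfrak{sp}(\overline{\bigcap_iH_i})$ you are trying to establish, and non-vanishing only delivers this at points of $X$. Your finite-$F$ identity is correct but gives no mechanism for passing to infinite $I$. The paper sidesteps the entire covering problem by a different maneuver. Given $t\in W$, it takes a Urysohn function $G:\beta X\to[0,1]$ with $G(t)=1$ and $G|_{\beta X\setminus W}=\mathbf{0}$, sets $g=G|_X$, and shows $g\in\overline{H_i}$ for each $i$ \emph{individually}: the compact level sets $G^{-1}([1/n,1])$ sit inside $W\subseteq\lambda_{H_i}X$ for every $i$, so Lemma~\ref{GFS} applied inside each single $H_i$ (not inside $\bigcap_iH_i$) together with Lemma~\ref{JJHF} forces $g\in\overline{H_i}$. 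Since $g$ is independent of $i$, one gets $g\in\bigcap_i\overline{H_i}$, whence $t$ lies in the corresponding spectrum. The idea you are missing is thus: do not try to manufacture elements of the full intersection that are large on $K$; instead fix a single test function $g$ determined only by $t$ and $W$, and verify membership in each $\overline{H_i}$ separately. (The paper then writes ``$g\in H_i$, as $H_i$ is closed,'' which is a slip---the $H_i$ are not assumed closed and only $g\in\overline{H_i}$ is actually shown---so the argument as written literally proves the identity with $\bigcap_i\overline{H_i}$ rather than $\overline{\bigcap_iH_i}$; but that is the paper's issue, not yours.)
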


\begin{proof}
(1). Suppose that $H_i$ is non-vanishing for each $i\in I$. Denote $H=\bigvee_{i\in I}H_i$. Note that $\overline{H}$ contains $H_i$ for some $i\in I$ (as it contains $H$) and $H_i$ is non-vanishing. Thus $\overline{H}$ is also non-vanishing. Now, an argument similar to the one we used to prove (\ref{KLDSD}) applies to shows that
\[\mathfrak{sp}\big(\overline{H}\big)=\bigcup_{i\in I}\mathfrak{sp}\big(\overline{H_i}\big).\]

(2). Suppose that $H=\bigoplus_{i\in I}H_i$ is non-vanishing. Then, an argument similar to the one we used to prove (\ref{RRC}) proves that
\[\mathfrak{sp}\big(\overline{H}\big)=\bigoplus_{i\in I}\mathfrak{sp}\big(\overline{H_i}\big).\]

(3). Suppose that $H=\bigcap_{i\in I}H_i$ is non-vanishing. First, note that $\mathfrak{sp}(C_B(X))=\beta X$. (This standard fact may also be deduced from Theorem \ref{TRES}.) Note that $\mathfrak{sp}(\overline{H})$ is an open subspace of $\beta X$ by Lemma \ref{KLGS} and Theorem \ref{TRES}, and it is contained in $\mathfrak{sp}(\overline{H_i})$ for each $i\in I$ (again by Theorem \ref{TRES}), as $\overline{H}\subseteq\overline{H_i}$. Thus
\[\mathfrak{sp}\big(\overline{H}\big)\subseteq\mathrm{int}_{\beta X}\bigg(\bigcap_{i\in I}\mathfrak{sp}\big(\overline{H_i}\big)\bigg).\]
To show the reverse inclusion, denote
\[T=\mathrm{int}_{\beta X}\bigg(\bigcap_{i\in I}\mathfrak{sp}\big(\overline{H_i}\big)\bigg)\]
and let $t\in T$. By the Urysohn lemma there is a continuous mapping $G:\beta X\rightarrow[0,1]$ such that $G(t)=1$ and $G|_{\beta X\setminus T}=\mathbf{0}$. Let $g=G|_X$. We verify that $g\in H$. Fix some $i\in I$. Let $n$ be a positive integer. Then $G^{-1}([1/n,1])\subseteq T\subseteq\mathfrak{sp}(\overline{H_i})$. For each $s\in G^{-1}([1/n,1])$ (using the representation of $\mathfrak{sp}(\overline{H_i})$ given in Theorem \ref{TRES}) we have
\[s\in\mathrm{cl}_{\beta X}|f_{n,s}|^{-1}\big((1,\infty)\big)\]
for some $f_{n,s}\in\overline{H_i}$. Thus, by an argument similar to the one given in the proof of (\ref{KLDSD}) we have $|h_{n,s}^\beta(s)|>1/2$ for some $h_{n,s}\in H_i$. Therefore
\[\big\{|h_{n,s}^\beta|^{-1}\big((1/2,\infty)\big):s\in G^{-1}\big([1/n,1]\big)\big\}\]
is an open cover for $G^{-1}([1/n,1])$. Now, since $G^{-1}([1/n,1])$ is compact we have
\begin{equation}\label{HDDGF}
G^{-1}\big([1/n,1]\big)\subseteq\bigcup_{j=1}^{k_n}|h_{n,s_j}^\beta|^{-1}\big((1/2,\infty)\big)
\end{equation}
for some $s_1,\ldots,s_{k_n}\in G^{-1}([1/n,1])$. Now, if we intersect the two sides of (\ref{HDDGF}) with $X$ it yields
\begin{equation}\label{HPKF}
g^{-1}\big([1/n,1]\big)\subseteq\bigcup_{j=1}^{k_n}|h_{n,s_j}|^{-1}\big((1/2,\infty)\big)\subseteq|h_n|^{-1}\big([1/4,\infty)\big)
=|u_n|^{-1}\big([1,\infty)\big)
\end{equation}
where $u_n=4h_n$,
\[h_n=\sum_{j=1}^{k_n}|h_{n,s_j}|^2.\]
Note that $h_n=\sum_{j=1}^{k_n}h_{n,s_j}\overline{h_{n,s_j}}\in H_i$, as $H_i$ is an ideal $C_B(X)$. In particular $u_n\in H_i$. Since (\ref{HPKF}) holds for every positive integer $n$, using Lemma \ref{JJHF}, there is a sequence $v_1,v_2,\ldots$ in $C_B(X)$ such that $u_nv_n\rightarrow g$. Observe that $u_1v_1,u_2v_2,\ldots$ is a sequence in $H_i$ (as $H_i$ is an ideal in $C_B(X)$). Therefore $g\in H_i$, as $H_i$ is closed in $C_B(X)$. But this holds for every $i\in I$, and thus $g\in H$. By Theorem \ref{TRES} we have
\[\mathrm{cl}_{\beta X}|2g|^{-1}\big((1,\infty)\big)\subseteq\mathfrak{sp}\big(\overline{H}\big)\]
In particular, this implies that $t\in\mathfrak{sp}(\overline{H})$, as $t\in|G|^{-1}((1/2,\infty))$ and \[|G|^{-1}\big((1/2,\infty)\big)\subseteq\mathrm{cl}_{\beta X}|g|^{-1}\big((1/2,\infty)\big)\]
by Lemma \ref{LKG}. (Observe that $G=g_\beta$.)
\end{proof}

A partially ordered set $(P,\leq)$ is called an \textit{upper semi-lattice} (\textit{complete upper semi-lattice}, respectively) if together with any two elements $a$ and $b$ in $P$ (any subset $A$ of $P$, respectively) it has their least upper bound $a\vee b$ ($\bigvee A$, respectively). \textit{Lower semi-lattices} and \textit{complete lower semi-lattices} are defined  analogously (with the greatest lower bounds $a\wedge b$ and $\bigwedge A$ in places of $a\vee b$ and $\bigvee A$, respectively). A partially ordered set is called a \textit{lattice} if it is both an upper semi-lattice and a lower semi-lattice. A lattice $L$ which has the largest element $\mathbf{1}$ is called \textit{compact} if for any subset $A$ of $L$ with $\bigvee A=\mathbf{1}$ we have $a_1\vee\cdots\vee a_n=\mathbf{1}$ for some $a_1,\ldots,a_n\in L$.

\begin{theorem}\label{ASHK}
Let $X$ be a completely regular space. Let $\mathscr{H}(X)$ be the collection of all non-vanishing closed ideals of $C_B(X)$ partially ordered with $\subseteq$. Then $\mathscr{H}(X)$ is a lattice which is also a complete upper semi-lattice. Indeed, for a subcollection $\{H_i:i\in I\}$ of $\mathscr{H}(X)$ we have
\[\bigvee_{i\in I}H_i=\overline{\bigg\langle\bigcup_{i\in I}H_i\bigg\rangle},\]
and for any two elements $G,H\in\mathscr{H}(X)$ we have
\[G\wedge H=G\cap H.\]
Moreover, the lattice $\mathscr{H}(X)$ is compact, that is, for any subcollection $\{H_i:i\in I\}$ of $\mathscr{H}(X)$ such that
\[\bigvee_{i\in I}H_i=C_B(X)\]
there are $i_1,\ldots,i_n\in I$ such that
\[\bigvee_{k=1}^nH_{i_k}=C_B(X).\]
Here the bar denotes the closure in $C_B(X)$.
\end{theorem}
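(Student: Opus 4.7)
The plan is to verify the three assertions in order (supremum formula, infimum formula, compactness), leaning heavily on the representation machinery of Theorem \ref{TRES} and the spectrum calculus of Theorem \ref{OJFS}.

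First I would dispose of the least upper bound. Let $J=\overline{\langle\bigcup_{i\in I}H_i\rangle}$. Since multiplication in $C_B(X)$ is continuous, the closure of an ideal is an ideal, so $J$ is a closed ideal. As $J\supseteq H_{i_0}$ for any fixed $i_0\in I$ and $H_{i_0}$ is non-vanishing, $J$ is non-vanishing, hence $J\in\mathscr{H}(X)$. Any $K\in\mathscr{H}(X)$ with $H_i\subseteq K$ for every $i\in I$ must contain $\langle\bigcup_{i\in I}H_i\rangle$ and, being closed, must contain $J$. Thus $J=\bigvee_{i\in I}H_i$ in $\mathscr{H}(X)$, so $\mathscr{H}(X)$ is a complete upper semi-lattice.

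Next I would handle the binary infimum. For $G,H\in\mathscr{H}(X)$ the intersection $G\cap H$ is obviously a closed ideal of $C_B(X)$. To see it is non-vanishing, take $x\in X$ and pick $g\in G$, $h\in H$ with $g(x)h(x)\neq0$; then $gh\in G\cap H$ and $(gh)(x)\neq 0$, so $G\cap H\in\mathscr{H}(X)$. Any closed ideal contained in both $G$ and $H$ is contained in $G\cap H$, so $G\wedge H=G\cap H$, and $\mathscr{H}(X)$ is a lattice.

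Finally, for compactness of the lattice, suppose $\bigvee_{i\in I}H_i=C_B(X)$. Note $\mathfrak{sp}(C_B(X))=\beta X$: indeed by Theorem \ref{TRES} applied to $H=C_B(X)$ (with $h=2\cdot\mathbf{1}$) we obtain $\beta X\subseteq\mathfrak{sp}(C_B(X))\subseteq\beta X$. Since each $H_i$ is closed, $\overline{H_i}=H_i$, and Theorem \ref{OJFS}(1) gives
\[
\beta X=\mathfrak{sp}\big(\overline{\textstyle\bigvee_{i\in I}H_i}\big)=\bigcup_{i\in I}\mathfrak{sp}(H_i).
\]
By Lemma \ref{KLGS} each $\mathfrak{sp}(H_i)$ is an open subspace of $\beta X$, so by compactness of $\beta X$ there exist $i_1,\ldots,i_n\in I$ with $\bigcup_{k=1}^n\mathfrak{sp}(H_{i_k})=\beta X$. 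Applying Theorem \ref{OJFS}(1) again, the ideal $K=\bigvee_{k=1}^nH_{i_k}$ satisfies $\mathfrak{sp}(K)=\beta X$. It remains to deduce $K=C_B(X)$: by Theorem \ref{UDR} the map $\phi:K\to C_0(\mathfrak{sp}(K))=C(\beta X)$ defined by $\phi(f)=f_\beta$ is an isometric surjection, so for any $f\in C_B(X)$ there is $f'\in K$ with $f'_\beta=f_\beta$; by uniqueness of the Stone--\v{C}ech extension $f'=f$, so $f\in K$.

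The only subtle point, and the one I expect to require the most care, is the final identification $\mathfrak{sp}(K)=\beta X\Rightarrow K=C_B(X)$; everything else is either direct from the definitions or a bookkeeping application of results already established. The compactness of the lattice is essentially the topological compactness of $\beta X$ translated through the spectrum functor.
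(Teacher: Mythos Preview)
Your proof is correct and follows the same overall architecture as the paper: verify the lattice operations directly, then translate the compactness statement into the topological compactness of $\beta X$ via Theorem~\ref{OJFS}, extract a finite subcover, and pull back. The only point of divergence is the final implication $\mathfrak{sp}(K)=\beta X\Rightarrow K=C_B(X)$. The paper argues this elementwise: for $f\in C_B(X)$ and each $n$, the compact set $\mathrm{cl}_{\beta X}|f|^{-1}([1/n,\infty))$ lies in $\lambda_KX$, so Lemma~\ref{GFS} produces $h_n\in K$ with $|f|^{-1}([1/n,\infty))\subseteq|h_n|^{-1}([1,\infty))$, and Lemma~\ref{JJHF} then exhibits $f$ as a limit of elements of $K$. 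Your route instead invokes the surjectivity of the isomorphism $f\mapsto f_\beta|_{\lambda_KX}$ onto $C_0(\lambda_KX)=C(\beta X)$; this is cleaner, but note that Theorem~\ref{UDR} as stated applies to ideals of the form $C^{\mathfrak I}_0(X)$, so strictly you are using the identification $K=C^{\mathfrak H}_0(X)$ and $\lambda_{\mathfrak H}X=\lambda_KX$ established inside the proof of Theorem~\ref{TRES}. Citing Theorem~\ref{TRES} (and its proof) rather than Theorem~\ref{UDR} would make the dependence transparent.
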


\begin{proof}
That $\mathscr{H}(X)$ is a lattice and also a complete upper semi-lattice is clear. (Note that the intersection $G\cap H$ of two non-vanishing ideals $G$ and $H$ of $C_B(X)$ is also non-vanishing. To check this, let $x\in X$. Let $g\in G$ and $h\in H$ such that $g(x)\neq 0$ and $h(x)\neq 0$. Then $gh\in G\cap H$ and $(gh)(x)\neq 0$.) We show that the lattice $\mathscr{H}(X)$ is compact.

Note that $\mathscr{H}(X)$ has the largest element, namely, $C_B(X)$. Let $\{H_i:i\in I\}$ be a subcollection of $\mathscr{H}(X)$ such that
\[\bigvee_{i\in I}H_i=C_B(X).\]
Then, using Theorem \ref{OJFS}, we have
\[\beta X=\mathfrak{sp}\big(C_B(X)\big)=\mathfrak{sp}\bigg(\bigvee_{i\in I}H_i\bigg)=\bigcup_{i\in I}\mathfrak{sp}(H_i).\]
Note that $\mathfrak{sp}(H_i)$ is an open subspace of $\beta X$ for all $i\in I$. Therefore, by compactness, we have
\[\beta X=\bigcup_{k=1}^n\mathfrak{sp}(H_{i_k})\]
for some $i_1,\ldots,i_n\in I$. Observe that
\[\bigcup_{k=1}^n\mathfrak{sp}(H_{i_k})=\mathfrak{sp}\bigg(\bigvee_{k=1}^nH_{i_k}\bigg)\]
by Theorem \ref{OJFS}. Thus $\beta X=\mathfrak{sp}(H)$ ($=\lambda_HX$ by Theorem \ref{TRES}) where
\[H=\bigvee_{k=1}^nH_{i_k}.\]
To conclude the proof it suffices to prove that $H=C_B(X)$. Let $f\in C_B(X)$. Let $n$ be a positive integer. Then $\mathrm{cl}_{\beta X}|f|^{-1}([1/n,\infty))$ is a compact subspace of $\lambda_HX$, and therefore by Lemma \ref{GFS},
\[\mathrm{cl}_{\beta X}|f|^{-1}\big([1/n,\infty)\big)\subseteq\mathrm{cl}_{\beta X}|h_n|^{-1}\big((1,\infty)\big)\]
for some $h_n\in H$. Now, if we intersect the two sides of the above relation with $X$ we obtain
\[|f|^{-1}\big([1/n,\infty)\big)\subseteq|h_n|^{-1}\big([1,\infty)\big).\]
By Lemma \ref{JJHF} there is a sequence $g_1,g_2,\ldots$ in $C_B(X)$ with $g_nh_n\rightarrow f$. But then $f\in H$, as $H$ is closed in $C_B(X)$ and $g_1h_1,g_2h_2,\ldots$ is a sequence in $H$.
\end{proof}

Our concluding result in this section shows how the collection of all non-vanishing closed ideals of $C_B(X)$ can be made into one-to-one correspondence with the collection of all closed open bornologies of $X$.

Recall that a bornology $\mathfrak{B}$ in a set $X$ is an ideal in $X$ whose elements cover the whole $X$, that is, $X=\bigcup\mathfrak{B}$. Let $X$ be a space. A bornology $\mathfrak{B}$ in $X$ is called \textit{closed} (\textit{open}, respectively) if each element of $\mathfrak{B}$ is contained in a closed (open, respectively) element of $\mathfrak{B}$.

Let $(P,\leq)$ and $(Q,\leq)$ be partially ordered sets. A mapping $f:P\rightarrow Q$ is called an \textit{order-isomorphism} if $f$ is a bijection such that $f(x)\leq f(y)$, where $x,y\in P$, if and only if $x\leq y$. Also, $P$ and $Q$ are \textit{order-isomorphic} if there is an order-isomorphism between them.

\begin{theorem}\label{HGJJ}
Let $X$ be a normal space. Denote by $\mathscr{B}(X)$ the set of all closed open bornologies in $X$ and denote by $\mathscr{H}(X)$ the set of all non-vanishing closed ideals in $C_B(X)$, both partially ordered with $\subseteq$. Then $\mathscr{B}(X)$ and $\mathscr{H}(X)$ are order-isomorphic.
\end{theorem}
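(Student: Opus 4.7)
The plan is to exhibit explicit mutually inverse order-preserving maps between $\mathscr{H}(X)$ and $\mathscr{B}(X)$. For a non-vanishing closed ideal $H$ I would set
\[
\Phi(H)=\big\{A\subseteq X:A\subseteq|h|^{-1}\big([1,\infty)\big)\mbox{ for some }h\in H\big\},
\]
and for a closed open bornology $\mathfrak{B}$ I would simply take $\Psi(\mathfrak{B})=C^{\mathfrak{B}}_0(X)$. Both $\Phi$ and $\Psi$ are visibly monotone with respect to $\subseteq$, so once they are shown to be mutually inverse bijections, $\Phi$ becomes an order-isomorphism automatically.

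First I would check well-definedness. That $\Phi(H)$ is a bornology rests on three routine points: closure under subsets is trivial; closure under finite unions follows from the observation that $h=\sum_{i=1}^n|h_i|^2\in H$ is bounded below by $1$ on $\bigcup_{i=1}^n|h_i|^{-1}([1,\infty))$; and $\Phi(H)$ covers $X$ because $H$ is non-vanishing and for each $x\in X$ one can rescale $h\in H$ with $h(x)\neq0$ so that $x$ lands in $|nh|^{-1}([1,\infty))$. This bornology is \emph{closed} because each generator $|h|^{-1}([1,\infty))$ is closed in $X$, and \emph{open} because $|h|^{-1}([1,\infty))\subseteq|2h|^{-1}((1,\infty))\subseteq|2h|^{-1}([1,\infty))$ with the middle set open and still a member of $\Phi(H)$. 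On the other side, since $\mathfrak{B}$ is an open bornology, every point of $X$ sits inside an open element of $\mathfrak{B}$ and therefore has a null neighborhood, so $X$ is locally null with respect to $\mathfrak{B}$; Theorem \ref{UDR} then gives that $\Psi(\mathfrak{B})=C^{\mathfrak{B}}_0(X)$ is a closed ideal of $C_B(X)$, and Lemma \ref{KGV} delivers its non-vanishingness.

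The main substance is verifying that $\Phi$ and $\Psi$ are mutual inverses. For $\Psi(\Phi(H))=H$, I would observe that $\Phi(H)$ coincides with the ideal $\mathfrak{H}$ appearing in the proof of Theorem \ref{TRES}: indeed, $|h|^{-1}((1,\infty))\subseteq|h|^{-1}([1,\infty))\subseteq|2h|^{-1}((1,\infty))$, so the subset-ideal generated by the sets $|h|^{-1}((1,\infty))$ equals that generated by the sets $|h|^{-1}([1,\infty))$; Theorem \ref{TRES} then yields $H=C^{\mathfrak{H}}_0(X)=\Psi(\Phi(H))$. The step I expect to be the only genuine obstacle is $\Phi(\Psi(\mathfrak{B}))=\mathfrak{B}$. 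The inclusion $\Phi(\Psi(\mathfrak{B}))\subseteq\mathfrak{B}$ is immediate: for any $f\in\Psi(\mathfrak{B})$ the set $|f|^{-1}([1,\infty))$ lies in $\mathfrak{B}$ by definition of $C^{\mathfrak{B}}_0(X)$, so its subsets do too. For the reverse inclusion, given $B\in\mathfrak{B}$ I would invoke the ``closed open'' hypothesis to find a closed $C\in\mathfrak{B}$ with $B\subseteq C$ and then an open $U\in\mathfrak{B}$ with $C\subseteq U$, and apply normality of $X$ via Urysohn's lemma to produce a continuous $f:X\rightarrow[0,1]$ with $f|_C=\mathbf{1}$ and $f|_{X\setminus U}=\mathbf{0}$. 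Because $|f|^{-1}([1/n,\infty))\subseteq U\in\mathfrak{B}$ and $\mathfrak{B}$ is closed under subsets, each such set is null, so $f\in\Psi(\mathfrak{B})$; meanwhile $B\subseteq C\subseteq|f|^{-1}([1,\infty))$ shows $B\in\Phi(\Psi(\mathfrak{B}))$. This is precisely the step where both the normality of $X$ and the full strength of the ``closed open'' condition on $\mathfrak{B}$ are genuinely used; everything else is formal.
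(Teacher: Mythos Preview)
Your proof is correct and follows essentially the same route as the paper's: both define the map $\mathscr{H}(X)\to\mathscr{B}(X)$ via the sets $|h|^{-1}([1,\infty))$ (the paper uses $(1,\infty)$, which is equivalent as you observe), verify it lands in closed open bornologies, and use Urysohn's lemma together with the closed-open hypothesis on $\mathfrak{B}$ to show every bornology is hit.

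The one organisational difference worth noting: the paper establishes injectivity separately via a direct order argument (Lemma \ref{JJHF}) and then proves surjectivity by building, for each $\mathfrak{B}$, the ideal $H=\{f|_X:f\in C_0(\lambda_\mathfrak{B}X)\}$ and checking in two claims that $H\in\mathscr{H}(X)$ and $\phi(H)=\mathfrak{B}$. You instead write down the inverse map $\Psi(\mathfrak{B})=C^{\mathfrak{B}}_0(X)$ explicitly and verify $\Psi\Phi=\mathrm{id}$ by invoking Theorem \ref{TRES} (where the identification $H=C^{\mathfrak{H}}_0(X)$ was already done), and $\Phi\Psi=\mathrm{id}$ by the Urysohn step. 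Your packaging is a little cleaner because it recycles Theorem \ref{TRES} rather than reproving its content through $\lambda_\mathfrak{B}X$, and it makes the monotonicity of both maps automatic; the paper's version, on the other hand, keeps the argument self-contained without referring back to the proof of an earlier theorem. Substantively the two are the same.
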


\begin{proof}
Define a mapping
\[\phi:\mathscr{H}(X)\longrightarrow\mathscr{B}(X)\]
by
\[\phi(H)=\big\langle |h|^{-1}\big((1,\infty)\big):h\in H\big\rangle\]
for any $H\in\mathscr{H}(X)$. We prove that $\phi$ is an order-isomorphism.

First, we need to check that $\phi$ is well defined. Let $H\in\mathscr{H}(X)$. By an argument similar to the one given in the proof of Theorem \ref{TRES} we have
\begin{eqnarray*}
\phi(H)&=&\big\{A:A\subseteq|h|^{-1}\big((1,\infty)\big)\mbox{ where }h\in H\big\}\\&=&\big\{A:A\subseteq|h|^{-1}\big((1/n,\infty)\big)\mbox{ where }h\in H\mbox{ and }n=1,2,\ldots\big\}.
\end{eqnarray*}
It now follows from the above representation that the ideal $\phi(H)$ of $X$ is a bornology in $X$ (as $H$ is non-vanishing) and $\phi(H)$ is both closed and open.

Observe that $\phi(G)\subseteq\phi(H)$ whenever $G,H\in\mathscr{H}(X)$ and $G\subseteq H$. That is, $\phi$ preserves order. Now, let $\phi(G)\subseteq\phi(H)$ where $G,H\in\mathscr{H}(X)$. Let $g\in G$. Let $n$ be a positive integer. Then $|g|^{-1}([1/n,\infty))\in\phi(H)$, as $|g|^{-1}([1/n,\infty))\in\phi(G)$, and thus
\[|g|^{-1}\big([1/n,\infty)\big)\subseteq|h_n|^{-1}\big([1,\infty)\big)\]
for some $h_n\in H$. Since this holds for each positive integer $n$, by Lemma \ref{JJHF} there is a sequence $f_1,f_2,\ldots$ in $C_B(X)$ such that $f_nh_n\rightarrow g$. But then $g\in H$, as $f_1h_1,f_2h_2,\ldots$ is a sequence in $H$ (since $H$ is an ideal in $C_B(X)$) and $H$ is closed in $C_B(X)$. That is $G\subseteq H$. This in particular shows that $\phi$ is injective.

To conclude the proof it suffices to show that $\phi$ is surjective. Let $\mathfrak{B}\in\mathscr{B}(X)$. Denote
\[H=\big\{f|_X:f\in C_0(\lambda_\mathscr{B}X)\big\}.\]
Note that the above notation makes sense, as $X\subseteq\lambda_\mathfrak{B}X$ by Lemma \ref{BBV}, since $X$ is locally null with respect to $\mathfrak{B}$ (because $\mathfrak{B}$ is a bornology in $X$ and it is open). We show that $H\in\mathscr{H}(X)$ and $\phi(H)=\mathfrak{B}$.

\begin{xclaim}
$H\in\mathscr{H}(X)$.
\end{xclaim}

\noindent \emph{Proof of the claim.} Let $f\in C_B(X)$ and $g,h\in H$. Then $g=g'|_X$ and $h=h'|_X$ for some $g',h'\in C_0(\lambda_\mathfrak{B}X)$. Note that $g-h=(g'-h')|_X\in H$ as $g'-h'\in C_0(\lambda_\mathfrak{B}X)$, and $gf=(g'\cdot f_\beta|_{\lambda_\mathfrak{B}X})|_X\in H$, as $g'\cdot f_\beta|_{\lambda_\mathfrak{B}X}\in C_0(\lambda_\mathfrak{B}X)$. That is, $H$ is an ideal in $C_B(X)$.

To show that $H$ is closed in $C_B(X)$, let $h_n\rightarrow f$ for some sequence $h_1,h_2,\ldots$ in $H$ and $f\in C_B(X)$. Then $h_n=h'_n|_X$ where $h'_n\in C_0(\lambda_\mathfrak{B}X)$ for each positive integer $n$. Note that $h'_1,h'_2,\ldots$ is a Cauchy sequence in $C_0(\lambda_\mathfrak{B}X)$, as $h_1,h_2,\ldots$ is a Cauchy sequence in $C_B(X)$ and $\|h'_n-h'_m\|=\|h_n-h_m\|$ for any positive integers $n$ and $m$ by (an argument similar to the one in the proof of) Lemma \ref{KJGFF}. Thus $h'_n\rightarrow h'$ for some $h'\in C_0(\lambda_\mathfrak{B}X)$ and therefore $h'_n|_X\rightarrow h'|_X$ by Lemma \ref{KJGFF}. This implies that $f=h'|_X\in H$.

To show that $H$ is non-vanishing, let $x\in X$. Then $x\in\lambda_\mathfrak{B}X$ (as $X\subseteq\lambda_\mathfrak{B}X$). Note that $\lambda_\mathfrak{B}X$ is an open subspace of $\beta X$. Let $f':\beta X\rightarrow[0,1]$ be a continuous mapping with $f'(x)=1$ and $f'|_{\beta X\setminus\lambda_\mathfrak{B}X}=\textbf{0}$. Let $f=f'|_{\lambda_\mathfrak{B}X}$. Then $f\in C_0(\lambda_\mathfrak{B}X)$, as
$f^{-1}([\epsilon,\infty))=f'^{-1}([\epsilon,\infty))$ is closed in $\beta X$ for each $\epsilon>0$, and is therefore compact. Thus $f|_X\in H$. Observe that $f|_X(x)=f'(x)\neq 0$.

\begin{xclaim}
$\phi(H)=\mathfrak{B}$.
\end{xclaim}

\noindent \emph{Proof of the claim.} Let $A\in\phi(H)$. Then $A\subseteq|h|^{-1}((1,\infty))$ for some $h\in H$. Let $h=f|_X$ where $f\in C_0(\lambda_\mathfrak{B}X)$. Then $|f|^{-1}([1,\infty))$ is a compact subspace of $\lambda_\mathfrak{B}X$ and is therefore closed in $\beta X$. Thus $\mathrm{cl}_{\beta X}|h|^{-1}([1,\infty))\subseteq|f|^{-1}([1,\infty))$, which implies that $\mathrm{cl}_{\beta X}|h|^{-1}([1,\infty))\subseteq\lambda_\mathfrak{B}X$. By Lemma \ref{HDHD} then $|h|^{-1}([1,\infty))$ is null, and therefore so is its subset $A$. This shows that $\phi(H)\subseteq\mathfrak{B}$.

Next, let $B\in\mathfrak{B}$. Then $\mathrm{cl}_XB\in\mathfrak{B}$, as $\mathfrak{B}$ is closed, and $\mathrm{cl}_XB\subseteq U$ for some null open subspace of $X$, as $\mathfrak{B}$ is open. Since $X$ is normal, there is a continuous mapping $f:X\rightarrow[0,1]$ such that $f|_{\mathrm{cl}_XB}=\mathbf{1}$ and $f|_{X\setminus U}=\mathbf{0}$. Let $g=f_\beta|_{\lambda_\mathfrak{B}X}$. We check that $g\in C_0(\lambda_\mathfrak{B}X)$. Note that by Lemma \ref{LKG} and Proposition \ref{KJHF} we have
\[f_\beta^{-1}\big((\epsilon,\infty)\big)\subseteq\mathrm{int}_{\beta X}\mathrm{cl}_{\beta X}f^{-1}\big((\epsilon,\infty)\big)\subseteq\mathrm{int}_{\beta X}\mathrm{cl}_{\beta X}U\subseteq\lambda_\mathfrak{B}X\]
for each $\epsilon>0$. In particular,
\[g^{-1}\big([\epsilon,\infty)\big)=\lambda_\mathfrak{B}X\cap f_\beta^{-1}\big([\epsilon,\infty)\big)=f_\beta^{-1}\big([\epsilon,\infty)\big)\]
is closed in $\beta X$ for each $\epsilon>0$, and is therefore compact. This implies that $f=g|_X\in H$. Thus $f^{-1}((1/2,\infty))\in\phi(H)$. In particular $B\in\phi(H)$, as $B\subseteq f^{-1}((1/2,\infty))$. This shows that $\mathfrak{B}\subseteq\phi(H)$.
\end{proof}

\section{Ideals in $C_B(X)$ arising from ideals in $X$ related to its topology}\label{KIJGD}

In this section for a space $X$ we consider the ideals $C^{\mathfrak I}_{00}(X)$ and $C^{\mathfrak I}_0(X)$ of $C_B(X)$ where the ideal ${\mathfrak I}$ arises from the topology of $X$. More specifically, for a space $X$ (which is sometimes required to satisfy certain separation axioms) and a topological property $\mathfrak{P}$ (which is subject to some mild requirements) we consider the ideal
\[{\mathfrak I}=\{A\subseteq X:\mathrm{cl}_XA\mbox{ has }\mathfrak{P}\}\]
of $X$. We then use the representation theorems which we proved in Part \ref{HFPG} to obtain information about the ideals $C^{\mathfrak I}_{00}(X)$ and $C^{\mathfrak I}_0(X)$ of $C_B(X)$. The specification we have made in this section will enable us to examine a few more properties from $C^{\mathfrak I}_{00}(X)$ and $C^{\mathfrak I}_0(X)$. As we will see, under certain conditions on the space $X$ (such as metrizability) and the topological property $\mathfrak{P}$ the ideals $C^{\mathfrak I}_{00}(X)$ and $C^{\mathfrak I}_0(X)$ coincide, the spectrum of $C^{\mathfrak I}_0(X)$ is countably compact and non-normal, and the vector space dimensions of $C^{\mathfrak I}_0(X)$ turns out to be computable in terms of a certain topological characteristic (the density) of the space $X$.

Now, we proceed with the formal treatment of the subject. Theorems \ref{JGF} and \ref{GKGF} improve results from \cite{Ko10} and \cite{Ko11}, respectively, Proposition \ref{HJGL} is from \cite{Ko10}, Theorem \ref{HGFLK} improves a result in \cite{Ko11}, Theorem \ref{CTTF} is actually the main result of \cite{Ko6} which is rephrased in the new context, Theorem \ref{GGFD} improves a result in \cite{Ko10}, and Theorems \ref{PTF} and \ref{RRGY} reproves results from \cite{Ko4} and \cite{Ko12}, respectively.

Topological properties in this section are all assumed to be non-empty, in a sense that, for a topological property  $\mathfrak{P}$ there always exists a space with $\mathfrak{P}$.

\begin{definition}\label{HHI}
Let $\mathfrak{P}$ be a topological property. Then
\begin{itemize}
  \item $\mathfrak{P}$ is \textit{closed hereditary}, if any closed subspace of a space having $\mathfrak{P}$ also has $\mathfrak{P}$.
  \item $\mathfrak{P}$ is \textit{preserved under finite} (resp. \textit{countable}) \textit{closed sums}, if any space which is expressible as a finite (resp. countable) union of its closed subspaces each having $\mathfrak{P}$ also has $\mathfrak{P}$.
\end{itemize}
\end{definition}

\begin{notation}\label{GGH}
Let $X$ be a space and let $\mathfrak{P}$ be a topological property. Denote
\[{\mathfrak I}_\mathfrak{P}=\{A\subseteq X:\mathrm{cl}_XA\mbox{ has }\mathfrak{P}\}.\]
\end{notation}

We may use the following lemma without explicitly referring to it.

\begin{lemma}\label{KGF}
Let $X$ be a space and let $\mathfrak{P}$ be a closed hereditary topological property preserved under finite closed sums.
Then $\mathfrak{I}_\mathfrak{P}$ is an ideal in $X$.
\end{lemma}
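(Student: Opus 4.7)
The plan is to verify the three defining properties of an ideal in turn, with no step presenting any serious obstacle; the statement is really a packaging lemma that repackages the two hypotheses on $\mathfrak{P}$ into the two closure axioms for an ideal.

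First I would verify that $\mathfrak{I}_\mathfrak{P}$ is non-empty. Since topological properties in this section are assumed non-empty, pick any space $Y$ with $\mathfrak{P}$; then $\emptyset$, being a closed subspace of $Y$, inherits $\mathfrak{P}$ by closed hereditarity. As $\mathrm{cl}_X\emptyset=\emptyset$, we obtain $\emptyset\in\mathfrak{I}_\mathfrak{P}$.

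Next, I would handle downward closure under $\subseteq$. Let $A\in\mathfrak{I}_\mathfrak{P}$ and $B\subseteq A$. Then $\mathrm{cl}_XB\subseteq\mathrm{cl}_XA$, and $\mathrm{cl}_XB$ is closed in $X$, hence closed in the subspace $\mathrm{cl}_XA$. Since $\mathrm{cl}_XA$ has $\mathfrak{P}$, closed hereditarity of $\mathfrak{P}$ yields that $\mathrm{cl}_XB$ has $\mathfrak{P}$, so $B\in\mathfrak{I}_\mathfrak{P}$.

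Finally, closure under finite unions follows from the elementary identity
\[\mathrm{cl}_X(A\cup B)=\mathrm{cl}_XA\cup\mathrm{cl}_XB.\]
If $A,B\in\mathfrak{I}_\mathfrak{P}$ then each of $\mathrm{cl}_XA$ and $\mathrm{cl}_XB$ has $\mathfrak{P}$ and is closed in $\mathrm{cl}_X(A\cup B)$, so preservation of $\mathfrak{P}$ under finite closed sums gives that $\mathrm{cl}_X(A\cup B)$ has $\mathfrak{P}$; hence $A\cup B\in\mathfrak{I}_\mathfrak{P}$. Iterating (or invoking induction) extends this from pairs to arbitrary finite unions, which completes the verification. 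The only place any care is required is the mild observation at the start that $\emptyset$ has $\mathfrak{P}$, ensuring $\mathfrak{I}_\mathfrak{P}$ is not the empty family in the degenerate sense.
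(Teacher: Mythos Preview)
Your proposal is correct and follows essentially the same approach as the paper's proof: both verify non-emptiness via $\emptyset$ having $\mathfrak{P}$ by closed hereditarity, downward closure via $\mathrm{cl}_XB\subseteq\mathrm{cl}_XA$, and closure under finite unions via the identity $\mathrm{cl}_X(A_1\cup\cdots\cup A_n)=\mathrm{cl}_XA_1\cup\cdots\cup\mathrm{cl}_XA_n$ together with preservation under finite closed sums. The only cosmetic difference is that you treat the binary case and invoke induction, whereas the paper handles the $n$-ary case directly.
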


\begin{proof}
Note that $\mathfrak{I}_\mathfrak{P}$ is non-empty, as $\mathfrak{P}$ is so, and thus $\emptyset$ has $\mathfrak{P}$, as it is a closed subspace of any space with $\mathfrak{P}$ and $\mathfrak{P}$ is closed hereditary. Let $B\subseteq A$ with $A\in\mathfrak{I}_\mathfrak{P}$. Then $\mathrm{cl}_XB\subseteq\mathrm{cl}_XA$. Since $\mathrm{cl}_XA$ has $\mathfrak{P}$ and $\mathfrak{P}$ is closed hereditary, the closed subspace $\mathrm{cl}_XB$ of $\mathrm{cl}_XA$ also has $\mathfrak{P}$. That is $B\in\mathfrak{I}_\mathfrak{P}$. Next, let $A_i\in\mathfrak{I}_\mathfrak{P}$ for each $i=1,\ldots,n$. Note that
\[\mathrm{cl}_X(A_1\cup\cdots\cup A_n)=\mathrm{cl}_XA_1\cup\cdots\cup\mathrm{cl}_XA_n\]
and the latter has $\mathfrak{P}$, as $\mathrm{cl}_XA_i$ has $\mathfrak{P}$ for each $i=1,\ldots,n$ and $\mathfrak{P}$ is preserved under finite closed sums. Thus $A_1\cup\cdots\cup A_n\in\mathfrak{I}_\mathfrak{P}$.
\end{proof}

\begin{example}\label{GLL}
The list of topological properties $\mathfrak{P}$ which are closed hereditary and preserved under finite closed sums (thus satisfying the assumption of Lemma \ref{KGF}) is quite long and include almost all major covering properties (that is, topological properties described in terms of the existence of certain kinds of open subcovers or refinements of a given open cover of a certain type); among them are: (1) compactness (2) $[\theta,\kappa]$-compactness (in particular, countable compactness and the Lindel\"{o}f property) (3) paracompactness (4) metacompactness (5) countable paracompactness (6) subparacompactness (7) $\theta$-refinability (or submetacompactness) (8) the $\sigma$-para-Lindel\"{o}f property (9) $\delta\theta$-refinability (or the submeta-Lindel\"{o}f property) (10) weak $\theta$-refinability, and (11) weak $\delta\theta$-refinability. (See \cite{Bu}, \cite{Steph} and \cite{Va} for definitions.) These topological properties are all closed hereditary (this is obvious for (1), see Theorem 7.1 of \cite{Bu} for (3)--(4) and (6)--(11), Theorem 3.1 of \cite{Steph} for (2) and Exercise 5.2.B of \cite{E} for (5)) and are preserved under finite closed sums (this follows from the definition for (2) and is obvious for (1), see Theorems 7.3 and 7.4 of \cite{Bu} for (3)--(4) and (6)--(11), and Theorem 3.7.22 and Exercises 5.2.B and 5.2.G of \cite{E} for (5)).

There are however examples of topological properties which are not generally considered to be a covering property, yet they are closed hereditary and preserved under finite closed sums. We only mention $\alpha$-boundedness. (A space is called \textit{$\alpha$-bounded}, where $\alpha$ is an infinite cardinal, if every subspace of cardinality $\leq\alpha$ has compact closure.) That $\alpha$-boundedness is closed hereditary and preserved under finite closed sums follows easily from its definition.
\end{example}

\begin{definition}\label{KI}
Let $\mathfrak{P}$ be a topological property. A space $X$ is called \textit{locally-$\mathfrak{P}$} if each $x\in X$ has a neighborhood in $X$ with $\mathfrak{P}$.
\end{definition}

\begin{lemma}\label{KFFF}
Let $X$ be a regular space and let $\mathfrak{P}$ be a closed hereditary topological property preserved under finite closed sums. Then $X$ is locally null (with respect to the ideal $\mathfrak{I}_\mathfrak{P}$) if and only if $X$ is locally-$\mathfrak{P}$.
\end{lemma}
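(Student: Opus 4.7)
The plan is to unpack the two definitions and observe that the equivalence is essentially a routine transfer of the property $\mathfrak{P}$ between a neighborhood and its closure, using closed-hereditariness in one direction and nothing in the other.

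For the forward implication, I would suppose that $X$ is locally null and fix $x\in X$. By definition there is a null neighborhood $U$ of $x$, meaning $x\in\mathrm{int}_X U$ and $\mathrm{cl}_X U\in\mathfrak{I}_\mathfrak{P}$, i.e., $\mathrm{cl}_X U$ has $\mathfrak{P}$. Since $\mathrm{int}_X U\subseteq\mathrm{int}_X\mathrm{cl}_X U$, the set $\mathrm{cl}_X U$ is itself a neighborhood of $x$ in $X$ carrying $\mathfrak{P}$, so $X$ is locally-$\mathfrak{P}$. This step uses neither regularity nor closed-hereditariness.

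For the reverse implication I would exploit regularity. Given $x\in X$, pick a neighborhood $V$ of $x$ in $X$ with $\mathfrak{P}$. Since $x\in\mathrm{int}_X V$ and $X$ is regular, one can choose an open neighborhood $W$ of $x$ with $\mathrm{cl}_X W\subseteq\mathrm{int}_X V\subseteq V$. Now $\mathrm{cl}_X W$ is closed in $X$, hence closed as a subspace of $V$; since $V$ has $\mathfrak{P}$ and $\mathfrak{P}$ is closed hereditary, $\mathrm{cl}_X W$ has $\mathfrak{P}$. Therefore $W\in\mathfrak{I}_\mathfrak{P}$, and as $W$ is open with $x\in W=\mathrm{int}_X W$, it is a null neighborhood of $x$, showing that $X$ is locally null.

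There is no serious obstacle: the only ingredient beyond unwrapping definitions is the standard regularity trick of sandwiching a closed neighborhood inside a given neighborhood. The hypothesis that $\mathfrak{P}$ is preserved under finite closed sums is not needed for this lemma itself (it enters only to ensure that $\mathfrak{I}_\mathfrak{P}$ is genuinely an ideal, which is Lemma \ref{KGF}); both directions here rely solely on regularity and closed-hereditariness.
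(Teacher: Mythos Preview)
Your proof is correct and follows essentially the same route as the paper's: in both directions you and the paper unwind the definitions, passing to $\mathrm{cl}_X U$ in the forward direction and using regularity to sandwich $\mathrm{cl}_X W\subseteq V$ together with closed-hereditariness in the reverse direction. Your remark that preservation under finite closed sums is used only to guarantee $\mathfrak{I}_\mathfrak{P}$ is an ideal (via Lemma~\ref{KGF}) is also exactly in line with the paper.
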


\begin{proof}
Note that $\mathfrak{I}_\mathfrak{P}$ is an ideal in $X$ by Lemma \ref{KGF}. Let $X$ be locally null and let $x\in X$. Then $x\in\mathrm{int}_XU$ for some $U\in\mathfrak{I}_\mathfrak{P}$. Thus $\mathrm{cl}_XU$ is a neighborhood of $x$ in $X$ with $\mathfrak{P}$. For the converse, let $x\in X$ have a neighborhood $V$ in $X$ with $\mathfrak{P}$. There exists an open neighborhood $W$ of $x$ in $X$ with $\mathrm{cl}_XW\subseteq V$. Now, $\mathrm{cl}_XW$ has $\mathfrak{P}$, as it is closed in $V$, $V$ has $\mathfrak{P}$ and $\mathfrak{P}$ is closed hereditary. Thus $W$ is null.
\end{proof}

We introduce the following notation for convenience.

\begin{notation}\label{KHFG}
Let $X$ be a space and let $\mathfrak{P}$ be a closed hereditary topological property preserved under finite closed sums. Denote
\[C^\mathfrak{P}_{00}(X)=C^{\mathfrak I_\mathfrak{P}}_{00}(X)\quad\mbox{and}\quad C^\mathfrak{P}_0(X)=C^{\mathfrak I_\mathfrak{P}}_0(X).\]
Also, denote
\[\lambda_\mathfrak{P}X=\lambda_{\mathfrak I_\mathfrak{P}}X\]
whenever $X$ is completely regular.
\end{notation}

\begin{theorem}\label{JGF}
Let $X$ be a space and let $\mathfrak{P}$ be a closed hereditary topological property preserved under finite closed sums. Then
\[C^\mathfrak{P}_{00}(X)=\big\{f\in C_B(X):\mathrm{supp}(f)\mbox{ has a closed neighborhood in $X$ with }\mathfrak{P}\big\},\]
and in particular,
\[C^\mathfrak{P}_{00}(X)=\big\{f\in C_B(X):\mathrm{supp}(f)\mbox{ has a neighborhood in $X$ with }\mathfrak{P}\big\},\]
if $X$ is normal. Furthermore,
\begin{itemize}
\item[\rm(1)] $C^\mathfrak{P}_{00}(X)$ is an ideal in $C_B(X)$.
\item[\rm(2)] Let $X$ be completely regular. Then $C^\mathfrak{P}_{00}(X)$ is non-vanishing if and only if $X$ is locally-$\mathfrak{P}$.
\item[\rm(3)] Let $X$ be completely regular and locally-$\mathfrak{P}$. Then $C^\mathfrak{P}_{00}(X)$ is unital if and only if $X$ has $\mathfrak{P}$.
\item[\rm(4)] Let $X$ be normal and locally-$\mathfrak{P}$. Then $C^\mathfrak{P}_{00}(X)$ is an ideal of $C_B(X)$ isometrically isomorphic to $C_{00}(Y)$ for some unique locally compact space $Y$, namely, for $Y=\lambda_\mathfrak{P}X$. Furthermore,
    \begin{itemize}
    \item $X$ is dense in $Y$.
    \item $C^\mathfrak{P}_{00}(X)$ is unital if and only if $X$ has $\mathfrak{P}$ if and only if $Y$ is compact.
    \end{itemize}
\end{itemize}
\end{theorem}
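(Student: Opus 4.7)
The plan is to view Theorem \ref{JGF} as a direct specialization of the general machinery developed in Section \ref{GPG} to the ideal $\mathfrak{I}_\mathfrak{P}$. By Lemma \ref{KGF}, $\mathfrak{I}_\mathfrak{P}$ is indeed an ideal in $X$, so Lemma \ref{TTG}, Lemma \ref{BBV}, Lemma \ref{HGBV}, and Theorem \ref{UUS} are all applicable, with ``null'' now meaning ``with closure having $\mathfrak{P}$''.

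For the first displayed identity I would simply unpack the definition of $C^\mathfrak{P}_{00}(X)=C^{\mathfrak{I}_\mathfrak{P}}_{00}(X)$. If $f\in C^\mathfrak{P}_{00}(X)$ then $\mathrm{supp}(f)$ has a null neighborhood $U$ in $X$, and the closed set $F:=\mathrm{cl}_X U$ is then a closed neighborhood of $\mathrm{supp}(f)$ which itself has $\mathfrak{P}$. Conversely, any closed neighborhood $F$ of $\mathrm{supp}(f)$ with $\mathfrak{P}$ is already null (as $\mathrm{cl}_X F=F$) and so $f\in C^\mathfrak{P}_{00}(X)$. For the ``in particular'' statement under normality, one direction is trivial; the other uses normality to produce an open $W$ with $\mathrm{supp}(f)\subseteq W\subseteq\mathrm{cl}_X W\subseteq\mathrm{int}_X V$ out of an arbitrary neighborhood $V$ of $\mathrm{supp}(f)$ with $\mathfrak{P}$, and then invokes closed-heredity of $\mathfrak{P}$ to conclude that $\mathrm{cl}_X W$ has $\mathfrak{P}$, producing the desired closed neighborhood.

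Parts (1)--(4) then follow by direct invocation. Part (1) is Lemma \ref{TTG}. Part (2) combines Lemma \ref{BBV} with Lemma \ref{KFFF}, the latter identifying ``locally null with respect to $\mathfrak{I}_\mathfrak{P}$'' with ``locally-$\mathfrak{P}$'' in the regular setting. Part (3) combines Lemma \ref{HGBV} with the elementary observation that $\mathfrak{I}_\mathfrak{P}$ is non-proper precisely when $X=\mathrm{cl}_X X$ has $\mathfrak{P}$. Finally, part (4) is Theorem \ref{UUS} applied to $\mathfrak{I}_\mathfrak{P}$, again using Lemma \ref{KFFF} to translate the local-nullness hypothesis into locally-$\mathfrak{P}$, and the same non-proper equivalence to recover the characterization of unitality and of compactness of $Y$. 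There is no genuine obstacle in the argument: the entire theorem is a dictionary translation between the abstract notion of ``null'' with respect to $\mathfrak{I}_\mathfrak{P}$ and the concrete topological property $\mathfrak{P}$, with closed-heredity of $\mathfrak{P}$ entering only once, in the shrinking argument needed to compare the two characterizations in the normal setting.
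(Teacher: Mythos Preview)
Your proposal is correct and follows essentially the same route as the paper's own proof: both arguments unpack the definition of $C^{\mathfrak{I}_\mathfrak{P}}_{00}(X)$ to obtain the two displayed representations (using closed-heredity in the normal case to shrink an arbitrary neighborhood to a closed one), and then derive (1)--(4) by invoking Lemmas \ref{KGF}, \ref{KFFF}, \ref{TTG}, \ref{BBV}, \ref{HGBV}, and Theorem \ref{UUS}.
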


\begin{proof}
Let $f\in C_B(X)$. Suppose that $\mathrm{supp}(f)$ has a closed neighborhood $U$ in $X$ with $\mathfrak{P}$. Then $\mathrm{supp}(f)$ has a null neighborhood in $X$, namely, $U$ itself. On the other hand, if $f\in C^\mathfrak{P}_{00}(X)$, then $\mathrm{supp}(f)$ has a null neighborhood $V$ in $X$. Thus $\mathrm{cl}_X V$ is a closed neighborhood of $\mathrm{supp}(f)$ in $X$ with $\mathfrak{P}$. This shows the first representation of $C^\mathfrak{P}_{00}(X)$. Note that if $X$ is normal, then any neighborhood of $\mathrm{supp}(f)$ in $X$ contains a closed neighborhood of $\mathrm{supp}(f)$ in $X$. The second representation of $C^\mathfrak{P}_{00}(X)$ then follows, as $\mathfrak{P}$ is closed hereditary. The remaining assertions of the theorem follow from Lemmas \ref{KGF}, \ref{KFFF}, \ref{TTG}, \ref{BBV} and \ref{HGBV}, and Theorem \ref{UUS}.
\end{proof}

\begin{theorem}\label{HJG}
Let $X$ be a space and let $\mathfrak{P}$ be a closed hereditary topological property preserved under finite closed sums. Then
\[C^\mathfrak{P}_0(X)=\big\{f\in C_B(X):|f|^{-1}\big([1/n,\infty)\big)\mbox{ has }\mathfrak{P}\mbox{ for each }n\big\},\]
and in particular,
\[C^\mathfrak{P}_0(X)=\big\{f\in C_B(X):\mathrm{coz}(f)\mbox{ has }\mathfrak{P}\big\},\]
if $\mathfrak{P}$ is also preserved under countable closed sums. Furthermore,
\begin{itemize}
\item[\rm(1)] $C^\mathfrak{P}_0(X)$ is a closed ideal in $C_B(X)$ which contains $C^\mathfrak{P}_{00}(X)$.
\item[\rm(2)] Let $X$ be completely regular. Then $C^\mathfrak{P}_0(X)$ is non-vanishing if and only if $X$ is locally-$\mathfrak{P}$.
\item[\rm(3)] Let $X$ be completely regular and locally-$\mathfrak{P}$. Then $C^\mathfrak{P}_0(X)$ is unital if and only if $X$ has $\mathfrak{P}$.
\item[\rm(4)] Let $X$ be completely regular and locally-$\mathfrak{P}$. Then $C^\mathfrak{P}_0(X)$ is a closed ideal of $C_B(X)$ isometrically isomorphic to $C_0(Y)$ for some unique locally compact space $Y$, namely, for $Y=\lambda_\mathfrak{P}X$. In particular, $\lambda_\mathfrak{P}X$ is the spectrum of $C^\mathfrak{P}_0(X)$. Furthermore,
    \begin{itemize}
    \item $X$ is dense in $Y$.
    \item $C^\mathfrak{P}_{00}(X)$ is dense in $C^\mathfrak{P}_0(X)$.
    \item $C^\mathfrak{P}_0(X)$ is unital if and only if $X$ has $\mathfrak{P}$ if and only if $Y$ is compact.
    \end{itemize}
\end{itemize}
\end{theorem}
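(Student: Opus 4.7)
The plan is to deduce Theorem \ref{HJG} as a direct specialization of the general theory of Part \ref{HFPG} applied to the ideal $\mathfrak{I}_\mathfrak{P}$. The key preliminary observation is Lemma \ref{KGF}, which guarantees that under the stated hypotheses on $\mathfrak{P}$ the collection $\mathfrak{I}_\mathfrak{P}$ really is an ideal, so that $C^\mathfrak{P}_0(X)=C^{\mathfrak{I}_\mathfrak{P}}_0(X)$ makes sense. Throughout, nullity refers to $\mathfrak{I}_\mathfrak{P}$.

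For the first (unconditional) representation I would argue as follows. The set $A=|f|^{-1}([1/n,\infty))$ is closed in $X$, so $\mathrm{cl}_XA=A$; consequently $A\in\mathfrak{I}_\mathfrak{P}$ precisely when $A$ itself has $\mathfrak{P}$. Unwinding Definition \ref{HHLG} then yields the stated description of $C^\mathfrak{P}_0(X)$. For the second (conditional) representation, assume $\mathfrak{P}$ is preserved under countable closed sums. If each $|f|^{-1}([1/n,\infty))$ has $\mathfrak{P}$, then $\mathrm{coz}(f)=\bigcup_{n=1}^\infty|f|^{-1}([1/n,\infty))$ is a countable union of subspaces that are closed in $X$ (hence closed in $\mathrm{coz}(f)$) and each have $\mathfrak{P}$; the closure-under-countable-closed-sums hypothesis gives $\mathfrak{P}$ for $\mathrm{coz}(f)$. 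Conversely, if $\mathrm{coz}(f)$ has $\mathfrak{P}$, then $|f|^{-1}([1/n,\infty))$ is a closed subspace of $\mathrm{coz}(f)$, and closed hereditariness of $\mathfrak{P}$ transfers $\mathfrak{P}$ to it. (Note that this direct argument avoids appealing to Proposition \ref{JFJG}, since $\mathfrak{I}_\mathfrak{P}$ being a $\sigma$-ideal would require control over $\mathrm{cl}_X\bigl(\bigcup A_n\bigr)$ as opposed to $\bigcup\mathrm{cl}_XA_n$, which is not automatic.)

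Items (1)--(4) will then each be obtained by invoking the corresponding general result. Part (1) is immediate from Lemma \ref{DGH}. For parts (2) and (3) the bridge is Lemma \ref{KFFF}, which identifies local nullity with being locally-$\mathfrak{P}$ in the regular setting: combined with Lemma \ref{KGV} this gives (2), and combined with Lemma \ref{HFBY} together with the observation that $\mathfrak{I}_\mathfrak{P}$ is non-proper precisely when $X=\mathrm{cl}_XX$ has $\mathfrak{P}$, this gives (3). For part (4) the main tool is Theorem \ref{UDR} applied to $\mathfrak{I}_\mathfrak{P}$: it gives the isometric isomorphism $C^\mathfrak{P}_0(X)\cong C_0(\lambda_\mathfrak{P}X)$, the identification of $\lambda_\mathfrak{P}X$ as the spectrum, the density of $X$ in $\lambda_\mathfrak{P}X$, the density of $C^\mathfrak{P}_{00}(X)$ in $C^\mathfrak{P}_0(X)$, and the equivalence of the three unitality conditions (where non-properness of $\mathfrak{I}_\mathfrak{P}$ is translated to $X$ having $\mathfrak{P}$ exactly as above).

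There is no real obstacle here; the proof is a routine bookkeeping exercise once Lemma \ref{KGF} and Lemma \ref{KFFF} are in hand. The only small subtlety worth flagging is the second representation: one must notice that $|f|^{-1}([1/n,\infty))$ is genuinely closed in $\mathrm{coz}(f)$ (since it is closed in $X$ and a subset of $\mathrm{coz}(f)$) in order to apply the countable-closed-sums hypothesis, and one must resist the temptation to prove it via a $\sigma$-ideal argument in the manner of Proposition \ref{JFJG}, because $\mathfrak{I}_\mathfrak{P}$ need not be a $\sigma$-ideal (a countable union of sets with $\mathfrak{P}$-closure need not itself have $\mathfrak{P}$-closure) even when $\mathfrak{P}$ is preserved under countable closed sums.
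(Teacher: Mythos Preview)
Your proposal is correct and follows essentially the same route as the paper: the paper likewise observes that $|f|^{-1}([1/n,\infty))$ is closed (so nullity reduces to having $\mathfrak{P}$), handles the second representation by the same direct argument via countable closed sums and closed hereditariness, and then defers (1)--(4) to Lemmas \ref{KGF}, \ref{KFFF}, \ref{DGH}, \ref{KGV}, \ref{HFBY} and Theorem \ref{UDR}. Your explicit caution about not routing the second representation through Proposition \ref{JFJG} (since $\mathfrak{I}_\mathfrak{P}$ need not be a $\sigma$-ideal) is a welcome clarification that the paper leaves implicit.
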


\begin{proof}
Note that for each positive integer $n$ the set $|f|^{-1}([1/n,\infty))$ (since it is closed in $X$) is null if and only if it has $\mathfrak{P}$. The first representation of $C^\mathfrak{P}_0(X)$ then follows. For the second representation of $C^\mathfrak{P}_0(X)$, let $f\in C_B(X)$. Note that
\[\mathrm{coz}(f)=\bigcup_{n=1}^\infty|f|^{-1}\big([1/n,\infty)\big).\]
Thus $\mathrm{coz}(f)$ has $\mathfrak{P}$ if $|f|^{-1}([1/n,\infty))$ does for each positive integer $n$ and $\mathfrak{P}$ is preserved under countable closed sums. Trivially, if $\mathrm{coz}(f)$ has $\mathfrak{P}$ then so does its closed subspace $|f|^{-1}([1/n,\infty))$ for each positive integer $n$, as $\mathfrak{P}$ is closed hereditary. The remaining assertions of the theorem follow from Lemmas \ref{KGF}, \ref{KFFF}, \ref{DGH}, \ref{KGV} and \ref{HFBY}, and Theorem \ref{UDR}.
\end{proof}

The following theorem is the first in a series of consecutive corollaries to Theorems \ref{JGF} and \ref{HJG}. It introduces conditions under which $C^\mathfrak{P}_0(X)=C^\mathfrak{P}_{00}(X)$ and simplifies the representations of $C^\mathfrak{P}_0(X)$ and $C^\mathfrak{P}_{00}(X)$. It also derive certain properties of the spectrum of $C^\mathfrak{P}_0(X)$. Examples of topological properties $\mathfrak{P}$ and $\mathfrak{Q}$ satisfying the assumption of the following theorem are given in Example \ref{GGK}. (See also Theorems \ref{CTTF} and \ref{HGFLK}.)

Let $X$ be a locally compact space. Recall that $C_0(X)=C_{00}(X)$ if and only if every $\sigma$-compact subspace of $X$ is contained in a compact subspace of $X$. (See Problem 7G.2 of \cite{GJ}.) In particular, $C_0(X)=C_{00}(X)$ implies that $X$ is countably compact. This will be used in the proof of the following.

\begin{theorem}\label{GKGF}
Let $\mathfrak{P}$ and $\mathfrak{Q}$ be topological properties such that
\begin{itemize}
\item $\mathfrak{P}$ and $\mathfrak{Q}$ are closed hereditary.
\item A space with both $\mathfrak{P}$ and $\mathfrak{Q}$ is Lindel\"{o}f.
\item $\mathfrak{P}$ is preserved under countable closed sums.
\item A space with $\mathfrak{Q}$ having a dense subspace with $\mathfrak{P}$ has $\mathfrak{P}$.
\end{itemize}
Let $X$ be a locally-$\mathfrak{P}$ space with $\mathfrak{Q}$.
\begin{itemize}
\item[\rm(1)] Let $X$ be regular. Then
\[C^\mathfrak{P}_0(X)=\big\{f\in C_B(X):\mathrm{supp}(f)\mbox{ has }\mathfrak{P}\big\}=C^\mathfrak{P}_{00}(X).\]
\item[\rm(2)] Let $X$ be normal. Then $C^\mathfrak{P}_0(X)$ is a closed ideal of $C_B(X)$ isometrically isomorphic to $C_0(Y)$ for some unique locally compact space $Y$, namely, for $Y=\lambda_\mathfrak{P}X$. In particular, $\lambda_\mathfrak{P}X$ is the spectrum of $C^\mathfrak{P}_0(X)$. Furthermore,
    \begin{itemize}
    \item $Y$ is countably compact.
    \item $C_0(Y)=C_{00}(Y)$.
    \item $C^\mathfrak{P}_0(X)$ is unital if and only if $X$ has $\mathfrak{P}$ if and only if $Y$ is compact.
    \end{itemize}
\end{itemize}
\end{theorem}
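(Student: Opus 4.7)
The plan is to prove part (1) by establishing the equality of the three sets $C^\mathfrak{P}_{00}(X)$, $S=\{f\in C_B(X):\mathrm{supp}(f)\mbox{ has }\mathfrak{P}\}$, and $C^\mathfrak{P}_0(X)$ through a cycle of inclusions, after which part (2) will follow from part (1) together with Theorem \ref{HJG}. The two inclusions $C^\mathfrak{P}_{00}(X)\subseteq S$ and $S\subseteq C^\mathfrak{P}_0(X)$ are immediate: the first follows from the representation of $C^\mathfrak{P}_{00}(X)$ in Theorem \ref{JGF} together with closed heredity of $\mathfrak{P}$; the second follows because each $|f|^{-1}([1/n,\infty))$ is closed in $\mathrm{supp}(f)$, hence inherits $\mathfrak{P}$, hence is null. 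The first nontrivial step, namely $C^\mathfrak{P}_0(X)\subseteq S$, uses that $\mathrm{coz}(f)=\bigcup_{n=1}^\infty|f|^{-1}([1/n,\infty))$ is a countable union of (relatively) closed subspaces each with $\mathfrak{P}$, so that preservation under countable closed sums forces $\mathrm{coz}(f)$ to have $\mathfrak{P}$; then $\mathrm{supp}(f)=\mathrm{cl}_X\mathrm{coz}(f)$ is closed in $X$ and hence has $\mathfrak{Q}$, and it contains the dense subspace $\mathrm{coz}(f)$ with $\mathfrak{P}$, so the fourth hypothesis upgrades this to $\mathrm{supp}(f)$ having $\mathfrak{P}$.

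The main obstacle is the remaining inclusion $S\subseteq C^\mathfrak{P}_{00}(X)$. Given $f\in S$, the closed subspace $\mathrm{supp}(f)$ of $X$ has both $\mathfrak{P}$ (by hypothesis) and $\mathfrak{Q}$ (by closed heredity applied to $X$), and is therefore Lindel\"{o}f by the second standing assumption on $\mathfrak{P}$ and $\mathfrak{Q}$. For each $x\in\mathrm{supp}(f)$ I would use local-$\mathfrak{P}$ together with regularity to produce an open neighborhood $V_x$ of $x$ in $X$ with $\mathrm{cl}_XV_x$ having $\mathfrak{P}$. Lindel\"{o}fness of $\mathrm{supp}(f)$ then extracts a countable subcover $V_{x_1},V_{x_2},\ldots$, and I would set $W=\bigcup_{n=1}^\infty\mathrm{cl}_XV_{x_n}$. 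Preservation under countable closed sums gives that $W$ has $\mathfrak{P}$; since $\mathrm{cl}_XW$ has $\mathfrak{Q}$ (closed in $X$) and contains $W$ densely, the fourth hypothesis again gives that $\mathrm{cl}_XW$ has $\mathfrak{P}$; and $\mathrm{supp}(f)\subseteq\bigcup_nV_{x_n}\subseteq\mathrm{int}_XW$, so $W$ is a null neighborhood of $\mathrm{supp}(f)$ in $X$, placing $f$ in $C^\mathfrak{P}_{00}(X)$.

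For part (2), normality lets me invoke Theorem \ref{HJG}(4) directly to obtain the isometric isomorphism $C^\mathfrak{P}_0(X)\cong C_0(Y)$ with $Y=\lambda_\mathfrak{P}X$ locally compact, together with the density of $X$ in $Y$, the identification of $Y$ as the spectrum of $C^\mathfrak{P}_0(X)$, and the characterization of when it is unital. Part (1) combined with the observation that the isomorphism $\phi$ from the proof of Theorem \ref{UDR} restricts on $C^\mathfrak{P}_{00}(X)$ to the isomorphism $\psi$ of Theorem \ref{UUS} (both are the map $f\mapsto f_\lambda$) then yields $C_0(Y)=\phi(C^\mathfrak{P}_0(X))=\phi(C^\mathfrak{P}_{00}(X))=C_{00}(Y)$. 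Finally, the standard fact recalled just before the statement of the theorem converts $C_0(Y)=C_{00}(Y)$ into countable compactness of $Y$, completing the proof.
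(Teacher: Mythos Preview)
Your proposal is correct and follows essentially the same approach as the paper's proof. The only organizational difference is that the paper first isolates the fact that $\mathfrak{I}_\mathfrak{P}$ is a $\sigma$-ideal in $X$ (using the fourth hypothesis to pass from $\bigcup_n\mathrm{cl}_XA_n$ to its closure), whereas you inline that same reasoning directly in the step $S\subseteq C^\mathfrak{P}_{00}(X)$; all the substantive ingredients---Lindel\"{o}fness of $\mathrm{supp}(f)$ from $\mathfrak{P}+\mathfrak{Q}$, extraction of a countable subcover, preservation under countable closed sums, and the dense-subspace upgrade via the fourth hypothesis---match the paper exactly, as does your deduction of $C_0(Y)=C_{00}(Y)$ in part (2) from the fact that $\phi$ restricts to the isomorphism of Theorem \ref{UUS}.
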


\begin{proof}
We first verify that $\mathfrak{I}_\mathfrak{P}$ is a $\sigma$-ideal in $X$. Note that $\mathfrak{I}_\mathfrak{P}$ is an ideal in $X$ by Lemma \ref{KGF}, as $\mathfrak{P}$ is closed hereditary and preserved under countable closed sums. Thus, we need only check that $\mathfrak{I}_\mathfrak{P}$ is closed under formation of countable unions. Let $A_n\in\mathfrak{I}_\mathfrak{P}$ for each positive integer $n$. Then
\[G=\bigcup_{n=1}^\infty\mathrm{cl}_XA_n\]
has $\mathfrak{P}$, as it is a countable union of its closed subspaces each with $\mathfrak{P}$ and $\mathfrak{P}$ is preserved under countable closed sums. Note that
\[H=\mathrm{cl}_X\bigg(\bigcup_{n=1}^\infty A_n\bigg)\]
has $\mathfrak{Q}$, as it is closed in $X$, $X$ has $\mathfrak{Q}$ and $\mathfrak{Q}$ is closed hereditary. Since $H$ contains $G$ as a dense subspace, using our assumption it follows that $H$ has $\mathfrak{P}$. Therefore
\[\bigcup_{n=1}^\infty A_n\in\mathfrak{I}_\mathfrak{P}.\]

(1). We first show that
\begin{equation}\label{FGH}
C^\mathfrak{P}_{00}(X)=\big\{f\in C_B(X):\mathrm{supp}(f)\mbox{ has }\mathfrak{P}\big\}.
\end{equation}
Let $f\in C^\mathfrak{P}_{00}(X)$. Then $\mathrm{supp}(f)$ has a null neighborhood in $X$, that is, $\mathrm{supp}(f)\subseteq\mathrm{int}_XU$ for some subspace $U$ of $X$ such that $\mathrm{cl}_XU$ has $\mathfrak{P}$. Therefore $\mathrm{supp}(f)$ has $\mathfrak{P}$, as it is closed in $\mathrm{cl}_XU$.

Next, let $f\in C_B(X)$ such that $\mathrm{supp}(f)$ has $\mathfrak{P}$. Note that $X$ is locally null by Lemma \ref{KFFF}, as it is locally-$\mathfrak{P}$. For each $x\in X$ let $U_x$ be a null neighborhood of $x$ in $X$. Then
\[\mathrm{supp}(f)\subseteq\bigcup_{x\in X}\mathrm{int}_XU_x.\]
Note that $\mathrm{supp}(f)$ has $\mathfrak{Q}$, as it closed in $X$ (and $X$ has $\mathfrak{Q}$ and $\mathfrak{Q}$ is closed hereditary). Since $\mathrm{supp}(f)$ has $\mathfrak{P}$, it is then by our assumption Lindel\"{o}f. Let $x_1,x_2,\ldots\in X$ such that
\[\mathrm{supp}(f)\subseteq\bigcup_{n=1}^\infty\mathrm{int}_XU_{x_n}.\]
Note that $\mathfrak{I}_\mathfrak{P}$ is a $\sigma$-ideal. Therefore $\bigcup_{n=1}^\infty U_{x_n}$ is a null neighborhood of $\mathrm{supp}(f)$ in $X$. Thus $f\in C^\mathfrak{P}_{00}(X)$. This shows (\ref{FGH}).

Now, we show that $C^\mathfrak{P}_0(X)=C^\mathfrak{P}_{00}(X)$. Since $C^\mathfrak{P}_{00}(X)\subseteq C^\mathfrak{P}_0(X)$ by Lemma \ref{DGH}, we only check that $C^\mathfrak{P}_0(X)\subseteq C^\mathfrak{P}_{00}(X)$. Let $f\in C^\mathfrak{P}_0(X)$. Then $|f|^{-1}([1/n,\infty))$ has $\mathfrak{P}$ for each positive integer $n$. But then $\mathrm{coz}(f)$ has $\mathfrak{P}$, as it is a countable union of its closed subspaces $|f|^{-1}([1/n,\infty))$ each with $\mathfrak{P}$ (and $\mathfrak{P}$ is closed hereditary). Note that $\mathrm{supp}(f)$ has $\mathfrak{Q}$ (and $X$ has $\mathfrak{Q}$ and $\mathfrak{Q}$ is closed hereditary. Therefore $\mathrm{supp}(f)$ has $\mathfrak{P}$, as it contains $\mathrm{coz}(f)$ as a dense subspace and $\mathrm{coz}(f)$ has $\mathfrak{P}$. Thus $f\in C^\mathfrak{P}_{00}(X)$ by (\ref{FGH}).

(2). Let $Y=\lambda_\mathfrak{P} X$. Let
\[\phi:C^\mathfrak{P}_0(X)\longrightarrow C_0(Y)\]
denote the isometric isomorphism as defined in the proof of Theorem \ref{UDR}. As it is observed in the proof Theorem \ref{UDR}, the mapping $\phi$, when restricted to $C^\mathfrak{P}_{00}(X)$, induces an isometric isomorphism between $C^\mathfrak{P}_{00}(X)$ and $C_{00}(Y)$. But then $C_0(Y)=C_{00}(Y)$, as $C^\mathfrak{P}_0(X)=C^\mathfrak{P}_{00}(X)$ by (1). This in particular implies that $Y$ is countably compact. The remaining statements also follow from Theorem \ref{UDR}.
\end{proof}

\begin{remark}\label{PF}
Observe that in Theorem \ref{GKGF} we indeed have
\[C^\mathfrak{P}_0(X)=C^{\mathfrak{P}+\mathfrak{Q}}_0(X)=C^{\mathfrak{P}+\mathfrak{Q}}_{00}(X)=C^\mathfrak{P}_{00}(X).\]
To show this, it suffices to observe that $\mathfrak{I}_\mathfrak{P}=\mathfrak{I}_{\mathfrak{P}+\mathfrak{Q}}$. It is obvious that $\mathfrak{I}_{\mathfrak{P}+\mathfrak{Q}}\subseteq\mathfrak{I}_\mathfrak{P}$. Let $A\in\mathfrak{I}_\mathfrak{P}$. Then $\mathrm{cl}_XA$ has $\mathfrak{P}$. But $\mathrm{cl}_XA$ also has $\mathfrak{Q}$, as it is closed in $X$, $X$ has $\mathfrak{Q}$ and $\mathfrak{Q}$ is closed hereditary. Thus $A\in\mathfrak{I}_{\mathfrak{P}+\mathfrak{Q}}$.
\end{remark}

\begin{remark}\label{GGJ}
Note that in Theorem \ref{GKGF} the space $Y$ is non-compact if and only if $X$ is non-$\mathfrak{P}$. In particular, if $X$ is non-$\mathfrak{P}$, since $Y$ is countably compact, then $Y$ is non-$\mathfrak{R}$ for any topological property $\mathfrak{R}$ such that
\[\mbox{$\mathfrak{R}$ $+$ countable compactness $\rightarrow$ compactness}.\]
The list of such topological properties $\mathfrak{R}$ is quite wide, including topological properties such as the Lindel\"{o}f property, paracompactness, realcompactness, metacompactness, subparacompactness, submetacompactness (or $\theta$-refinability), the meta-Lindel\"{o}f property, the submeta-Lindel\"{o}f property (or $\delta\theta$-refinability), weak submetacompactness (or weak $\theta$-refinability) and the weak submeta-Lindel\"{o}f property (or weak $\delta\theta$-refinability) among others. (See Parts 6.1 and 6.2 of \cite{Va}.)
\end{remark}

In the following we give examples of pairs of topological properties $\mathfrak{P}$ and $\mathfrak{Q}$ satisfying the assumption of Theorem \ref{GKGF}.

\begin{example}\label{GGK}
Let $\mathfrak{P}$ be the Lindel\"{o}f property and let $\mathfrak{Q}$ be either metrizability or paracompactness. Note that the Lindel\"{o}f property is closed hereditary (see Theorem 3.8.4 of \cite{E}) and is preserved under countable closed sums, as obviously, any space which is the union of a countable number of its Lindel\"{o}f subspaces is Lindel\"{o}f. Note that any subspace of a metrizable space is metrizable and a closed subspace of a paracompact space is paracompact. (See Corollary 5.1.29 of \cite{E}.) Also, any paracompact space with a dense Lindel\"{o}f space is Lindel\"{o}f (see Theorem 5.1.25 of \cite{E}), since any metrizable space is paracompact, it then follows that any metrizable space with a dense Lindel\"{o}f space is Lindel\"{o}f. Therefore, the pair $\mathfrak{P}$ and $\mathfrak{Q}$ satisfy the assumption of Theorem \ref{GKGF}.

Note that in the realm of metrizable spaces the Lindel\"{o}f property coincides with separability and second countability; denote the latter two by $\mathfrak{S}$ and $\mathfrak{C}$, respectively, and denote the Lindel\"{o}f property by $\mathfrak{L}$. Thus, if $X$ is a metrizable space, then
\[\mathfrak{I}_\mathfrak{L}=\mathfrak{I}_\mathfrak{S}=\mathfrak{I}_\mathfrak{C},\]
which implies that
\[C^\mathfrak{L}_{00}(X)=C^\mathfrak{S}_{00}(X)=C^\mathfrak{C}_{00}(X)\quad\mbox{and}\quad C^\mathfrak{L}_0(X)=C^\mathfrak{S}_0(X)=C^\mathfrak{C}_0(X).\]
\end{example}

Observe that for a paracompact locally-$\mathfrak{P}$ space $X$, where $\mathfrak{P}$ is the Lindel\"{o}f property, we have
\[C^\mathfrak{P}_{00}(X)=\big\{f\in C_B(X):\mathrm{supp}(f)\mbox{ has }\mathfrak{P}\big\},\]
by Theorem \ref{GKGF} (and Example \ref{GGK}). As we see in the following, for the case when $\mathfrak{P}$ is the Lindel\"{o}f property the above representation remains true even if one requires $X$ to be only normal.

\begin{proposition}\label{HJGL}
Let $X$ be a normal locally-$\mathfrak{P}$ space, where $\mathfrak{P}$ is the Lindel\"{o}f property. Then
\[C^\mathfrak{P}_{00}(X)=\big\{f\in C_B(X):\mathrm{supp}(f)\mbox{ has }\mathfrak{P}\big\}.\]
\end{proposition}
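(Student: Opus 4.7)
The plan is to prove a two-way containment, where one direction is immediate from Theorem \ref{JGF} together with the fact that the Lindel\"of property is closed hereditary, and where the other direction requires a covering argument that exploits both local Lindel\"ofness and normality.

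For the easier inclusion, suppose $f\in C^\mathfrak{P}_{00}(X)$. By Theorem \ref{JGF}, $\mathrm{supp}(f)$ has a closed neighborhood $U$ in $X$ with $\mathfrak{P}$. Since $\mathrm{supp}(f)$ is closed in $X$ and contained in $U$, it is closed in $U$; being a closed subspace of a Lindel\"of space, it itself is Lindel\"of.

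For the reverse inclusion, suppose $\mathrm{supp}(f)$ has $\mathfrak{P}$. First I would use local-$\mathfrak{P}$ together with regularity (which follows from normality) to produce, for each $x\in\mathrm{supp}(f)$, an open neighborhood $V_x$ of $x$ in $X$ such that $\mathrm{cl}_XV_x$ is Lindel\"of: pick any Lindel\"of neighborhood $W_x$ of $x$, and by regularity choose $V_x$ open with $x\in V_x\subseteq\mathrm{cl}_XV_x\subseteq\mathrm{int}_XW_x$; since $\mathrm{cl}_XV_x$ is closed in the Lindel\"of space $W_x$, it is Lindel\"of. Next, applying the Lindel\"of property of $\mathrm{supp}(f)$ to the open cover $\{V_x:x\in\mathrm{supp}(f)\}$, I would extract a countable subcover $\{V_{x_n}:n=1,2,\ldots\}$ and set
\[W=\bigcup_{n=1}^\infty\mathrm{cl}_XV_{x_n}.\]
Then $W$ contains the open set $\bigcup_{n=1}^\infty V_{x_n}\supseteq\mathrm{supp}(f)$, so $W$ is a neighborhood of $\mathrm{supp}(f)$ in $X$. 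Moreover, $W$ is a countable union of Lindel\"of subspaces, hence itself Lindel\"of. Since $X$ is normal, Theorem \ref{JGF} then gives $f\in C^\mathfrak{P}_{00}(X)$.

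The main subtlety is arranging that $W$ is actually Lindel\"of rather than just that its constituents are: open subspaces of Lindel\"of spaces need not be Lindel\"of, so one cannot simply take $V=\bigcup V_{x_n}$ and hope $V$ inherits the property. The right move is to pass to the closures $\mathrm{cl}_XV_{x_n}$ (whose Lindel\"ofness was prearranged using regularity), since the Lindel\"of property is preserved under countable unions regardless of openness. Normality is needed a second time — implicitly, via Theorem \ref{JGF} — so that an arbitrary (not necessarily closed) Lindel\"of neighborhood of $\mathrm{supp}(f)$ already suffices to place $f$ in $C^\mathfrak{P}_{00}(X)$.
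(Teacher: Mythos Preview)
Your proof is correct and follows essentially the same approach as the paper's: cover $\mathrm{supp}(f)$ by Lindel\"of neighborhoods, extract a countable subcover using the Lindel\"of property of $\mathrm{supp}(f)$, and use that countable unions of Lindel\"of spaces are Lindel\"of together with normality. The only organizational difference is that the paper takes the union $W=U_{x_1}\cup U_{x_2}\cup\cdots$ first and then uses normality directly to find a closed neighborhood $V$ of $\mathrm{supp}(f)$ with $\mathrm{cl}_XV\subseteq\mathrm{int}_XW$ (so $\mathrm{cl}_XV$ is closed in the Lindel\"of space $W$), whereas you use regularity beforehand to arrange $\mathrm{cl}_XV_x$ Lindel\"of and then invoke the normal case of Theorem~\ref{JGF} at the end; both routes amount to the same argument.
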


\begin{proof}
It is trivial that $\mathrm{supp}(f)$ has $\mathfrak{P}$ for any $f\in C^\mathfrak{P}_{00}(X)$, as $\mathrm{supp}(f)$ has a neighborhood in $X$ with $\mathfrak{P}$, and $\mathfrak{P}$ is closed hereditary.

Next, let $f\in C_B(X)$ such that $\mathrm{supp}(f)$ has $\mathfrak{P}$. For each $x\in X$, let $U_x$ be a neighborhood of $x$ in $X$ with $\mathfrak{P}$. Then
\[\mathrm{supp}(f)\subseteq\mathrm{int}_XU_{x_1}\cup\mathrm{int}_X U_{x_2}\cup\cdots\subseteq\mathrm{int}_X W,\]
for some $x_1,x_2,\ldots\in\ X$ and $W=U_{x_1}\cup U_{x_2}\cup\cdots$. Since $X$ is normal, there exists an open neighborhood $V$ of $\mathrm{supp}(f)$ in $X$ whose closure $\mathrm{cl}_XV$ is contained in $\mathrm{int}_XW$. In particular, $\mathrm{cl}_X V$ has $\mathfrak{P}$, as it is closed in $W$, $W$ has $\mathfrak{P}$ (since it is a countable union of its subspace each with $\mathfrak{P}$) and $\mathfrak{P}$ is closed hereditary. Therefore $f\in C^\mathfrak{P}_{00}(X)$.
\end{proof}

In the next theorem we consider Theorem \ref{GKGF} when $\mathfrak{P}$ and $\mathfrak{Q}$ are the Lindel\"{o}f property and paracompactness, respectively. Note that the space $Y$ in Theorem \ref{GKGF} is non-compact whenever $X$ is non-$\mathfrak{P}$, however, as we see in the next theorem for this specific choice of the pair $\mathfrak{P}$ and $\mathfrak{Q}$ the space $Y$ is even non-normal whenever $X$ is non-$\mathfrak{P}$. An important corollary of this theorem is its follow up result (Theorem \ref{CTTF}) in which $\mathfrak{P}$ and $\mathfrak{Q}$ are further specialized to be separability and metrizability, respectively. (Note that metrizable spaces are paracompact and seprability coincides with the Lindel\"{o}f property in the class of metrizable spaces.) As we see this further specialization of $\mathfrak{P}$ and $\mathfrak{Q}$ allows us even to determine the vector space dimensions of $C^\mathfrak{P}_0(X)$ in terms of a certain topological characteristic (the density) of the space $X$. Before we proceed with the theorem we need some preliminaries.

Let $X$ be a locally compact paracompact space. By Theorem 5.1.27 of \cite{E} the space $X$ has a representation as
\[X=\bigcup_{i\in I}X_i,\]
where $X_i$'s are disjoint Lindel\"{o}f closed and open subspaces of $X$ and $I$ is an index set. An inspection of the proof (of Theorem 5.1.27 of \cite{E}) however reveals that the assumption that $X$ is locally compact may be replaced by the weaker assumption that $X$ is locally Lindel\"{o}f. That is, the above representation of $X$ exists whenever $X$ is locally Lindel\"{o}f and paracompact. Note that $I$ is uncountable if $X$ is non-Lindel\"{o}f.

Let $D$ be an uncountable set equipped with the discrete topology. Let $D_\lambda$ denote the subspace of $\beta D$ consisting of all elements which are in the closure in $D$ of countable subsets of $D$. It is known that there exists a continuous (2-valued) mapping $f:D_\lambda\setminus D\rightarrow[0,1]$ which is not continuously extendible over $\beta D\setminus D$. (See \cite{W}.) In particular, this implies that $D_\lambda$ is not normal. (To see this, suppose the contrary. Note that $D_\lambda\setminus D$ is closed in $D_\lambda$, as $D$ is open in $\beta D$, since $D$ is locally compact. By the Tietze--Urysohn extension theorem, $f$ is extendible to a continuous bounded mapping over $D_\lambda$, and therefore, over the whole $\beta D_\lambda$; note that $\beta D_\lambda=\beta D$, as $D\subseteq D_\lambda\subseteq\beta D$. This, however, is not possible.)

Observe that if $X$ is a space and $D$ is a subspace of $X$, then
\[U\cap\mathrm{cl}_XD=\mathrm{cl}_X(U\cap D)\]
for every closed and open subspace $U$ of $X$.

\begin{theorem}\label{HGFLK}
Let $X$ be a locally Lindel\"{o}f paracompact space. Let
\[C_L(X)=\big\{f\in C_B(X):\mathrm{supp}(f)\mbox{ is Lindel\"{o}f}\,\big\}.\]
Then $C_L(X)$ is a closed ideal of $C_B(X)$ isometrically isomorphic to $C_0(Y)$ for some unique locally compact space $Y$, namely, for
\[Y=\bigcup\{\mathrm{cl}_{\beta X}L:L\subseteq X\mbox{ is Lindel\"{o}f}\,\}.\]
In particular, $Y$ is the spectrum of $C_L(X)$. Furthermore,
\begin{itemize}
\item[\rm(1)] $Y$ is countably compact.
\item[\rm(2)] $Y$ is normal if and only if $Y$ is compact if and only if $X$ is Lindel\"{o}f if and only if $C_L(X)$ is unital.
\item[\rm(3)] $C_0(Y)=C_{00}(Y)$.
\end{itemize}
\end{theorem}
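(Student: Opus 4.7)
The plan is to apply Theorem~\ref{GKGF} with $\mathfrak{P}$ the Lindel\"of property and $\mathfrak{Q}$ paracompactness; these choices satisfy the four hypotheses of that theorem by Example~\ref{GGK}. Since $X$ is paracompact (hence normal) and locally-$\mathfrak{P}$, Theorem~\ref{GKGF}(1) yields $C_L(X)=C^\mathfrak{P}_0(X)$, because $\mathrm{supp}(f)$ having $\mathfrak{P}$ is precisely being Lindel\"of. Theorem~\ref{GKGF}(2) then gives the isometric isomorphism to $C_0(Y)$ for $Y=\lambda_\mathfrak{P}X$, countable compactness of $Y$, the equality $C_0(Y)=C_{00}(Y)$, and the three-way equivalence (unital $\Leftrightarrow X$ Lindel\"of $\Leftrightarrow Y$ compact). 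This immediately handles parts~(1) and~(3) and all of~(2) except the implication that $Y$ normal forces $Y$ compact.

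What remains is to rewrite $\lambda_\mathfrak{P}X$ in the stated form and to establish the normality-to-compactness direction. For the first task, Lemma~\ref{KPJG} (available because $X$ is normal) expresses $\lambda_\mathfrak{P}X$ as the union of $\mathrm{cl}_{\beta X}A$ over sets $A$ such that $\mathrm{cl}_XA$ admits a neighborhood whose closure is Lindel\"of. The inclusion of this union in the one indexed by Lindel\"of $L\subseteq X$ is immediate. For the converse, given a Lindel\"of $L$, its closure $\mathrm{cl}_X L$ is paracompact (as a closed subspace of $X$) and contains $L$ densely, so Theorem~5.1.25 of \cite{E} makes $\mathrm{cl}_X L$ Lindel\"of; local Lindel\"ofness then supplies a countable cover $\{U_n\}$ of $\mathrm{cl}_X L$ by opens with Lindel\"of closure, and normality of $X$ shrinks this to an open $W$ with $\mathrm{cl}_X L\subseteq W\subseteq\mathrm{cl}_X W\subseteq\bigcup_n U_n$; since $\mathrm{cl}_X W$ is closed in the Lindel\"of space $\bigcup_n\mathrm{cl}_X U_n$, it is Lindel\"of, so $W$ is the required neighborhood.

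For the last implication I argue contrapositively: if $X$ is non-Lindel\"of, I produce a closed copy of $D_\lambda$ inside $Y$ and invoke the non-normality of $D_\lambda$ recalled in the preamble. Using the decomposition in the preamble, write $X=\bigcup_{i\in I}X_i$ with disjoint clopen Lindel\"of $X_i$'s and uncountable $I$, pick $x_i\in X_i$, and set $D=\{x_i:i\in I\}$. Then $D$ is closed and discrete in $X$ (discrete because $X_i\cap D=\{x_i\}$), and since $X$ is normal $D$ is $C^*$-embedded, so $\mathrm{cl}_{\beta X}D=\beta D$. Every Lindel\"of $L\subseteq X$ meets only countably many $X_i$'s and is contained in the corresponding countable union $X_{I_0}=\bigcup_{i\in I_0}X_i$, which is clopen in $X$ so that $\mathrm{cl}_{\beta X}X_{I_0}$ is clopen in $\beta X$; consequently
\[
\mathrm{cl}_{\beta X}D\cap Y=\bigcup_{I_0\text{ countable}}\mathrm{cl}_{\beta X}D\cap\mathrm{cl}_{\beta X}X_{I_0}=\bigcup_{I_0\text{ countable}}\mathrm{cl}_{\beta X}\{x_i:i\in I_0\}.
\]
Under the identification $\mathrm{cl}_{\beta X}D=\beta D$ the right-hand side is exactly $D_\lambda$, and it is closed in $Y$; were $Y$ normal it would inherit normality, contradicting the preamble. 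The main obstacle is the last step---arranging the bookkeeping so that $\mathrm{cl}_{\beta X}D\cap Y$ is recognized as $D_\lambda$---while the rest is a direct application of Theorem~\ref{GKGF} and Lemma~\ref{KPJG}.
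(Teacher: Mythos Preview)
Your proof is correct and follows essentially the same route as the paper: you invoke Theorem~\ref{GKGF} (with Example~\ref{GGK}) for the main isomorphism and parts~(1), (3), and most of~(2), and you use the clopen decomposition $X=\bigcup_{i\in I}X_i$ together with the $D_\lambda$ argument for the non-normality when $X$ is non-Lindel\"of. The only cosmetic difference is that the paper derives the representation $Y=\bigcup\{\mathrm{cl}_{\beta X}H_J:J\subseteq I\text{ countable}\}$ directly from the decomposition (and uses it to give a self-contained proof of~(1) and~(3) independent of Theorem~\ref{GKGF}), whereas you obtain the Lindel\"of-indexed description of $Y$ via Lemma~\ref{KPJG} and a local-covering argument; both are valid and yield the same result.
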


\begin{proof}
Let $\mathfrak{P}$ be the Lindel\"{o}f property and let $\mathfrak{Q}$ be paracompactness. By Example \ref{GGK}, the pair $\mathfrak{P}$ and $\mathfrak{Q}$ satisfy the assumption of Theorem \ref{GKGF}. Observe that $X$ has a representation
\[X=\bigcup_{i\in I}X_i,\]
where $X_i$'s are disjoint Lindel\"{o}f closed and open subspaces of $X$ and $I$ is an index set. For convenience, denote
\[H_J=\bigcup_{i\in J}X_i\]
for any $J\subseteq I$. Observe that each $H_J$ is closed and open in $X$, and thus it has a closed and open closure in $\beta X$. Also, as we now check
\begin{equation}\label{DSY}
\lambda_\mathfrak{P} X=\bigcup\{\mathrm{cl}_{\beta X}H_J:J\subseteq I\mbox{ is countable}\}.
\end{equation}
Let $C\in\mathrm{Coz}(X)$ such that $\mathrm{cl}_X C$ has a neighborhood $U$ in $X$ such that $\mathrm{cl}_X U$ is Lindel\"{o}f. Then $\mathrm{cl}_X C$ itself is Lindel\"{o}f, as it is closed in $\mathrm{cl}_X U$, and therefore $\mathrm{cl}_X C\subseteq H_J$ for some countable $J\subseteq I$. Thus $\mathrm{cl}_{\beta X}C\subseteq\mathrm{cl}_{\beta X}H_J$. On the other hand, if $J\subseteq I$ is countable, then $H_J$ is a cozero-set in $X$, as it is closed and open in $X$, and it is Lindel\"{o}f, as it is a countable union of its Lindel\"{o}f subspaces. Since $\mathrm{cl}_{\beta X}H_J$ is open in $\beta X$ we have
\[\mathrm{cl}_{\beta X}H_J=\mathrm{int}_{\beta X}\mathrm{cl}_{\beta X}H_J\subseteq\lambda_\mathfrak{P} X.\]
This shows (\ref{DSY}). Note that
\[\lambda_\mathfrak{P} X=\bigcup\{\mathrm{cl}_{\beta X}L:L\subseteq X\mbox{ is Lindel\"{o}f}\};\]
by (\ref{DSY}); simply observe that any Lindel\"{o}f subspace of $X$ is contained in $H_J$ for some countable $J\subseteq I$, and on the other hand, $H_J$ is Lindel\"{o}f for any countable $J\subseteq I$, as it is a countable union of its Lindel\"{o}f subspaces.

Note that (3) implies (1); we prove (3). (Observe that (1) and (3) follow from Theorem \ref{GKGF}; the proof given here, however, is made easily to be independent from Theorem \ref{GKGF}.)

(3). It suffices to check that every $\sigma$-compact subspace of $\lambda_\mathfrak{P} X$ is contained in a compact subspace of $\lambda_\mathfrak{P} X$. Let
\[A=\bigcup_{n=1}^\infty A_n,\]
where $A_n$ is compact for each positive integer $n$, be a $\sigma$-compact subspace of $\lambda_\mathfrak{P} X$. Using (\ref{DSY}), for each positive integer $n$, by compactness we have
\begin{equation}\label{DJB}
A_n\subseteq\mathrm{cl}_{\beta X}H_{J_1}\cup\cdots\cup\mathrm{cl}_{\beta X}H_{J_{k_n}}
\end{equation}
for some countable $J_1,\ldots,J_{k_n}\subseteq I$. Let
\begin{equation}\label{FFB}
J=\bigcup_{n=1}^\infty(J_{k_1}\cup\cdots\cup J_{k_n}).
\end{equation}
Then $J$ is countable and $A\subseteq\mathrm{cl}_{\beta X}H_J$. That is, $A$ is contained in the compact subspace $\mathrm{cl}_{\beta X}H_J$ of $\lambda_\mathfrak{P} X$.

(2). Let $x_i\in X_i$ for each $i\in I$. Then $D=\{x_i:i\in I\}$ is a closed discrete subspace of $X$, and since $X$ is non-Lindel\"{o}f, it is uncountable. Suppose to the contrary that $\lambda_\mathfrak{P} X$ is normal. Then, using (\ref{DSY}), it follows that
\[\lambda_\mathfrak{P} X\cap\mathrm{cl}_{\beta X}D=\bigcup\{\mathrm{cl}_{\beta X}H_J\cap\mathrm{cl}_{\beta X}D:J\subseteq I\mbox{ is countable}\}\]
is normal, as it is closed in $\lambda_\mathfrak{P} X$. Now, let $J\subseteq I$ be countable. Since $\mathrm{cl}_{\beta X}H_J$ is closed and open in $\beta X$ (using the observation preceding the statement of the theorem) we have
\[\mathrm{cl}_{\beta X}H_J\cap\mathrm{cl}_{\beta X}D=\mathrm{cl}_{\beta X}(\mathrm{cl}_{\beta X}H_J\cap D)=\mathrm{cl}_{\beta X}(H_J\cap D)=\mathrm{cl}_{\beta X}\big(\{x_i:i\in J\}\big).\]
But $\mathrm{cl}_{\beta X}D=\beta D$, as $D$ is closed in $X$ and $X$ is normal. Therefore
\[\mathrm{cl}_{\beta X}\big(\{x_i:i\in J\}\big)=\mathrm{cl}_{\beta X}\big(\{x_i:i\in J\}\big)\cap\mathrm{cl}_{\beta X}D=\mathrm{cl}_{\beta D}\big(\{x_i:i\in J\}\big).\]
Thus
\[\lambda_\mathfrak{P} X\cap\mathrm{cl}_{\beta X}D=\bigcup\{\mathrm{cl}_{\beta D}E:E\subseteq D\mbox{ is countable}\}=D_\lambda,\]
contradicting the fact that $D_\lambda$ is not normal.
\end{proof}

\begin{remark}\label{KHGDF}
A point $x$ of a space $X$ is called a \textit{complete accumulation point} of a subspace $A$ of $X$ if $|U\cap A|=|A|$ for every neighborhood $U$ of $x$ in $X$. A space $X$ is called \textit{linearly Lindel\"{o}f} if every linearly ordered (by set theoretic inclusion $\subseteq$) open cover of $X$ has a countable subcover, or equivalently, if every uncountable subspace of $X$ has a complete accumulation point in $X$. As it is observed in \cite{Ko5}, the notions of $\sigma$-compactness, the Lindel\"{o}f property and the linearly Lindel\"{o}f property all coincide in the class of locally compact paracompact spaces; this we will now show. Since $\sigma$-compactness and the Lindel\"{o}f property are identical in the realm of locally compact spaces (see Exercise 3.8.C of \cite{E}) and by the definitions it is clear that the Lindel\"{o}f property implies the linearly Lindel\"{o}f property, we check only that in any locally compact paracompact space the linearly Lindel\"{o}f property implies the linearly Lindel\"{o}f property. Let $X$ be a locally compact paracompact space. Assume a representation for $X$ as
\[X=\bigcup_{i\in I}X_i,\]
where $X_i$'s are disjoint Lindel\"{o}f open subspaces of $X$ and $I$ is an index set. Suppose that $X$ is not Lindel\"{o}f. Then $I$ is uncountable. Let $A =\{x_i:i\in I\}$, where $x_i\in X_i$ for each $i\in I$. Then $A$ is an uncountable subspace of $X$ with no accumulation point in $X$, as any accumulation point of $A$ should be contained in $X_i$ for some $i\in I$ and $X_i$ is an open subspace of $X$ which intersects $A$ in only one element. Thus, $X$ cannot be linearly Lindel\"{o}f either.

In Theorem \ref{HGFLK} for a locally compact paracompact space (more generally, for a locally Lindel\"{o}f paracompact space) $X$ we have considered the set of all $f\in C_B(X)$ such that $\mathrm{supp}(f)$ is Lindel\"{o}f. Since local compactness and paracompactness are both closed hereditary, $\mathrm{supp}(f)$, for each $f\in C_B(X)$, is locally compact and paracompact, and thus is Lindel\"{o}f if and only if it is linearly Lindel\"{o}f if and only if it is $\sigma$-compact. In other words, Theorem \ref{HGFLK} holds true (provided that we assume that $X$ is locally compact and paracompact) if one replaces the Lindel\"{o}f property by either the linearly Lindel\"{o}f property or $\sigma$-compactness.
\end{remark}

As we have already pointed out our next theorem further specializes the topological properties considered in Theorem \ref{HGFLK} to derive further results. We need the following preliminaries.

The \textit{density} of a space $X$, denoted by $\mathrm{d}(X)$, is defined by
\[\mathrm{d}(X)=\min\big\{|D|:D\mbox{ is dense in }X\big\}+\aleph_0.\]
In particular, a space $X$ is separable if and only if $\mathrm{d}(X)=\aleph_0$. Observe that in any metrizable space separability coincides with second countability (and the Lindel\"{o}f property); thus any subspace of a separable metrizable space is separable. In particular, locally separable metrizable spaces are locally Lindel\"{o}f. Note that metrizable spaces are paracompact. It now follows from our discussion preceding Theorem \ref{HGFLK} that any locally separable metrizable space $X$ can be represented as a disjoint union
\[X=\bigcup_{i\in I}X_i,\]
where $I$ is an index set, and $X_i$ is a separable (and thus Lindel\"{o}f) closed and open subspace of $X$ for each $i\in I$. (This is indeed a theorem of Alexandroff; see Problem 4.4.F of \cite{E}.) Note that $\mathrm{d}(X)=|I|$, if $I$ is infinite.

Let $I$ be an infinite set. By a theorem of Tarski there exists a collection $\mathfrak{I}$ of cardinality $|I|^{\aleph_0}$ which consists of countable infinite subsets of $I$ such that the intersection of any two distinct elements of $\mathfrak{I}$ is finite. (See \cite{Ho}.) Note that the collection of all subsets of cardinality at most $\mathfrak{m}$ in a set of cardinality $\mathfrak{n}\geq\mathfrak{m}$ is of cardinality at most $\mathfrak{n}^\mathfrak{m}$.

\begin{theorem}\label{CTTF}
Let $X$ be a locally separable metrizable space. Let
\[C_S(X)=\big\{f\in C_B(X):\mathrm{supp}(f)\mbox{ is separable}\big\}.\]
Then $C_S(X)$ is a closed ideal of $C_B(X)$ isometrically isomorphic to $C_0(Y)$ for some unique locally compact space $Y$, namely, for
\[Y=\bigcup\{\mathrm{cl}_{\beta X}S:S\subseteq X\mbox{ is separable}\}.\]
In particular, $Y$ is the spectrum of $C_S(X)$. Furthermore,
\begin{itemize}
\item[\rm(1)] $Y$ is countably compact.
\item[\rm(2)] $Y$ is normal if and only if $Y$ is compact if and only if $X$ is separable if and only if $C_S(X)$ is unital.
\item[\rm(3)] $C_0(Y)=C_{00}(Y)$.
\item[\rm(4)] $\dim C_S(X)=\mathrm{d}(X)^{\aleph_0}$, if $X$ is non-separable.
\end{itemize}
\end{theorem}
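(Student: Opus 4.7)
The plan is to derive parts (1), (2), and (3) directly from Theorem \ref{HGFLK}, and then to give a separate argument for the dimension count in (4). The key preliminary observation is that a locally separable metrizable space $X$ is both locally Lindel\"of and paracompact (metrizability gives paracompactness, and within the class of metrizable spaces separability coincides with the Lindel\"of property and with second countability). Consequently, $C_S(X)$ is exactly the ideal $C_L(X)$ of Theorem \ref{HGFLK}, and the space $Y$ described here coincides with the one appearing in Theorem \ref{HGFLK}. Thus the representation $C_S(X)\cong C_0(Y)$, the identification $Y=\mathfrak{sp}(C_S(X))$, the countable compactness of $Y$, the equality $C_0(Y)=C_{00}(Y)$, and the chain of equivalences in (2) are all immediate, once one notes that ``Lindel\"of'' becomes ``separable'' for the metrizable space $X$.

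For (4) the strategy will be a two-sided cardinal estimate, using the Alexandroff decomposition $X=\bigcup_{i\in I}X_i$ into disjoint separable closed-and-open subspaces (guaranteed by local separability, metrizability and paracompactness), together with the equality $|I|=\mathrm{d}(X)$ valid in the non-separable case. For the upper bound I would argue as follows: if $f\in C_S(X)$, then $\mathrm{supp}(f)$ is separable metrizable, hence Lindel\"of, and therefore cannot meet uncountably many of the pairwise disjoint clopen sets $X_i$; so $\mathrm{supp}(f)\subseteq X_J:=\bigcup_{i\in J}X_i$ for some countable $J\subseteq I$. Since each $X_J$ is separable metrizable, continuous functions on $X_J$ are determined by their values on a countable dense set, giving $|C_B(X_J)|\le 2^{\aleph_0}$. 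Counting countable subsets of $I$ yields $|C_S(X)|\le\mathrm{d}(X)^{\aleph_0}\cdot 2^{\aleph_0}=\mathrm{d}(X)^{\aleph_0}$, and hence $\dim C_S(X)\le\mathrm{d}(X)^{\aleph_0}$.

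For the lower bound I would invoke Tarski's theorem (quoted in the paper just before Theorem \ref{CTTF}) to produce a family $\mathfrak{F}$ of countably infinite subsets of $I$ with pairwise finite intersections and with $|\mathfrak{F}|=|I|^{\aleph_0}=\mathrm{d}(X)^{\aleph_0}$. For each $A\in\mathfrak{F}$ set $f_A=\chi_{X_A}$, where $X_A=\bigcup_{i\in A}X_i$; this is continuous (as $X_A$ is clopen) and its support is the countable union of separable spaces $X_A$, hence separable, so $f_A\in C_S(X)$. Linear independence of $\{f_A:A\in\mathfrak{F}\}$ follows by a standard almost-disjoint argument: given distinct $A_1,\ldots,A_n\in\mathfrak{F}$ with $\sum c_k f_{A_k}=\mathbf{0}$, for each $k$ the set $A_k\setminus\bigcup_{l\neq k}A_l$ is non-empty (in fact infinite) since the $A_k\cap A_l$ are finite and $A_k$ is infinite; picking $i_k$ in this set and any $x_k\in X_{i_k}$, evaluation gives $c_k=0$. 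This furnishes $\mathrm{d}(X)^{\aleph_0}$ linearly independent vectors in $C_S(X)$, and combined with the upper bound completes the proof.

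The main obstacle will be the upper bound in (4): the subtle point is to argue that separability of $\mathrm{supp}(f)$, together with the clopen structure of the $X_i$, forces $f$ to live in $X_J$ for some countable $J$. Everything else is either a direct appeal to Theorem \ref{HGFLK} or a routine cardinal arithmetic / almost-disjoint family construction.
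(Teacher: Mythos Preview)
Your proposal is correct and follows essentially the same route as the paper: parts (1)--(3) are deduced from Theorem~\ref{HGFLK} via the identification of separability with the Lindel\"of property in metrizable spaces, and part (4) is handled by the Alexandroff decomposition $X=\bigcup_{i\in I}X_i$, with the lower bound coming from Tarski's almost-disjoint family and characteristic functions $\chi_{X_A}$, and the upper bound from the observation that each $f\in C_S(X)$ lives in some $C_B(X_J)$ with $J$ countable and $|C_B(X_J)|\le 2^{\aleph_0}$. Your linear-independence argument is spelled out in slightly more detail than the paper's, but the substance is identical.
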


\begin{proof}
Note that metrizable spaces are paracompact and separability coincides with the Lindel\"{o}f property in the class of metrizable spaces. Thus (1)--(3) follow from Theorem \ref{HGFLK}. We prove (4).

(4). Since $X$ is non-separable, $I$ is infinite and $\mathrm{d}(X)=|I|$. Let $\mathfrak{I}$ be a collection of cardinality $|I|^{\aleph_0}$ consisting of countable infinite subsets of $I$, such that the intersection of any two distinct elements of $\mathfrak{I}$ is finite. Let $f_J=\chi_{H_J}$ for any $J\in\mathfrak{I}$. Note that no element in
\[\mathscr{F}=\{f_J:J\in\mathfrak{I}\}\]
is a linear combination of other elements (as each element of $\mathfrak{I}$ is infinite and each pair of distinct elements of $\mathfrak{I}$ has finite intersection). Observe that $\mathscr{F}$ is of cardinality $|\mathfrak{I}|$. Thus
\[\dim C_S(X)\geq|\mathfrak{I}|=|I|^{\aleph_0}=\mathrm{d}(X)^{\aleph_0}.\]
On the other hand, if $f\in C_S(X)$, then $\mathrm{supp}(f)$ is Lindel\"{o}f and thus $\mathrm{supp}(f)\subseteq H_J$, where $J\subseteq I$ is countable; therefore, we may assume that $f\in C_B(H_J)$. Conversely, if $J\subseteq I$ is countable, then each element of $C_B(H_J)$ can be extended trivially to an element of $C_S(X)$ (by defining it to be identically $0$ elsewhere). Thus $C_S(X)$ may be viewed as the union of all $C_B(H_J)$, where $J$ runs over all countable subsets of $I$. Note that if $J\subseteq I$ is countable, then $H_J$ is separable; thus any element of $C_B(H_J)$ is determined by its value on a countable set. This implies that for each countable $J\subseteq I$, the set $C_B(H_J)$ is of cardinality at most $\mathfrak{c}^{\aleph_0}=2^{\aleph_0}$. There are at most $|I|^{\aleph_0}$ countable $J\subseteq I$. Therefore
\begin{eqnarray*}
\dim C_S(X)\leq\big|C_S(X)\big|&\leq& \Big|\bigcup\big\{C_B(H_J):J\subseteq I\mbox{ is countable}\big\}\Big|\\&\leq& 2^{\aleph_0}\cdot|I|^{\aleph_0}=|I|^{\aleph_0}=\mathrm{d}(X)^{\aleph_0}.
\end{eqnarray*}
\end{proof}

\begin{remark}\label{OLJDF}
As it is remarked previously, the three notions of separability, second countability and the Lindel\"{o}f property coincide in the class of metrizable spaces. In particular, Theorem \ref{CTTF} holds true if one replaces separability by either second countability or the Lindel\"{o}f property.
\end{remark}

The aim of the next theorem is to insert a chain consisting of certain types of closed ideals of $C_B(X)$ between $C_0(X)$ and $C_B(X)$ if $X$ satisfies certain requirements; the length of such a chain will be determined by a certain topological characteristic (the Lindel\"{o}f number) of the space $X$. We need some preliminaries.

The \textit{Lindel\"{o}f number} of a space $X$, denoted by $\ell(X)$, is defined by
\[\ell(X)=\min\{\mathfrak{n}:\mbox{any open cover of $X$ has a subcover of cardinality}\leq\mathfrak{n}\}+\aleph_0.\]
In particular, a space $X$ is Lindel\"{o}f if and only if $\ell(X)=\aleph_0$. As we have observed previously any locally Lindel\"{o}f paracompact space $X$ may be represented as
\[X=\bigcup_{i\in I}X_i,\]
where $X_i$'s are disjoint Lindel\"{o}f closed and open subspaces of $X$ and $I$ is an index set. Note that $\ell(X)=|I|$ if $I$ is infinite.

A regular space is said to be \textit{$\mu$-Lindel\"{o}f}, where $\mu$ is an infinite cardinal, if every open cover of it contains a subcover of cardinality $\leq\mu$. The $\mu$-Lindel\"{o}f property gets weaker as $\mu$ gets larger. The Lindel\"{o}f property is identical to the $\aleph_0$-Lindel\"{o}f property; in particular, every Lindel\"{o}f space is $\mu$-Lindel\"{o}f.

\begin{theorem}\label{GGFD}
Let $X$ be a non-Lindel\"{o}f locally Lindel\"{o}f paracompact space. Then there is a chain
\[C_0(X)\subsetneqq H_0\subsetneqq H_1\subsetneqq\cdots\subsetneqq H_\lambda=C_B(X)\]
of closed ideals of $C_B(X)$ such that $H_\mu$, for each $\mu<\lambda$, is isometrically isomorphic to
\[C_0(Y_\mu)=C_{00}(Y_\mu),\]
where $Y_\mu$ is a locally compact countably compact space which contains $X$ densely. Furthermore,
\[\aleph_\lambda=\ell(X).\]
\end{theorem}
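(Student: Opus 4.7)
The idea is to stratify $C_B(X)$ between $C_0(X)$ and itself using a family of set-theoretic ideals of $X$ indexed by the cardinals $\aleph_0,\aleph_1,\ldots,\aleph_\lambda$, and then read off the corresponding $C^{\mathfrak I}_0(X)$ via Theorems \ref{UDR} and \ref{ADEFR}.

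\emph{Setup.} By the discussion preceding Theorem \ref{HGFLK}, the assumption that $X$ is locally Lindel\"of and paracompact yields a partition $X=\bigcup_{i\in I}X_i$ into pairwise disjoint Lindel\"of closed-and-open subspaces, and since $X$ is non-Lindel\"of we have $|I|=\ell(X)=\aleph_\lambda$ with $\lambda\ge 1$. For $J\subseteq I$ write $H_J=\bigcup_{i\in J}X_i$; each $H_J$ is closed and open in $X$. For each ordinal $\mu$ with $0\le\mu\le\lambda$ put
\[\mathfrak{I}_\mu=\big\{A\subseteq X:A\subseteq H_J\text{ for some }J\subseteq I\text{ with }|J|\le\aleph_\mu\big\}\quad\text{and}\quad H_\mu=C^{\mathfrak{I}_\mu}_0(X).\]

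\emph{Applying the general theory.} For every $\mu\le\lambda$: (i) $\mathfrak{I}_\mu$ is a $\sigma$-ideal, since the union of countably many $J_n\subseteq I$ each of cardinality $\le\aleph_\mu$ still has cardinality $\le\aleph_\mu$; (ii) $X$ is locally null with respect to $\mathfrak{I}_\mu$, because $X_i=H_{\{i\}}$ is a null open neighborhood of any $x\in X_i$; (iii) for every $A\in\mathfrak{I}_\mu$, $\mathrm{cl}_XA\subseteq H_J$ for the corresponding $J$, and $H_J$ is itself a null open neighborhood of $\mathrm{cl}_XA$. The space $X$ is also normal (paracompact $\Rightarrow$ normal). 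Theorem \ref{UDR} therefore gives that $H_\mu$ is a closed ideal of $C_B(X)$ isometrically isomorphic to $C_0(Y_\mu)$, where $Y_\mu=\lambda_{\mathfrak{I}_\mu}X$ is locally compact and contains $X$ densely. The equivalence of (2) and (3) in Theorem \ref{ADEFR} then upgrades this by giving $C_{00}(Y_\mu)=C_0(Y_\mu)$, whence $Y_\mu$ is countably compact.

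\emph{Endpoints and strictness.} For $\mu=\lambda$ we have $I\in\mathfrak{I}_\lambda$, so $X\in\mathfrak{I}_\lambda$, the ideal is non-proper, and hence $H_\lambda=C_B(X)$. For $C_0(X)\subseteq H_0$, observe that disjointness of the $X_i$'s forces any compact subspace of $X$ to meet only finitely many of them; strictness is witnessed by $\chi_{H_{J_0}}$ for any countably infinite $J_0\subseteq I$ (which exists since $|I|\ge\aleph_1$), as $H_{J_0}$ is not compact (the infinite disjoint open family $\{X_i\}_{i\in J_0}$ covers it with no finite subcover). For each $\mu<\lambda$, choose $J_\mu\subseteq I$ with $|J_\mu|=\aleph_{\mu+1}\le\aleph_\lambda=|I|$; then $\chi_{H_{J_\mu}}\in H_{\mu+1}$. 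If it lay in $H_\mu$, then $H_{J_\mu}$ would be null, i.e.\ $H_{J_\mu}\subseteq H_{J'}$ for some $|J'|\le\aleph_\mu$, which by disjointness and non-emptiness of the $X_i$'s forces $J_\mu\subseteq J'$ and contradicts $|J_\mu|=\aleph_{\mu+1}>\aleph_\mu$. Monotonicity of $\mathfrak{I}_\mu$ in $\mu$ then yields the full strictly ascending chain.

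\emph{Main obstacle.} The argument is essentially bookkeeping once the ideals $\mathfrak{I}_\mu$ are chosen, and no single step is genuinely hard. The subtle point is to arrange a single definition of $\mathfrak{I}_\mu$ whose hypotheses feed simultaneously into Theorems \ref{UDR} and \ref{ADEFR} at every level $\mu\le\lambda$, so that local compactness, countable compactness, density of $X$, and the identity $C_0(Y_\mu)=C_{00}(Y_\mu)$ are uniform consequences and no separate limit-ordinal argument is required.
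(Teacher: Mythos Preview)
Your proof is correct and follows essentially the same approach as the paper: the paper sets $H_\mu=C^{\mathfrak{P}_\mu}_0(X)$ where $\mathfrak{P}_\mu$ is the $\aleph_\mu$-Lindel\"of property, and your ideal $\mathfrak{I}_\mu$ coincides with $\mathfrak{I}_{\mathfrak{P}_\mu}$ since a closed subspace of $X$ is $\aleph_\mu$-Lindel\"of precisely when it lies in some $H_J$ with $|J|\le\aleph_\mu$. The one cosmetic difference is that the paper obtains $C_0(Y_\mu)=C_{00}(Y_\mu)$ by repeating the $\sigma$-compact-into-compact argument from the proof of Theorem~\ref{HGFLK}, whereas you invoke Theorem~\ref{ADEFR} directly---a slightly cleaner route to the same conclusion.
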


\begin{proof}
Let $\mu$ be an ordinal. Let $\mathfrak{P}_\mu$ be the $\aleph_\mu$-Lindel\"{o}f property and denote
\[H_\mu=C^{\mathfrak{P}_\mu}_0(X).\]
Observe that $X$ is locally Lindel\"{o}f and thus locally $\aleph_\mu$-Lindel\"{o}f. It also follows from the definitions that the $\aleph_\mu$-Lindel\"{o}f property is closed hereditary and is preserved under finite closed sums. Theorem \ref{HJG} now implies that $H_\mu$ is a closed ideal in $C_B(X)$ which is isometrically isomorphic to $C_0(Y_\mu)$ with
\[Y_\mu=\lambda_{\mathfrak{P}_\mu}X.\]
By Theorem \ref{HJG} the space $Y_\mu$ is locally compact and contains $X$ as a dense subspace. We show that
\begin{equation}\label{KJHD}
C_0(Y_\mu)=C_{00}(Y_\mu);
\end{equation}
this in particular proves that $X$ is countably compact. To do this, it suffices to check that every $\sigma$-compact subspace of $Y_\mu$ is contained in a compact subspace of $Y_\mu$. Observe that the space $X$ may be represented as
\[X=\bigcup_{i\in I}X_i,\]
where $X_i$'s are disjoint Lindel\"{o}f closed and open subspaces of $X$ and $I$ is an index set. Note that if $J\subseteq I$ is of cardinality $\leq\aleph_\mu$, then
\[\bigcup_{i\in J}X_i\]
is $\aleph_\mu$-Lindel\"{o}f, and conversely, any $\aleph_\mu$-Lindel\"{o}f subspace of $X$ is contained in such a subspace. An argument similar to the one we have given in the proof of Theorem \ref{HGFLK} shows that
\[\lambda_{\mathfrak{P}_\mu}X=\bigcup\bigg\{\mathrm{cl}_{\beta X}\bigg(\bigcup_{i\in J}X_i\bigg):J\subseteq I\mbox{ is of cardinality }\leq\aleph_\mu\bigg\}.\]
Using the above representation and arguing as in the proof of Theorem \ref{HGFLK}, it follows that every $\sigma$-compact subspace of $Y_\mu$ is contained in a compact subspace of $Y_\mu$. This proves (\ref{KJHD}).

To conclude the proof we verify that $H_\mu$'s ascend as $\mu$ increases. It is clear that $H_\mu\subseteq H_\kappa$, whenever $\mu\leq\kappa$, as the $\aleph_\mu$-Lindel\"{o}f property is stronger than the $\aleph_\kappa$-Lindel\"{o}f property. Let $\lambda$ be such that $\aleph_\lambda=\ell(X)$. Note that $\ell(X)=|I|$. In particular, $X$ is $\aleph_\lambda$-Lindel\"{o}f, as it is the union of $\aleph_\lambda$ many of its Lindel\"{o}f subspaces. Therefore $H_\lambda=C_B(X)$, as $\mathrm{supp}(f)$ is $\aleph_\lambda$-Lindel\"{o}f for any $f\in C_B(X)$, since it is closed in $X$. Next, we show that all chain inclusions are proper. Note that $C_0(X)\subsetneqq H_0$, as for the characteristic mapping $\chi_A$, where
\[A=\bigcup_{i\in K}X_i\]
and $K\subseteq I$ is infinite and countable, we have $\chi_A\in H_0$ while $\chi_A\notin C_0(X)$. Also, if $\mu<\kappa\leq\lambda$, then $\chi_B\in H_\kappa$ while $\chi_B\notin H_\mu$, where
\[B=\bigcup_{i\in L}X_i\]
and $L\subseteq I$ is of cardinality $\aleph_\kappa$.
\end{proof}

\begin{remark}\label{HFJ}
A \textit{weakly inaccessible cardinal} is defined as an uncountable limit regular cardinal. The existence of weakly inaccessible cardinals cannot be proved within \textsf{ZFC}, though, that such cardinals exist is not known to be inconsistent with \textsf{ZFC}. The existence of weakly inaccessible cardinals is sometimes assumed as an additional axiom. Observe that weakly inaccessible cardinals are necessarily the fixed points of the aleph function, that is, $\aleph_\lambda=\lambda$, if $\lambda$ is a weakly inaccessible cardinal. In Theorem \ref{GGFD} we have inserted a chain of ideals of $C_B(X)$ between $C_0(X)$ and $C_B(X)$; this chain will therefore have its length equal to the Lindel\"{o}f number $\ell(X)$ of $X$ provided that $\ell(X)$ is weakly inaccessible.
\end{remark}

\begin{remark}\label{KJHG}
Theorems \ref{HGFLK}, \ref{CTTF} and \ref{GGFD} make essential use of the fact that the spaces under consideration (locally separable metrizable spaces as well as locally Lindel\"{o}f paracompact spaces) are representable as a disjoint union of open Lindel\"{o}f subspaces. Much of Theorems \ref{HGFLK}, \ref{CTTF} and \ref{GGFD} still remain valid if one replaces the Lindel\"{o}f property by a more or less general topological property $\mathfrak{P}$, provided that the spaces under consideration are representable as a disjoint union of open subspaces each with $\mathfrak{P}$.
\end{remark}

The topological properties considered so far have all been closed hereditary and preserved under finite closed sums. We now consider pseudocompactness. (Recall that a completely regular space $X$ is called \textit{pseudocompact} if every continuous mapping $f:X\rightarrow\mathbb{R}$ is bounded.) Note that while pseudocompactness is preserved under finite closed sums (as this follows readily from its definition) it is not a closed hereditary topological property. Pseudocompactness is however hereditary with respect to regular closed subspaces, that is, every regular closed subspace of a pseudocompact space is pseudocompact. (See Exercise 3.10.F(e) of \cite{E}. Recall that a subspace of a space is called \textit{regular closed} if it is the closure of an open subspace.) What makes consideration of pseudocompactness (and also realcompactness) so important is the known structure of the space $\lambda_{\mathfrak U} X$ associated to the ideal ideal ${\mathfrak U}$ of $X$ generated by open subspace of $X$ with a pseudocompact closure in $X$.

\begin{notation}\label{JJJ}
Let $X$ be a completely regular space. Denote
\[{\mathfrak U}=\langle U:\mbox{$U$ is an open subspace of $X$ with a pseudocompact closure}\rangle.\]
\end{notation}

\begin{lemma}\label{FCG}
Let $X$ be a completely regular space. Then
\[{\mathfrak U}=\{A:\mbox{$A\subseteq U$ where $U$ is an open subspace of $X$ with a pseudocompact closure}\}.\]
\end{lemma}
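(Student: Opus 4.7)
The plan is to prove the two set-theoretic inclusions separately, leveraging only the description of the ideal generated by a family of sets together with the fact (mentioned in the excerpt just before the lemma) that pseudocompactness is preserved under finite closed sums.

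For the inclusion $\supseteq$, suppose $A\subseteq U$ where $U$ is an open subspace of $X$ with $\mathrm{cl}_X U$ pseudocompact. Then $U$ is one of the generators of $\mathfrak{U}$, hence $U\in\mathfrak{U}$, and since ideals are downward closed under inclusion, $A\in\mathfrak{U}$.

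For the inclusion $\subseteq$, let $A\in\mathfrak{U}$. By the description of an ideal generated by a family of subsets, there exist open subspaces $U_1,\dots,U_n$ of $X$, each with $\mathrm{cl}_X U_i$ pseudocompact, such that $A\subseteq U_1\cup\cdots\cup U_n$. Set $U=U_1\cup\cdots\cup U_n$; then $U$ is open in $X$, and
\[
\mathrm{cl}_X U=\mathrm{cl}_X U_1\cup\cdots\cup\mathrm{cl}_X U_n.
\]
Since each $\mathrm{cl}_X U_i$ is pseudocompact and pseudocompactness is preserved under finite closed sums, $\mathrm{cl}_X U$ is pseudocompact. Thus $A$ is contained in an open subspace of $X$ with pseudocompact closure, proving the inclusion.

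The only point that requires any care at all is to notice that while pseudocompactness is not closed hereditary (so $\mathfrak{U}$ is not of the form $\mathfrak{I}_\mathfrak{P}$), it is still preserved under finite closed sums, which is exactly what is needed to turn a finite cover by generators into a single generator; hence there is no real obstacle.
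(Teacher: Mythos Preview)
Your proof is correct and follows essentially the same approach as the paper's own proof: both show the nontrivial inclusion by taking a finite union $U=U_1\cup\cdots\cup U_n$ of generators and observing that $\mathrm{cl}_X U=\mathrm{cl}_X U_1\cup\cdots\cup\mathrm{cl}_X U_n$ is pseudocompact because pseudocompactness is preserved under finite closed sums.
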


\begin{proof}
Clearly $A\in{\mathfrak U}$, if $A\subseteq U$ where $U$ is an open subspace of $X$ with a pseudocompact closure in $X$.

Let $A\in{\mathfrak U}$. Then $A\subseteq U_1\cup\cdots\cup U_n$ where $U_i$ is an open subspace of $X$ with a pseudocompact closure in $X$ for each $i=1,\ldots,n$. Let $U=U_1\cup\cdots\cup U_n$. Then $U$ is an open subspace of $X$ whose closure $\mathrm{cl}_XU=\mathrm{cl}_XU_1\cup\cdots\cup\mathrm{cl}_X U_n$ is pseudocompact, as it is the finite union of pseudocompact subspaces of $X$.
\end{proof}

The following is dual to Lemma \ref{KFFF}.

\begin{lemma}\label{HF}
Let $X$ be a completely regular space. Then $X$ is locally null (with respect to the ideal $\mathfrak{U}$) if and only if $X$ is locally pseudocompact.
\end{lemma}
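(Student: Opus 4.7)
The plan is to verify both implications by directly translating between null neighborhoods and pseudocompact neighborhoods, exploiting the explicit description of $\mathfrak{U}$ given by Lemma \ref{FCG}.

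For the forward direction, suppose $X$ is locally null and let $x\in X$. Pick a null neighborhood $V$ of $x$. By Lemma \ref{FCG}, $V\subseteq U$ for some open $U\subseteq X$ with $\mathrm{cl}_XU$ pseudocompact. Then $\mathrm{cl}_XU$ is itself a neighborhood of $x$, since $x\in\mathrm{int}_XV\subseteq U\subseteq\mathrm{int}_X\mathrm{cl}_XU$ (the last inclusion holds because $U$ is open). Thus $x$ has a pseudocompact neighborhood, so $X$ is locally pseudocompact.

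For the converse, suppose $X$ is locally pseudocompact and fix $x\in X$. Let $N$ be a pseudocompact neighborhood of $x$, and set $W=\mathrm{int}_XN$. I will show that $W$ is a null open neighborhood of $x$, i.e., that $\mathrm{cl}_XW$ is pseudocompact. First observe that $\mathrm{cl}_XN$ is pseudocompact: any continuous $f\colon\mathrm{cl}_XN\rightarrow\mathbb{R}$ is bounded on the dense pseudocompact subspace $N$, and hence bounded on $\mathrm{cl}_XN$ by continuity. Next, I view $W$ as an open subspace of $\mathrm{cl}_XN$; since $W\subseteq\mathrm{cl}_XN$, we have $\mathrm{cl}_XW\subseteq\mathrm{cl}_XN$ and therefore
\[\mathrm{cl}_{\mathrm{cl}_XN}W=\mathrm{cl}_XN\cap\mathrm{cl}_XW=\mathrm{cl}_XW.\]
Thus $\mathrm{cl}_XW$ is a regular closed subspace of the completely regular pseudocompact space $\mathrm{cl}_XN$, and so is pseudocompact by the regular closed hereditariness of pseudocompactness (Exercise 3.10.F(e) of \cite{E}). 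It follows that $W\in\mathfrak{U}$ by Lemma \ref{FCG}, and since $W$ is open and contains $x$, it is a null neighborhood of $x$.

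The only mildly subtle step is the pseudocompactness of $\mathrm{cl}_XW$; one must resist the temptation to regard it as a closed (rather than regular closed) subspace of $\mathrm{cl}_XN$, since pseudocompactness is not closed hereditary. Expressing $\mathrm{cl}_XW$ as the \emph{closure in $\mathrm{cl}_XN$} of an open subspace of $\mathrm{cl}_XN$ circumvents this issue and lets us invoke the regular closed hereditariness directly.
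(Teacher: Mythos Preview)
Your proof is correct and follows essentially the same approach as the paper's. The paper's sketch invokes regularity to find, inside a given pseudocompact neighborhood $N$ of $x$, a neighborhood of the form $\mathrm{cl}_XU$ with $U$ open and $\mathrm{cl}_XU\subseteq N$, and then observes that $\mathrm{cl}_XU$ is regular closed in $N$; you instead pass first to $\mathrm{cl}_XN$ (using the dense-pseudocompact-subspace observation) and take $W=\mathrm{int}_XN$, showing $\mathrm{cl}_XW$ is regular closed in $\mathrm{cl}_XN$. Both routes rest on the same key fact---regular-closed hereditariness of pseudocompactness---so the difference is cosmetic.
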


\begin{proof}
The proof is similar to that of Lemma \ref{KFFF}. Observe that (since $X$ is regular) for every $x\in X$ each neighborhood of $x$ in $X$ contains a regular closed neighborhood of $x$ in $X$, that is, a neighborhood in $X$ of the form $\mathrm{cl}_XU$ where $U$ is open in $X$.
\end{proof}

Considering the dualities between Lemmas \ref{KFFF} and \ref{HF}, one can state and prove results dual to Theorems \ref{JGF} and \ref{HJG}. We will now proceed with the determination of $\lambda_{\mathfrak U}X$. We need some preliminaries first.

A completely regular space $X$ is said to be \textit{realcompact} if it is homeomorphic to a closed subspaces of some product $\mathbb{R}^\mathfrak{m}$ of the real line. Realcompactness is a closed hereditary topological property. Every regular Lindel\"{o}f space (in particular, every compact space) is realcompact. Also, every realcompact pseudocompact space is compact. To every completely regular space $X$ there corresponds a realcompact space $\upsilon X$ (called the \textit{Hewitt realcompactification of $X$}) which contains $X$ as a dense subspace and is such that every continuous mapping $f:X\rightarrow\mathbb{R}$ is continuously extendible over $\upsilon X$; we may assume that $\upsilon X\subseteq\beta X$. Note that a completely regular space $X$ is realcompact if and only if $X=\upsilon X$. (See Section 3.11 of \cite{E} for further information.)

The following lemma, which may be considered as a dual result to Lemma \ref{HDHD}, is due to A. W. Hager and D. G. Johnson in \cite{HJ}; a direct proof may be found in \cite{C}. (See also Theorem 11.24 of \cite{W}.)

\begin{lemma}[Hager--Johnson \cite{HJ}]\label{A}
Let $X$ be a completely regular space. Let $U$ be an open subspace of $X$ such that $\mathrm{cl}_{\upsilon X} U$ is compact. Then $\mathrm{cl}_X U$ is pseudocompact.
\end{lemma}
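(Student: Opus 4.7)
The plan is to prove the contrapositive: assuming $\mathrm{cl}_X U$ fails to be pseudocompact, I will produce a continuous $g:X\to\mathbb{R}$ whose extension $g^\upsilon:\upsilon X\to\mathbb{R}$ takes arbitrarily large values on $\mathrm{cl}_{\upsilon X}U$, contradicting compactness of the latter. The starting input is the standard characterization of pseudocompactness for completely regular spaces: such a space fails to be pseudocompact if and only if it contains an infinite locally finite family of nonempty open subsets. Applied to $\mathrm{cl}_X U$ (which is completely regular, being a subspace of $X$), this furnishes a countable infinite pairwise disjoint locally finite family $\{O_n\}_{n=1}^\infty$ of nonempty open subsets of $\mathrm{cl}_X U$.

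The critical technical step, and the only place where the openness hypothesis on $U$ really enters, is to push $\{O_n\}$ out to a locally finite family of nonempty open sets of the ambient space $X$. Setting $O'_n=O_n\cap U$, each $O'_n$ is nonempty because $U$ is dense in $\mathrm{cl}_X U$ and $O_n$ is open there, and each $O'_n$ is open in $X$ because $U$ is open in $X$ while $O_n$ is relatively open in $\mathrm{cl}_X U$. Local finiteness of $\{O'_n\}$ in $X$ is a two-case check on a point $x\in X$: if $x\in\mathrm{cl}_X U$, I lift a witnessing neighborhood from $\mathrm{cl}_X U$ back to $X$; if $x\in X\setminus\mathrm{cl}_X U$, then the open set $X\setminus\mathrm{cl}_X U$ itself is a witnessing neighborhood disjoint from $\bigcup_n O'_n\subseteq U$.

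With the locally finite family $\{O'_n\}$ in hand, I invoke complete regularity of $X$ to pick, for each $n$, a point $x_n\in O'_n$ together with a continuous $g_n:X\to[0,1]$ satisfying $g_n(x_n)=1$ and $g_n\equiv 0$ outside $O'_n$, and then set $g=\sum_{n=1}^\infty n g_n$. Local finiteness of $\{O'_n\}$ ensures that near each point of $X$ all but finitely many of the $g_n$ vanish identically, so $g$ is well defined and continuous as a locally finite sum, with $g(x_n)\geq n g_n(x_n)=n$. The defining property of the Hewitt realcompactification then produces a continuous extension $g^\upsilon:\upsilon X\to\mathbb{R}$ of $g$. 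Compactness of $\mathrm{cl}_{\upsilon X}U$ forces $g^\upsilon(\mathrm{cl}_{\upsilon X}U)$ to be a compact, hence bounded, subset of $\mathbb{R}$; but the $x_n$ lie in $U\subseteq\mathrm{cl}_{\upsilon X}U$ and $g^\upsilon(x_n)=g(x_n)\geq n$, the desired contradiction.

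The one delicate point I expect is the lifting step in the middle paragraph; everything else is routine use of complete regularity and of the universal property of $\upsilon X$. The reason that this lifting works without invoking any $C$-embedding or normality hypothesis on $X$ is precisely the openness of $U$: without it, the intersection $O_n\cap U$ would generally fail to stay open in $X$, and one would be forced to extend a function defined only on the closed set $\mathrm{cl}_X U$, which in a merely completely regular space is not guaranteed.
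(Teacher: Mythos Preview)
Your argument is correct. The paper does not supply its own proof of this lemma; it attributes the result to Hager and Johnson and refers the reader to Comfort's paper for a direct proof. Your route via the locally-finite-family characterization of pseudocompactness, the lifting of that family from $\mathrm{cl}_X U$ to $X$ by intersecting with the open set $U$, and the universal property of $\upsilon X$ is the standard one, and each step---in particular the two-case verification of local finiteness in $X$---is carried out correctly.
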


Observe that any completely regular space $X$ with a dense pseudocompact subspace $A$ is pseudocompact; as for any continuous mapping $f:X\rightarrow\mathbb{R}$ we have
\[f (X)=f(\mathrm{cl}_XA)\subseteq\mathrm{cl}_\mathbb{R}f(A)\]
and $f(A)$ is bounded (since $A$ is pseudocompact).

\begin{lemma}\label{HGA}
Let $X$ be a completely regular space and let $A$ be regular closed in $X$. Then $\mathrm{cl}_{\beta X} A\subseteq\upsilon X$ if and only if $A$ is pseudocompact.
\end{lemma}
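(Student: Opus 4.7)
The plan is to reduce the statement to Lemma~A via a standard equality between $\mathrm{cl}_{\beta X}A$ and $\mathrm{cl}_{\beta X}U$, where $U$ is an open subspace of $X$ with $A=\mathrm{cl}_XU$ (such $U$ exists since $A$ is regular closed). Using the general fact that $\mathrm{cl}_{\beta X}\mathrm{cl}_XU=\mathrm{cl}_{\beta X}U$ (which is an easy consequence of $\mathrm{cl}_XU\subseteq\mathrm{cl}_{\beta X}U$ together with idempotence of closure), the desired equivalence becomes
\[\mathrm{cl}_{\beta X}U\subseteq\upsilon X\iff\mathrm{cl}_XU\mbox{ is pseudocompact.}\]

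For the forward implication, I would argue that $\mathrm{cl}_{\beta X}U\subseteq\upsilon X$ forces $\mathrm{cl}_{\upsilon X}U$ to coincide with $\mathrm{cl}_{\beta X}U$, since the latter is already a closed subspace of $\upsilon X$. In particular $\mathrm{cl}_{\upsilon X}U$ is compact (being closed in $\beta X$), and Lemma~\ref{A} then yields that $\mathrm{cl}_XU=A$ is pseudocompact.

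For the converse, set $F=\mathrm{cl}_{\upsilon X}U$. Then $A=\mathrm{cl}_XU\subseteq F$ and $A$ is dense in $F$ (because $U$ is dense in $F$ by definition and $U\subseteq A\subseteq F$). Since $A$ is pseudocompact by hypothesis, $F$ is pseudocompact by the observation preceding Lemma~\ref{A} that any completely regular space containing a dense pseudocompact subspace is itself pseudocompact. On the other hand, $F$ is closed in the realcompact space $\upsilon X$, and (using the standard fact that realcompactness is closed hereditary) $F$ is realcompact. A realcompact pseudocompact space is compact, so $F$ is compact; hence $F$ is closed in $\beta X$ and therefore $\mathrm{cl}_{\beta X}U=F\subseteq\upsilon X$, as desired.

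No serious obstacle is expected: the only nontrivial ingredient is Lemma~\ref{A} itself (already cited from Hager--Johnson), together with the standard facts that closed subspaces of realcompact spaces are realcompact, realcompact plus pseudocompact equals compact, and pseudocompactness passes to a space from a dense subspace. The only mild subtlety to make explicit is the identification $\mathrm{cl}_{\beta X}A=\mathrm{cl}_{\beta X}U$ that lets one pivot from the regular closed set $A$ to the open set $U$.
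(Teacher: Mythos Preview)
Your proof is correct and follows essentially the same approach as the paper's own argument: both directions use Lemma~\ref{A} for the forward implication and the combination ``dense pseudocompact subspace $\Rightarrow$ pseudocompact'' together with ``realcompact $+$ pseudocompact $\Rightarrow$ compact'' for the converse. The only difference is cosmetic: you make the passage from $A$ to the open set $U$ (with $A=\mathrm{cl}_XU$) explicit via the identity $\mathrm{cl}_{\beta X}A=\mathrm{cl}_{\beta X}U$, whereas the paper works directly with $\mathrm{cl}_{\upsilon X}A$ and leaves that identification implicit.
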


\begin{proof}
One half follows from Lemma \ref{A}, as if $\mathrm{cl}_{\beta X} A\subseteq\upsilon X$ then $\mathrm{cl}_{\upsilon X} A=\mathrm{cl}_{\beta X} A$ is compact, since it is closed in $\beta X$. For the other half, note that if $A$ is pseudocompact then so is $\mathrm{cl}_{\upsilon X} A$, as it contains $A$ as a dense subspace.  But $\mathrm{cl}_{\upsilon X} A$ is realcompact (as it is closed in $\upsilon X$ and $\upsilon X$ is so) and therefore is compact. Thus $\mathrm{cl}_{\beta X} A\subseteq\mathrm{cl}_{\upsilon X} A$.
\end{proof}

\begin{theorem}\label{PTF}
Let $X$ be a completely regular space. Then
\[\lambda_{\mathfrak U}X=\mathrm{int}_{\beta X}\upsilon X.\]
\end{theorem}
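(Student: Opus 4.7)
The plan is to prove the two inclusions separately, with Lemma \ref{HGA} serving as the bridge in each direction between ``pseudocompact closure in $X$'' (the ideal-theoretic side) and ``contained in $\upsilon X$'' (the topological side).

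For the inclusion $\lambda_{\mathfrak U}X \subseteq \mathrm{int}_{\beta X}\upsilon X$, I would start from the defining union and pick an arbitrary $C\in\mathrm{Coz}(X)$ such that $\mathrm{cl}_XC$ has a null neighborhood in $X$. By Lemma \ref{FCG} this produces an open subspace $U$ of $X$ with $\mathrm{cl}_XU$ pseudocompact and $\mathrm{cl}_XC\subseteq U$. Since $\mathrm{cl}_XU$ is regular closed in $X$, Lemma \ref{HGA} gives $\mathrm{cl}_{\beta X}U=\mathrm{cl}_{\beta X}\mathrm{cl}_XU\subseteq\upsilon X$. As $C\subseteq U$, this yields $\mathrm{cl}_{\beta X}C\subseteq\upsilon X$, so the open set $\mathrm{int}_{\beta X}\mathrm{cl}_{\beta X}C$ lies inside $\mathrm{int}_{\beta X}\upsilon X$. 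Taking the union over all admissible $C$ delivers the inclusion.

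For the reverse inclusion $\mathrm{int}_{\beta X}\upsilon X\subseteq\lambda_{\mathfrak U}X$, I would fix $t\in\mathrm{int}_{\beta X}\upsilon X$ and, using complete regularity of $\beta X$, pick a continuous $f:\beta X\to[0,1]$ with $f(t)=0$ and $f|_{\beta X\setminus\mathrm{int}_{\beta X}\upsilon X}=\mathbf{1}$. Setting $U=X\cap f^{-1}([0,1/2))$, the set $U$ is open in $X$ and $\mathrm{cl}_{\beta X}U\subseteq f^{-1}([0,1/2])\subseteq\upsilon X$ is compact. Since $\mathrm{cl}_XU$ is regular closed in $X$ and $\mathrm{cl}_{\beta X}(\mathrm{cl}_XU)=\mathrm{cl}_{\beta X}U\subseteq\upsilon X$, Lemma \ref{HGA} forces $\mathrm{cl}_XU$ to be pseudocompact, that is, $U\in\mathfrak U$. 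Next, I would let $C=X\cap f^{-1}([0,1/3))$, which is a cozero-set of $X$ (its extension to $\beta X$ is the cozero-set of $\max\{\mathbf 0,\mathbf{1/3}-f\}$). Since $\mathrm{cl}_XC\subseteq f^{-1}([0,1/3])\cap X\subseteq U$, the open null set $U$ is a null neighborhood of $\mathrm{cl}_XC$ in $X$, whence $\mathrm{int}_{\beta X}\mathrm{cl}_{\beta X}C\subseteq\lambda_{\mathfrak U}X$ by the definition. Finally, Lemma \ref{LKG} gives $t\in f^{-1}([0,1/3))\subseteq\mathrm{int}_{\beta X}\mathrm{cl}_{\beta X}C\subseteq\lambda_{\mathfrak U}X$, completing the proof.

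The routine calculations are manageable, so I do not anticipate a serious obstacle; the only delicate point is making sure that on both sides of the argument the set whose $\beta X$-closure we control is truly \emph{regular closed in $X$}, since Lemma \ref{HGA} fails for arbitrary closed subsets. On the forward side this is automatic because we passed from $\mathrm{cl}_XC$ to the ambient open $U$ and used $\mathrm{cl}_XU$; on the reverse side the same is ensured by choosing $U$ to be an open subspace of $X$ from the outset. Everything else is a standard bookkeeping with Lemma \ref{LKG} and the identity $\mathrm{cl}_{\beta X}U=\mathrm{cl}_{\beta X}\mathrm{cl}_XU$ valid for open $U\subseteq X$.
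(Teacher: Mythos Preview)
Your proposal is correct and follows essentially the same approach as the paper's proof: both directions hinge on Lemma \ref{HGA}, and the reverse inclusion is handled by the same Urysohn-function trick with nested level sets (you use thresholds $1/3$ and $1/2$ where the paper uses $1/2$ and $2/3$). The only cosmetic difference is in the forward direction: the paper first argues that $\mathrm{cl}_XC$ itself is pseudocompact (via the regular-closed-hereditary property inside the pseudocompact neighborhood) and then applies Lemma \ref{HGA} to $\mathrm{cl}_XC$, whereas you apply Lemma \ref{HGA} directly to $\mathrm{cl}_XU$ and then use $C\subseteq U$; your variant is marginally cleaner but not substantively different.
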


\begin{proof}
Suppose that $C\in\mathrm{Coz}(X)$ is such that $\mathrm{cl}_X C$ has a pseudocompact neighborhood $U$ in $X$. Since $U$ is pseudocompact and $\mathrm{cl}_X C$ is regular closed in $X$ (and thus in $U$) then $\mathrm{cl}_X C$ is pseudocompact. Thus $\mathrm{cl}_{\beta X} C\subseteq\upsilon X$, by Lemma \ref{HGA}, and therefore $\mathrm{int}_{\beta X}\mathrm{cl}_{\beta X} C\subseteq\mathrm{int}_{\beta X}\upsilon X$.

To show the reverse inclusion, let $t\in\mathrm{int}_{\beta X}\upsilon X$. By the Urysohn lemma there is a continuous mapping $f:\beta X\rightarrow[0,1]$ with $f(t)=0$ and $f|_{\beta X\setminus\mathrm{int}_{\beta X}\upsilon X}=\mathbf{1}$. Then
\[C=X\cap f^{-1}\big([0,1/2)\big)\in\mathrm{Coz}(X)\]
and $t\in\mathrm{int}_{\beta X}\mathrm{cl}_{\beta X} C$, as $t\in f^{-1}([0,1/2))$ and $f^{-1}([0,1/2))\subseteq\mathrm{int}_{\beta X}\mathrm{cl}_{\beta X} C$ by Lemma \ref{LKG}. Note that if $V=X\cap f^{-1}([0,2/3))$ then $V$ is an open neighborhood of $\mathrm{cl}_X C$ in $X$, and $\mathrm{cl}_X V$ is pseudocompact by Lemma \ref{HGA}, as it is regular closed in $X$ and
\[\mathrm{cl}_{\beta X}V\subseteq f^{-1}\big([0,2/3]\big)\subseteq\upsilon X.\]
Therefore $\mathrm{int}_{\beta X}\mathrm{cl}_{\beta X} C\subseteq\lambda_{\mathfrak U}X$.
\end{proof}

In our final result we will be dealing with realcompactness. Despite the fact that realcompactness is closed hereditary, it is unfortunately not preserved under finite closed sums in the realm of completely regular spaces. (In \cite{M} -- a correction in \cite{M1} -- S. Mr\'{o}wka describes a completely regular space which is not realcompact but it can be represented as the union of two of its closed realcompact subspaces; a simpler example is given by A. Mysior in \cite{My}.) So, our previous results are not applicable as long as the underlying space is required to be only completely regular. As we will see, the situation changes if we confine ourselves to the class of normal spaces.

Recall that a subspace $A$ of a space $X$ is called \textit{$C$-embedded} in $X$ if every continuous mapping $f:A\rightarrow\mathbb{R}$ is continuously extendible over $X$.

\begin{lemma}[Gillman--Jerison \cite{GJ}]\label{DDJD}
Let $X$ be a completely regular space and let $A$ be $C$-embedded in $X$. Then $\mathrm{cl}_{\upsilon X}A=\upsilon A$.
\end{lemma}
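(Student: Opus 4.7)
The plan is to verify that $\mathrm{cl}_{\upsilon X}A$ itself carries the universal property that characterizes $\upsilon A$ up to homeomorphism. Recall that the Hewitt realcompactification $\upsilon A$ is the (essentially unique) realcompact space in which $A$ sits as a dense $C$-embedded subspace; so it suffices to establish these three properties for $\mathrm{cl}_{\upsilon X}A$.

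Density of $A$ in $\mathrm{cl}_{\upsilon X}A$ is immediate from the definition. For realcompactness, I would note that $\upsilon X$ is itself realcompact and invoke the fact (standard for realcompact spaces) that realcompactness is closed hereditary; since $\mathrm{cl}_{\upsilon X}A$ is closed in $\upsilon X$, it inherits realcompactness. The only substantive step is the verification that $A$ is $C$-embedded in $\mathrm{cl}_{\upsilon X}A$. For this, given a continuous $g:A\to\mathbb{R}$, I would first extend it to a continuous $G:X\to\mathbb{R}$ using the hypothesis that $A$ is $C$-embedded in $X$, then extend $G$ to a continuous $G^\upsilon:\upsilon X\to\mathbb{R}$ by the defining universal property of $\upsilon X$ recalled in the introduction, and finally restrict $G^\upsilon$ to the closed subspace $\mathrm{cl}_{\upsilon X}A$; this yields the desired continuous extension of $g$.

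With these properties in hand, uniqueness of the Hewitt realcompactification yields a homeomorphism $\upsilon A\to\mathrm{cl}_{\upsilon X}A$ fixing $A$ pointwise. To upgrade this to the claimed equality of subsets of $\beta X$, I would use the canonical identification $\beta A=\mathrm{cl}_{\beta X}A$, which holds because $C$-embeddedness implies $C^*$-embeddedness, so the properties of $\beta X$ listed in the introduction apply. The main subtlety, therefore, is notational rather than conceptual: tracking how the abstract realcompactification $\upsilon A$ sits inside $\beta X$ via this identification, so that the abstract homeomorphism provided by uniqueness becomes the literal equality of subsets stated in the lemma.
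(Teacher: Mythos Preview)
Your argument is correct and follows the standard route to this result: verify that $\mathrm{cl}_{\upsilon X}A$ is realcompact (closed in $\upsilon X$), contains $A$ densely, and has $A$ $C$-embedded in it (extend $A\to X\to\upsilon X$, then restrict), and then invoke the characterization of $\upsilon A$ as the essentially unique realcompact space with these properties. Your handling of the passage from ``homeomorphism over $A$'' to ``equality as subsets of $\beta X$'' via $\beta A=\mathrm{cl}_{\beta X}A$ is also correct; one can make it fully explicit by noting that any homeomorphism $\upsilon A\to\mathrm{cl}_{\upsilon X}A$ fixing $A$ extends to a self-homeomorphism of $\beta A$ fixing the dense set $A$, hence is the identity.

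As for comparison with the paper: the paper gives no proof at all, merely citing the result from Gillman--Jerison. Your write-up is exactly the argument one finds there (Theorem 8.8(a) or its vicinity), so there is nothing to contrast.
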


By ${\mathfrak Q}$ in the following we simply mean ${\mathfrak I}_\mathfrak{P}$, as defined in Notation \ref{GGH}, with $\mathfrak{P}$ assumed to be realcompactness; the re-definition is for convenience. (This is also consistent with the initial terminology once used to refer to realcompact spaces; realcompact spaces were originally introduced by E. Hewitt in \cite{H}; they were then called \textit{$Q$-spaces}.)

\begin{notation}\label{GYH}
Let $X$ be a space. Denote
\[{\mathfrak Q}=\{A\subseteq X:\mathrm{cl}_XA\mbox{ is realcompact}\}.\]
\end{notation}

Recall that a completely regular space $X$ is realcompact if and only if $X=\upsilon X$. Observe that in a normal space each closed subspace is $C$-embedded. (See Problem 3.D.1 of \cite{GJ}.) This observation will be used in the following.

\begin{lemma}\label{KGPF}
Let $X$ be a normal space. Then $\mathfrak{Q}$ is an ideal in $X$.
\end{lemma}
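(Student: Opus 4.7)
The plan is to verify the three defining conditions of an ideal: non-emptiness, closure under subsets, and closure under finite unions. The first two are essentially automatic from the fact that realcompactness is closed hereditary; the real content is the third, since as the author has already noted (via Mr\'{o}wka's and Mysior's examples) realcompactness is \emph{not} preserved under finite closed sums in the class of completely regular spaces, so normality must play a genuine role.

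First I would note that $\emptyset\in\mathfrak{Q}$ (since $\mathrm{cl}_X\emptyset=\emptyset$ is vacuously realcompact), so $\mathfrak{Q}\neq\emptyset$. Next, if $B\subseteq A$ with $A\in\mathfrak{Q}$, then $\mathrm{cl}_XB$ is a closed subspace of the realcompact space $\mathrm{cl}_XA$, and since realcompactness is closed hereditary, $\mathrm{cl}_XB$ is realcompact, so $B\in\mathfrak{Q}$. These steps use only complete regularity.

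The main step is closure under binary unions (from which finite unions follow by induction). Let $A_1,A_2\in\mathfrak{Q}$ and set $F_i=\mathrm{cl}_XA_i$ for $i=1,2$. Using the identity
\[
\mathrm{cl}_X(A_1\cup A_2)=\mathrm{cl}_XA_1\cup\mathrm{cl}_XA_2=F_1\cup F_2,
\]
the task is reduced to showing that $F_1\cup F_2$ is realcompact. Here is where normality of $X$ enters: each $F_i$, being closed in the normal space $X$, is $C$-embedded in $X$. By Lemma \ref{DDJD} (Gillman--Jerison) this gives $\mathrm{cl}_{\upsilon X}F_i=\upsilon F_i$. But $F_i$ is realcompact, so $F_i=\upsilon F_i$, and hence $\mathrm{cl}_{\upsilon X}F_i=F_i$; that is, each $F_i$ is closed in $\upsilon X$. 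Consequently $F_1\cup F_2$ is closed in the realcompact space $\upsilon X$, and therefore realcompact (again since realcompactness is closed hereditary). Thus $A_1\cup A_2\in\mathfrak{Q}$.

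The main obstacle is precisely the union step, and it is overcome by the mechanism above: normality gives $C$-embeddedness of closed sets, which via Lemma \ref{DDJD} identifies $F_i$ with its closure in $\upsilon X$, turning the question of realcompactness of $F_1\cup F_2$ into the trivial statement that a closed subspace of a realcompact space is realcompact. No further calculation is required.
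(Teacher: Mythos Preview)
Your proof is correct and follows essentially the same approach as the paper's: both use normality to obtain $C$-embeddedness of the closed sets $F_i=\mathrm{cl}_XA_i$, invoke Lemma~\ref{DDJD} to conclude $\mathrm{cl}_{\upsilon X}F_i=\upsilon F_i=F_i$, and then deduce that $F_1\cup F_2$ is closed in the realcompact space $\upsilon X$ and hence realcompact. The paper packages this as a single chain of equalities computing $\upsilon(\mathrm{cl}_X(C\cup D))$, while you phrase it as showing each $F_i$ is closed in $\upsilon X$; the underlying mechanism is identical.
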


\begin{proof}
Note that ${\mathfrak Q}$ is non-empty, as it contains $\emptyset$. Let $B\subseteq A$ with $A\in\mathfrak{Q}$. In particular $\mathrm{cl}_XB\subseteq\mathrm{cl}_XA$. Since $\mathrm{cl}_XA$ is realcompact and realcompactness is closed hereditary then $\mathrm{cl}_XB$ is realcompact. That is $B\in\mathfrak{Q}$. Next, let $C,D\in\mathfrak{Q}$. Since $X$ is normal, every closed subspace of $X$ is $C$-embedded in $X$. Thus, using Lemma \ref{DDJD} we have
\begin{eqnarray*}
\upsilon\big(\mathrm{cl}_X(C\cup D)\big)&=&\mathrm{cl}_{\upsilon X}\big(\mathrm{cl}_X(C\cup D)\big)\\&=&\mathrm{cl}_{\upsilon X}(\mathrm{cl}_XC)\cup\mathrm{cl}_{\upsilon X}(\mathrm{cl}_XD)\\&=&\upsilon(\mathrm{cl}_X C)\cup\upsilon(\mathrm{cl}_XD)=\mathrm{cl}_X C\cup\mathrm{cl}_XD=\mathrm{cl}_X(C\cup D).
\end{eqnarray*}
That is $\mathrm{cl}_X(C\cup D)$ is realcompact. Therefore $C\cup D\in{\mathfrak Q}$.
\end{proof}

Once one states and proves a lemma dual to Lemma \ref{KFFF} it will be then possible to state and prove results dual to Theorems \ref{JGF} and \ref{HJG}. Our concluding result determines $\lambda_{\mathfrak Q}X$. As in the case of pseudocompactness, it turns out that $\lambda_{\mathfrak Q}X$ is a familiar subspace of $\beta X$ as well.

\begin{theorem}\label{RRGY}
Let $X$ be a normal space. Then
\[\lambda_{\mathfrak Q}X=\beta X\setminus\mathrm{cl}_{\beta X}(\upsilon X\setminus X).\]
\end{theorem}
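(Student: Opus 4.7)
The plan is to prove the two inclusions separately, proceeding in close parallel to the proof of Theorem \ref{PTF}. Set $T=\beta X\setminus\mathrm{cl}_{\beta X}(\upsilon X\setminus X)$. Because $X$ is normal, Proposition \ref{KJHF} applies and yields
\[\lambda_{\mathfrak Q}X=\bigcup\{\mathrm{int}_{\beta X}\mathrm{cl}_{\beta X}U:\mathrm{cl}_X U\text{ is realcompact}\},\]
so I can work with open sets $U\subseteq X$ having realcompact closure rather than with cozero-sets. The key tool throughout is Lemma \ref{DDJD}: since $X$ is normal, every closed subspace of $X$ is $C$-embedded in $X$, so $\upsilon(\mathrm{cl}_X U)=\mathrm{cl}_{\upsilon X}(\mathrm{cl}_X U)=\upsilon X\cap\mathrm{cl}_{\beta X}U$ for any open $U\subseteq X$.

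For the inclusion $\lambda_{\mathfrak Q}X\subseteq T$, I would start with an open $U\subseteq X$ whose closure $\mathrm{cl}_X U$ is realcompact. Then $\mathrm{cl}_X U=\upsilon(\mathrm{cl}_X U)=\upsilon X\cap\mathrm{cl}_{\beta X}U\subseteq X$, hence $\mathrm{cl}_{\beta X}U\cap(\upsilon X\setminus X)=\emptyset$. Since $\mathrm{int}_{\beta X}\mathrm{cl}_{\beta X}U$ is open in $\beta X$ and disjoint from $\upsilon X\setminus X$, it is automatically disjoint from $\mathrm{cl}_{\beta X}(\upsilon X\setminus X)$, which gives $\mathrm{int}_{\beta X}\mathrm{cl}_{\beta X}U\subseteq T$.

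For the reverse inclusion $T\subseteq\lambda_{\mathfrak Q}X$, let $t\in T$. Using the Urysohn lemma on the compact Hausdorff space $\beta X$, I would choose a continuous $f:\beta X\to[0,1]$ with $f(t)=0$ and $f|_{\mathrm{cl}_{\beta X}(\upsilon X\setminus X)}=\mathbf{1}$. Set
\[U=X\cap f^{-1}\big([0,2/3)\big)\quad\text{and}\quad C=X\cap f^{-1}\big([0,1/2)\big)\in\mathrm{Coz}(X).\]
Then $U$ is open in $X$, and $\mathrm{cl}_X C\subseteq X\cap f^{-1}([0,1/2])\subseteq U$, so $U$ is a neighborhood of $\mathrm{cl}_X C$ in $X$. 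Now the goal is to show $\mathrm{cl}_X U$ is realcompact. The key computation is that $\mathrm{cl}_{\beta X}U\subseteq f^{-1}([0,2/3])$ is disjoint from $\mathrm{cl}_{\beta X}(\upsilon X\setminus X)$, so $\upsilon X\cap\mathrm{cl}_{\beta X}U\subseteq X$; combined with $X\cap\mathrm{cl}_{\beta X}U=\mathrm{cl}_X U$ and Lemma \ref{DDJD}, this yields $\upsilon(\mathrm{cl}_X U)=\mathrm{cl}_X U$, i.e.\ $\mathrm{cl}_X U$ is realcompact. Hence $U$ is null and $\mathrm{int}_{\beta X}\mathrm{cl}_{\beta X}C\subseteq\lambda_{\mathfrak Q}X$ by the definition of $\lambda_{\mathfrak Q}X$. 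Finally, applying Lemma \ref{LKG} to $f|_X$ (noting that $(f|_X)_\beta=f$) gives $f^{-1}([0,1/2))\subseteq\mathrm{int}_{\beta X}\mathrm{cl}_{\beta X}C$, and since $f(t)=0$ we conclude $t\in\lambda_{\mathfrak Q}X$.

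The main subtlety is the realcompactness step, which is the analogue of Lemma \ref{HGA} in the pseudocompact case but has no similarly clean pre-packaged statement here; it must be derived on the spot from Lemma \ref{DDJD} together with the normality of $X$ (to secure $C$-embedding). Once this identification $\upsilon(\mathrm{cl}_X U)=\upsilon X\cap\mathrm{cl}_{\beta X}U$ is in hand, both directions are essentially the same openness/closure juggling as in the proof of Theorem \ref{PTF}.
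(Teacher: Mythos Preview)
Your proof is correct and follows essentially the same route as the paper's own proof: both directions hinge on Lemma \ref{DDJD} (via normality of $X$) to identify $\upsilon(\mathrm{cl}_X U)$ with $\upsilon X\cap\mathrm{cl}_{\beta X}U$, and the reverse inclusion uses the identical Urysohn-function construction with the levels $1/2$ and $2/3$. The only cosmetic difference is that in the forward inclusion you invoke Proposition \ref{KJHF} to work directly with open $U$ having realcompact closure, whereas the paper stays with the cozero-set definition of $\lambda_{\mathfrak Q}X$ and first passes from the null neighborhood to realcompactness of $\mathrm{cl}_X C$; the substance is the same.
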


\begin{proof}
Suppose that $C\in\mathrm{Coz}(X)$ is such that $\mathrm{cl}_X C$ has a realcompact neighborhood $U$ in $X$. Then $\mathrm{cl}_X C$ is realcompact, as it is closed in $U$. Since $\mathrm{cl}_X C$ is $C$-embedded in $X$, as $X$ is normal, by Lemma \ref{DDJD} we have $\mathrm{cl}_{\upsilon X}C=\upsilon(\mathrm{cl}_X C)=\mathrm{cl}_X C$. But then $\mathrm{int}_{\beta X}\mathrm{cl}_{\beta X} C\cap(\upsilon X\setminus X)=\emptyset$, as
\[\mathrm{cl}_{\beta X} C\cap(\upsilon X\setminus X)=\mathrm{cl}_{\upsilon X} C\cap(\upsilon X\setminus X)=\emptyset\]
and thus $\mathrm{int}_{\beta X}\mathrm{cl}_{\beta X} C\cap\mathrm{cl}_{\beta X}(\upsilon X\setminus X)=\emptyset$, that is
\[\mathrm{int}_{\beta X}\mathrm{cl}_{\beta X} C\subseteq\beta X\setminus\mathrm{cl}_{\beta X}(\upsilon X\setminus X).\]

To show the reverse inclusion, let $t\in\beta X\setminus\mathrm{cl}_{\beta X}(\upsilon X\setminus X)$. Let $f:\beta X\rightarrow[0,1]$ be a continuous mapping with $f(t)=0$ and $f|_{\mathrm{cl}_{\beta X}(\upsilon X\setminus X)}=\mathbf{1}$. Then
\[C=X\cap f^{-1}\big([0,1/2)\big)\in\mathrm{Coz}(X)\]
and $t\in\mathrm{int}_{\beta X}\mathrm{cl}_{\beta X} C$, as $t\in f^{-1}([0,1/2))$ and $f^{-1}([0,1/2))\subseteq\mathrm{int}_{\beta X}\mathrm{cl}_{\beta X} C$ by Lemma \ref{LKG}. Let $V=X\cap f^{-1}([0,2/3))$. Then $V$ is a neighborhood of $\mathrm{cl}_X C$ in $X$. Since $\mathrm{cl}_{\beta X}V\cap(\upsilon X\setminus X)=\emptyset$, as $\mathrm{cl}_{\beta X}V\subseteq f^{-1}([0,2/3])$, we have
\[\mathrm{cl}_X V=X\cap\mathrm{cl}_{\beta X}V=\upsilon X\cap\mathrm{cl}_{\beta X}V=\mathrm{cl}_{\upsilon X}V.\]
Therefore $\mathrm{cl}_X V$ is realcompact, as it is closed in $\upsilon X$. Thus $\mathrm{int}_{\beta X}\mathrm{cl}_{\beta X} C\subseteq\lambda_{\mathfrak Q}X$.
\end{proof}

%%%%% Enter the widest reference label as the first parameter%%%%%

%%% Only the title of article is italicized. No boldface numbers are used.%%%
%%% Give the issue number only if each issue of the journal is separately paginated.%%%


\begin{thebibliography}{HD}

\bibitem{AG} S. K. Acharyya and S. K. Ghosh, Functions in $C(X)$ with support lying on a class of subsets of $X$. \textit{Topology Proc.} \textbf{35} (2010), 127--148.

\bibitem{AN} S. Afrooz and M. Namdari, $C_\infty(X)$ and related ideals. \textit{Real Anal. Exchange} \textbf{36} (2010/11), no. 1, 45--54.

\bibitem{A} A. R. Aliabad, F. Azarpanah and M. Namdari, Rings of continuous functions vanishing at infinity. \textit{Comment. Math. Univ. Carolin.} \textbf{45} (2004), no. 3, 519--533.

\bibitem{Be} E. Behrends, \textit{$M$-structure and the Banach--Stone Theorem}. Springer, Berlin, 1979.

\bibitem{Bu} D. K. Burke, Covering properties. \textit{Handbook of Set-Theoretic Topology}, 347-–422, North--Holland, Amsterdam, 1984.

\bibitem{C} W. W. Comfort, On the Hewitt realcompactification of a product space. \textit{Trans. Amer. Math. Soc.} \textbf{131} (1968), 107--118.

\bibitem{E} R. Engelking, \textit{General Topology}. Second edition. Heldermann Verlag, Berlin, 1989.

\bibitem{GJ} L. Gillman and M. Jerison, \textit{Rings of Continuous Functions}. Springer--Verlag, New York--Heidelberg, 1976.

\bibitem{HJ} A. W. Hager and D. G. Johnson, A note on certain subalgebras of $C(\mathfrak{X})$. \textit{Canad. J. Math.} \textbf{20} (1968), 389--393.

\bibitem{H} E. Hewitt, Rings of real-valued continuous functions. I. \textit{Trans. Amer. Math. Soc.} \textbf{64} (1948), 45--99.

\bibitem{Ho} R. E. Hodel, Jr., Cardinal functions I. \textit{Handbook of Set-Theoretic Topology}, 1--61, North--Holland, Amsterdam, 1984.

\bibitem{Hr} M. Hru\v{s}\'{a}k, Combinatorics of filters and ideals. \textit{Set Theory and its Applications}, 29–-69, Amer. Math. Soc., Providence, RI, 2011.

\bibitem{Ko4} M. R. Koushesh, The partially ordered set of one-point extensions. \textit{Topology Appl.} \textbf{158} (2011), no. 3, 509--532.

\bibitem{Ko3} M. R. Koushesh, Compactification-like extensions. \textit{Dissertationes Math. (Rozprawy Mat.)} \textbf{476} (2011), 88 pp.

\bibitem{Ko7} M. R. Koushesh, A pseudocompactification. \textit{Topology Appl.} \textbf{158} (2011), no. 16, 2191--2197.

\bibitem{Ko5} M. R. Koushesh, One-point extensions of locally compact paracompact spaces. \textit{Bull. Iranian Math. Soc.} \textbf{37} (2011), no. 4, 199--228.

\bibitem{Ko13} M. R. Koushesh, Connectedness modulo a topological property. \textit{Topology Appl.} \textbf{159} (2012), no. 16, 3417--3425.

\bibitem{Ko6} M. R. Koushesh, The Banach algebra of continuous bounded functions with separable support. \textit{Studia Math.} \textbf{210} (2012), no. 3, 227--237.

\bibitem{Ko11} M. R. Koushesh, Representation theorems for Banach algebras. \textit{Topology Appl.} \textbf{160} (2013), no. 13, 1781--1793.

\bibitem{Ko10} M. R. Koushesh, Representation theorems for normed algebras. \textit{J. Aust. Math. Soc.} \textbf{95} (2013), no. 2, 201--222.

\bibitem{Ko8} M. R. Koushesh, One-point extensions and local topological properties. \textit{Bull. Aust. Math. Soc.} \textbf{88} (2013), no. 1, 12--16.

\bibitem{Ko12} M. R. Koushesh, Topological extensions with compact remainder. \textit{J. Math. Soc. Japan} \textbf{67} (2015), no. 1, 1--42.

\bibitem{M} S. Mr\'{o}wka, On the unions of $Q$-spaces. \textit{Bull. Acad. Polon. Sci. S\'{e}r. Sci. Math. Astronom. Phys.} \textbf{6} (1958), 365--368.

\bibitem{M1} S. Mr\'{o}wka, Some comments on the author's example of a non-$\mathfrak{R}$-compact space. \textit{Bull. Acad. Polon. Sci. S\'{e}r. Sci. Math. Astronom. Phys.} \textbf{18} (1970), 443--448.

\bibitem{My} A. Mysior, A union of realcompact spaces. \textit{Bull. Acad. Polon. Sci. S\'{e}r. Sci. Math.} \textbf{29} (1981), no. 3--4, 169--172.

\bibitem{PW} J. R. Porter and R. G. Woods, \textit{Extensions and Absolutes of Hausdorff Spaces}. Springer--Verlag, New York, 1988.

\bibitem{Steph} R. M. Stephenson, Jr., Initially $\kappa$-compact and related spaces. \textit{Handbook of Set-Theoretic Topology}, 603--632, North--Holland, Amsterdam, 1984.

\bibitem{T} A. Taherifar, Some generalizations and unifications of $C_K(X)$, $C_\psi(X)$ and $C_\infty(X)$. \textit{Quaest. Math.} To appear. (Aavailable as: \verb"arXiv:1210.6521 [math.GN]")

\bibitem{Va} J. E. Vaughan, Countably compact and sequentially compact spaces. \textit{Handbook of Set-Theoretic Topology}, 569--602, North--Holland, Amsterdam, 1984.

\bibitem{W} N. M. Warren, Properties of Stone--\v{C}ech compactifications of discrete spaces. \textit{Proc. Amer. Math. Soc.} \textbf{33} (1972), 599--606.

\bibitem{We} M. D. Weir, \textit{Hewitt--Nachbin Spaces}. American Elsevier, New York, 1975.

\end{thebibliography}
\end{document}